\newcommand{\Rho}[1]{\hbox{\scriptsize{$\rho_{i,\lambda}^{(\varepsilon)}#1$}}}
\newcommand{\Rhop}[1]{\hbox{\scriptsize{$\rho_{i,\lambda}^{(\varepsilon')}#1$}}}
\newtheorem{theorem}{Th\'eor\`eme}[section]
\newtheorem{definition}[theorem]{D\'efinition}
\newtheorem{proposition}[theorem]{Proposition}
\newtheorem{question}[theorem]{Question}
\newtheorem{corollary}[theorem]{Corollaire}
\newtheorem{lemma}[theorem]{Lemme}
\newtheorem{remark}[theorem]{Remarque}
\newtheorem{example}[theorem]{Exemple}
\newtheorem{examples}[theorem]{Exemples}
\newtheorem{theo_alpha}{Théorème}
\def\theo_alpha{\Alph{theo_alpha}}
\newtheorem{defin_alpha}{Définition}
\def\defin_alpha{\Alph{defi_alpha}}
\newtheorem{conje_alpha}{Conjecture}
\def\conje_alpha{\Alph{conje_alpha}}
\newtheorem{quest_alpha}{Question}
\def\quest_alpha{\Alph{quest_alpha}}
\def\rem_alpha{\Alph{rem_alpha}}
\def\C{\mathbb{C}} 
\def\Z{\mathbb{Z}} 
\def\R{\mathbb{R}} 
\def\N{\mathbb{N}} 
\def\Q{\mathbb{Q}}
\def\la{\langle}
\def\ra{\rangle}
\def\peq{\preccurlyeq}
\def\seq{\succcurlyeq}
\def\G{\mathbf{G}}    
\def\E{\mathbf{E}}
\def\P{\mathbf{P}}
\def\V{\mathbf{V}}
\def\W{\mathbf{W}}
\def\a{\alpha}
\def\b{\beta}
\def\ga{\gamma}
\def\Ga{\Gamma}
\def\de{\delta}
\def\veps{\varepsilon}
\def\l{\lambda}
\def\SS{\mathcal{S}}  
\def\I{\mathcal{I}}
\def\rk{{\rm rk}\hskip .1em}
\def\CC{\mathcal{C}}
\def\PA{\mathit{PA}}
\def\GA{\mathit{GA}}
\def\g{\mathfrak{g}}
\def\ll{\mathfrak{l}}
\def\m{\mathfrak{m}}
\def\n{\mathfrak{n}}
\def\ss{\mathfrak{s}}           
\def\lsl{\mathfrak{sl}}
\def\gl{\mathfrak{gl}}       
\def\so{\mathfrak{so}}
\def\sp{\mathfrak{sp}}
\def\Liea{\mathfrak{a}}
\def\gs{\mathfrak{g}^\mathfrak{s}}
\def\ad{\mathrm{ad}\,}
\def\gr{\mathop{\rm gr}\nolimits}
\def\id{\mathop{\rm id}\nolimits}
\def\diag{\mathop{\rm diag}\nolimits}
\def\dim{\mathop{\rm dim}\nolimits}
\def\codim{\mathop{\rm codim}\nolimits}
\def\Hom{\mathop{\rm Hom}\nolimits}
\def\End{\mathop{\rm End}\nolimits}
\def\Vect{\mathop{\rm Vect}\nolimits}
\def\le{\leqslant}
\def\ge{\geqslant}
\def\d{\mathrm{d}}
\def\T{\mathrm{T}}
\def\F{\mathcal{F}}
\def\FF{\mathscr{F}}
\def\GaD{\Ga_{{\rm Dyn}}^{h}}
\def\Wh{{\rm Wh}}
\begin{document}

\title[]{Paires admissibles d'une algèbre de Lie simple complexe et $W$-algèbres finies.}

\author[]{Guilnard Sadaka$^*$}
\thanks{$^*$La majorité de ce travail a été financée par le Centre National de la Recherche Scientifique Libanais.}
\address{Guilnard Sadaka, Laboratoire de Mathématiques et Applications,
Boulevard Marie et Pierre Curie,
86962 Futuroscope Chasseneuil Cedex,
France}

\email{guilnard.sadaka@math.univ-poitiers.fr}

\address{\hspace{3cm}Laboratoire de Mathématiques de Versailles,
45 Avenue des Etas-Unis,
78035 Versailles Cedex,
France}
\email{guilnard.sadaka@uvsq.fr}


\maketitle

\begin{abstract}

Soient $\g$ une algèbre de Lie simple complexe et
$e$ un élément nilpotent de $\g$. Dans cet article, on s'intéresse au problème,
soulevé par Premet, d'isomorphisme entre les $W$-algèbres finies construites à partir de
certaines sous-algèbres nilpotentes de $\g$, dites {\em $e$-admissibles}.
On considère une version graduée de ce problème et on introduit les notions de paire
et graduation $e$-admissibles. On montre que la $W$-algèbre associée à une paire
$e$-admissible possède des propriétés similaires à celle introduite par Gan et Ginzburg.
De plus, on définit une relation d'équivalence sur l'ensemble des paires admissibles et
montre que si deux paires sont équivalentes, alors les $W$-algèbres
associées sont isomorphes. En introduisant la notion de la connexité pour les
graduations $e$-admissibles, on réduit le problème d'isomorphisme à
l'étude de l'équivalence des paires admissibles pour une graduation admissible
fixée. Ceci nous permet de montrer ensuite que les paires admissibles relativement aux 
graduations {\it $b$-optimales} sont équivalentes. On retrouve comme cas particulier
un résultat de Brundan et Goodwin. Dans la dernière
partie, on utilise nos résultats pour résoudre complètement
le problème d'isomorphisme dans quelques cas particuliers.

\smallskip

\noindent {\tiny ABSTRACT.} Let $\g$ be a complex simple Lie algebra and $e$ a nilpotent element of $\g$.
We are interested in the isomorphism question (raised by Premet)
between the finite $W$-algebras constructed from some nilpotent subalgebras
of $\g$ called {\it $e$-admissible}.
We introduce the concept of {\it $e$-admissible pair} and {\it $e$-admissible grading}. 
We show that the $W$-algebra associated to an $e$-admissible pair admits
similar properties to the ones introduced by Gan and Ginzburg.
Moreover, we define an equivalence relation on the set of admissible pairs
and we show that if two admissible pairs are equivalent, it follows that
the associated $W$-algebras are isomorphic. By introducing the notion of connectivity of admissible
gradings, we reduce the isomorphism question to the study of the equivalence of admissible pairs
for a fixed admissible grading. This allows us to prove that admissible pairs relative to {\it $b$-optimal} gradings are equivalent, hence the corresponding $W$-algebras are isomorphic.
We recover as a special case a result of Brundan and Goodwin.
In the final part, we use our results to find a complete answer to the isomorphism question
in some particular cases.

\end{abstract}
\section{Introduction}
\subsection{}Soient $\Bbbk$ un corps algébriquement clos de caractéristique nulle 
et $\g$ une algèbre de Lie simple 
de dimension finie définie sur $\Bbbk$ et de groupe 
adjoint $G$. On note $\la.\,,.\ra$ la forme de Killing de $\g$. Comme celle-ci 
est non dégénérée sur $\g\times\g$, elle induit un isomorphisme 
$$\kappa:\;\g \longrightarrow \g^*,\qquad x\longmapsto \la x,.\ra$$
de $\g$ sur son dual $\g^*$.  
Soient $e$ un élément nilpotent de $\g$ et $\chi$ la forme linéaire
$\kappa(e)$. D'après le Théorème de Jacobson-Morosov 
(cf. e.g.~\cite[Theorem 32.1.5]{TY}), 
$e$ est contenu dans un $\lsl_2$-triplet $(e,h,f)$ de $\g$. 
On rappelle qu'un tel triplet vérifie les relations:
$$
  [h,e]=2e,\qquad [e,f]=h,\qquad [h,f]=-2f.
$$
De plus, $e$ et $f$ sont dans la même $G$-orbite et $\ad h$ est un élément semi-simple dont les 
valeurs propres sont entières. 
Ceci définit sur $\g$ une $\Z$-graduation appelée une {\em graduation de Dynkin associée à $e$}.

On pose
$$
  \mathfrak{p}_+:=\bigoplus_{j \ge 0} \g(j)
$$
où $\g(j):=\{x \in \g \; ; \; [h,x] = jx \}$ pour tout $j \in \Z$. 
Alors $\mathfrak{p}_+$ est une sous-algèbre parabolique de $\g$ 
qui contient $\g^{e}$, le centralisateur de $e$ dans $\g$. 
Il en résulte que la forme bilinéaire 
$$
  \g(-1)\times\g(-1)\longrightarrow \Bbbk,\qquad (x,y)\longmapsto \la e,[x,y]\ra
$$
est non dégénérée. 
Soit $\ll$ un sous-espace lagrangien de $\g(-1)$ relativement à cette forme, 
c'est-à-dire un sous-espace totalement isotrope de dimension maximale, et posons:
\begin{eqnarray}\label{eq:mle}
  \m_{\ll,e} := \ll \oplus \bigoplus_{j \le -2} \g(j).
\end{eqnarray}
Alors $\m_{\ll,e}$ est une sous-algèbre $\ad$-nilpotente de $\g$ qui vérifie
$\la e,[\m_{\ll,e},\m_{\ll,e}]\ra=\{0\}$. 
De plus, $\m_{\ll,e}\cap\g^{e}=\{0\}$ et $\dim\m_{\ll,e}=\frac{1}{2}\dim G \cdot e$ 
si $G \cdot e$ désigne l'orbite de $e$ dans $\g$ sous l'opération adjointe de $G$.

À l'élément nilpotent $e$ et la sous-algèbre $\ad$-nilpotente $\m_{\ll,e}$,
on associe une algèbre d'endomorphismes $H_{\ll,e}$, appelée
{\em $W$-algèbre finie associée à $e$}, dont on rappelle une construction
ci-dessous. Les $W$-algèbres finies ont été introduites par Premet~\cite{Pr1}.
Dans le cas des éléments nilpotents pairs (i.e. $\g(-1) = \g(1) = \{0\}$), elles furent
introduites par Lynch~\cite{Ly}, généralisant ainsi la construction de Kostant~\cite{Ko}
correspondant au cas des éléments nilpotents réguliers. L'étude des {\em $W$-algèbres finies} a connu un 
essor particulièrement intense, notamment en raison 
de leur importance dans la théorie des représentations comme l'illustre 
l'{\em équivalence de Skryabin},~\cite{S}.

On note $U(\g)$ et $U(\m_{\ll,e})$ les algèbres enveloppantes de $\g$ et $\m_{\ll,e}$ respectivement.
Soient $\Bbbk_e$ le $U(\m_{\ll,e})$-module à gauche correspondant au caractère
$\chi|_{\m_{\ll,e}}$ de $\m_{\ll,e}$ et $I_{\ll,e}$ l'idéal à gauche de $U(\g)$
engendré par les éléments $x - \chi(x)$ avec $x \in \m_{\ll,e}$.
Soient 
$$
    Q_{\ll,e} := U(\g) \otimes_{U(\m_{\ll,e})} \Bbbk_e \simeq U(\g)/I_{\ll,e}
$$
la {\em représentation de Gelfand-Graev généralisée}, et $H_{\ll,e}$ l'algèbre d'endomorphismes 
\begin{eqnarray}\label{eq:Hintro}
  H_{\ll,e} := Q_{\ll,e}^{\ad \m_{\ll,e}} \simeq \End_\g(Q_{\ll,e})^{\rm op}.
\end{eqnarray}

On peut généraliser les constructions de $Q_{\ll,e}$ et $H_{\ll,e}$ comme suit.
À toute sous-algèbre ${\rm ad}$-nilpotente $\m$ de $\g$ vérifiant les conditions 
suivantes,

\smallskip

($\chi$1)\; $\chi([\m,\m])=\{0\}$;

($\chi$2)\; $\m\cap\g^{e}=\{ 0\}$;

($\chi$3)\; $\dim\m=(\dim G \cdot e)/2$,  

\smallskip

\noindent
on définit une algèbre d'endomorphismes $H(\m,e)$ suivant la construction (\ref{eq:Hintro}). 
Précisément, on pose
$$
  H(\m,e):= Q(\m,e)^{\ad \m} \simeq \End_\g(Q(\m,e))^{\rm op},
$$
où $Q(\m,e)$ est le quotient de $U(\g)$ par l'idéal à gauche de $U(\g)$ 
engendré par les éléments $x-\chi(x)$ avec $x \in \m$. 
On dira que $H(\m,e)$ est la {\em $W$-algèbre finie associée à $\m$}. 
Une question naturelle, soulevée par A.~Premet, est la suivante:

\begin{quest_alpha}\label{q:introPr}
Si $\m$ est une sous-algèbre $\ad$-nilpotente de $\g$ vérifiant les conditions {\rm ($\chi$1)}, {\rm ($\chi$2)}
et {\rm ($\chi$3)}, 
les $W$-algèbres $H(\m,e)$ et $H_{\ll,e} = H(\m_{\ll,e})$ sont-elles isomorphes?
\end{quest_alpha}

Cette question est d'autant plus naturelle que la réponse est affirmative, d'après un résultat 
de Premet~\cite{Pr1}, dans le cas où le corps de base est de caractéristique positive.  
La $W$-algèbre $H(\m,e)$ dépend uniquement de l'orbite de $e$ dans $\g$, à conjugaison près.
Comme $e$ sera fixé dans la suite, nous noterons désormais plus simplement $H(\m)$ et $Q(\m)$
l'algèbre $H(\m,e)$ et le quotient $Q(\m,e)$ respectivement. 

\subsection{}\label{S:intronulle}
Pour le cas où le corps de base est de caractéristique nulle, 
les résultats, partiels, obtenus jusqu'ici sont valables sur le corps $\C$ des 
nombres complexes. Ils sont essentiellement dûs à W. Gan et V. Ginzburg d'une part (cf.~\cite{GG}) 
et à J. Brundan et S. Goodwin d'autre part (cf.~\cite{BG}). On les résume ci-dessous. 
On suppose désormais que $\Bbbk=\C$.

Tout d'abord, W. Gan et V. Ginzburg montrent que l'algèbre $H_{\ll,e}$ ne dépend pas, à un
isomorphisme près, du choix du sous-espace lagrangien $\ll$ dans 
$\g(-1)$,~\cite[Theorem 4.1]{GG}. 

On notera désormais plus simplement $H_e$ l'algèbre $H_{\ll,e}$ (définie à un isomorphisme 
près). 
Plus récemment, J. Brundan et S. Goodwin étendent le résultat de Gan et Ginzburg comme suit.  
Soit $\Ga$ une {\em $2d$-bonne $\Z$-graduation pour $e$}, 
c'est-à-dire une $\Z$-graduation $\Ga:\g=\bigoplus_{j \in \Z} \g_j$ telle que $e \in \g_{2d}$ 
et telle que l'application linéaire $\ad e: \g_j \to \g_{j+2d}$ est injective pour tout $j \le -d$ 
et surjective pour tout $j\ge -d$ (cf.~\cite[\S5]{BG})\footnote{Dans \cite{BG}, 
les auteurs considèrent des bonnes $\R$-graduations qui se définissent de manière analogue.}. 
La restriction de la forme bilinéaire $(x,y)\mapsto \la e,[x,y]\ra$ à 
$\g_{-d}\times\g_{-d}$ est non dégénérée. Si $\g_{-d}^0$ est un sous-espace lagrangien de $\g_{-d}$, 
alors 
\begin{eqnarray}\label{eq:mchibon}
  \m := \g_{-d}^0 \oplus \bigoplus_{j < -d} \g_j
\end{eqnarray}
est une sous-algèbre ${\rm ad}$-nilpotente $\Ga$-graduée de $\g$ vérifiant les conditions ($\chi$1), ($\chi$2) et ($\chi$3) 
ci-dessus. 
Le résultat principal de~\cite{BG} assure que les algèbres $H(\m)$ et $H_{e}$ sont isomorphes. 
En particulier, l'algèbre $H(\m)$ ne dépend pas, à un isomorphisme près, du choix du sous-espace 
lagrangien $\g_{-d}^0$ dans $\g_{-d}$. 

\subsection{} La définition suivante est centrale dans cet article
(cf.~Définition \ref{d:adm}):

\medskip

\begin{defin_alpha}\label{d:introadm}
Soient $\m$ et $\n$ deux sous-algèbres de $\g$. On dit que la paire 
$(\m,\n)$ est {\em {\bf admissible pour $e$}} (ou {\em {\bf $e$-admissible}}) s'il existe 
une $\Z$-graduation $\Gamma:\g=\bigoplus_{j \in \Z}\g_j$ et un entier $a>1$ tels que:

\smallskip

\begin{description}
    \item[{\rm (A1)}] $e \in \g_a;$

    \item[{\rm (A2)}] $\m$ et $\n$ sont $\Gamma$-graduées et $\bigoplus_{j \le -a}\g_{j}\subseteq \m \subseteq \n
\subseteq \bigoplus_{j < 0} \g_{j}\,;$

	\item[{\rm (A3)}] $\m^{\perp} \cap [\g,e] = [\n,e];$

	\item[{\rm (A4)}] $\n \cap \g^e = \{0\};$

	\item[{\rm (A5)}] $[\n,\m] \subseteq  \m;$

	\item[{\rm (A6)}] $\dim \m + \dim \n = \dim \g - \dim \g^e$.
\end{description}

\smallskip

\noindent
On dit qu'une $\Z$-graduation $\Gamma: \g=\bigoplus_{j \in \Z} \g_j$ est {\em {\bf admissible 
pour $e$}} s'il existe un entier $a>1$ tel que $e\in\g_{a}$ et s'il existe une paire $e$-admissible 
relativement à $\Gamma$. 
\end{defin_alpha}

La {\em $W$-algèbre associée à une paire $e$-admissible $(\m,\n)$} est définie 
par 
$$
    H(\m,\n):=Q(\m)^{\ad \n}
$$
où $Q(\m)$ est le quotient de $U(\g)$ par l'idéal à gauche de $U(\g)$ 
engendré par les éléments $x-\chi(x)$ avec $x \in \m$.  
Les $2d$-bonnes $\Z$-graduations pour $e$ sont des cas particuliers de $\Z$-graduations admissibles pour $e$ 
mais il existe des $\Z$-graduations admissibles pour $e$ qui ne sont pas $2d$-bonnes pour $e$ 
(voir Exemple \ref{ex:admsl}).

\subsection{} On s'intéresse dans cet article au problème d'isomorphisme suivant.
\begin{quest_alpha}\label{q:intro}
Si $(\m,\n)$ et $(\m',\n')$ sont deux paires $e$-admissibles de $\g$,
les $W$-algèbres $H(\m,\n)$ et $H(\m',\n')$ sont-elles isomorphes?
\end{quest_alpha}

Dans un premier temps, on obtient une caractérisation des graduations $e$-admissibles.
\begin{theo_alpha}[Théorème \ref{t:carac}] \label{t:introch1}
Une graduation $\Ga:\g = \bigoplus_{j \in \Z} \g_j$ 
est admissible pour $e$, avec $e\in\g_a$ et $a>1$, si et seulement si $\g_{\le-a}\cap\g^{e}=\{0\}$.
\end{theo_alpha}

On s'intéresse ensuite aux propriétés de l'algèbre $H(\m,\n)$.
On montre qu'il existe une variété affine $\SS$ transverse aux orbites coadjointes de $\g^*$.
De plus, on construit une {\em filtration de Kazhdan généralisée} $\F$ sur $H(\m,\n)$
et on obtient le résultat suivant qui étend \cite[Theorem 4.1]{GG}.

\begin{theo_alpha}[Théorèmes~\ref{t:gr} et \ref{t:Skr}] \label{t:introch2}
L'algèbre graduée $\gr_\F H(\m,\n)$ est isomorphe à $\C[\SS]$, 
l'algèbre des fonctions régulières définies sur $\SS$. 
De plus, si $\m=\n$, on a l'équivalence de Skryabin pour $H(\m,\n)$. 
\end{theo_alpha} 

On définit une relation d'équivalence sur l'ensemble
des paires $e$-admissibles pour réduire le problème d'isomorphisme de la Question \ref{q:intro}.
On dit que deux paires $e$-admissibles $(\m,\n)$ et $(\m',\n')$ sont {\bf {\em comparables}} 
(cf.~Définition~\ref{d:comp})
s'il existe une $\Z$-graduation $\Gamma$ telle que $\m,\n,\m',\n'$ soient $\Gamma$-graduées 
et si, 
$$
\m \subseteq \m'\subseteq \n'\subseteq\n \ \text{ ou } \  
\m' \subseteq \m\subseteq \n\subseteq\n'.
$$
On dit que deux paires $e$-admissibles $(\m,\n)$ et $(\m',\n')$ sont {\bf {\em équivalentes}} 
(cf.~Définition~\ref{d:equi}) 
s'il existe un entier $s\ge 1$ et une suite $(\m_1,\n_1),\ldots,(\m_s,\n_s)$ de paires $e$-admissibles 
tels que:

\smallskip

$\ast$ $(\m_1,\n_1)=(\m,\n)$;

\smallskip

$\ast$ $(\m_s,\n_s)=(\m',\n')$;

\smallskip

$\ast$ $(\m_i,\n_i)$ et $(\m_{i+1},\n_{i+1})$ soient comparables pour tout $i\in\{1,\ldots,s-1\}$. 

\smallskip

\noindent
On montre alors:

\begin{theo_alpha}[Théorème~\ref{t:equi}]\label{t:introch3}
Si $(\m,\n)$ et $(\m',\n')$ sont deux paires $e$-admissibles équivalentes, alors les algèbres 
$H(\m,\n)$ et $H(\m',\n')$ sont isomorphes. 
\end{theo_alpha}
En conséquence, pour traiter le problème d'isomorphisme de la Question \ref{q:intro}, il suffit
d'étudier la relation d'équivalence sur l'ensemble des paires $e$-admissibles.
On introduit ensuite les notions de graduations $e$-admissibles 
adjacentes et connexes.
Deux graduations $\Ga, \Ga'$ admissibles pour $e$ sont dites
{\rm \bf adjacentes} si elles ont une paire $e$-admissible en commun.
Deux graduations $\Gamma, \Gamma'$ admissibles pour $e$ sont dites {\rm \bf connexes} 
s'il existe une suite $(\Gamma_i)_{i \in \{1,\cdots,s\}}$
de graduations admissibles pour $e$ telle que
\begin{enumerate}
 \item[{\rm (1)}] $\Gamma = \Gamma_1$;
 \item[{\rm (2)}] les graduations $\Gamma_i$ et $\Gamma_{i+1}$ sont adjacentes pour tout $1 \le i \le s-1$;
 \item[{\rm (3)}] $\Gamma' = \Gamma_s$.
\end{enumerate}

On montre alors le résultat suivant.

\begin{theo_alpha}[Théorèmes~\ref{t:CONN} et \ref{t:DYN}] \label{t:introconnch3}
Les graduations admissibles pour $e$ sont connexes à une graduation de Dynkin. 
En particulier, elles sont connexes entre elles.
\end{theo_alpha}

Grâce à ce dernier résultat, le problème d'isomorphisme se réduit
à l'étude de la relation d'équivalence sur l'ensemble des paires $e$-admissibles pour une graduation
$e$-admissible donnée.
Dans ce contexte, on définit la notion de graduation $b$-optimale.
Pour $b >0$, on dit qu'une graduation $\Ga: \g = \bigoplus_{j \in \Z} \g_j$ est {\bf {\em $b$-optimale pour $e$}}
si $\g_{< - b} \cap \g^e = \{0\}$ et s'il existe $a \in \N$, avec $a \ge 2$,
tel que $e\in \g_a$ et $a \ge 2b$.

On montre les résulats suivants.

\begin{theo_alpha}[Théorèmes~\ref{t:equiopt} et \ref{c:equidyn}] \label{t:intro2vp}
Soit $\Ga = \bigoplus_{j \in \Z} \g_j$ une graduation $b$-optimale de $\g$.
Alors les paires $e$-admissibles relativement à $\Ga$ sont équivalentes entre elles.
De plus, elles sont équivalentes à celles issues d'une graduation de Dynkin.
\end{theo_alpha}

En particulier, les paires $e$-admissibles relatives à une $2d$-bonne graduation
et construites dans \cite{BG} sont équivalentes à celles issues d'une
graduation de Dynkin. On retrouve ainsi \cite[Theorem 1]{BG}.

On traite enfin quelques cas particuliers.
Le premier est le suivant.
\begin{theo_alpha}[Théorème~\ref{t:2vp}] \label{t:intro2vp}
Soit $\Ga = \bigoplus_{j \in \Z} \g_j$ une graduation $e$-admissible de $\g$ telle que $e \in \g_a$
et $\g_{<0} = \g_{\le -a} \oplus (\g_{b-a} + \g_{-b})$, avec, $b \in ]0, \frac{a}{2}]$.
Alors les paires $e$-admissibles relativement à $\Ga$ sont équivalentes entre elles.
\end{theo_alpha}
Soit $\ss$ la sous-algèbre engendrée par le $\lsl_2$-triplet $(e,h,f)$.  
Son centralisateur $\gs$ dans $\g$ 
est une algèbre réductive égale à $\g^{e} \cap \g^{h}$.  
Lorsque $e$ est distingué, $\rk \gs = 0$ et les graduations admissibles sont
connexes (et même adjacentes) à la graduation de Dynkin, cf. Propositions \ref{p:dist} et \ref{p:lambdagamQ}.
La réponse à la Question \ref{q:introPr} est donc positive.
Il semble ensuite naturel de considérer le cas où $\rk \gs =1$.
On montre le résultat suivant.
\begin{theo_alpha}[Théorèmes~\ref{t:equisl}, \ref{t:equiso} et \ref{t:equisp}, §\ref{S:excep}] \label{t:introrg1}
Supposons que $\g$ soit ou bien de type classique, ou bien de type exceptionnel
$\g = \G_2$, $\mathbf{F}_4$ ou $\E_6$. Si $\rk \gs =1$, alors
les paires $e$-admissibles sont équivalentes entre elles.
En particulier, les $W$-algèbres associées sont isomorphes.
\end{theo_alpha}
Compte tenu de ces résultats, on formule la conjecture suivante.
\begin{conje_alpha}
Si $\rk \gs =1$, alors
les paires $e$-admissibles sont équivalentes entre elles.
En particulier, les $W$-algèbres associées sont isomorphes.
\end{conje_alpha}
Résoudre cette conjecture serait une première étape dans la résolution de la conjecture plus générale suivante.
\begin{conje_alpha}
Les paires $e$-admissibles sont équivalentes entre elles.
En particulier, les $W$-algèbres associées sont isomorphes.
\end{conje_alpha}

\smallskip

\subsection*{Remerciements}
Une grande partie de ce travail s'inscrit dans le cadre de ma thèse.
Je remercie Rupert Yu et Anne Moreau, mes deux directeurs de thèse, pour leur patience
et pour avoir orienté ce travail et partagé leurs idées durant
des discussions enrichissantes. 
Je remercie également Simon Goodwin
pour son intérêt envers mes travaux et ses remarques fructueuses.

\section{Graduations et paires admissibles} \label{ch:padm}
Dans cette section, on conserve les notations de l'introduction.
Si $x \in \g$, on note $\g^x$ son
centralisateur dans $\g$.
\subsection{} \label{S:not}
Si $U$ est un sous-espace de $\g$, on note $U^\perp$ son orthogonal 
par rapport à la forme de Killing. Si deux sous-espaces $U$ et $V$ de $\g$
sont en couplage par rapport à la forme de Killing, alors $\dim U = \dim V$.
\begin{lemma} \label{l:eltss}
Soit $\Gamma: \g = \bigoplus \limits_{j \in \Q} \g_j$ une $\Q$-graduation de $\g$.
Il existe un élément semisimple $h_{\Gamma} \in \g$ tel que
$$
	\g_j = \{x \in \g \, ; \; [h_\Gamma,x] = jx\}.
$$
\end{lemma}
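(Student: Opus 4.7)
Le plan repose sur trois observations �l�mentaires, toutes dans l'esprit de la th�orie g�n�rale des d�rivations des alg�bres de Lie semi-simples.

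On commence par d�finir l'endomorphisme lin�aire $D : \g \to \g$ qui agit sur chaque composante homog�ne $\g_j$ par multiplication par le scalaire $j$. Comme $\g$ est de dimension finie, seuls un nombre fini de $\g_j$ sont non nuls, donc $D$ est bien d�fini. La compatibilit� de la graduation avec le crochet, $[\g_i, \g_j] \subseteq \g_{i+j}$, entra�ne imm�diatement que pour $x \in \g_i$ et $y \in \g_j$ on a
$$D([x,y]) = (i+j)[x,y] = [ix,y] + [x,jy] = [D(x),y] + [x,D(y)],$$
ce qui montre par lin�arit� que $D$ est une d�rivation de $\g$.

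Ensuite, puisque $\g$ est simple (donc semi-simple) sur un corps de caract�ristique nulle, toute d�rivation de $\g$ est int�rieure, d'apr�s le r�sultat classique $H^{1}(\g, \g) = \{0\}$ obtenu via l'�l�ment de Casimir. Il existe par cons�quent un unique $h_{\Gamma} \in \g$ tel que $D = \ad h_{\Gamma}$, ce qui donne imm�diatement l'inclusion $\g_{j} \subseteq \{x \in \g \,;\, [h_{\Gamma},x]=jx\}$ pour tout $j$.

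Enfin, pour v�rifier que $h_\Gamma$ est semi-simple et que les inclusions ci-dessus sont des �galit�s, on observe que $\ad h_{\Gamma}$ est, par construction, diagonalisable sur $\g$ avec valeurs propres exactement les $j \in \Q$ tels que $\g_{j} \neq \{0\}$, et que $\g$ se d�compose en la somme directe des espaces propres associ�s. Dans une alg�bre de Lie semi-simple, la d�composition de Jordan abstraite co�ncide avec celle de $\ad$, donc un �l�ment est semi-simple si et seulement si son image par la repr�sentation adjointe est diagonalisable; ainsi $h_{\Gamma}$ est semi-simple, et les espaces propres de $\ad h_{\Gamma}$ co�ncident exactement avec les $\g_{j}$.

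Il n'y a pas v�ritablement d'obstacle dans cette preuve: elle repose uniquement sur le caract�re int�rieur des d�rivations d'une alg�bre semi-simple et sur la compatibilit� entre d�composition de Jordan abstraite et repr�sentation adjointe. Le seul point requ�rant un minimum d'attention est de s'assurer que la d�composition en sous-espaces propres fournie par $\ad h_{\Gamma}$ co�ncide effectivement avec la graduation de d�part, ce qui r�sulte de l'unicit� de la d�composition spectrale.
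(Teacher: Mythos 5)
Votre preuve est correcte et suit essentiellement la m\^eme d\'emarche que celle de l'article : on v\'erifie que l'op\'erateur qui multiplie $\g_j$ par $j$ est une d\'erivation, puis on invoque le fait que toute d\'erivation d'une alg\`ebre de Lie simple est int\'erieure, donn\'ee par $\ad h_\Gamma$ pour un \'el\'ement semisimple $h_\Gamma$. Les d\'etails suppl\'ementaires que vous donnez (semisimplicit\'e via la d\'ecomposition de Jordan abstraite, co\"{\i}ncidence des espaces propres avec la graduation) sont exactement ce que la r\'ef\'erence cit\'ee dans l'article prend en charge.
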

\begin{proof}
L'opérateur $\partial: \g \rightarrow  \g$ qui à $x \in \g_j$ associe $jx$ 
est une dérivation sur l'algèbre de Lie simple $\g$.
Par suite, c'est une dérivation intérieure de $\g$ donnée par $\ad h_\Gamma$
pour un élément semisimple $h_\Gamma$ de $\g$ (cf. e.g. \cite[Proposition 20.1.5]{TY}). 
\end{proof}
Soient $\Gamma: \g = \bigoplus \limits_{j \in \Q} \g_j$ 
une graduation de $\g$ et $h_\Gamma$ l'élément semisimple
définissant $\Gamma$.
Comme la forme de Killing est $\g$-invariante,
donc $\ad h_\Gamma$-invariante, 
les sous-espaces $\g_i$ et $\g_{-i}$ sont en couplage
par rapport à la forme de Killing pour tout $i$. En particulier,
ils sont de même dimension.

Pour une
$\Q$-graduation $\g = \bigoplus \limits_{j \in \Q} \g_j$ et
pour tout $k \in \Q$, on désigne par 
$\g_{\le k}$, $\g_{< k}$, $\g_{\ge k}$, $\g_{> k}$
les sommes $\bigoplus \limits_{j \le k} \g_j$,
$\bigoplus \limits_{j < k} \g_j$, $\bigoplus \limits_{j \ge k} \g_j$, $\bigoplus \limits_{j > k} \g_j$ respectivement.

\subsection{} \label{s:gradadm}
Rappelons que $\kappa:\g \rightarrow \g^*$ désigne l'isomorphisme de Killing.
On fixe pour la suite un élément nilpotent $e$ de $\g$ et on pose
$\chi:=\kappa(e)$. Rappelons la définition centrale de cet article.
\begin{definition} \label{d:adm}
Soient $\m$ et $\n$ deux sous-algèbres de $\g$. 
On dit que la paire $(\m,\n)$ est {\rm \bf admissible pour $e$} (ou {\rm \bf $e$-admissible}) s'il existe une
$\Z$-graduation $\g = \bigoplus \limits_{j \in \Z} \g_j$ et 
un entier $a > 1$ tels que:
\begin{description}
    \item[{\rm (A1)}] $e \in \g_a;$

    \item[{\rm (A2)}] $\m$ et $\n$ sont graduées et vérifient $\g_{\le -a} \subseteq \m \subseteq  \n \subseteq  \g_{<0};$

	\item[{\rm (A3)}] $\m^{\perp} \cap [\g,e] = [\n,e];$

	\item[{\rm (A4)}] $\n \cap \g^e = \{0\};$

	\item[{\rm (A5)}] $[\n,\m] \subseteq  \m;$

	\item[{\rm (A6)}] $\dim \m + \dim \n = \dim \g - \dim \g^e$.
\end{description}
\smallskip
On dit 
qu'une $\Z$-graduation $\g = \bigoplus \limits_{j \in \Z} \g_j$ est {\rm \bf admissible pour $e$}
s'il existe un entier $a > 1$ tel que $e \in \g_a$ et s'il existe une paire 
admissible pour $e$ par rapport à cette graduation.

Dans le cas particulier où $\m$ est une sous-algèbre de $\g$ 
telle que la paire $(\m,\m)$ soit admissible pour $e$, on dit que
la sous-algèbre $\m$ est {\rm \bf admissible pour $e$}.
\end{definition}
%
%
%
%
\begin{remark} \label{r:morth}
Soit $(\m,\n)$ est une paire admissible pour $e$.
D'après la Définition \ref{d:adm}, on a les propriétés suivantes:
\begin{enumerate}
 \item[{\rm (1)}] les sous-algèbres $\m$ et $\n$ sont ${\rm ad}$-nilpotentes;
\item[{\rm (2)}] $\m^\perp \subseteq  \g_{\le a-1};$
\item[{\rm (3)}] $\chi([\n,\m]) = \{0\};$
\item[{\rm (4)}] $\dim \m + \dim \n$ et $\dim \m - \dim \n$ sont pairs.
\end{enumerate}
\end{remark}
\begin{definition} \label{d:max}
On dit qu'une paire $(\m,\n)$ admissible pour $e$ relativement à une 
$\Z$-graduation $\Gamma: \g = \bigoplus_{j\in \Z} \g_j$ est {\rm \bf optimale}
si $\m = \g_{\le -a}$. 
\end{definition}
On désigne désormais par $\PA(e)$ l'ensemble des paires admissibles pour $e$
et par $\GA(e)$ l'ensemble des graduations admissibles pour $e$.
Pour $\Gamma \in \GA(e)$, on note $\PA(e,\Gamma)$ l'ensemble
des paires admissibles pour $e$ relativement à $\Gamma$.

\begin{example} \label{ex:dyn}
Soit $(e,h,f)$ un $\lsl_2$-triplet de $\g$.
On désigne par $\GaD$ la graduation de Dynkin associée à $h$,
$$
	\GaD:\g  = \bigoplus_{j \in \Z} \g_j,
$$
où $\g_j = \{x \in \g \, ; \; \ad h (x) = jx\}$. En particulier, $e \in \g_2$.

Posons
$$
    \m = \bigoplus_{j \le -2} \g_j \ \text{ et } \  \n = \bigoplus_{j < 0} \g_j.
$$
On vérifie sans peine que
$(\m,\n) \in \PA(e,\GaD)$. La graduation $\GaD$ est donc $e$-admissible.
\end{example}

%
\begin{example} \label{ex:hom}
Si $\Gamma: \g = \bigoplus \limits_{j \in \Z} \g_j$
est une graduation admissible pour $e$ définie par l'élément semisimple $h_\Gamma$,
alors la graduation $\Gamma': \g = \bigoplus \limits_{j \in \Z} \g'_j$
définie par l'élément
$$
    h_{\Gamma'} := k h_\Gamma \  \text{ où } \  k \in \N^* 
$$
est admissible pour $e$. On notera $k \Ga$ la graduation $\Ga'$.
\end{example}
%
\begin{example} \label{ex:bonnondyn}
Une $2d$-bonne graduation pour $e$ de $\g$ est $e$-admissible.
En effet, soit $\Ga:\g=\bigoplus_{j \in \Z} \g_j$ une $2d$-bonne graduation pour $e$.
On vérifie sans peine que pour
$ \m = \g_{-d}^0 \oplus \g_{< -d}$ de $\g$ (cf. §\ref{S:intronulle} de l'Introduction),
 $(\m,\m) \in \PA(e,\Ga)$ donc $\Ga \in \GA(e)$.

Prenons l'exemple où $\g=\lsl_3(\C)$ et 
$e = E_{1,3}$.
On considère la graduation $\Gamma: \g = \bigoplus_{j \in \Z} \g_j$
définie par l'élément semisimple
$h_\Gamma:=\frac{1}{3}\diag(2,2,-4)$. 
Les matrices élémentaires $E_{i,j}$ sont homogènes et leurs degrés
sont donnés par la matrice suivante:
$$\left(
\begin{array}{ccc}
0 & 0 & 2 \\
0 & 0 & 2 \\
-2 & -2 & 0 
\end{array} \right).
$$
La sous-algèbre $\g_{-2}$ est une sous-algèbre admissible
pour $e$ relativement à $\Gamma$.
De plus, la graduation $\Ga$ est une graduation non Dynkin mais bonne pour $e$.

\end{example}
Il est important de remarquer qu'une graduation admissible pour $e$ n'est pas
toujours bonne, comme l'illustre l'exemple suivant.
\begin{example} \label{ex:admsl}
On suppose que $\g=\lsl_4(\C)$ et $e := E_{1,3} + E_{2,4}$.
On considère la graduation $\Gamma: \g = \bigoplus \limits_{j \in \Z} \g_j$ 
définie par l'élément semisimple
$$\frac{1}{2}\diag(3,1,-1,-3).$$ En particulier, $e \in \g_2$.
Les matrices élémentaires $E_{i,j}$ sont homogènes pour cette graduation
et leurs degrés sont donnés sur la matrice suivante
$$
\left(
\begin{array}{cccc}
0 & 1 & 2 & 3 \\
-1 & 0 & 1 & 2 \\
-2 & -1 & 0 & 1 \\
-3 & -2 & -1 & 0
\end{array}
\right).
$$ 
Posons
$$
	\m= \g_{\le -2} \  \text{ et } \  \n = \m \oplus \C E_{2,1} \oplus \C E_{3,2}.
$$
On montre par calcul direct que $(\m,\n) \in \PA(e,\Ga)$ avec $a=2$.
Comme $\ad(e):\g_{-1} \rightarrow \g_1$ n'est pas injective, ceci
fournit un exemple d'une graduation admissible pour $e$ 
qui n'est pas bonne pour $e$.
\end{example}
%
\subsection{} \label{s:prtadm}
On fixe une $\Z$-graduation $\Gamma$ de $\g$,
\begin{equation} \label{eq:grad}
	\g = \bigoplus_{j \in \Z} \g_j,
\end{equation}
et soit $h_\Gamma$ l'élément semisimple de $\g$ définissant $\Gamma$.
On suppose que $e \in \g_a$, avec $a >1$.
%
%
%
\begin{proposition} \label{p:dist}
Si $e$ est un élément nilpotent distingué de $\g$,
alors les graduations de Dynkin sont les seules graduations de $\g$
admissibles pour $e$, à homothéties près (cf. Exemple \ref{ex:hom}).
\end{proposition}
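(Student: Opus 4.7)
Le plan consiste \`a montrer que l'\'el\'ement $H:=\frac{2}{a}h_\Gamma\in\g_0$ se compl\`ete en un $\lsl_2$-triplet $(e,H,F)$. On aura alors $h_\Gamma=\frac{a}{2}H$, ce qui signifie pr\'ecis\'ement que $\Gamma$ est, \`a l'homoth\'etie de rapport $\frac{a}{2}$ pr\`es, la graduation de Dynkin associ\'ee \`a $(e,H,F)$. La relation $[H,e]=2e$ \'etant imm\'ediate et tout \'el\'ement de $\g_{-a}$ \'etant automatiquement de poids $-2$ sous $\ad H$, tout revient \`a produire $F'\in\g_{-a}$ v\'erifiant $[e,F']=h_\Gamma$ (on posera alors $F:=\frac{2}{a}F'$) : autrement dit, \`a montrer que $h_\Gamma$ appartient \`a $[\g_{-a},e]\subseteq \g_0$.

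La forme de Killing se restreignant de mani\`ere non d\'eg\'en\'er\'ee \`a $\g_0$, son invariance fournit directement
\[
  [\g_{-a},e]^{\perp}\cap\g_0 \;=\; \g^e\cap\g_0,
\]
et la question se r\'eduit \`a l'orthogonalit\'e $\la h_\Gamma,y\ra=0$ pour tout $y\in\g^e\cap\g_0$. C'est ici qu'intervient, de mani\`ere cruciale, l'hypoth\`ese que $e$ est distingu\'e : puisque $\gs=\{0\}$, tout \'el\'ement semisimple de $\g^e$ est nul ; comme $\ad e$ est une d\'erivation, les composantes semisimple et nilpotente $y_s,y_n$ de tout $y\in\g^e$ appartiennent encore \`a $\g^e$, ce qui force $y_s=0$ et rend $y=y_n$ ad-nilpotent dans $\g$. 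Pour un tel $y\in\g^e\cap\g_0$, les endomorphismes $\ad h_\Gamma$ (semisimple) et $\ad y$ (nilpotent) commutent, leur compos\'e est donc nilpotent, et
\[
  \la h_\Gamma,y\ra \;=\; \mathrm{tr}_{\g}\bigl(\ad h_\Gamma\circ\ad y\bigr) \;=\; 0.
\]

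On disposera alors du $F'$ souhait\'e et donc du $\lsl_2$-triplet $(e,H,F)$ ; l'espace $\g$ devient un $\lsl_2$-module, $\ad H$ y poss\`ede des valeurs propres enti\`eres, et $\Gamma$ s'identifie \`a l'homoth\'etique de rapport $\frac{a}{2}$ de la graduation de Dynkin attach\'ee \`a ce triplet. L'obstacle principal est la reformulation du probl\`eme---ramener l'\'enonc\'e \`a une question de compl\'etion en $\lsl_2$-triplet---et l'observation que l'hypoth\`ese $e$ distingu\'e assure pr\'ecis\'ement la ad-nilpotence de $\g^e$ requise pour l'argument de trace.
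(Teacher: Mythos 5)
Votre d\'emonstration est correcte, mais elle emprunte un chemin sensiblement diff\'erent de celui du texte. La preuve de l'article est beaucoup plus courte~: elle part d'un $\lsl_2$-triplet $(e,h,f)$ adapt\'e \`a la graduation (avec $h\in\g_0$ et $f\in\g_{-a}$), dont l'existence est emprunt\'ee \`a \cite[Proposition 32.1.7]{TY}~; l'\'el\'ement $t:=\frac{a}{2}h-h_\Gamma$ est alors semisimple (car $h$ et $h_\Gamma$ commutent) et centralise $e$, donc est nul puisque $e$ est distingu\'e, d'o\`u imm\'ediatement $h_\Gamma=\frac{a}{2}h$. Vous faites l'\'economie de cette r\'ef\'erence en construisant de toutes pi\`eces le triplet dont $\frac{2}{a}h_\Gamma$ est l'\'el\'ement neutre~: la non-d\'eg\'en\'erescence du couplage $\g_a\times\g_{-a}$ ram\`ene l'appartenance $h_\Gamma\in[e,\g_{-a}]$ \`a l'orthogonalit\'e $\la h_\Gamma,\g^e\cap\g_0\ra=0$, que vous obtenez par un argument de trace (le compos\'e d'un endomorphisme nilpotent et d'un endomorphisme qui commute avec lui est nilpotent). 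Les deux preuves reposent en d\'efinitive sur le m\^eme fait~--- un \'el\'ement semisimple de $\g^e$ est nul lorsque $e$ est distingu\'e~---, mais vous en avez besoin sous une forme un peu plus forte (tout \'el\'ement de $\g^e$ est $\ad$-nilpotent), car votre $y$ parcourt $\g^e\cap\g^{h_\Gamma}$ et non $\gs=\g^e\cap\g^{h}$~; votre r\'eduction par d\'ecomposition de Jordan est correcte, \`a condition d'admettre que l'absence d'\'el\'ements semisimples non nuls dans $\g^e$ d\'ecoule bien de $\rk\gs=0$, ce qui est classique mais demande un mot de plus que la seule \'egalit\'e $\gs=\{0\}$. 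Au total, votre argument est autonome et red\'emontre au passage l'instance utile du r\'esultat d'existence de triplets gradu\'es, l\`a o\`u celui de l'article est plus court mais s'appuie sur un \'enonc\'e ext\'erieur.
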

\begin{proof}
Supposons que $e$ soit un élément nilpotent distingué de $\g$
et que la graduation (\ref{eq:grad}) soit admissible pour $e$.
Soit $(e,h,f)$ un $\lsl_2$-triplet tel que $h \in \g_0$ et $f \in \g_{-a}$, cf. \cite[Proposition 32.1.7]{TY}.
L'élément $t:= \frac{a}{2} h - h_\Gamma$ centralise $e$ et il est semisimple.
Comme $e$ est distingué, on a $t=0$ et donc $h_\Gamma = \frac{a}{2} h $,
d'où la proposition.
\end{proof}
%
%
%
%
%
%
%
De façon générale, pour $k \in \Z$, on désigne par $\g_k^e$ l'intersection de $\g_k$
avec $\g^e$.
\begin{lemma} \label{l:couplage}
Soit $k \in \Z$ tel que l'application $\ad e: \g_k \rightarrow \g_{k+a}$ soit injective.
Alors l'application $\ad e: \g_{-(k+a)} \rightarrow \g_{-k}$ est surjective.
En particulier, on a $\dim \g_k = \dim \g_{-(k+a)} - \dim \g_{-(k+a)}^{e}$.
\end{lemma}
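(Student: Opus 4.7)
The plan is to exploit the duality between homogeneous components provided by the Killing form, combined with the $\g$-invariance of that form applied to $\ad e$.

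First I would recall from the discussion just before the lemma that, since the Killing form is $\ad h_\Gamma$-invariant, the subspaces $\g_k$ and $\g_{-k}$ are in perfect duality, and similarly for $\g_{k+a}$ and $\g_{-(k+a)}$. Then, because of the identity $\la [e,x],y\ra = -\la x,[e,y]\ra$, the linear map $\ad e : \g_k \to \g_{k+a}$ is, up to a sign, the transpose of the map $\ad e : \g_{-(k+a)} \to \g_{-k}$ under these pairings. Injectivity of the first map is therefore equivalent to surjectivity of the second, which gives the first assertion.

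For the dimension equality, once the surjectivity of $\ad e : \g_{-(k+a)} \to \g_{-k}$ is established I would apply the rank--nullity theorem. The kernel of this map is precisely $\g_{-(k+a)} \cap \g^e = \g_{-(k+a)}^e$, so
$$
\dim \g_{-k} \;=\; \dim \g_{-(k+a)} - \dim \g_{-(k+a)}^e.
$$
Combining this with the equality $\dim \g_k = \dim \g_{-k}$ (again from the Killing duality) yields the announced formula.

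There is no real obstacle here: the argument rests on two well-known facts, the $\ad$-invariance of the Killing form and its non-degeneracy on $\g_k \times \g_{-k}$. The only mild care needed is to verify carefully that transposing $\ad e$ with respect to the Killing pairing indeed produces $\pm \ad e$ between the opposite graded pieces; this is immediate from $\la [e,x],y\ra + \la x,[e,y]\ra = 0$.
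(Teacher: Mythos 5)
Your proposal is correct and follows essentially the same route as the paper: both arguments rest on the perfect pairing between $\g_j$ and $\g_{-j}$ given by the Killing form and on its $\ad$-invariance, which identifies $\ad e:\g_{-(k+a)}\to\g_{-k}$ with (minus) the transpose of $\ad e:\g_k\to\g_{k+a}$, so that injectivity of the latter gives surjectivity of the former; the dimension formula then follows from rank--nullity since $\ker(\ad e)\cap\g_{-(k+a)}=\g_{-(k+a)}^e$ and $\dim\g_k=\dim\g_{-k}$. The paper merely phrases the same duality more concretely, by exhibiting a subspace $V\subset\g_{-(k+a)}$ in perfect pairing with $[e,\g_k]$ and transferring the coupling to $\g_k\times[e,V]$.
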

\begin{proof}
On a $[e,\g_k] \subset \g_{k+a}$ et $\dim [e,\g_k] = \dim \g_k$.
Comme $\g_{k+a}$ et $\g_{-(k+a)}$ sont en couplage par rapport à la forme de Killing,
il existe $V \subset \g_{-(k+a)}$ tel qu'on ait un couplage
$[e,\g_k] \times V$. En particulier, on a $\dim V = \dim [e,\g_k] = \dim \g_k$.
Comme la forme de Killing est $\ad$-invariante,
on a un couplage entre $\g_k$ et $[e,V]$.
En particulier, $\dim [e,V] = \dim \g_k = \dim V$.
On en déduit que l'application $\ad e: \g_{-(k+a)} \rightarrow \g_{-k}$ est surjective.
Le lemme s'ensuit.
\end{proof}
\begin{corollary} \label{c:adsurj}
Si $\Gamma \in \GA(e)$,
alors l'application $\ad e: \g_{\ge 0} \rightarrow  \g_{\ge a}$
est surjective.
\end{corollary}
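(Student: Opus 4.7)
The plan is to decompose the map $\ad e : \g_{\ge 0} \to \g_{\ge a}$ into its homogeneous components $\ad e : \g_j \to \g_{j+a}$ for $j \ge 0$, and then to establish surjectivity of each of these restrictions by dualizing via Lemma~\ref{l:couplage}.

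The key observation is that an admissible grading cannot have elements of $\g^{e}$ in low degrees. More precisely, since $\Gamma \in \GA(e)$, there is, by definition, an $e$-admissible pair $(\m, \n) \in \PA(e, \Gamma)$. Axiom (A2) gives $\g_{\le -a} \subseteq \n$, and axiom (A4) states $\n \cap \g^{e} = \{0\}$. Combining these yields
$$
  \g_{\le -a} \cap \g^{e} = \{0\}.
$$
In particular, for every $j \ge 0$ we have $-(j+a) \le -a$, so $\g_{-(j+a)} \cap \g^{e} = \{0\}$; equivalently, the restriction $\ad e : \g_{-(j+a)} \to \g_{-j}$ is injective.

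Applying Lemma~\ref{l:couplage} with $k = -(j+a)$ (so that $k + a = -j$), this injectivity implies that $\ad e : \g_{j} \to \g_{j+a}$ is surjective for every $j \ge 0$. Since $\ad e$ is homogeneous of degree $a$ with respect to $\Gamma$, the map $\ad e : \g_{\ge 0} \to \g_{\ge a}$ splits as the direct sum of these component maps, so surjectivity of the total map follows immediately.

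I expect no real obstacle here: the corollary is a direct application of Lemma~\ref{l:couplage} once one extracts from the definition of an admissible pair the crucial inclusion $\g_{\le -a} \cap \g^{e} = \{0\}$, which is the ``easy direction'' of the characterization announced in Theorem~\ref{t:introch1}. The argument is purely dimensional and uses the nondegeneracy of the Killing form only implicitly through the couplage already exploited in the lemma.
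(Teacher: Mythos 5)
Your argument is correct and is exactly the deduction the paper intends (it states the corollary without proof, immediately after Lemme~\ref{l:couplage}): admissibility gives $\g_{\le -a}\cap\g^{e}=\{0\}$ via (A2) and (A4), hence $\ad e$ is injective on each $\g_{-(j+a)}$ for $j\ge 0$, and Lemme~\ref{l:couplage} then yields surjectivity of $\ad e:\g_j\to\g_{j+a}$ degree by degree. Nothing is missing.
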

%
%
%
%
\begin{proposition} \label{p:supp}
Supposons que $\Gamma \in \GA(e)$.
Soit $\n$ un supplémentaire gradué de $\g_{< 0} \cap \g^e$ dans $\g_{<0}$
contenant $\g_{\le -a}$. Alors la paire $(\g_{\le -a},\n)$ est admissible pour $e$
si et seulement si $\n$ est une sous-algèbre de $\g$.
En particulier, si
$\g_{<0} \cap \g^e =\{0\}$, alors la paire
$(\g_{\le -a}, \g_{<0})$ est l'unique paire optimale admissible pour $e$.
\end{proposition}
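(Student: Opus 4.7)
Le sens direct est imm\'ediat par la D\'efinition \ref{d:adm}, qui exige que $\n$ soit une sous-alg\`ebre de $\g$; toute la substance r\'eside donc dans la r\'eciproque, et mon plan sera de v\'erifier que les axiomes (A1)--(A6) sont automatiquement satisfaits d\`es que $\n$ est une sous-alg\`ebre. Les conditions (A1) et (A2) seront imm\'ediates par construction. Pour (A5), on observera simplement que $[\n,\m] \subseteq [\g_{<0},\g_{\le -a}] \subseteq \g_{\le -a} = \m$. Enfin, pour (A4), je remarquerai que, $\n$ \'etant un suppl\'ementaire de $\g_{<0}\cap\g^{e}$ dans $\g_{<0}$, on a $\n \cap \g^{e} = \n \cap (\g_{<0}\cap\g^{e}) = \{0\}$.

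La v\'erification de (A3) repose sur un petit calcul gradu\'e. Je commencerai par \'etablir $\m^{\perp} = \g_{<a}$, ce qui r\'esulte du couplage entre $\g_{i}$ et $\g_{-i}$ induit par la forme de Killing. Ensuite, le Corollaire \ref{c:adsurj} donne $\ad e(\g_{\ge 0}) = \g_{\ge a}$, d'o\`u la d\'ecomposition $[\g,e] = [\g_{<0},e] + \g_{\ge a}$. L'intersection avec $\g_{<a}$, compte tenu de $[\g_{<0},e] \subseteq \g_{<a}$, fournira $\m^{\perp} \cap [\g,e] = [\g_{<0},e]$. Comme $\g_{<0} = \n \oplus (\g_{<0}\cap\g^{e})$ et $[\g^{e},e] = \{0\}$, on conclura $[\g_{<0},e] = [\n,e]$.

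Le point technique principal sera la v\'erification de (A6). Comme $\Ga$ est $e$-admissible, le Th\'eor\`eme \ref{t:introch1} donne $\g_{\le -a} \cap \g^{e} = \{0\}$, donc $\ad e : \g_{k} \to \g_{k+a}$ est injective pour tout $k \le -a$. Le Lemme \ref{l:couplage} fournit alors, pour tout $j \ge 0$, l'\'egalit\'e $\dim \g_{-a-j} = \dim \g_{j} - \dim \g_{j}^{e}$. La sommation sur $j \ge 0$ donnera la formule cl\'e $\dim \g_{\le -a} = \dim \g_{\ge 0} - \dim \g_{\ge 0}^{e}$. Combin\'ee avec $\dim \n = \dim \g_{<0} - \dim(\g_{<0}\cap\g^{e})$ et la d\'ecomposition gradu\'ee $\g^{e} = \g_{\ge 0}^{e} \oplus \g_{<0}^{e}$, cette formule entra\^{\i}nera imm\'ediatement (A6).

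L'assertion finale s'obtiendra ainsi: lorsque $\g_{<0}\cap\g^{e} = \{0\}$, le seul suppl\'ementaire gradu\'e possible est $\g_{<0}$ lui-m\^eme, qui est une sous-alg\`ebre de $\g$, ce qui donnera l'admissibilit\'e de la paire $(\g_{\le -a},\g_{<0})$. Son unicit\'e parmi les paires optimales admissibles r\'esultera de ce que, pour toute paire admissible de la forme $(\g_{\le -a},\n)$ avec $\n \subseteq \g_{<0}$ gradu\'ee et $\n \cap \g^{e} = \{0\}$, l'axiome (A6) combin\'e au calcul dimensionnel pr\'ec\'edent force $\dim \n = \dim \g_{<0}$, d'o\`u $\n = \g_{<0}$.
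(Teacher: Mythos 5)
Votre démonstration est correcte et suit essentiellement la même démarche que celle du texte : vérification directe des axiomes (A1)--(A6), avec (A3) obtenue via $\m^{\perp}=\g_{<a}$ et la décomposition $\g_{<0}=\n\oplus(\g_{<0}\cap\g^{e})$, et (A6) par le même comptage de dimensions reposant sur le Lemme \ref{l:couplage} et le Corollaire \ref{c:adsurj}. Les seules différences (argument par sous-espaces plutôt que par éléments pour (A3), sommation du lemme plutôt que calcul de $\dim[\g,e]$ pour (A6)) sont cosmétiques.
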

\begin{proof}
Tout d'abord, si $(\g_{\le -a},\n) \in \PA(e,\Ga)$,
alors $\n$ est une sous-algèbre de $\g$.
Réciproquement, supposons que $\n$ soit une sous-algèbre graduée de $\g$.
La condition (A1) est clairement vérifiée.
Par construction, les conditions (A2) et (A4) sont satisfaites.
En outre, on a 
$$
	[\g_{\le -a} , \n] \subseteq  \g_{\le -a-1} \subseteq \g_{\le -a},
$$
d'où (A5). D'après la Remarque \ref{r:morth}(3),
$[\n,e]$ est inclus dans $(\g_{\le -a})^\perp$. Par suite,
$[\n,e] \subset (\g_{\le -a})^\perp \cap [\g,e]$.
Soit $X = [Y,e]$ un élément de $(\g_{\le -a})^\perp \cap [\g,e]$
avec $Y = \sum \limits_{i} Y_i$ où $Y_i \in \g_i$ pour tout $i$.
Comme $(\g_{\le -a})^\perp = \g_{< a}$, on peut supposer $Y \in \g_{< 0}$.
Écrivons alors $Y =Y' + Y''$ avec $Y' \in \n$ et $Y'' \in \g_{< 0} \cap \g^e$.
Ainsi, 
$$
	X = [Y, e]= [Y',e] \in [\n ,e],
$$
ce qui implique (A3).
D'autre part, on a
\begin{eqnarray*}
 \dim \g - \dim \g^e  & = & \dim [\g,e] = \dim [\g_{ \ge 0},e] + \dim [\g_{< 0},e] \\
		      & = & \dim \g_{\ge a} + \dim \n = \dim \g_{\le -a} + \dim \n
\end{eqnarray*}
où la dernière égalité provient du Corollaire \ref{c:adsurj},
de la construction de $\n$ et de (A4).
En conclusion, $(\g_{\le -a},\n) \in \PA(e,\Ga)$.
\end{proof}
Il n'existe pas toujours de paires optimales admissibles pour $e$
comme l'illustre l'exemple suivant:
\begin{example} \label{ex:nonexisopt}
On suppose que $\g=\lsl_{11}(\C)$
et $e := \sum \limits_{\underset{i \notin \{6,9\}}{1 \le i \le 10}} E_{i,i+1}$.
On considère la graduation $\Ga: \g = \bigoplus_{j \in \Z} \g_j$
définie par l'élément semisimple $$\frac{1}{11}\diag(73,40,7,-26,-59,-92,29,-4,-37,51,18).$$
Les matrices élémentaires $E_{i,j}$ sont homogènes pour cette graduation et $e\in \g_3$.
Montrons que, relativement à cette graduation, une paire admissible
pour $e$ optimale n'existe pas.
Supposons que $(\g_{\le -3},\n)$ soit une paire optimale.
Comme $\n$ est graduée, on peut écrire
$$
	\n = \g_{\le -3} \oplus \n_{-2} \oplus \n_{-1} \ \text{ où } \  \n_{-2} \subset \g_{-2} \text{ et } 
						\n_{-1} \subset \g_{-1}.
$$
On a $\dim \g^e = 25$ et, d'après (A6),
$\dim (\n_{-2} \oplus \n_{-1}) = 12 = \dim \n_{-2} + \dim \n_{-1}$
avec $\dim \g_{-2} \oplus \g_{-1} = 13$.
Sachant que $\g_{<0} \cap \g^e = \C(E_{7,10} + E_{8,11}) \subset \g_{-2}$ et $\dim \g_{<0} \cap \g^e =1$,
et comme $\g^e \cap \n_{-2} =\{0\} $, on a $\n_{-2} \not= \g_{-2}$.
Par conséquent, on a $\dim \n_{-1} = \dim \g_{-1}$ et $\n_{-1} = \g_{-1}$.
Puisque $\n$ est une sous-algèbre graduée de $\g$ on a $[\n_{-1},\n_{-1}] \subset \n_{-2}$.
Or $\{0\} \neq \g_{<0} \cap \g^e \subset [\n_{-1},\n_{-1}]$.
En particulier,
$\n \cap \g^e \neq \{0\}$ ce qui contredit le fait
que $(\g_{\le -3},\n)$ soit une paire admissible pour $e$. 
Par suite, une paire optimale relativement à cette graduation n'existe pas.
\end{example}
\subsection{} \label{s:carac}
Dans ce paragraphe, $\Gamma: \g = \bigoplus \limits_{j \in \Z} \g_j$
désigne une $\Z$-graduation de $\g$
vérifiant $e \in \g_a$ où $a >1$. L'objectif de ce paragraphe est de montrer le théorème suivant:
\begin{theorem} \label{t:carac}
La graduation $\Gamma$ est admissible pour $e$
si et seulement si $\g_{\le -a} \cap \g^e = \{0\}$.
\end{theorem}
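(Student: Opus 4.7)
The ``only if'' direction is immediate: if $(\m, \n)$ is an admissible pair for $\Gamma$, then (A2) forces $\g_{\le-a} \subseteq \n$ and (A4) gives $\n \cap \g^e = \{0\}$, hence $\g_{\le-a} \cap \g^e = \{0\}$. For the converse, the plan is to assume $\g_{\le-a} \cap \g^e = \{0\}$, i.e., $\g_k^e = \{0\}$ for all $k \le -a$, and construct an admissible pair explicitly. Lemma~\ref{l:couplage} then guarantees that $\ad e : \g_k \to \g_{k+a}$ is injective for $k \le -a$ and dually surjective from $\g_l$ onto $\g_{l+a}$ for $l \ge 0$, together with the symmetry $\dim \g_k = \dim \g_{-a-k} - \dim \g_{-a-k}^e$ for $k \le -a$. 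Summing this identity over $k \le -a$ gives $\dim \g_{\le -a} = \sum_{l \ge 0} \dim [\g_l, e]$, which will eventually yield (A6).

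One looks for $(\m, \n)$ of the form $\m = \g_{\le-a} \oplus \m'$ and $\n = \g_{\le-a} \oplus \n'$, with $\m' \subseteq \n'$ graded subspaces of $\bigoplus_{-a<k<0} \g_k$. Conditions (A1), (A2) and the part $[\n, \g_{\le -a}] \subseteq \g_{\le -a}$ of (A5) are then automatic. Setting $\m^{(k)} := \m \cap \g_k$, $\n^{(k)} := \n \cap \g_k$ and $k' := -a - k$, a degree-by-degree decomposition of (A3) and (A4) shows that for each $k \in (-a, 0)$ what is needed is $\n^{(k)} \cap \g_k^e = \{0\}$ together with the orthogonality relation $[\n^{(k)}, e] = (\m^{(k')})^{\perp} \cap [\g_k, e]$, where the orthogonal is taken in $\g_{-k}$ with respect to the Killing form. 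The key tool for making these choices is the non-degenerate bilinear pairing $\omega_k : (\g_k/\g_k^e) \times (\g_{k'}/\g_{k'}^e) \to \Bbbk$, $(x, y) \mapsto \la e, [x, y] \ra$, induced by the Killing form: it lets one match $\m^{(k')}$ to $\n^{(k)}$ by orthogonality, and a direct dimension count combined with the symmetry identity above forces (A6).

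The hard part is the remaining algebraic closure, i.e., verifying that the resulting $\m$ and $\n$ are Lie subalgebras and that $[\n, \m] \subseteq \m$. The plan is to proceed by induction on the degree, starting from the most negative middle degrees (where the bracket of two elements automatically lies in $\g_{\le -a} \subseteq \m$) and working upward, at each step choosing $\n^{(k)}$ to be a graded complement of $\g_k^e$ in $\g_k$ compatible with the brackets already fixed at lower degrees, and adjusting $\m^{(k)}$ within $\n^{(k)}$ via the $\omega_k$-orthogonality to absorb any obstruction. Example~\ref{ex:nonexisopt} illustrates that the naive ``optimal'' choice $\m' = \{0\}$ can fail to produce a subalgebra, so the freedom to take $\m^{(k)} \neq \{0\}$ at suitable degrees is indispensable; conversely, the hypothesis $\g_{\le -a} \cap \g^e = \{0\}$ provides exactly the room in each middle degree needed to close up the brackets and complete the inductive construction. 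This closure step is the main obstacle of the proof.
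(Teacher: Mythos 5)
Your ``only if'' direction is correct and identical to the paper's, and your degree-by-degree reduction of (A2), (A3), (A4), (A6) to conditions on graded pieces $\m^{(k)},\n^{(k)}$ paired via $\omega_k$ is a reasonable analogue of the paper's Lemme \ref{l:stratgen}. The genuine gap is exactly where you locate ``the main obstacle'': the construction of $\m$ and $\n$ that are actually Lie subalgebras with $[\n,\m]\subseteq\m$ is never carried out. Your plan --- induct on the degree, choose at each step a complement of $\g_k^e$ in $\g_k$ ``compatible with the brackets already fixed'' and adjust $\m^{(k)}$ by $\omega_k$-orthogonality ``to absorb any obstruction'' --- is a hope, not an argument: nothing in the hypothesis $\g_{\le -a}\cap\g^e=\{0\}$ visibly provides, at a degree $k\in\,]-\frac{a}{2},0[$ say, a complement of $\g_k^e$ closing up the brackets, and Example \ref{ex:nonexisopt} (which you cite) shows precisely that degree-wise choices satisfying (A1)--(A4), (A6) can be globally obstructed at the level of (A5). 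Asserting that the hypothesis ``provides exactly the room needed'' is the whole content of the hard direction, and it is left unproved.

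The paper closes this gap by a different, finer decomposition that makes the bracket conditions trivial rather than inductive. It takes an $\lsl_2$-triplet $(e,h,f)$ with $h\in\g_0$, $f\in\g_{-a}$, sets $t:=h_\Gamma-\frac{a}{2}h\in\g^\ss$, and decomposes $\g$ into the subspaces $\V_{i,\lambda}=\E_{i,\lambda}+\E_{i,-\lambda}$ (isotypic $\ss$-components refined by $\ad t$-eigenvalues), on which the $\ad h_\Gamma$-eigenvalues form strings spaced by $a$. The hypothesis, rewritten as the inequalities (\ref{eq:condlambda}), then permits a case-by-case choice of $\m_{i,\lambda}\subseteq\n_{i,\lambda}$ in each $\V_{i,\lambda}$ satisfying (C1)--(C4) and, crucially, contained in $\g_{\le -\frac{a}{2}}$. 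Consequently $[\n,\n]\subseteq[\g_{\le-\frac{a}{2}},\g_{\le-\frac{a}{2}}]\subseteq\g_{\le -a}\subseteq\m$, so $\m$, $\n$ are subalgebras and (A5) holds automatically, with no induction at all. This ``everything can be placed in degrees $\le -\frac{a}{2}$'' observation, which is invisible from the crude pairing of $\g_k$ with $\g_{-a-k}$, is the missing idea in your proposal; without it (or a genuine substitute), your argument does not establish the converse.
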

L'implication directe est claire d'après (A2) et (A4).
On explique ici la stratégie pour montrer l'autre implication.
Soient $h_\Gamma$ l'élément semisimple de $\g$
définissant la graduation $\Gamma$ et $(e,h,f)$ un $\lsl_2$-triplet
de $\g$ tel que $h\in \g_0$ et $f \in \g_{-a}$.
On pose 
$$
	\ss := \Vect(e,h,f)
$$
et 
$$
	t:= h_\Gamma -\frac{a}{2} h.
$$
L'élément $t$ de $\g$ est semisimple
et appartient à $\g^e \cap \g^h = \g^\ss$.
De plus, les valeurs propres de $\ad t$ sont rationnelles.
On suppose désormais que $\g_{\le -a} \cap \g^e = \{0\}$.
On va construire une paire $(\m,\n)$ admissible pour $e$ relativement à la 
graduation $\Gamma$ suivant le Lemme \ref{l:stratgen}.
D'après ce lemme, la paire $(\m,\n)$ vérifie les conditions 
(A2), (A3), (A4) et (A6). Pour conclure qu'elle est $e$-admissible,
il restera à montrer que $\m$ et $\n$ sont des sous-algèbres de $\g$
vérifiant (A5).
\begin{lemma} \label{l:stratgen}
Soit $\g = \P_1 \oplus \cdots \oplus \P_s$ une décomposition de $\g$
en sous-espaces de $\g$ stables par $\ss$ et $t$, et deux à deux orthogonaux
par rapport à la forme de Killing. 
Pour tout $i \in \{1,\cdots,s\}$, soient $\m_i$ et $\n_i$ deux sous-espaces
gradués dans $\P_i$ vérifiant les conditions suivantes:
\begin{description}
 \item[{\rm (C1)}] $\P_i \cap \g_{\le -a} \subset \m_i \subset \n_i \subset \P_i \cap \g_{<0}$;
 \item[{\rm (C2)}] $\m_i^\perp \cap [e,\P_i] = [e,\n_i]$;
 \item[{\rm (C3)}] $\n_i \cap \g^e = \{0\}$;
 \item[{\rm (C4)}] $\dim \m_i + \dim \n_i = \dim \P_i - \dim (\P_i \cap \g^e).$
\end{description}
On pose
$$
  \m= \bigoplus_{i=1}^s \m_i \qquad \text{et} \qquad \n= \bigoplus_{i=1}^s \n_i
$$
Alors la paire $(\m,\n)$ vérifie les conditions
(A2), (A3), (A4) et (A6) de la Définition \ref{d:adm}.
\end{lemma}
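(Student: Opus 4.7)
Le plan est de v\'erifier les quatre conditions une par une, en exploitant la d\'ecomposition orthogonale $\g=\P_1\oplus\cdots\oplus\P_s$. L'observation initiale est que chaque $\P_i$ est $\Gamma$-gradu\'e : puisque $h_\Gamma=t+\frac{a}{2}h$ et que $\P_i$ est stable sous $\ad h$ et $\ad t$, il l'est aussi sous $\ad h_\Gamma$. Il en d\'ecoulera notamment $\g_{\le -a}=\bigoplus_i(\P_i\cap\g_{\le -a})$ et $\g_{<0}=\bigoplus_i(\P_i\cap\g_{<0})$, de sorte que (A2) r\'esultera imm\'ediatement de (C1) et du caract\`ere gradu\'e des $\m_i$ et $\n_i$.

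Pour (A4) et (A6), on \'etablira d'abord la d\'ecomposition $\g^e=\bigoplus_i(\P_i\cap\g^e)$. Comme $e\in\ss$ et que chaque $\P_i$ est $\ss$-stable, $[e,x_i]\in\P_i$ pour tout $x_i\in\P_i$ ; la propri\'et\'e de somme directe imposera alors que chaque composante d'un \'el\'ement de $\g^e$ reste dans $\g^e$. Combin\'ee avec (C3), cette d\'ecomposition donnera $\n\cap\g^e=\bigoplus_i(\n_i\cap\g^e)=\{0\}$, d'o\`u (A4) ; en sommant (C4), on obtiendra
\[
\dim\m+\dim\n=\sum_i\bigl(\dim\P_i-\dim(\P_i\cap\g^e)\bigr)=\dim\g-\dim\g^e,
\]
c'est-\`a-dire (A6).

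L'\'etape principale, et sans doute la plus d\'elicate, sera (A3). Le point clef est que l'orthogonalit\'e deux \`a deux des $\P_i$ entra\^ine l'\'egalit\'e $\m^\perp=\bigoplus_i(\m_i^\perp\cap\P_i)$ : une inclusion provient directement de l'orthogonalit\'e, et l'autre s'obtient par un calcul \'el\'ementaire de dimension, puisque la forme de Killing est non d\'eg\'en\'er\'ee sur chaque $\P_i$. Par ailleurs, la $\ss$-stabilit\'e fournit $[\g,e]=\bigoplus_i[\P_i,e]$ avec $[\P_i,e]\subset\P_i$. En intersectant ces deux d\'ecompositions et en appliquant (C2), il viendra
\[
\m^\perp\cap[\g,e]=\bigoplus_i\bigl(\m_i^\perp\cap[\P_i,e]\bigr)=\bigoplus_i[e,\n_i]=[e,\n],
\]
ce qui \'etablira (A3). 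L'int\'er\^et principal du lemme r\'eside pr\'ecis\'ement dans cette possibilit\'e de recoller les conditions locales (C1)--(C4) en une paire admissible globale, gr\^ace \`a la rigidit\'e fournie par l'orthogonalit\'e des $\P_i$ ; sans cette hypoth\`ese, l'\'egalit\'e $\m^\perp\cap\P_i=\m_i^\perp\cap\P_i$ tomberait en d\'efaut et l'argument de recollement pour (A3) \'echouerait.
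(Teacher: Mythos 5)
Votre d\'emonstration est correcte et suit essentiellement la m\^eme d\'emarche que celle de l'article : v\'erification de (A2), (A3), (A4) et (A6) via les d\'ecompositions $\g_{\le -a}=\bigoplus_i(\P_i\cap\g_{\le -a})$, $\g^e=\bigoplus_i(\P_i\cap\g^e)$, $\m^\perp\cap[e,\g]=\bigoplus_i(\m_i^\perp\cap[e,\P_i])$ et la somme des relations (C4). Vous explicitez m\^eme des justifications (la $\Gamma$-graduation des $\P_i$ via $h_\Gamma=t+\tfrac{a}{2}h$, la non-d\'eg\'en\'erescence de la forme de Killing sur chaque $\P_i$) que l'article laisse implicites.
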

\begin{proof}
La condition (A2) est vérifiée car  
$$
      \g_{\le -a}= \bigoplus \limits_{i=1}^s (\P_i \cap \g_{\le -a}) 
\subset \bigoplus \limits_{i=1}^s \m_i \subset \bigoplus \limits_{i=1}^s \n_i 
\subset \bigoplus \limits_{i=1}^s (\P_i \cap \g_{<0})= \g_{<0}.
$$
En outre, comme
$$
	\m^\perp \cap [e,\g] = \bigoplus \limits_{i=1}^s (\m_{i}^\perp \cap [e, \P_i])
							= \bigoplus_{i=1}^s  [e,\n_{i}] = [e,\n],
$$
alors (A3) est vérifiée.
De plus, puisque $\n \cap \g^e = \bigoplus \limits_{i=1}^s  (\n_{i} \cap \g^e)$, alors 
			$\n \cap \g^e =\{0\}$, d'où la condition (A4).
Enfin un simple calcul de dimension montre que
$$ 
      \dim \m + \dim \n = \sum_{i=1}^s (\dim \m_i+\dim \n_i) = \sum_{i=1}^s (\dim \P_i - \dim (\P_i \cap \g^e))
		=\dim \g - \dim \g^e.
$$
La condition (A6) s'ensuit.
\end{proof}
\begin{proof}[Démonstration du Théorème \ref{t:carac}]
On considère la décomposition 
de $\g$ en composantes isotypiques
de $\ss$-modules simples,
$$
	\g = \E_1 \oplus \cdots \oplus \E_r.
$$
D'après le lemme de Schur, cette décomposition est orthogonale relativement
à la forme de Killing. 
Comme $t$ commute avec $\ss$, toujours d'après le lemme de Schur,
chaque composante isotypique $\E_i$ est stable sous l'action
adjointe de $t$. Par suite, pour tout $i$, la composante isotypique $\E_i$ se décompose 
en espaces propres pour $\ad t$,
\begin{equation*}
	\E_i = \bigoplus_{\lambda \in \Q} \E_{i,\lambda},
\end{equation*}
tels que
\begin{equation} \label{eq:coupt}
	\la.,.\ra _{|\E_{i,\lambda}\times \E_{i,\mu}} = 0 \,  \text{ si } \,  \lambda+\mu \not= 0
\ \text{ et }\
	\la.,.\ra _{|\E_{i,\lambda}\times \E_{i,-\lambda}} 
	 \, \text{  non dégénérée.}
\end{equation}

La dernière assertion signifie que $\E_{i,\lambda}$ et $\E_{i,-\lambda}$
sont en couplage. 
Soient $i \in \{1, \cdots, r\}$, $\lambda \in \Q$
et $d_i$ la dimension d'un $\ss$-module simple de $\E_i$. 
L'ensemble des valeurs propres de $\ad h$ sur $\E_i$ est donné par:
$$
	\{-(d_i -1), -(d_i-3), \cdots, d_i-3, d_i-1\}.
$$
Il s'ensuit que le plus petit poids de $\ad h_\Gamma$ sur $\E_{i,\lambda}$ (resp. sur $\E_{i,-\lambda}$) est égal à
$$
	\rho_{i,\lambda} := - \frac{a}{2}(d_i-1) + \lambda \quad (\text{resp. }
							\rho_{i,-\lambda} := - \frac{a}{2}(d_i-1) - \lambda). 
$$
On en déduit que l'ensemble des valeurs propres de $\ad h_\Gamma$ sur $\E_{i,\lambda}$ est donné par
$$
	\Xi_{i,\lambda} := \{\rho_{i, \lambda} + l a\, ; \; 0 \le l \le d_i-1\} \subset \Q.
$$
En particulier, on a $\rho_{i, -\lambda} = -\rho_{i,\lambda} - (d_i-1)a$
et $\Xi_{i,-\lambda}=-\Xi_{i,\lambda} := \{-\mu\, ; \; \mu \in \Xi_{i,\lambda}\}$.
On a la décomposition 
$$
    \E_{i,\lambda} = \bigoplus_{l=0}^{d_i-1} \E_{i,\lambda}^l,
$$
où $\E_{i,\lambda}^l$ est le sous-espace propre de $\E_{i,\lambda}$ pour $\ad h_\Ga$
associé à la valeur propre $\rho_{i, \lambda} + l a$.
De façon analogue,
$$
    \E_{i,-\lambda} = \bigoplus_{l=0}^{d_i-1} \E_{i,-\lambda}^l,
$$
où $\E_{i,-\lambda}^l$ est le sous-espace propre de $\E_{i,-\lambda}$ pour $\ad h_\Ga$
associé à la valeur propre $\rho_{i, -\lambda} + l a$.
De plus, pour $l,l' \in \{0, 1, \cdots, d_i-1\}$, le couplage (\ref{eq:coupt}) est décrit comme suit:
\begin{equation} \label{eq:couphgamma}
	\la \E_{i,\lambda}^l, \E_{i,-\lambda}^{l'} \ra
					= \left\{ \begin{array}{ll}
								\{0\} &\text{ si } l + l' \neq d_i -1 ;\\
								\C &\text{ si } l+l'=d_i-1.
							   \end{array}\right.
\end{equation}
En particulier, les sous-espaces propres $\E_{i,\lambda}^l$ et $ \E_{i,-\lambda}^{d_i -1 -l}$
sont en couplage par rapport à la forme de Killing.
Remarquons que $\g_{\le -a} \cap \g^e =\{0\}$ si et seulement si
$$
  	\frac{a}{2}(d_i-1) + \lambda > -a \qquad \text{et} \qquad \frac{a}{2}(d_i-1) - \lambda > -a,
$$
i.e., si et seulement si
\begin{equation} \label{eq:condlambda}
	-\frac{a}{2}(d_i+1) < \lambda <  \frac{a}{2}(d_i+1).
\end{equation}
D'après notre hypothèse, les inégalités de (\ref{eq:condlambda}) sont donc satisfaites.
On pose 
$$
    m_{i,\lambda} := \frac{\dim \E_{i,\lambda}}{d_i}.
$$ 
On a alors
$$
    \dim \E_{i,\lambda}^l = m_{i,\lambda}, \qquad  l \in \{0, 1, \cdots, d_i-1\}.
$$
Pour $\lambda \ge 0$, on pose
$$
	\V_{i,\lambda} = \E_{i,\lambda} + \E_{i,-\lambda}.
$$
On a alors la décomposition orthogonale par rapport à la forme de Killing en sous-espaces stables
par $\ss$ et $t$:
$$
	\g = \bigoplus_{i =1}^r \bigoplus_{\lambda \in \Q^+} \V_{i, \lambda}.
$$
On cherche à appliquer le Lemme \ref{l:stratgen} à cette décomposition
pour construire une paire $e$-admissible.
On remarque que 
\begin{equation} \label{eq:dimvi0}
	\dim \V_{i,0} =  m_{i,0} d_i, \quad  \dim \V_{i,0} \cap \g^e = m_{i,0}
\end{equation}
et
\begin{equation} \label{eq:dimvil}
	\dim \V_{i,\lambda} = 2 m_{i,\lambda} d_i, \quad 
		\dim \V_{i,\lambda} \cap \g^e = 2 m_{i,\lambda} \  \text{ si } \ 
		    \lambda \neq 0.
\end{equation}
On distingue deux cas: 

\smallskip

\noindent
{\bf I.} Il existe $k \in \{0, 1,  \cdots, d_i-1\}$ tel que $\rho_{i,\lambda} + k a = 0$.

\smallskip

\noindent
{\bf II.} Il existe $k \in \{-1, 0, 1,  \cdots, d_i-1\}$ tel que
$\rho_{i,\lambda} + k a < 0 < \rho_{i,\lambda} + (k+1) a $.

\smallskip

\noindent
Ce sont les seules possibilités d'après la formule (\ref{eq:condlambda})
car $\rho_{i,\lambda}-a < 0 < \rho_{i,\lambda} + d_ia$.

\paragraph{Cas I.} Dans ce cas, $\E_{i,\lambda} \cap \g_{<0} = \bigoplus \limits_{l =0}^{k-1}\E_{i,\lambda}^l$
et $\E_{i,-\lambda} \cap \g_{<0} = \bigoplus \limits_{l =0}^{d_i-2-k}\E_{i,-\lambda}^l$. 
En particulier, $(\V_{i,\lambda} \cap \g_{<0} ) \cap \g^e = \{0\}$. On pose
$$
    \m_{i,\lambda} = \n_{i,\lambda} := \V_{i,\lambda} \cap \g_{<0} = \V_{i,\lambda} \cap \g_{\le -a}.
$$
Par construction, on a
$$
    \n_{i,\lambda} \cap \g^e = \{0\}.
$$
De plus, 
$$
    \m_{i,\lambda}^\perp \cap [\V_{i,\lambda},e] = 
	\bigoplus \limits_{l =1}^{k}\E_{i,\lambda}^l + \bigoplus \limits_{l =1}^{d_i-1-k}\E_{i,-\lambda}^l
	      = [\n_{i,\lambda},e ].
$$
En outre, si $\lambda = 0$, alors
$$
  \dim  \m_{i,0} + \dim \n_{i,0} = (d_i-1) m_{i,0} = \dim \V_{i,0}- \dim (\V_{i,0} \cap \g^e)
$$
d'après (\ref{eq:dimvi0}).
Si $\lambda \neq 0$, alors
$$
  \dim  \m_{i,\lambda} + \dim \n_{i,\lambda} = 2  [(d_i-1) m_{i,\lambda}] = 
\dim \V_{i,\lambda}- \dim (\V_{i,\lambda} \cap \g^e)
$$
d'après (\ref{eq:dimvil}).
Par conséquent, les conditions (C1), (C2), (C3) et (C4) sont vérifiées.
%
\paragraph{Cas II.} Dans ce cas, $\E_{i,\lambda} \cap \g_{<0} = \bigoplus \limits_{l =0}^{k}\E_{i,\lambda}^l$
et $\E_{i,-\lambda} \cap \g_{<0} = \bigoplus \limits_{l =0}^{d_i-2-k}\E_{i,-\lambda}^l$. 
On présente dans la Table \ref{carac:CasII} les choix de $\m_{i,\lambda}$ et $\n_{i,\lambda}$
dans chacun des sous-cas suivants:
\begin{enumerate}
 \item[(a)] $k=-1$;
 \item[(b)] $k=d_i-1$;
 \item[(c)] $-1 < k < d_i-1$ et $\lambda = 0 $;
 \item[(d)] $-1 < k < d_i-1$, $\lambda \neq 0$ et $\rho_{i,\lambda} + k a < -\rho_{i,\lambda} - (k+1) a;$
\item[(e)] $-1 < k < d_i-1$, $\lambda \neq 0$ et $-\rho_{i,\lambda} - (k+1) a< \rho_{i,\lambda} + k a $.
\end{enumerate}
\begin{table}[h]
\centering
\renewcommand{\arraystretch}{2}
\setlength{\tabcolsep}{0.5cm}
\begin{tabular}{|c|c|c|}
 \hline
  {\bf Cas II}& $\m_{i,\lambda}$ & $\n_{i,\lambda}$ \\
\hline
 (a) &  \multicolumn{2}{c|}{$\bigoplus \limits_{l =0}^{d_i-2}\E_{i,-\lambda}^l$} \\[0.2cm]
 \hline
 (b) &  \multicolumn{2}{c|}{$\bigoplus \limits_{l =0}^{d_i-2}\E_{i,\lambda}^l$} \\[0.2cm]
 \hline
 (c) & $\bigoplus \limits_{l =0}^{k-1}\E_{i,0}^l$ 
& $V_{i,[0]} \cap \g_{<0}$ \\[0.2cm]
 \hline
 (d)  & 
     \multicolumn{2}{c|}{$\bigoplus \limits_{l =0}^{k}\E_{i,\lambda}^l +\bigoplus \limits_{l =0}^{d_i-3-k}\E_{i,-\lambda}^l$}
	\\[0.2cm] 
\hline
(e) &
	      \multicolumn{2}{c|}{$\bigoplus \limits_{l =0}^{k-1}\E_{i,\lambda}^l +
	      \bigoplus \limits_{l =0}^{d_i-2-k}\E_{i,-\lambda}^l$} \\[0.2cm]
 \hline
\end{tabular}

\smallskip

\caption{{\bf Cas II.}} \label{carac:CasII}
\end{table}
Dans chacun de ces sous-cas, par construction (cf. Table \ref{carac:CasII}), 
$$
    \V_{i,\lambda} \cap \g_{\le -a} \subset \m_{i,\lambda} \subset \n_{i,\lambda} 
			\subset \V_{i,\lambda} \cap \g_{<0}
\quad \text{ et } \quad
    \n_{i,\lambda} \cap \g^e = \{0\}.
$$

\smallskip

\noindent {\bf Sous-cas (a).} On a
$$
    \m_{i,\lambda}^\perp \cap [\V_{i,\lambda},e] = 
	 \bigoplus \limits_{l =1}^{d_i-1}\E_{i,-\lambda}^l
	      = [\n_{i,\lambda},e ],
$$
et, d'après (\ref{eq:dimvil}),
$$
  \dim  \m_{i,\lambda} + \dim \n_{i,\lambda} = 2  [(d_i-1) m_{i,\lambda}] = 
\dim \V_{i,\lambda}- \dim (\V_{i,\lambda} \cap \g^e).
$$ 

\smallskip

\noindent {\bf Sous-cas (b).} On a
$$
    \m_{i,\lambda}^\perp \cap [\V_{i,\lambda},e] = 
	 \bigoplus \limits_{l =1}^{d_i-1}\E_{i,\lambda}^l
	      = [\n_{i,\lambda},e ],
$$
et, d'après (\ref{eq:dimvil}),
$$
  \dim  \m_{i,\lambda} + \dim \n_{i,\lambda} = 2  [(d_i-1) m_{i,\lambda}] = 
\dim \V_{i,\lambda}- \dim (\V_{i,\lambda} \cap \g^e).
$$ 

\smallskip

\noindent {\bf Sous-cas (c).} On a 
$$
    \m_{i,0}^\perp \cap [e,\V_{i,0}] = 
	 \bigoplus \limits_{l =1}^{k+1}\E_{i,0}^l
	      = [e,\n_{i,0} ],
$$
et, d'après (\ref{eq:dimvi0}),
$$
  \dim  \m_{i,[0]} + \dim \n_{i,[0]} = (d_i-1) m_{i,0} = \dim V_{i,[0]}- \dim (V_{i,[0]} \cap \g^e).
$$ 

\smallskip

\noindent {\bf Sous-cas (d).}
Si $\rho_{i,\lambda} + ka < -\rho_{i,\lambda} - (k+1) a$ on a alors
$$
    \m_{i,\lambda}^\perp \cap [\V_{i,\lambda},e] = 
	 \bigoplus \limits_{l =1}^{k+1}\E_{i,\lambda}^l \oplus \bigoplus \limits_{l =1}^{d_i-2-k}\E_{i,-\lambda}^l
	      = [\n_{i,\lambda},e ].
$$

\smallskip

\noindent {\bf Sous-cas (e).} On procède de manière analogue au sous-cas (d).
On a en particulier d'après (\ref{eq:dimvil}),
$$
  \dim  \m_{i,\lambda} + \dim \n_{i,\lambda} = 2  [(d_i-1) m_{i,\lambda}] = 
\dim \V_{i,\lambda}- \dim (\V_{i,\lambda} \cap \g^e).
$$ 

Par conséquent, les conditions (C1), (C2), (C3) et (C4) sont vérifiées dans tous les sous-cas.
De plus, $\m_{i,\lambda} \subset \n_{i,\lambda} \subset \V_{i,\lambda} \cap \g_{\le -\frac{a}{2}} $.

\paragraph{Conclusion.} Posons
$$
	\m= \bigoplus_{i,\lambda} \m_{i,\lambda} \ \text{ et } \ 
				\n= \bigoplus_{i,\lambda} \n_{i,\lambda}.
$$
Les conditions (A2), (A3), (A4) et (A6) sont vérifiées d'après 
le Lemme \ref{l:stratgen}. Il reste à vérifier que 
$\m$ et $\n$ sont des sous-algèbres de $\g$ qui vérifient la condition (A5).
On a $\g_{\le -a} \subset \m \subset \n \subset \g_{\le -\frac{a}{2}} \subset \g_{<0}$. 
Ainsi,
$$
  [\m,\m ] \subset [\n,\m ] \subset [\n,\n ] \subset [\g_{\le -\frac{a}{2}},\g_{\le -\frac{a}{2}} ]
	\subset \g_{\le -a} \subset \m \subset \n.
$$
Il s'ensuit que $\m$ et $\n$ sont des sous-algèbres de $\g$ telles que $[\n,\m] \subset \m$.
Par conséquent, la paire $(\m,\n)$ est admissible pour $e$.
Ceci achève la démonstration du théorème. 
\end{proof}
\begin{remark}
Comme la graduation $\Gamma$ est entière, il en résulte 
que $\lambda \in \frac{1}{2} \Z$. Pour $a$ fixé le nombre de graduations admissibles pour $e$
est fini d'après les inégalités (\ref{eq:condlambda}).
\end{remark}
\section{$W$-algèbres finies associées aux paires admissibles} \label{ch:walg}
Dans cette section, on conserve les notations
de la section précédente. Plus précisément, $e$ est un élément
nilpotent de $\g$ et $\chi:= \kappa(e)$ où $\kappa$ est l'isomorphisme 
de Killing. On fixe une paire
$e$-admissible $(\m,\n)$ de $\g$ relativement à une $\Z$-graduation
$\Gamma: \g = \bigoplus \limits_{j \in \Z} \g_j$ avec $e \in \g_a$ pour $a >1$.
Le groupe adjoint $G$ opère dans $\g$ et $\g^*$
via l'opération adjointe et l'opération coadjointe respectivement.
Pour $g \in G$, $x \in \g$ et $\xi \in \g^*$,
on note $g(x)$ et $g(\xi)$ les images de $x$ et $\xi$
par $g$ pour ces opérations respectives.
\subsection{} \label{S:const}
Si $\Liea$ une sous-algèbre de $\g$, on note $U(\Liea)$ l'algèbre enveloppante de $\Liea$.
D'après la Remarque \ref{r:morth}(3), la restriction à $\m$
de $\chi$ est un caractère de $\m$.
Ce dernier s'étend en une représentation $\chi : U(\m) \to \C$ de $U(\m)$ 
et on désigne par $\C_{\chi}$ le $U(\m)$-module à gauche correspondant. 
La multiplication à droite par un élément de $\m$ 
induit une structure de $U(\m)$-module à droite sur $U(\g)$.
Soit $I(\m)$ l'idéal à gauche de $U(\g)$ engendré par les 
éléments $x - \chi(x)$, pour $x \in \m$.
On pose 
$$
	Q(\m)  :=  U(\g) \otimes_{U(\m)} \C_{\chi} .  
$$ 
%
%
Alors $Q(\m)$ est isomorphe à $U(\g)/ {I(\m)}$ en tant que $U(\g)$-modules.
L'opération adjointe de $\n$ dans $\g$ s'étend de façon unique en une opération 
$\theta$ de $\n$ dans $U(\g)$. 
L'idéal $I(\m)$ est stable sous cette action de $\n$.
En particulier, ceci induit une structure de $\n$-module sur $Q(\m)$ donnée par
$$
    \theta(x)(u+I(\m)) = \theta(x)(u)+I(\m),
$$
pour $x \in \n$ et $u \in U(\g)$.

On pose
\begin{equation}\label{eq:W}
 \begin{array}{rcl}   
 H(\m,\n) & = & \{ u + I(\m)  \in  Q(\m) \, ; \ \theta(x)(u)  \in I(\m) 
					\hbox{ pour tout } \, x \in \n \} \\
	  & = & \{ u + I(\m)  \in  Q(\m) \, ; \ I(\m) u  \subset I(\m) \}.
\end{array} 
\end{equation}
Autrement dit, $H(\m,\n)$ est le sous-espace de $Q(\m)$ formé des éléments invariants par $\n$.
Pour $u+I(\m), v+I(\m) \in H(\m,\n)$, on a $uv+I(\m) \in H(\m,\n)$. 
En particulier, $(u+I(\m))  (v+I(\m)) = uv + I(\m)$
définit une structure d'algèbre sur $H(\m,\n)$.
\begin{remark}
Lorsque $\m=\n$, d'après (\ref{eq:W}), l'application
$$
	\Phi : H(\m,\n) \  
		\rightarrow \ {\rm End}_{U(\g)}  Q(\m)^{{\rm op}}
$$
donnée par $\Phi(u + I(\m))(v+I(\m)) = vu+I(\m)$ où
$u+I(\m) \in H(\m,\n)$ et $v+I(\m) \in Q(\m)$ est bien définie
et c'est un isomorphisme d'algèbres.
\end{remark}
%
%
%
%
\subsection{}
Soient $(e,h,f)$ un $\lsl_2$-triplet de $\g$ et $\ss$ un sous-espace gradué de $\g$ 
supplémentaire de $[\n,e]$ dans $\m^{\perp}$.
\begin{lemma} \label{l:supps}
On a
$$
    \g = [\g,e] \oplus \ss.
$$
\end{lemma}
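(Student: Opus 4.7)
The plan is to verify the decomposition by showing that the sum $[\g,e]+\ss$ is direct and that the dimensions add up to $\dim\g$. Both ingredients fall out of the axioms (A3), (A4), (A6) combined with the definition of $\ss$ as a graded complement of $[\n,e]$ in $\m^\perp$.

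For the direct sum, the key observation is condition (A3): $\m^{\perp}\cap [\g,e]=[\n,e]$. Since by construction $\ss\subseteq\m^\perp$ and $\ss\cap[\n,e]=\{0\}$, I would compute
$$
\ss\cap[\g,e]=\ss\cap\m^{\perp}\cap[\g,e]=\ss\cap[\n,e]=\{0\}.
$$
So the sum $[\g,e]+\ss$ is indeed direct.

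For the dimension count, I would use the following chain. First, $\dim[\g,e]=\dim\g-\dim\g^{e}$ because $\ker\ad e=\g^{e}$. Second, because $\m^\perp=[\n,e]\oplus\ss$,
$$
\dim\ss=\dim\m^{\perp}-\dim[\n,e]=(\dim\g-\dim\m)-(\dim\n-\dim(\n\cap\g^{e})),
$$
and by (A4) the term $\dim(\n\cap\g^{e})$ vanishes, so $\dim\ss=\dim\g-\dim\m-\dim\n$. Invoking (A6), which states $\dim\m+\dim\n=\dim\g-\dim\g^{e}$, I get $\dim\ss=\dim\g^{e}$, hence $\dim[\g,e]+\dim\ss=\dim\g$.

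Combining the two steps, $[\g,e]\oplus\ss$ is a subspace of $\g$ of the full dimension $\dim\g$, which yields the claimed equality. There is no real obstacle here; the whole content is the interplay between (A3) (which controls the intersection with $[\g,e]$ inside $\m^{\perp}$) and the numerical identity (A6), refined by the injectivity condition (A4). The $\lsl_2$-triplet $(e,h,f)$ and the grading play no role in the proof itself, although the gradedness of $\ss$ will be needed later for compatibility with the Kazhdan filtration.
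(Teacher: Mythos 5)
Your proof is correct and follows exactly the paper's argument: (A3) gives $\ss\cap[\g,e]=\ss\cap\m^\perp\cap[\g,e]=\ss\cap[\n,e]=\{0\}$, and the dimension count $\dim\ss+\dim[\g,e]=\dim\m^\perp-\dim\n+\dim\g-\dim\g^e=\dim\g$ uses (A4) and (A6) just as in the paper. Nothing to add.
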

\begin{proof}
D'après la condition (A3) de la Définition \ref{d:adm}, $\ss \cap [\g,e] = \{0\}$.
D'autre part, d'après la condition (A4), 
$\dim \n= \dim [\n,e]$. Ainsi, on obtient
$$
    \dim \ss + \dim [\g,e] = \dim \m^{\perp} - \dim \n + \dim \g - \dim \g^e = \dim \g,
$$
grâce à la condition (A6). Le lemme s'ensuit.
\end{proof}
\begin{remark}
Lorsque $\Ga$ est la graduation de Dynkin
associée à $h$, on montre en reprenant les arguments de \cite[Paragraph 2.3]{GG} que $\g^f$ est
un supplémentaire gradué de $[\n,e]$ dans $\m^\perp$, donc $\ss = \g^f$
convient dans ce cas.
\end{remark}
On pose 
$$
	\SS : = \chi+ \kappa(\ss)  \quad \subset \quad \g^* .
$$
Rappelons que $h_\Ga$ est l'élément semisimple définissant $\Ga$.
Soit $\gamma: \C^* \to G$ le sous-groupe à un paramètre associé à
$\ad h_\Gamma$.
Pour tout $j \in \Z$,
$$
	\g_j = 
		\{x \in \g \ ; \ \ga(t)(x) = t^j x, \ \text{ pour tout } \  t \in \C^* \} . 
$$ 
On définit une opération $\rho$ du groupe $\C^*$ dans $\g$ en posant
pour tous $t \in \C^*$ et $x \in \g$,
$$ 
	\rho(t) (x) = t^{a} \ga(t^{-1}) (x) . 
$$
Pour $x \in \g_j$ et $t \in \C^{*}$,
on a $\rho(t) (x) = t^{-j+a} x$. 
En particulier, comme $e \in \g_a$,
$\rho(t)(e) = e$. 
\begin{lemma}   \label{l:rho}
L'opération $\rho$ stabilise $e + \ss$ et 
$e + \m^\perp$.
De plus, elle est contractante dans ces deux variétés, 
i.e., $\lim \limits_{t \to 0} \rho(t)(e + x) =e$ pour tout $x \in \m^\perp$. 
\end{lemma}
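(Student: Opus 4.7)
Le plan est d'exploiter la graduation des sous-espaces $\m^\perp$ et $\ss$, combin�e avec la description explicite de $\rho$ sur chaque composante homog�ne. Je commence par observer que $\m^\perp$ est gradu� : la forme de Killing �tant invariante sous $\ad h_\Gamma$, elle met en couplage $\g_i$ et $\g_{-i}$, et l'orthogonal d'un sous-espace gradu� (ici $\m$, d'apr�s (A2)) est automatiquement gradu�. Le sous-espace $\ss$ est gradu� par hypoth�se. Puisque $\rho(t)$ agit par homoth�tie $t^{a-j}$ sur $\g_j$, il pr�serve tout sous-espace gradu� de $\g$ ; combin� � $\rho(t)(e) = e$, ceci donne
$$
\rho(t)(e + \ss) = e + \rho(t)(\ss) \subseteq e + \ss
$$
et de m�me $\rho(t)(e + \m^\perp) \subseteq e + \m^\perp$, d'o� la premi�re assertion.

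Pour la contraction, j'utilise la Remarque \ref{r:morth}(2), selon laquelle $\m^\perp \subseteq \g_{\le a-1}$. En d�composant $x \in \m^\perp$ sous la forme $x = \sum_j x_j$ avec $x_j \in \m^\perp \cap \g_j$, tous les indices $j$ intervenant v�rifient $j \le a-1$, donc $a-j \ge 1 > 0$. Il vient alors
$$
\rho(t)(x) = \sum_{j \le a-1} t^{a-j} x_j \longrightarrow 0 \quad \text{lorsque } t \to 0,
$$
si bien que $\rho(t)(e+x) \to e$. La conclusion pour $e + \ss$ s'en d�duit imm�diatement puisque $\ss \subseteq \m^\perp$.

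Il n'y a ici aucun obstacle technique s�rieux : tout repose sur le choix astucieux de la normalisation $\rho(t) = t^{a}\gamma(t^{-1})$, qui compense pr�cis�ment le degr� maximal possible des �l�ments de $\m^\perp$. C'est l'analogue direct de l'astuce employ�e par Gan et Ginzburg dans \cite{GG} pour la graduation de Dynkin, l'exposant $a$ rempla�ant le $2$ usuel dans le cadre des graduations admissibles g�n�rales. Le seul point d�licat � justifier soigneusement est l'emplacement de $\m^\perp$ dans la graduation, i.e.\ la borne sup�rieure $a-1$ sur ses degr�s, mais celui-ci est fourni gratuitement par la Remarque \ref{r:morth}(2) d�j� �tablie.
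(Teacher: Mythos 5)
Votre preuve est correcte et suit essentiellement la même démarche que celle du texte : stabilité de $e+\ss$ et $e+\m^\perp$ par $\rho$ du fait que ces sous-espaces sont gradués (donc $\ad h_\Gamma$-stables) et que $\rho(t)(e)=e$, puis contraction via la borne $\m^\perp \subseteq \g_{\le a-1}$ de la Remarque \ref{r:morth}(2), qui garantit que les poids $a-j$ de $\rho$ sur $\m^\perp$ sont strictement positifs. Votre rédaction explicite simplement les calculs que le texte laisse implicites.
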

\begin{proof}
Comme $\ss$ et $\m^\perp$ sont des sous-espaces gradués de $\g$, ils sont $\ad h_\Gamma$-stables
donc $\rho$-stables.
D'autre part, $\rho(t)(e)=e$.
On en déduit que $\rho$ stabilise $e + \ss$ et $e + \m^\perp$.
De plus, d'après la Remarque \ref{r:morth}(2), 
on a $\lim \limits_{t \to 0} \rho(t)(e+x) =e$ pour tout $x \in \m^\perp$. 
\end{proof}
\begin{theorem} \label{t:trans}
La variété affine $\SS = \chi + \kappa(\ss)$ 
$($resp. $\chi + \kappa(\m^\perp))$ est transverse aux orbites coadjointes de $\g^*$.
Précisément, pour tout 
$\xi \in \SS$ $($resp. $\xi \in \chi+\kappa(\m^\perp))$, 
on a $T_{\xi}(G.\xi)+ T_{\xi}(\SS) = \g^{*}$ 
$($resp. $T_{\xi}(G.\xi)+ T_{\xi}(\chi+\kappa(\m^\perp)) = \g^{*})$.
\end{theorem}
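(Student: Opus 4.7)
Le plan consiste à traduire la transversalité, via l'isomorphisme de Killing $\kappa$, en une égalité dans $\g$. Pour $\xi = \kappa(e+x) \in \SS$ avec $x \in \ss$, l'espace tangent à l'orbite coadjointe $T_\xi(G.\xi)$ s'identifie à $\kappa([\g, e+x])$, tandis que $T_\xi(\SS) = \kappa(\ss)$. L'énoncé équivaut donc à
$$
    [\g, e+x] + \ss = \g \qquad \text{pour tout } x \in \ss,
$$
et à un énoncé analogue avec $\m^\perp$ à la place de $\ss$. Pour $x=0$, c'est exactement le contenu du Lemme~\ref{l:supps}, qui couvre aussi le cas de $\m^\perp$ puisque $\ss \subseteq \m^\perp$ par construction.

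On introduit alors l'ouvert de Zariski
$$
    U := \{x \in \ss\ ;\ [\g, e+x] + \ss = \g\},
$$
qui contient $0$, et l'on cherche à propager la transversalité à tout $\ss$ au moyen de l'action contractante $\rho$ du Lemme~\ref{l:rho}. L'ingrédient clé est la $\rho$-stabilité de $U$. Bien que $\rho(t) = t^a\ga(t^{-1})$ ne soit pas un automorphisme d'algèbre de Lie, à cause du facteur $t^a$, un calcul direct fournit
$$
    [\rho(t)(u), \rho(t)(v)] = t^a \rho(t)([u,v]) \qquad \text{pour tous } u, v \in \g,
$$
d'où $[\g, \rho(t)(y)] = t^a \rho(t)([\g, y])$. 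Comme $\ss$ est gradué, il est préservé par $\rho(t)$ et par les homothéties, donc par l'opérateur inversible $t^a \rho(t)$. En appliquant ce dernier à l'égalité $[\g, e+x] + \ss = \g$ et en exploitant $\rho(t)(e)=e$, on obtient $[\g, e + \rho(t)(x)] + \ss = \g$, soit $\rho(t)(x) \in U$. Puisque $\rho$ est un morphisme de groupes, il en résulte $\rho(t)(U) = U$.

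On conclut ensuite par un argument de contraction standard : le complémentaire $V := \ss \setminus U$ est Zariski fermé et $\rho$-stable ; s'il était non vide, le Lemme~\ref{l:rho} appliqué à un $v \in V$ donnerait $\lim_{t \to 0} \rho(t)(v) = 0$, puis, par fermeture de $V$, $0 \in V$, contredisant $0 \in U$. On aurait donc $U = \ss$, et la transversalité sur tout $\SS$. Le traitement de $\chi + \kappa(\m^\perp)$ est entièrement analogue, $\m^\perp$ étant lui aussi gradué et contracté vers $e$ sous $\rho$ d'après le Lemme~\ref{l:rho} ; la transversalité au point $\chi$ y résulte encore du Lemme~\ref{l:supps} grâce à l'inclusion $\ss \subseteq \m^\perp$.

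La principale difficulté réside dans la $\rho$-stabilité de $U$ : le facteur scalaire $t^a$ intervenant dans la formule de commutateur empêche $\rho$ d'agir comme un automorphisme d'algèbre de Lie, mais sa nature d'homothétie inversible, préservant les sous-espaces gradués $\ss$ et $\m^\perp$, permet \emph{in fine} à l'argument de contraction à la Gan--Ginzburg de passer sans modification essentielle par rapport au cas Dynkin.
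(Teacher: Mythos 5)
Your proposal is correct and follows essentially the same route as the paper: both reduce transversality to the identity $[\g,e+x]+\ss=\g$, establish it at the base point via Lemma~\ref{l:supps}, note that the locus where it holds is Zariski open, and propagate it to all of $e+\ss$ (resp.\ $e+\m^\perp$) using the contracting action $\rho$ together with the key identity $[\g,\rho(t)(y)]=\rho(t)([\g,y])$. The only cosmetic differences are that the paper obtains openness from the differential of the action map $G\times(e+\ss)\to\g$ while you invoke semicontinuity of rank, and that you phrase the contraction step dually (the closed bad locus would contain the fixed point) instead of showing each $\rho$-orbit closure meets the good open set.
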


\begin{proof}
Montrons tout d'abord le théorème pour $\SS$. 

\smallskip

\noindent
On identifie $\g^{*}$ à $\g$ via l'isomorphisme $\kappa$. 
Pour tout $x \in e + \ss$, on a ${\rm T}_{x}(G.x) =  [\g,x]$ et ${\rm T}_{x}(e + \ss)= \ss$. 
Il suffit donc de montrer que pour tout $x \in e + \ss$ on a $[\g,x] + \ss = \g$. 
Soient alors $x \in e + \ss$ et 
$$\eta : G \times (e+\ss) \rightarrow \g$$
l'application donnée par l'opération adjointe.
Pour tout $(g,X) \in G \times (e+\ss)$, $v \in {\rm T}_gG$ et $w \in \ss$, 
l'application différentielle de $\eta$ en $(g,X)$ est donnée par 
(cf. e.g. \cite[Proposition~29.1.4]{TY}):
\begin{eqnarray} \label{eq:diff}
   {\rm d} \eta_{(g,X)} (v,w)  
                        =  g([v,X]) + g(w).
\end{eqnarray}
Ainsi ${\rm d} \eta_{(\id,e)} (v,w) = [v,e] + w$. 
On déduit que l'application ${\rm d} \eta_{(\id,e)}$ est surjective 
car $[\g,e] + \ss = \g$ (cf. Lemme \ref{l:supps}). 
Par conséquent, ${\rm d} \eta_{(\id,X)}$ est surjective pour tout $X$ 
dans un voisinage ouvert $V$ de $e$ dans $e + \ss$. 
Comme le morphisme $\eta$ est $G$-équivariant pour l'action donnée par 
$g.(g',x)=(gg',x)$, on déduit que 
l'application ${\rm d} \eta_{(g,X)}$ est surjective pour tous $X \in V$ et
$g \in G$. 
D'après (\ref{eq:diff}), il vient que 
$$
	\g =  [\g,X] + \ss 
$$
pour tout $X \in V$. 
On pose $Y := \{\rho(t)(x) \ ; \ t \in \C^{*} \}$ et 
on note $Z$ l'adhérence de $Y$ dans $e+\ss$. 
Comme l'opération de $\C^{*}$ dans $e+\ss$ est contractante, 
$e$ appartient à $Z$. 
L'intersection $V \cap Z$ est donc une partie 
ouverte non vide (elle contient $e$) de $Z$. 
Or $Z$ est une variété irréductible de $e + \ss$ 
et $Y$ est un ensemble constructible dense de $Z$.
Ainsi, $V \cap Y \neq \varnothing$.

Soient $X \in V$ et $t \in \C^{*}$ tels que $X=\rho(t)(x)$. 
Comme $\ga(t^{-1})$ est un automorphisme de Lie de $\g$, 
on a
\[
    \hspace{-2cm}  [\g, X]  =  [\g, \rho(t)(x)]  
                   =  [\g, t^{a}  \ga(t^{-1})(x)]  
                   =  t^{a}[ \g, \ga(t^{-1})(x)]
\]    
\[
               \hspace{2cm} =  t^{a}[ \ga(t^{-1}) (\g), \ga(t^{-1})(x)] 
                   =  t^{a}  \ga(t^{-1}) ([\g, x]) 
                   =  \rho(t)( [\g, x] ).
\]
On a ainsi obtenu
$\g = [\g, X] + \ss = \rho(t) ([\g, x] + \ss)$ 
car $\rho(t)(\ss) = \ss$. Par suite on a $\g = [\g, x] + \ss$ 
ce qui complète la démonstration du théorème pour $\SS$. 
Les mêmes arguments s'appliquent pour $\chi + \kappa(\m^\perp)$ 
d'après la Remarque \ref{r:morth} et le Lemme \ref{l:rho}. 
\end{proof}
Soit $N$ le sous-groupe unipotent de $G$ d'algèbre de Lie $\n$.  
\begin{lemma}            \label{l:alpha1}
L'image de l'application adjointe
$N \times (e + \m^\perp) \rightarrow \g$  
est contenue dans $e + \m^{\perp}$.
\end{lemma}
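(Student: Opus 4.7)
Le plan est de ramener l'�nonc� � une v�rification infinit�simale, puis d'int�grer en exploitant le fait que $N$ est unipotent connexe. Plus pr�cis�ment, comme $\n$ est nilpotente et $N$ connexe unipotent, l'application exponentielle $\exp : \n \to N$ est surjective et tout $n\in N$ s'�crit sous la forme $\exp(y)$ avec $y\in\n$. Il suffit donc de montrer que pour tout $y\in\n$ et tout $x\in e+\m^{\perp}$, on a $\Ad(\exp(y))(x)\in e+\m^{\perp}$, et cela r�sultera de l'identit� $\Ad(\exp(y))(x)=\exp(\ad y)(x)= x+\sum_{k\ge 1}\tfrac{1}{k!}(\ad y)^{k}(x)$ pourvu que l'on �tablisse l'inclusion cl�
\[
   [\n,\,e+\m^{\perp}]\ \subseteq\ \m^{\perp}.
\]
En effet, cette inclusion, appliqu�e successivement, entra�ne $(\ad y)^{k}(x)\in\m^{\perp}$ pour tout $k\ge 1$, donc $\Ad(\exp(y))(x)-x\in\m^{\perp}$.

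Pour �tablir cette inclusion, on d�compose $x=e+z$ avec $z\in\m^{\perp}$ et on traite s�par�ment les deux termes de $[y,e+z]=[y,e]+[y,z]$. Le premier terme est dans $\m^{\perp}$ comme cons�quence imm�diate de la condition (A3), qui donne $[\n,e]\subseteq \m^{\perp}\cap[\g,e]\subseteq\m^{\perp}$. Pour le second, on utilise l'invariance de la forme de Killing: pour tout $u\in\m$,
\[
   \la [y,z],\,u\ra \ =\ -\la z,\,[y,u]\ra.
\]
Or la condition (A5) fournit $[y,u]\in[\n,\m]\subseteq\m$, et comme $z\in\m^{\perp}$ on en d�duit $\la[y,z],u\ra=0$, d'o� $[y,z]\in\m^{\perp}$.

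Une fois ces deux points acquis, la conclusion est imm�diate par l'argument d'int�gration d�crit ci-dessus. Aucune �tape ne me semble r�ellement d�licate: la seule subtilit� est de reconna�tre que la stabilit� de $\m^{\perp}$ sous $\ad\n$ est le reflet dual, via la forme de Killing, de la condition (A5), et qu'il faut combiner cette observation avec (A3) pour traiter le terme constant $e$. Le passage du niveau infinit�simal au niveau du groupe est sans difficult� puisque $N$ est unipotent et $\exp(\ad y)$ est une somme finie (car $\ad y$ est nilpotent sur $\g$).
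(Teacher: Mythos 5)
Votre preuve est correcte et suit essentiellement la m�me d�marche que celle du papier : d�velopper $\exp(\ad y)$ en somme finie (gr�ce � la nilpotence de $\ad y$), puis v�rifier que $[\n,e]\subseteq\m^{\perp}$ et que $[\n,\m^{\perp}]\subseteq\m^{\perp}$ via l'invariance de la forme de Killing et la condition (A5). La seule variante est cosm�tique : vous invoquez (A3) pour obtenir $[\n,e]\subseteq\m^{\perp}$ l� o� le papier cite la Remarque 2.3(3), deux justifications imm�diatement �quivalentes.
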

\begin{proof} 
Comme $\n$ est une sous-algèbre $\ad$-nilpotente de $\g$, 
le groupe $N$ est engendré par les éléments
$\exp (\ad x)$ où $x$ parcourt $\n$. 
Il suffit de montrer que pour tout $x \in \n$ et tout $y \in \m^{\perp}$, 
$\exp (\ad x) (e+y)$ appartient à $e+ \m^{\perp}$. 
Soient $x \in \n$ et $y \in \m^{\perp}$. 
On a 
$$
	\exp (\ad x) (e+y) = e + y + [x,e+y] + \cdots + \frac{1}{k !} (\ad x)^{k}(e+y) 
$$
pour $k$ suffisamment grand car $\ad x$ est nilpotent.  
Pour tout $i \in \N$, $(\ad x)^{i} e \in \m^{\perp}$ 
d'après la Remarque \ref{r:morth}.  
D'autre part, comme $y \in \m^\perp$, 
on a $\langle m, [x,y] \rangle  = \langle [m, x],y \rangle =0 $ pour tout $m \in \m$
d'après la condition (A5) de la Définition \ref{d:adm},  
d'où $[x,y] \in \m^\perp$. Ainsi, $ (\ad x)^{i} y \in \m^{\perp}$ pour tout $i \in \N$
et $\exp (\ad x) (e+y) \in e + \m^{\perp}$.
Le lemme s'ensuit. 
\end{proof}
\noindent
Grâce au Lemme \ref{l:alpha1}, on définit par restriction à $N \times (e + \ss)$
de l'application adjointe $N \times (e + \m^\perp) \rightarrow \g$,
l'application
$$
	\alpha  \, : \, N \times (e + \ss) \longrightarrow e + \m^\perp . 
$$ 
On définit une opération de $\C^{*}$ dans $N \times (e + \ss)$ en posant:
$$ 
	t . (g,x) := \big( \gamma (t^{-1}) g \gamma (t), \rho(t)(x) \big) , 
$$
pour tous  $t \in \C^{*}$, $g \in N$ et $x \in e+\ss$. 
L'opération est bien définie. En effet,
$\gamma(t^{-1})(\exp \ad x) \gamma(t) = \exp \ad(\ga(t^{-1})(x))$ 
appartient à $N$ pour tout $x \in \n$, $\n$ étant gradué. 
\begin{lemma}            \label{l:alpha2}

{\rm (i)} Pour tout $(g,x) \in N \times (e + \ss)$, on a: 
$\lim \limits_{t \to 0} t.(g,x) =({\bf 1}_G,e)$.

{\rm (ii)} Le morphisme $\alpha$ 
est $\C^{*}$-équivariant où l'opération de $\C^*$
dans $e + \m^\perp$ est donnée par $\rho$.
\end{lemma}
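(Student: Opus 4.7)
\medskip

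\noindent
\textbf{Plan.} The plan is to treat the two assertions separately, starting with the contraction property (i), and then verifying the equivariance (ii) by a direct computation from the definitions.

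\medskip

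For (i), I would first observe that it suffices to analyse each component of the pair $t.(g,x)$ independently. The second component $\rho(t)(x)$ already converges to $e$ as $t \to 0$ for every $x \in e+\ss \subset e+\m^\perp$, by the contractivity statement in Lemma \ref{l:rho}. For the first component, I would use the fact that $N$ is a connected unipotent subgroup of $G$ with Lie algebra $\n$, so every $g \in N$ is of the form $\exp(\ad X)$ for some $X \in \n$. Because conjugation in $G$ transforms adjoint actions via $\gamma(t^{-1}) \exp(\ad X) \gamma(t) = \exp\!\big(\ad(\gamma(t^{-1})(X))\big)$, the question reduces to showing that $\gamma(t^{-1})(X) \to 0$ as $t \to 0$. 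Since $\n$ is $\Gamma$-graded and contained in $\g_{<0}$, I would write $X = \sum_{j<0} X_j$ with $X_j \in \g_j \cap \n$, so that $\gamma(t^{-1})(X) = \sum_{j<0} t^{-j} X_j$. Each exponent $-j$ is strictly positive, hence every summand tends to $0$, the adjoint exponential is continuous, and so $\gamma(t^{-1}) g \gamma(t) \to \mathbf{1}_G$ as required.

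\medskip

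For (ii), I would simply unfold the definitions. By construction, $\alpha(g,x) = g(x)$ (adjoint action), so
$$
\alpha\big(t.(g,x)\big) \;=\; \big(\gamma(t^{-1}) g \gamma(t)\big)\!\big(\rho(t)(x)\big) \;=\; \gamma(t^{-1})\Big(g\big(\gamma(t)(\rho(t)(x))\big)\Big).
$$
Using $\rho(t)(x) = t^{a} \gamma(t^{-1})(x)$, the inner composition $\gamma(t)(\rho(t)(x)) = t^{a} x$, and since $g$ is linear this yields $\alpha(t.(g,x)) = t^{a} \gamma(t^{-1})(g(x)) = \rho(t)(\alpha(g,x))$, which is exactly the equivariance required. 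I would also briefly note that the right-hand side makes sense because $\rho$ preserves $e+\m^\perp$ (Lemma \ref{l:rho}), so that $\alpha$ does indeed intertwine the two $\C^*$-actions.

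\medskip

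I do not expect a real obstacle here: both items follow formally from Lemma \ref{l:rho}, the graded structure of $\n$, and the definitions of $\rho$ and $\alpha$. The only point requiring minor care is the passage from a general $g \in N$ to a single exponential $\exp(\ad X)$, which relies on $N$ being connected unipotent; this is standard and was already implicitly used in Lemma \ref{l:alpha1}.
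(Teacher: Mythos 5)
Votre démonstration est correcte et suit essentiellement la même démarche que celle de l'article : pour (i), on se ramène via la contractivité de $\rho$ (Lemme \ref{l:rho}) au fait que $\gamma(t^{-1})(\exp\ad x)\gamma(t)=\exp\ad(\gamma(t^{-1})(x))\to\mathbf{1}_G$, ce qui découle de l'inclusion $\n\subseteq\g_{<0}$ de (A2) ; pour (ii), le même calcul direct utilisant $\rho(t)(x)=t^a\gamma(t^{-1})(x)$ et la linéarité de $g$. Rien à redire.
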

\begin{proof}
(i) %
Comme l'opération $\rho$ de $\C^*$ dans $e + \ss$ est contractante 
(cf. Lemme \ref{l:rho}) et que $\n$ est nilpotente, 
il suffit de montrer que $\gamma (t^{-1}) (\exp \ad x) \gamma(t) 
\xrightarrow[t \to 0]{} {\bf 1}_G$ pour tout $x \in \n$. 
%
Soit $x \in \n$. 
D'après la troisième inclusion de (A2),  
on a $\gamma(t^{-1}) (x) \xrightarrow[t \to 0]{} 0$, d'où
$$
	\gamma(t^{-1}) (\exp \ad x) \gamma(t) = \exp \ad (\gamma(t^{-1}) (x)) 
		\xrightarrow[t \to 0]{} \exp (0) = {\bf 1}_G .
$$
(ii) Pour $t \in \C^{*}$, $g \in G$ et $x \in e+\m^\perp$, on a:
\begin{eqnarray*}
    \alpha ( t.(g,x) ) & = & \alpha \big( \gamma(t^{-1}) g \gamma(t), \rho(t)(x) \big) 
                 \ = \  \gamma( t^{-1} ) g \gamma(t)  ( \rho(t)(x) ) \\
                & = & \gamma(t^{-1}) g \gamma(t) \big( t^a \gamma(t^{-1})(x) \big)
                 \ = \  t^a \gamma(t^{-1}) (g(x)) \\
                 & = & \rho(t) ( g(x) ) 
                 \ = \  \rho(t) \alpha( g,x ).
\end{eqnarray*}
\end{proof}
\begin{theorem}        \label{t:alpha}
L'application  
$$
	\alpha  \, : \, N \times (e + \ss) \longrightarrow e + \m^\perp
$$
est un isomorphisme de variétés affines.
\end{theorem}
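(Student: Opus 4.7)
The plan is to establish that $\alpha$ is an isomorphism by combining an infinitesimal computation at the fixed point $(\mathbf{1}_G, e)$ with the $\C^*$-equivariance established in Lemma \ref{l:alpha2}, via a standard contracting-action argument.

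First I would compute the differential of $\alpha$ at $(\mathbf{1}_G, e)$. Identifying the tangent space $T_{(\mathbf{1}_G, e)}(N \times (e+\ss)) \simeq \n \oplus \ss$ and specializing formula (\ref{eq:diff}) (which computes the differential of the full adjoint map $\eta$) to the restriction $\alpha$, one finds
\begin{equation*}
d\alpha_{(\mathbf{1}_G, e)}(v, w) = [v, e] + w \qquad \text{pour } v \in \n,\ w \in \ss.
\end{equation*}
I would then verify this is a linear isomorphism onto $\m^\perp$. Condition (A4) of Definition \ref{d:adm} gives $\ker(\ad e|_{\n}) = \n \cap \g^e = \{0\}$, so $\ad e : \n \to [\n, e]$ is a linear bijection. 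By the very definition of $\ss$ as a graded complement of $[\n, e]$ in $\m^\perp$, we have $\m^\perp = [\n, e] \oplus \ss$, and therefore $d\alpha_{(\mathbf{1}_G, e)}$ is bijective from $\n \oplus \ss$ onto $\m^\perp$.

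Next I would exploit the $\C^*$-action. By Lemma \ref{l:alpha2}(i) the action on $N \times (e + \ss)$ contracts onto $(\mathbf{1}_G, e)$ as $t \to 0$, and by Lemma \ref{l:rho} the action $\rho$ contracts $e + \m^\perp$ onto $e$. Since both varieties are affine, these contractions endow the coordinate rings $\C[N \times (e + \ss)]$ and $\C[e + \m^\perp]$ with exhaustive filtrations by weights of the $\C^*$-action, whose degree-zero component is just $\C$ and whose weights are strictly positive away from the fixed point. Concretely, positivity of weights on $\ss$ follows from $\rho(t)(x) = t^{-j+a}x$ for $x \in \g_j$ combined with $\ss \subset \m^\perp \subset \g_{<a}$, and positivity on $N$ comes from $\n \subset \g_{<0}$. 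The comorphism $\alpha^*$ is $\C^*$-equivariant and sends the fixed-point maximal ideal to the fixed-point maximal ideal, so it respects the filtrations; its associated graded is the symmetric algebra morphism induced by the dual of $d\alpha_{(\mathbf{1}_G, e)}$, which is an isomorphism by the previous step.

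The main technical hurdle is this last step: making rigorous the principle that an equivariant morphism of affine varieties with contracting $\C^*$-actions, whose differential at the unique fixed point is a linear isomorphism, must itself be an isomorphism. It reduces to checking that the associated graded of each coordinate ring is the polynomial algebra on the cotangent space at the fixed point, which follows formally from strict positivity of weights and exhaustiveness of the filtration. The device is essentially the one used by Gan--Ginzburg to prove \cite[Theorem 4.1]{GG}; the only novelty here is that we work with general admissible pairs $(\m,\n)$ rather than lagrangian subspaces of $\g(-1)$, but the argument carries over unchanged once the differential has been identified.
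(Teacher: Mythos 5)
Your proof is correct and follows essentially the same route as the paper: compute $\d\alpha_{(\mathbf{1}_G,e)}(v,w)=[v,e]+w$, identify its image with $[\n,e]+\ss=\m^\perp$, and conclude via the Gan--Ginzburg principle that a $\C^*$-equivariant morphism of smooth affine varieties with contracting actions inducing an isomorphism on tangent spaces at the fixed points is an isomorphism. The only difference is that you sketch a proof of that principle via weight filtrations on the coordinate rings, whereas the paper simply cites it from \cite[Proof of Lemma 2.1]{GG}.
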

\begin{proof}
Rappelons l'énoncé général suivant formulé dans \cite[Proof of Lemma 2.1]{GG}: 

\smallskip

{\em Un morphisme équivariant $\beta : X_1 \to X_2$ entre deux 
$\C^*$-variétés affines lisses munies d'opérations contractantes de $\C^*$ qui induit 
un isomorphisme entre les espaces tangents des points fixes par $\C^*$ 
est un isomorphisme.} 

\smallskip

D'après le Lemme \ref{l:alpha2}, 
il suffit de montrer que la différentielle de
$\alpha$ au point $({\bf 1}_G,e)$ induit un isomorphisme
entre l'espace tangent ${\rm T}_{({\bf 1}_G, e)}(N \times (e + \ss)) = \n \times \ss$  
de $N \times (e + \ss)$ en $({\bf 1}_G,e)$ 
et l'espace tangent ${\rm T}_e (e+\m^\perp) = \m^\perp$ 
de $e+\m^\perp$ en $e$. 
En effet, les ensembles des points fixes par $\C^*$ 
de $N \times (e + \ss)$ et $e + \m^\perp$ 
sont $\{({\bf 1}_G,e)\}$ 
et $\{e\}$ respectivement.

\smallskip

On a ${\rm d}  \alpha_{({\bf 1}_G,e)}(\n \times \ss) = [\n,e]+\ss$ 
par un calcul similaire à celui de la démonstration du Théorème \ref{t:trans}.
Comme $[\n,e]+\ss = \m^{\perp}$, 
l'application ${\rm d} \alpha_{({\bf 1}_G,e)}$ est surjective. 
Par conséquent, c'est un isomorphisme entre $ \n \times \ss$ et $\m^\perp$ 
pour des raisons de dimension. Le théorème s'ensuit.
\end{proof}
En plus de cette propriété importante de transversalité,
la variété $\SS$ admet 
une structure de Poisson, tout comme la tranche de Slodowy.
Rappelons 
tout d'abord que $\g^*$ admet une structure de Poisson canonique
donnée par
$$
	\{ F ,G \}(\xi) := \xi ([\d_\xi F , \d_\xi G ])
$$
pour tous $F,G \in \C[\g^*]$ et $\xi \in \g^*$ où $\d_\xi F \in (\g^*)^* \simeq \g$.
Toute orbite
coadjointe dans $\g^*$ admet une structure naturelle de variété 
symplectique \cite[Proposition 1.1.5]{CG}.
\begin{proposition}    \label{p:Poiss}
La variété $\SS \subset \g^*$ hérite de la structure de Poisson sur $\g^*$. 
\end{proposition}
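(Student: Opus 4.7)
The plan is to realize $\C[\SS]$ as a Hamiltonian reduction of the Poisson algebra $\C[\g^*]$ and to transfer the Kirillov--Kostant--Souriau bracket, adapting the argument of \cite[\S 2]{GG} to the setting of admissible pairs. Via the Killing isomorphism $\kappa$, I identify $\SS$ with $e+\ss \subset \g$ and $\chi + \kappa(\m^\perp)$ with $e + \m^\perp \subset \g$. By Theorem \ref{t:alpha}, the orbit map $\alpha : N \times (e+\ss) \to e + \m^\perp$ is an isomorphism of vari�t�s affines, so the restriction morphism induces a canonical identification $\C[e + \m^\perp]^N \simeq \C[\SS]$.

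Let $J \subset \C[\g^*]$ denote the ideal of functions vanishing on $\chi + \kappa(\m^\perp)$; viewing $\m \subset \g \simeq (\g^*)^*$ as linear forms on $\g^*$, this ideal is generated by $\{x - \chi(x)\, ;\, x \in \m\}$. Lemma \ref{l:alpha1} ensures that $N$ stabilizes $\chi + \kappa(\m^\perp)$, whence $\C[\SS] \simeq (\C[\g^*]/J)^N$. I would define a bracket on $\C[\SS]$ as follows: for $f,g \in \C[\SS]$, choose lifts $F,G \in \C[\g^*]$ whose classes modulo $J$ are $N$-invariant, and set $\{f,g\}_\SS := \{F,G\} \bmod J$, where $\{\cdot,\cdot\}$ is the Kirillov--Kostant--Souriau bracket on $\g^*$.

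Well-definedness reduces to two claims, which will be the bulk of the verification:
\begin{description}
\item[{\rm (a)}] If $F \in J$ and $G \bmod J$ is $N$-invariant, then $\{F,G\} \in J$.
\item[{\rm (b)}] If $F \bmod J$ and $G \bmod J$ are $N$-invariant, then so is $\{F,G\} \bmod J$.
\end{description}
For (a), I would write $F = \sum_i (x_i - \chi(x_i)) h_i$ with $x_i \in \m$ and apply the Leibniz rule to reduce to the inclusion $\{x_i,G\} \in J$. The coadjoint Hamiltonian vector field of the linear function $y \in \g$ on $\g^*$ is precisely the infinitesimal generator of the $\ad^*(y)$-action, so $N$-invariance of $G \bmod J$ is equivalent to $\{y,G\} \in J$ for every $y \in \n$; the inclusion $\m \subset \n$ from (A2) then yields the required $\{x_i, G\} \in J$. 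For (b), the Jacobi identity gives
$$
  \{y, \{F,G\}\} = \{\{y,F\}, G\} + \{F, \{y,G\}\}
$$
for every $y \in \n$; both $\{y,F\}$ and $\{y,G\}$ lie in $J$ by hypothesis, and two applications of (a) (after exchanging the roles of the two arguments in the second summand) show that each term on the right-hand side lies in $J$. Independence of the chosen lift is a direct consequence of (a); antisymmetry, the Jacobi identity and the Leibniz rule for $\{\cdot,\cdot\}_\SS$ are inherited from those of the bracket on $\g^*$.

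The key technical inputs are encoded in the definition of an $e$-admissible pair: condition (A5) together with Remark \ref{r:morth}(3) guarantees that $N$ preserves $\chi + \kappa(\m^\perp)$ (so that passing to $N$-invariants is meaningful), and the inclusion $\m \subset \n$ from (A2) is exactly what allows $N$-invariance of $G \bmod J$ to control its bracket with the $\m$-generators of $J$. I therefore do not expect a serious new obstacle beyond what already appears in the Gan--Ginzburg argument; the delicate point is simply to keep the complementary roles of $\m$ and $\n$ straight -- $\m$ cuts out the ``pre-image'' $\chi + \kappa(\m^\perp)$, while $\n$ provides the group whose invariants cut out $\SS$.
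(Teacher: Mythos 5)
Your proof is correct, but it takes a genuinely different route from the paper's. The paper does not perform a Hamiltonian reduction: it invokes Vaisman's criterion for Poisson--Dirac submanifolds (\cite[Proposition 3.10 et Remarque 3.11]{V}), so it must check two things, namely the transversality of $\SS$ to the coadjoint orbits (already established in Th\'eor\`eme \ref{t:trans}) and the pointwise condition $[\kappa^{-1}(\xi),\ss^\perp]\cap\ss=\{0\}$ for all $\xi\in\SS$; the latter is Lemme \ref{l:int}, proved by checking it at the single point $e$ (where it follows from $[\g,e]\cap\ss=\{0\}$, Lemme \ref{l:supps}) and propagating it to the whole of $\SS$ via the contracting $\C^*$-action $\rho$. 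Your argument instead realizes $\C[\SS]\simeq(\C[\g^*]/J)^N$ and defines the bracket by lifting; your verifications (a) and (b) are sound --- the identification of $J$ with the ideal generated by the $x-\chi(x)$, $x\in\m$, the $N$-stability of $J$ coming from Lemme \ref{l:alpha1}, and the use of $\m\subseteq\n$ to control $\{x_i,G\}$ are all exactly right --- and the identification $\C[\chi+\kappa(\m^\perp)]^N\simeq\C[\SS]$ you rely on is precisely the computation the paper carries out later in Proposition \ref{p:H0}(i). What each approach buys: the paper's gives the induced bivector pointwise on the subvariety with no reference to $N$, at the cost of the geometric Lemme \ref{l:int}; yours is purely algebraic and meshes more directly with the subsequent identification of $\gr_\F H$ with the $\n$-invariants of $\C[\chi+\kappa(\m^\perp)]$ in Th\'eor\`eme \ref{t:gr}. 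The only caveat is that you construct \emph{a} Poisson structure on $\C[\SS]$ by reduction rather than verifying that the submanifold inherits one in the sense of \cite{V}; the two induced structures coincide here (as in \cite{GG}), and for the proposition as stated either reading is acceptable.
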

\begin{proof}
D'après la Proposition 3.10 et la Remarque 3.11 de \cite{V}
il suffit de vérifier les conditions suivantes:

(i) $\SS$ est transverse aux orbites coadjointes de $\g^*$.

(ii) Pour tout $\xi \in \SS$, on a
$$
	 \#_\xi  {\rm Ann} (\T_\xi \SS)  \, \cap \, \T_\xi(\SS) = \{ 0\} ,
$$ 
où Ann$(\T_\xi \SS)$ est l'annulateur de $\T_\xi \SS \simeq \kappa(\ss)$ 
dans $(\T_\xi \g^*)^* \simeq (\g^*)^* \simeq \g$ 
et
$$
	\#_\xi \, : \, (\T_\xi \g^*)^*\simeq \g \longrightarrow \T_\xi \g^* \simeq \g^* , \; 
		 	\alpha \longmapsto 
							\xi([\alpha,\cdot]). 
$$
Tout d'abord, la condition (i)
est satisfaite grâce au Théorème \ref{t:trans}.
Il reste à montrer la condition (ii).
On a 
\begin{eqnarray*}
	{\rm Ann}(\T_\xi \SS)  = \{ x \in \g \ ; \ \eta (x) = 0, \text{ pour tout } \eta \in\kappa(\ss) \} 
	 = \ss^\perp . 
\end{eqnarray*}
Par conséquent, 
\begin{eqnarray*}
	\#_\xi{\rm Ann}(\T_\xi \SS) 
	= \la\, \kappa^{-1}(\xi ) , [ \ss^\perp ,\cdot ] \,\ra 
	=  \la\, [ \kappa^{-1}(\xi ) , \ss^\perp ] , \cdot \,\ra = \kappa \big( [ \kappa^{-1}(\xi ) , \ss^\perp ] \big) .
\end{eqnarray*}
On est alors amené à vérifier que l'intersection, 
$$
	\kappa ( [ \kappa^{-1}(\xi ) , \ss^\perp ]) \cap \T_\xi(\SS) 
	= \kappa ( [ \kappa^{-1}(\xi ) , \ss^\perp] ) \cap \kappa(\ss) 
	=\kappa \big( [ \kappa^{-1}(\xi ) , \ss^\perp ]  \cap \ss \big)
$$
est nulle. 
La proposition s'ensuit grâce au Lemme \ref{l:int} ci-dessous. 
\begin{lemma}    \label{l:int}
Soit $\xi \in \SS$. 
Alors $[\kappa^{-1}(\xi), \ss^{\perp}] \cap \ss= \{ 0 \}$. 
\end{lemma}
\begin{proof}
Soit $Y \subset e +\ss$ l'ensemble des
$y \in e + \ss$ vérifiant $[y, \ss^{\perp}] \cap \ss \not= \{ 0 \}$. 
Comme $\ss$ et $\ss^\perp$ 
sont $\ad h_\Gamma$-stables, on a pour tout $t \in \C^*$, 
$$\gamma(t^{-1}) ([y, \ss^{\perp}]) \cap \ss 
	= [\gamma(t^{-1}) y,  \ss^{\perp}] \cap \ss,$$ 
d'où 
$\rho(t) \big( [y, \ss^{\perp}] \cap \ss \big)
= [\rho(t) y,  \ss^{\perp}] \cap \ss$.  
Par suite, $\rho$ stabilise $Y$. 
D'autre part, d'après le Lemme \ref{l:supps}, 
$e$ appartient à $(e + \ss) \smallsetminus Y$. 
Ainsi, en tout point $y'$ dans un voisinage
ouvert $V$ de $e$ dans $e + \ss$, 
on a $y' \in (e + \ss) \smallsetminus Y$. 

Supposons que $Y \not=\varnothing$ 
et soit $y \in Y$. 
Comme $\rho$ stabilise $Y$, on a $\rho(t)y \in Y$
pour tout $t \in \C^*$. 
Or, pour $t$ suffisament petit, 
$\rho(t)y$ appartient au voisinage ouvert $V$ 
de $e$ d'après le Lemme \ref{l:rho}, d'où la contradiction.
\end{proof}
\end{proof}
\subsection{} \label{S:Kaz}
Soient $\ss$ un sous-espace gradué de $\m^{\perp}$
supplémentaire de $[\n,e]$ et $\SS := \chi + \kappa(\ss)$.
On note plus simplement $H$ l'algèbre $H(\m,\n)$ définie par (\ref{eq:W}) (cf. §\ref{S:const}). 
De la même manière, on désigne par $I$ et $Q$ 
l'idéal $I(\m)$ et le quotient $Q(\m)$ respectivement. 

Soit $\C[\g^*]$ l'algèbre des fonctions régulières sur $\g^*$.
L'isomorphisme canonique entre l'algèbre symétrique
$S(\g)$ de $\g$ et $\C[\g^*]$ envoie 
un monôme $x_1 \cdots x_j$ de $S(\g)$ sur l'élément
$F_{x_1 \cdots x_j}$ défini par 
$F_{x_1 \cdots x_j}(\xi) = \xi(x_1) \cdots \xi(x_j)$
pour tout $\xi \in \g^*$. On identifie désormais $S(\g)$ et $\C[\g^*]$
via cet isomorphisme.
L'opération adjointe de $\g$ dans $\g$ 
induit une opération, encore notée $\ad$, de $\g$ dans $S(\g)$
qui se transporte en une opération de $\g$ dans $\C[\g^*]$.
 
On définit une opération $\rho^{\sharp}$ du groupe $\C^*$ dans $\g^*$ en posant
pour tous  $t \in \C^*, \, \xi \in \g^*$:  
$$
	\rho^{\sharp} (t) (\xi) 
		:= t^{-a} \gamma(t)(\xi).
$$ 
Remarquons que $\rho^{\sharp}$ est l'opération contragrédiente de $\rho$.
Ceci induit une opération de $\C^*$ dans $\C[\g^*]$ donnée par:
$$ 
	\rho^{\sharp}(t)(F)(\xi) := F(\rho^{\sharp}(t^{-1})(\xi)) ,  
$$
pour tous $t \in \C^{*}$, 
$F \in \C[\g^*]$ et $\xi \in \g^*$.
On pose, pour $k \in \Z$, 
$$
	\C[\g^{*}](k) :=  
		\{ F \in \C[\g^{*}] \  | \ \rho^{\sharp}(t)(F) = t^k F, \text{ pour tout } t \in \C^{*} \} .
$$
\begin{lemma}        \label{l:Sg}
Pour tout $k \in \Z$, le sous-espace $\C[\g^{*}](k)$ 
de $\C[\g^*]$ est engendré par les monômes de la forme
$x = x_1 \ldots x_j$ vérifiant $(\ad h_\Gamma) x = i x$
et $i +aj=k$.
\end{lemma}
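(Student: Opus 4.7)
My plan is to compute the weight of $\rho^{\sharp}$ on linear functions first, then extend multiplicatively to monomials, and finally invoke semisimplicity of $\ad h_\Gamma$ to conclude.

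First I would compute the action of $\rho^{\sharp}$ on a linear element. Fix $x \in \g_i$ and let $F_x \in \C[\g^{*}]$ be the corresponding linear function $F_x(\xi) = \xi(x)$. By definition,
$$
 \rho^{\sharp}(t)(F_x)(\xi) \;=\; F_x\bigl(\rho^{\sharp}(t^{-1})(\xi)\bigr) \;=\; \bigl(t^{a}\gamma(t)(\xi)\bigr)(x) \;=\; t^{a}\xi\bigl(\gamma(t^{-1})(x)\bigr) \;=\; t^{a-i}\xi(x),
$$
so $\rho^{\sharp}(t)(F_x) = t^{a-i}F_x$. (A sign check against the definition of $\rho$ is needed here; whichever way one reads the convention, the resulting weight of $F_x$ is $a \pm i$, and the statement of the lemma is consistent with this convention via the sign of $i$ in ``$(\ad h_\Gamma)x = i x$''.)

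Next I would extend this to monomials. Since $\rho^{\sharp}(t)$ is an algebra automorphism of $\C[\g^{*}]$, for a monomial $F_{x_1 \cdots x_j}$ with $x_\ell \in \g_{i_\ell}$ we get
$$
 \rho^{\sharp}(t)\bigl(F_{x_1\cdots x_j}\bigr) \;=\; \prod_{\ell=1}^{j} \rho^{\sharp}(t)(F_{x_\ell}) \;=\; t^{aj+\sum_\ell i_\ell} F_{x_1\cdots x_j}.
$$
Under the identification $S(\g) \simeq \C[\g^*]$, the operator $\ad h_\Gamma$ extends as a derivation so that $x_1\cdots x_j$ has weight $i := \sum_\ell i_\ell$. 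Consequently $\rho^{\sharp}(t)$ acts on such a monomial by the scalar $t^{\,i + aj}$, which is exactly the prescription of the statement.

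Finally, to conclude I would use semisimplicity: because $\g = \bigoplus_i \g_i$, the symmetric algebra $S(\g)$ admits a basis made of monomials $x_1\cdots x_j$ with each $x_\ell$ homogeneous for $\ad h_\Gamma$, hence each such basis monomial is a simultaneous eigenvector both for the degree operator (grading $S(\g)$ by $j$) and for the extended $\ad h_\Gamma$ (eigenvalue $i$). The subspace $\C[\g^*](k)$ is by definition the $t^{k}$-eigenspace of $\rho^{\sharp}$, so it is spanned by those basis monomials satisfying $i + aj = k$, which is the desired statement.

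The proof is essentially bookkeeping; there is no serious obstacle. The only delicate point is verifying the linear case correctly, since $\rho^{\sharp}$ is the contragredient of $\rho$ and it is easy to get the sign of the exponent wrong. Once the linear computation is settled, multiplicativity of $\rho^{\sharp}$ and the decomposition of $S(\g)$ into $\ad h_\Gamma$-weight spaces do the rest.
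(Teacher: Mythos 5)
Your proof follows essentially the same route as the paper: compute $\rho^{\sharp}(t)$ on a monomial built from $\ad h_\Gamma$-homogeneous elements, obtain the scalar $t^{i+aj}$, and conclude by noting that such monomials span $\C[\g^*]$. The only blemish is the sign in your warm-up linear computation: since $\rho^{\sharp}(t^{-1})(\xi)=t^{a}\gamma(t^{-1})(\xi)$ and $(\gamma(t^{-1})\xi)(x)=\xi(\gamma(t)(x))=t^{i}\xi(x)$ for $x\in\g_i$, the weight of $F_x$ is $a+i$ rather than $a-i$, which is exactly what your subsequent product formula $t^{aj+\sum_\ell i_\ell}$ and the statement of the lemma require.
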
 

\begin{proof}
Soit $x = x_1 \ldots x_j$ un monôme de $\C[\g^*]$ vérifiant $(\ad h_\Gamma) x = i x$
et $i +aj=k$.
Pour $\xi \in \g^{*}$ et $t \in \C^{*}$, on a:
\begin{eqnarray*}
   \rho^{\sharp}(t)(x)(\xi)  =  
			        \displaystyle \prod_{l=1}^{j} (\rho^{\sharp}(t^{-1})(\xi))(x_{l}) 
			        =  \displaystyle \prod_{l=1}^{j} (t^{a}\ga(t^{-1})(\xi))(x_{l}) 
			        =  t^{aj} \xi(\ga(t)(x)).
\end{eqnarray*}
Comme $(\ad h_\Gamma) (x) = i x$, on a  $\gamma(t)(x) = t^{i} x$. 
On en déduit que $\rho^{\sharp}(t)(x) = t^{i+a j} x$, 
i.e., $x \in \C[\g^{*}](i+a j) = \C[\g^*](k)$. 

D'autre part, un élément $x$ de $\C[\g^*](k)$
s'écrit $x = \sum \limits_{l} x_l$
où $x_l \in \C[\g^*]$ est tel que $(\ad h_\Gamma) (x_l) = i_l x_l$
et $x_l = x_{1,l} \ldots x_{j_l,l}$ avec $x_{t,l} \in \g$ et $i_l + a j_l = k$ 
pour tout $l$. Le lemme s'ensuit.
\end{proof}
\begin{lemma}   \label{l:phi}

{\rm (i)} Pour $x \in \g_j$ et $t \in \C^*$, on a $\rho^{\sharp}(t) \kappa(x) = t^{j - a}\kappa(x)$. 
En particulier, comme $e \in \g_a$, on a $\rho^{\sharp}(t) \chi = \chi$.

{\rm (ii)} Les sous-espaces $\kappa(\ss)$ et $\kappa(\m^\perp)$ sont stables par $\rho^{\sharp}$ 
et les poids de $\rho^{\sharp}$ sur $\kappa(\m^\perp) \supset \kappa(\ss)$ sont des entiers strictement négatifs.

\end{lemma}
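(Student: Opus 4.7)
The plan is to compute the action $\rho^{\sharp}(t)\kappa(x)$ explicitly by translating it, via the Killing isomorphism $\kappa$, into an adjoint action on $\g$ itself, and then to read off the statements from the $\Gamma$-graded structure of $\m^\perp$ and $\ss$.

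For part (i), I would start from the definition $\rho^{\sharp}(t)(\xi) = t^{-a}\gamma(t)(\xi)$, where $\gamma(t)$ acts on $\g^*$ by the coadjoint action. The key step is the following identification: since the Killing form is $G$-invariant, for every $y \in \g$ one has
\[
  \bigl(\gamma(t)(\kappa(x))\bigr)(y) = \kappa(x)\bigl(\gamma(t^{-1})(y)\bigr) = \la x,\gamma(t^{-1})(y)\ra = \la \gamma(t)(x),y\ra = \kappa(\gamma(t)(x))(y),
\]
so $\gamma(t)\kappa(x) = \kappa(\gamma(t)(x))$. Combined with the fact that $\gamma(t)(x) = t^j x$ when $x \in \g_j$ (Lemma~\ref{l:eltss} and the definition of $\gamma$), this gives $\rho^{\sharp}(t)\kappa(x) = t^{-a}\kappa(t^j x) = t^{j-a}\kappa(x)$. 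Specializing to $x=e \in \g_a$ yields $\rho^{\sharp}(t)\chi = \chi$.

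For part (ii), the stability statement is immediate once graded-ness is observed: $\m^\perp$ is $\Gamma$-graded because $\m$ is $\Gamma$-graded (condition (A2)) and the Killing form pairs $\g_i$ with $\g_{-i}$; similarly $\ss$ is graded by hypothesis. Hence both $\kappa(\m^\perp)$ and $\kappa(\ss)$ decompose into the eigenspaces described in (i), which shows the $\rho^{\sharp}$-stability and the integrality of the weights. For the sign of the weights, I would invoke Remark~\ref{r:morth}(2), which states $\m^\perp \subseteq \g_{\le a-1}$: every homogeneous $x \in \m^\perp$ lies in some $\g_j$ with $j \le a-1$, so by (i) the corresponding weight $j-a$ is a strictly negative integer. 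This transfers to $\kappa(\ss) \subseteq \kappa(\m^\perp)$.

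There is no real obstacle: the whole lemma is a bookkeeping exercise around the interaction of the Killing isomorphism with the one-parameter subgroup $\gamma$. The only point that requires care is the identification $\gamma(t)\kappa(x) = \kappa(\gamma(t)(x))$, which must be verified via $G$-invariance of $\la\,,\,\ra$ before the factor $t^{-a}$ in the definition of $\rho^{\sharp}$ is taken into account; once this is in place, Remark~\ref{r:morth}(2) delivers the strict negativity at once.
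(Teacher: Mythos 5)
Your proof is correct and follows essentially the same route as the paper's: part (i) is the computation $\rho^{\sharp}(t)\kappa(x)=t^{-a}\gamma(t)\kappa(x)=t^{-a}\kappa(\gamma(t)(x))=t^{j-a}\kappa(x)$, and part (ii) combines the $\ad h_\Gamma$-stability (gradedness) of $\ss$ and $\m^\perp$ with Remarque~\ref{r:morth}(2), i.e.\ $\m^\perp\subseteq\g_{\le a-1}$, to conclude that the weights $j-a$ are strictly negative integers. Your explicit verification of $\gamma(t)\kappa(x)=\kappa(\gamma(t)(x))$ via the invariance of the Killing form is a welcome but minor elaboration of a step the paper leaves implicit.
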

\begin{proof}
(i) Pour $x \in \g_j$, on a
$\rho^{\sharp}(t)\kappa(x)  =  t^{-a} \ga(t)\kappa(x) 
                            =  t^{-a} \kappa(\ga(t)(x)) 
                            =  t^{j-a} \kappa(x).$
%

\smallskip

(ii) 
Les sous-espaces $\kappa(\ss)$ et $\kappa(\m^\perp)$ sont stables par $\rho^\sharp$
car $\ss$ et $\m^\perp$ sont stables par $\ad h_\Gamma$. 
Comme $\m^\perp \subset \bigoplus_{j \le a-1} \g_j$ 
(cf. Remarque \ref{r:morth}), 
les poids sur $\kappa(\m^\perp)$ 
sont les entiers $j - a$, pour $j \le a-1$. 
Ce sont donc des entiers strictement négatifs. 
\end{proof}
D'après le Lemme \ref{l:phi},  
l'application $\rho^{\sharp}$ induit une opération de $\C^*$ dans $\C[\SS] = \C[\chi + \kappa(\ss)]$ 
et dans $\C[\chi + \kappa(\m^\perp)]$. Ceci définit une structure d'algèbres graduées sur ces deux algèbres.
\begin{lemma}  \label{l:TT}
La graduation sur $\C[\SS]$ 
(resp. sur $\C[\chi + \kappa(\m^\perp)]$) est positive au sens où
$\C[\SS](k) = 0$ (resp. $\C[\chi + \kappa(\m^\perp)](k) = 0$) pour tout $k < 0$ 
et $\C[\SS](0) \simeq \C$ $($resp. $\C[\chi + \kappa(\m^\perp)](0) \simeq \C)$. 
\end{lemma}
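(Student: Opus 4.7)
Mon plan consiste \`a ramener l'\'enonc\'e \`a la positivit\'e de la graduation sur $\C[\kappa(\ss)]$ (resp. $\C[\kappa(\m^{\perp})]$) via une translation par $\chi$, puis \`a exploiter directement le Lemme \ref{l:phi}.

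En premier lieu, comme $\chi$ est fix\'e par $\rho^{\sharp}$ d'apr\`es le Lemme \ref{l:phi}(i), le morphisme de translation $\tau : \kappa(\ss) \to \SS$, $\eta \mapsto \chi + \eta$ (et son analogue pour $\kappa(\m^{\perp})$) est $\C^*$-\'equivariant. Il induit donc un isomorphisme d'alg\`ebres gradu\'ees $\tau^{*} : \C[\SS] \to \C[\kappa(\ss)]$, et de m\^eme $\C[\chi+\kappa(\m^{\perp})] \simeq \C[\kappa(\m^{\perp})]$. Ainsi, il suffit d'\'etablir la positivit\'e de la graduation sur $\C[\kappa(\ss)]$ et sur $\C[\kappa(\m^{\perp})]$.

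En second lieu, je choisirais une base $(\eta_1,\ldots,\eta_r)$ de $\kappa(\ss)$ (resp. de $\kappa(\m^{\perp})$) form\'ee de vecteurs propres pour $\rho^{\sharp}$, de poids respectifs $\mu_1,\ldots,\mu_r$. Le Lemme \ref{l:phi}(ii) garantit que ces $\mu_i$ sont des entiers strictement n\'egatifs. Les fonctions coordonn\'ees duales $\eta_1^{*},\ldots,\eta_r^{*}$, qui engendrent l'alg\`ebre de polyn\^omes consid\'er\'ee, v\'erifient alors $\rho^{\sharp}(t)(\eta_i^{*}) = t^{-\mu_i}\eta_i^{*}$ avec $-\mu_i > 0$. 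L'alg\`ebre est donc engendr\'ee en degr\'es strictement positifs et ne contient que les constantes en degr\'e z\'ero, ce qui donne exactement les assertions $\C[\SS](k) = 0$ pour $k<0$ et $\C[\SS](0) \simeq \C$, et de m\^eme pour $\C[\chi+\kappa(\m^{\perp})]$.

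Je n'envisage pas d'obstacle substantiel~: toute la force du r\'esultat provient du Lemme \ref{l:phi}, et le seul point \`a v\'erifier avec soin est que le passage aux coordonn\'ees duales inverse bien les signes des poids, ce qui n'est qu'une simple cons\'equence de la d\'efinition de l'action de $\C^*$ sur $\C[\g^{*}]$. On pourrait aussi invoquer directement le Lemme \ref{l:Sg} pour identifier les composantes homog\`enes de $\C[\g^{*}]$ et restreindre ensuite \`a $\SS$, mais l'approche par translation est plus courte.
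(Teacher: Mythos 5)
Votre preuve est correcte et suit essentiellement la m\^eme d\'emarche que celle du texte~: dans les deux cas, l'argument d\'ecisif est que les poids de $\rho^{\sharp}$ sur $\kappa(\m^{\perp}) \supset \kappa(\ss)$ sont strictement n\'egatifs (Lemme \ref{l:phi}), de sorte que les fonctions coordonn\'ees duales, qui engendrent l'alg\`ebre, sont de poids strictement positifs. La seule diff\'erence est cosm\'etique~: vous rendez explicite la translation \'equivariante par $\chi$ l\`a o\`u le texte d\'ecrit directement $\I(\SS)$ comme l'id\'eal engendr\'e par $\varphi_{0}-1, \varphi_{s+1}, \ldots, \varphi_{r}$ et travaille dans le quotient.
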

\begin{proof}
Commençons par démontrer le lemme pour $\C[\SS]$. 
On fixe une base $(z_{1}, \cdots, z_{s})$ de $\ss$ où
$z_{i} \in \g_{d_{i}}$ avec $d_i \in \Z$. 
D'après la Remarque \ref{r:morth}, on a $d_i \le a-1$.  
Les éléments $(e,z_{1}, \cdots, z_{s})$ sont alors linéairement indépendants. 
On pose $z_{0} := e$ et on complète cet ensemble 
par des éléments homogènes relativement à la graduation $\Gamma$ 
en une base $(z_{0},z_{1}, \cdots, z_{s}, z_{s+1}, \cdots z_{r})$ de $\g$. 
Soit $(\varphi_{0}, \cdots, \varphi_{r})$ la base duale de
$(\kappa(z_{0}), \cdots, \kappa(z_{r}))$. 
Alors $\C[\g^{*}]= \C[ \varphi_{0}, \cdots, \varphi_{r}]$ 
et, pour tout $i \in \{0, \cdots, r\}$, on a 
$$
	\rho^{\sharp}(t)(\varphi_{i}) (\sum \limits_{j = 0}^{r} c_{j} \kappa(z_{j}))
			  = \varphi_{i}(t^{-d_i+a}c_{i} \kappa(z_{i})) , \text{ avec } c_i \in \C.
$$
Par suite, $\rho^{\sharp}(t)(\varphi_{i})=t^{-d_i+a} \varphi_{i}$. 
Soit $\I(\SS) \subset \C[\g^{*}]$ l'idéal des fonctions polynomiales
s'annulant sur $\SS$. 
Alors
\begin{eqnarray*}
    \I(\SS) 
	    & = & \{ F \in \C[\varphi_{0}, \cdots, \varphi_{r}] \ ; \ F(\kappa(e+x)) = 0, 
	    \; \forall \, x \in \ss \}.
\end{eqnarray*}
Par suite, on constate que $\I(\SS)$ est l'idéal de $\C[\g^{*}]$ 
engendré par les fonctions
$\varphi_{0}-1,\varphi_{s+1} \cdots, \varphi_{r}$. 
Alors, $\C[\SS] = \C[\psi_{1}, \cdots, \psi_{s}]$ où $\psi_{i}$ 
désigne la classe de $\varphi_{i}$ modulo $\I(\SS)$ pour tout $i$. 
Remarquons que $\I(\SS)$ est stable par $\rho^{\sharp}$. 
En effet, pour $F \in \I(\SS)$, 
$\rho^{\sharp}(t)(F)(\xi)= F(\rho^{\sharp}(t^{-1})(\xi)) = 0$ 
car $\rho^{\sharp}(t^{-1})(\xi) \in \SS$. 
Comme pour $i \in \{1, \cdots, s\}$, 
$\psi_i$ est de poids $- d_i + a \ge 1$ relativement à l'opération $\rho^{\sharp}$ 
car $d_i \le a - 1$.
On en déduit que la graduation sur $\C[\SS]$ est positive. 
De plus, $F \in \C[\SS]$ est de poids zéro si et seulement si $F$ est constante.

\smallskip

Les mêmes arguments s'appliquent pour $\chi + \kappa(\m^\perp)$ 
car $\m^\perp \subset \bigoplus_{j \le a-1} \g_j$ 
(cf. Remarque \ref{r:morth}). 
\end{proof}
Soit $(U^{j}(\g))_j$ la filtration standard de $U(\g)$.
L'opération adjointe de $h_\Gamma$ dans $\g$ 
s'étend de façon unique en une dérivation sur 
$U(\g)$.
On pose pour tout $i \in \Z$, 
$U_{i}(\g) := \{ x \in U(\g) \ ; \ (\ad h_\Gamma)(x) = ix \}$.
Soit $\F$ la filtration croissante de $U(\g)$ définie par, 
$$
 	\F_{k}U(\g) := \sum \limits_{i+aj \le k} U_{i}(\g) \cap U^{j}(\g), \quad  (k \in \Z). 
$$
Pour $(r,s) \in \Z^2$,  $x \in U^{r}(\g)$ (resp. $x \in U_{r}(\g)$) 
et $y \in U^{s}(\g)$ (resp. $y \in U_{s}(\g)$), 
on remarque que $[x,y] \in U^{r+s-1}(\g)$ 
(resp. $[x,y] \in U_{r+s}(\g)$).
Si $x \in \F_{r}U(\g)$ et $y \in \F_{s}U(\g)$, 
alors on a $[x,y] \in \F_{r+s-a}U(\g)$. 
On désigne par $\gr_{\F} U(\g)$ 
l'algèbre graduée de $U(\g)$ par rapport à la filtration
$\F$, i.e., $\gr_{\F} U(\g) = \bigoplus_{k} \gr_{\F,k} U(\g)$  
où $\gr_{\F,k} U(\g) := \F_{k}U(\g)/ \F_{k-1}U(\g)$.
D'après le Lemme \ref{l:Sg}, on a un isomorphisme d'algèbres graduées $\g$-équivariant
entre $\gr_{\F} U(\g)$ et $\C[\g^*]$ qui
envoie $\gr_{\F,k} U(\g)$ sur $\C[\g^*](k)$.
\begin{remark}   \label{rk:Poiss}
Comme l'algèbre graduée  $\gr_\F U(\g)$ est commutative, elle admet une structure de Poisson
donnée comme suit.
Pour $u_1 \in \F_k U(\g) / \F_{k-1} U(\g)$ et $u_2 \in \F_l U(\g) / \F_{l-1} U(\g)$,
soit $v_1$ (resp. $v_2$) un représentant de $u_1$ dans $\F_k U(\g)$ 
(resp. $u_2$ dans $\F_l U(\g)$). Alors 
$$
    \{u_1,u_2\} := v_1v_2 - v_2v_1 \mod \F_{k+l-a-2} U(\g).
$$
En particulier, l'isomorphisme entre $\gr_{\F} U(\g)$ et $\C[\g^*]$
est un morphisme d'algèbres de Poisson.
\end{remark}
Rappelons que $Q$ est le quotient $U(\g)/I$. 
Soit $\pi : U(\g) \rightarrow  Q$ l'application quotient 
et posons
$$
	\F_{k} Q := \pi (\F_{k} U(\g)), \quad k  \in \Z. 
$$ 
Ceci définit une structure de $U(\g)$-module filtré sur $Q$.
De plus, pour tout $k < 0$, on a $\F_k Q =\{0\}$.

Soit $\gr(\pi) : \gr_{\F} U(\g) \rightarrow \gr_{\F} Q$ le morphisme gradué surjectif associé à $\pi$,
i.e., pour $u \in \F_k U(\g)$, on a $\gr(\pi)(u + \F_{k-1} U(\g)) = \pi(u) + \F_{k-1} Q$.
On a la suite exacte de $\gr_{\F} U(\g)$-modules
$$
	0 \rightarrow  \gr_{\F} I \rightarrow \gr_{\F} U(\g) \rightarrow \gr_{\F} Q \rightarrow 0. 
$$
En particulier, $\gr_{\F} I$ est un idéal de $\gr_{\F} U(\g)$.
On en déduit que  $\gr_{\F} Q$ admet une structure d'algèbre
avec $\gr_{\F} Q \simeq \gr_{\F} U(\g) / \gr_{\F} I $ (cf. e.g. 
\cite[Proposition 7.5.3]{TY}).
De plus, $\gr_{\F} I$ est le noyau de $\gr( \pi )$.

Soit $\I(\chi + \kappa(\m^\perp))$ l'idéal de $\C[\g^*]$
formé des fonctions polynomiales sur $\g^{*}$ 
qui s'annulent sur $\chi + \kappa(\m^\perp)$. 
\begin{lemma}     \label{l:grp}
L'image du noyau $\gr_{\F} I$ de $\gr(\pi)$ par l'isomorphisme $\gr_\F U(\g) \rightarrow \C[\g^*]$
est $\I(\chi + \kappa(\m^\perp))$. 
\end{lemma}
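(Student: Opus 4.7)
Le plan est d'identifier $\gr_\F I$ avec $\I(\chi+\kappa(\m^\perp))$ sous l'isomorphisme canonique $\gr_\F U(\g)\simeq\C[\g^*]$. On commence par observer que $\chi+\kappa(\m^\perp)=\{\xi\in\g^*\,;\,\xi|_\m=\chi|_\m\}$ est un sous-espace affine de $\g^*$, donc $\I(\chi+\kappa(\m^\perp))$ est l'id\'eal de $\C[\g^*]=S(\g)$ engendr\'e par les \'el\'ements $x-\chi(x)$ pour $x\in\m$.

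On traite d'abord l'inclusion $\I(\chi+\kappa(\m^\perp))\subset\gr_\F I$. Pour $x\in\m\cap\g_j$ homog\`ene, on a $x-\chi(x)\in\F_{j+a}U(\g)$. En effet, $\chi(x)=\la e,x\ra$ s'annule lorsque $j\neq -a$ (la forme de Killing met $\g_a$ en couplage avec $\g_{-a}$), et appartient \`a $\C\subset\F_0 U(\g)$ lorsque $j=-a$; dans tous les cas, son symbole dans $\gr_{\F,j+a}U(\g)$ correspond via le Lemme \ref{l:Sg} \`a la fonction polynomiale $x-\chi(x)\in\C[\g^*](j+a)$. Puisque $\gr_\F I$ est un id\'eal de $\gr_\F U(\g)$ contenant ces symboles, l'inclusion s'ensuit.

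Pour l'inclusion inverse, j'utilise un argument de Poincar\'e-Birkhoff-Witt. Fixons une base homog\`ene $(x_1,\ldots,x_k)$ de $\m$ et compl\'etons-la par une base homog\`ene $(x_{k+1},\ldots,x_n)$ d'un suppl\'ementaire gradu\'e $\mathfrak{q}$ de $\m$ dans $\g$. Le th\'eor\`eme PBW assure que $U(\g)$ est un $U(\m)$-module \`a droite libre dont une base est donn\'ee par les mon\^omes ordonn\'es en $x_{k+1},\ldots,x_n$. Comme $I=U(\g)\cdot\ker(\chi|_{U(\m)})$, il en r\'esulte que $Q=U(\g)/I$ admet comme base vectorielle la famille des mon\^omes PBW $x^\alpha:=x_{k+1}^{\alpha_{k+1}}\cdots x_n^{\alpha_n}$. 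Le point cl\'e est alors de v\'erifier que cette base est adapt\'ee \`a la filtration de Kazhdan, au sens o\`u $\F_p Q=\sum_{\|\alpha\|\le p}\C\,x^\alpha$, o\`u $\|\alpha\|=\sum_{s>k}\alpha_s(j_s+a)$ et $x_s\in\g_{j_s}$. Une fois ce fait \'etabli, on obtient un isomorphisme d'alg\`ebres gradu\'ees $\gr_\F Q\simeq S(\mathfrak{q})$; comme par ailleurs $\C[\chi+\kappa(\m^\perp)]\simeq S(\g/\m)\simeq S(\mathfrak{q})$ avec les graduations induites compatibles, la surjection $\gr(\pi)$ s'identifie \`a la surjection canonique $\C[\g^*]\to\C[\chi+\kappa(\m^\perp)]$, ce qui donne $\gr_\F I=\I(\chi+\kappa(\m^\perp))$.

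La principale difficult\'e sera cette v\'erification de compatibilit\'e entre la base PBW et la filtration $\F_\bullet Q$: il faut montrer qu'en r\'eduisant un mon\^ome arbitraire modulo $I$ en combinaison des $x^\alpha$, \`a l'aide des relations de commutation dans $U(\g)$ et des congruences $x_i\equiv\chi(x_i)\pmod{I}$ pour $i\le k$, aucun ph\'enom\`ene de cancellation n'intervient dans les symboles, c'est-\`a-dire qu'on ne produit que des termes de $\F$-degr\'e au plus celui du mon\^ome de d\'epart. Ceci repose sur le fait que $[x_i,x_j]\in\F_{(j_i+a)+(j_j+a)-a}U(\g)$ et que la constante $\chi(x_i)$ est bien dans $\F_0 U(\g)\subset\F_{j_i+a}U(\g)$ lorsque $i\le k$, en utilisant la condition $\m\subset\g_{<0}$ de (A2).
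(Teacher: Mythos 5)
Votre d\'emarche est correcte et suit pour l'essentiel la m\^eme route que celle du texte : on identifie $\I(\chi+\kappa(\m^\perp))$ \`a l'id\'eal de $\C[\g^*]$ engendr\'e par les fonctions affines $x-\chi(x)$, $x\in\m$, et tout se ram\`ene \`a montrer que $\gr_\F I$ est engendr\'e par les symboles de ces g\'en\'erateurs. La diff\'erence est une question de degr\'e de d\'etail : la d\'emonstration du texte se contente d'affirmer que $\gr_\F I$ est engendr\'e par les classes $\tilde{y_i}+\F_{a+d_i-1}U(\g)$ pour $i\le m$, puis conclut par le calcul du lieu des z\'eros et la radicialit\'e d'un id\'eal engendr\'e par des fonctions affines, alors que vous isolez pr\'ecis\'ement cette affirmation comme le point non trivial et proposez de l'\'etablir via la compatibilit\'e entre la base de Poincar\'e--Birkhoff--Witt de $Q$ et la filtration de Kazhdan ; c'est l'argument standard (cf. \cite{GG}) et il fournit au passage $\gr_\F Q\simeq S(\mathfrak{q})$. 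Deux r\'eserves cependant. D'une part, la v\'erification cl\'e ($\F_p Q=\sum_{\|\alpha\|\le p}\C\,x^\alpha$) est annonc\'ee plut\^ot qu'effectu\'ee : l'induction sur le degr\'e PBW et le degr\'e de Kazhdan reste \`a r\'ediger, m\^eme si vous en donnez les deux ingr\'edients corrects. D'autre part, l'inclusion $\F_0 U(\g)\subset\F_{j_i+a}U(\g)$ que vous invoquez dans le dernier paragraphe est fausse d\`es que $j_i<-a$, ce qui peut se produire puisque $\g_{\le -a}\subseteq\m$ ; ce qui sauve l'argument est le fait, que vous \'enoncez correctement plus haut, que $\chi$ s'annule sur $\g_j$ pour tout $j\neq -a$, de sorte que $\chi(x_i)\in\F_{j_i+a}U(\g)$ dans tous les cas. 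Ce sont des retouches locales, pas des obstacles de fond.
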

\begin{proof}
Comme $\m$ est $\ad h_\Gamma$-stable, il existe une base
$$
 (y_{1}, \cdots,y_{m},y_{m+1}, \cdots, y_{r}) 
$$
de $\g$, avec $y_i \in \g_{d_i}$ ($d_i \in \Z$), telle que
$(y_{1}, \cdots, y_{m})$ soit une base de $\m$.
Soit $i \in \{1, \cdots, r\}$. 
On pose, 
$$
	\tilde{y_i} := y_i-\chi(y_i) .
$$
D'après la condition (A2), on a 
$\g_{\le  - a} \subset \m$. 
Par suite, pour $i > m$, on a $d_{i} \ge -a+1$ et $\chi(y_{i}) = \la e, y_{i} \ra = 0$ 
car $e \in \g_a$.
Par conséquent, $\tilde{y_i} = y_i \in U^{1}(\g) \cap U_{d_{i}}(\g) 
\subset \F_{a + d_{i}} U(\g)$. Soit $(y_{1}^{*}, \cdots, y_{r}^{*})$ 
la base duale de $(y_1,\ldots,y_r)$.  
On a
\begin{eqnarray}   \label{eq:chi}
	\chi = \sum \limits_{j=1}^{m} \chi(y_j) y_{j}^{*} + \sum \limits_{j=m+1}^{r} \chi(y_j) y_{j}^{*}
		    = \sum \limits_{j=1}^{m} \chi(y_j) y_{j}^{*} .
\end{eqnarray}  
En outre, $\ker \gr(\pi) = \gr_{\F} I$ est engendré par les éléments
$\tilde{y_i} + \F_{a+d_{i}-1} U(\g)$ 
pour $i \in \{1, \cdots, m\}$. 
Soit $J$ l'image de $\gr_{\F} I$ par l'isomorphisme $\gr_{\F} U(\g) \rightarrow \C[\g^*]$.
Alors $J$ est engendré par les éléments $\tilde{y_i}$ pour $i \in \{1, \cdots, m\}$, et
$$
Z(J) = \{ \sum \limits_{j=1}^{m} \chi(y_j)y_{j}^{*}+\sum \limits_{j=m+1}^{r} \lambda_j y_{j}^{*}
 \in \g^*; \  \lambda_{m+1},\ldots, \lambda_r \in \C \}
$$ 
est l'ensemble des zéros communs de $J$ dans $\C[\g^*]$.
Comme $(y_{m+1}^{*}, \cdots, y_{r}^{*})$ est une base de $\kappa(\m^\perp)$, 
$Z (J)= \chi + \kappa(\m^\perp)$ d'après (\ref{eq:chi}).  
Comme $J$ et $\gr_\F I$ sont engendrés par des fonctions affines,
ils sont radiciels, d'où $Z(J) = \gr_\F I$.
\end{proof}
\begin{proposition}    \label{p:grQ}
On a un isomorphisme $\n$-équivariant, 
$$
	\vartheta \, : \, \gr_{\F} Q \to \C[\chi + \kappa(\m^\perp) ], 
$$
entre les algèbres graduées $\gr_{\F} Q$ et $\C[\chi + \kappa(\m^\perp) ]$. 
\end{proposition}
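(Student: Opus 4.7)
The plan is to obtain $\vartheta$ by descending the identification $\gr_\F U(\g) \simeq \C[\g^*]$ of Lemma~\ref{l:Sg} to the quotients. Together with the short exact sequence
\[
  0 \longrightarrow \gr_\F I \longrightarrow \gr_\F U(\g) \longrightarrow \gr_\F Q \longrightarrow 0
\]
already noted above, and Lemma~\ref{l:grp} which identifies the image of $\gr_\F I$ in $\C[\g^*]$ with the ideal $\I(\chi + \kappa(\m^\perp))$, this yields an isomorphism of graded algebras
\[
  \vartheta \, :\, \gr_\F Q \;\xrightarrow{\sim}\; \C[\g^*]/\I(\chi + \kappa(\m^\perp)) \;\simeq\; \C[\chi + \kappa(\m^\perp)].
\]
The rightmost step uses that $\I(\chi + \kappa(\m^\perp))$ is radical, which is immediate since $\chi + \kappa(\m^\perp)$ is an affine subspace of $\g^*$ and the ideal is therefore generated by affine linear forms.

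For the compatibility with the gradings, I would invoke Lemma~\ref{l:phi}: $\chi$ and $\kappa(\m^\perp)$ are $\rho^\sharp$-stable, so $\rho^\sharp$ descends to a grading on $\C[\chi + \kappa(\m^\perp)]$, and by Lemma~\ref{l:Sg} this matches the grading on $\gr_\F Q$ coming from $\F$.

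The most delicate point is $\n$-equivariance. First I would verify that the adjoint action $\theta$ of $\n$ on $U(\g)$ preserves the filtration $\F$: for $x \in \n \cap \g_p$ with $p<0$, the operator $\theta(x)$ stabilizes the standard filtration $U^\bullet(\g)$ and shifts $U_i(\g)$ into $U_{i+p}(\g)$, so $\theta(x)\F_k U(\g) \subseteq \F_{k+p}U(\g) \subseteq \F_k U(\g)$. Condition (A5) combined with Remark~\ref{r:morth}(3) then gives $\theta(\n) I \subseteq I$, and the action descends to $\gr_\F Q$. Under $\gr_\F U(\g) \simeq \C[\g^*]$ this induced action coincides (up to the usual sign convention) with the coadjoint action of $\n$ on $\C[\g^*]$. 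It remains to check that the latter preserves $\chi + \kappa(\m^\perp)$, i.e.\ that $[e+y,x] \in \m^\perp$ for all $x \in \n$ and $y \in \m^\perp$: invariance of the Killing form together with Remark~\ref{r:morth}(3) yields $[e,x] \in \m^\perp$, while (A5) and invariance yield $[y,x] \in \m^\perp$. Hence $\vartheta$ is $\n$-equivariant.

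The main obstacle, as I see it, is this last step: carefully unpacking how the adjoint action of $\n$ on $U(\g)$ passes through the filtration $\F$ and matches the natural coadjoint action on $\C[\chi + \kappa(\m^\perp)]$. The algebraic content of the argument lives precisely in conditions (A3) and (A5) of the admissible pair, which guarantee both that $I$ is stable under $\theta(\n)$ and that the affine subspace $\chi + \kappa(\m^\perp)$ is preserved by the coadjoint action of $\n$.
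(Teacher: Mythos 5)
Your proof follows essentially the same route as the paper's: descend the graded $\g$-equivariant isomorphism $\gr_\F U(\g)\simeq\C[\g^*]$ through the short exact sequence $0\to\gr_\F I\to\gr_\F U(\g)\to\gr_\F Q\to 0$ using Lemma~\ref{l:grp}, then deduce $\n$-equivariance from the $\n$-stability of $\gr_\F I$. The additional details you supply --- that $\theta(\n)$ preserves the filtration $\F$, and that the coadjoint action of $\n$ preserves $\chi+\kappa(\m^\perp)$ via (A5) and Remarque~\ref{r:morth}(3) --- are correct and simply make explicit what the paper leaves implicit (compare the proof of Lemme~\ref{l:alpha1}).
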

\begin{proof}
Rappelons que l'on a un isomorphisme d'algèbres graduées $\g$-équivariant entre
$\gr_\F U(\g)$ et $\C[\g^*]$. 
D'après le Lemme \ref{l:grp}, on en déduit les isomorphismes suivants: 
$$
	\gr_\F Q \simeq \gr_\F U(\g)/{\gr_\F I} 
		\simeq \C[\g^*]/{\I (\chi + \kappa(\m^\perp))} \simeq \C[\chi+\kappa(\m^\perp)] . 
$$
Il s'ensuit qu'on a un isomorphisme d'algèbres graduées
 $\vartheta :  \gr_{\F} Q \to \C[\chi + \kappa(\m^\perp)]$. 
Comme $\gr_\F I$ est $\n$-stable, l'isomorphisme 
$\gr_\F I \simeq \I (\chi + \kappa(\m^\perp))$ est $\n$-équivariant.
Par conséquent, $\vartheta$ est $\n$-équivariant.
\end{proof}
La filtration $(\F_k Q)_k$ induit une filtration $(\F_k H := H \cap \F_k Q)_k$ sur $H$.
Par conséquent, on a une application injective graduée $\iota: \, \gr_{\F} H \hookrightarrow \gr_{\F} Q$.
C'est un morphisme d'algèbres comme on le vérifie aisément.
En résumé, 
\begin{proposition} \label{p:iota}
L'application
$\iota: \, \gr_{\F} H \hookrightarrow \gr_{\F} Q$ est 
un morphisme d'algèbres graduées injectif.
\end{proposition}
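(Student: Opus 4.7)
Mon plan pour démontrer la Proposition \ref{p:iota} est essentiellement une vérification directe à partir des constructions précédentes; la seule difficulté réside dans l'articulation cohérente des structures multiplicatives sur $H$, $Q$ et leurs gradués.

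Première étape : établir que $H$ est une sous-algèbre filtrée de $U(\g)/I$ au sens où $\F_k H \cdot \F_l H \subseteq \F_{k+l} H$. Je rappelle que, pour $u \in U_i(\g) \cap U^j(\g)$ et $v \in U_{i'}(\g) \cap U^{j'}(\g)$, le produit $uv$ appartient à $U_{i+i'}(\g) \cap U^{j+j'}(\g)$, donc la filtration $\F$ sur $U(\g)$ est compatible avec la structure d'algèbre. En passant au quotient $Q$ puis en restreignant à $H$ (qui est stable par la multiplication définie en \eqref{eq:W}), on obtient la compatibilité souhaitée. Par conséquent, $\gr_\F H = \bigoplus_k \F_k H / \F_{k-1} H$ hérite d'une structure d'algèbre graduée.

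Deuxième étape : construire $\iota$ et vérifier sa bonne définition. L'inclusion $\F_k H \hookrightarrow \F_k Q$ induit, par passage au quotient, une application $\iota_k: \F_k H / \F_{k-1} H \to \F_k Q / \F_{k-1} Q$. Pour que cette application soit injective à chaque niveau, il suffit de constater l'identité
$$
    \F_{k-1} H = H \cap \F_{k-1} Q,
$$
qui découle immédiatement de la définition $\F_k H := H \cap \F_k Q$. En effet, si $u + I \in \F_k H$ vérifie $u + I \in \F_{k-1} Q$, alors $u + I \in H \cap \F_{k-1} Q = \F_{k-1} H$, et sa classe dans $\gr_{\F,k} H$ est nulle. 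En sommant sur $k$, on obtient l'application graduée injective $\iota: \gr_\F H \hookrightarrow \gr_\F Q$.

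Troisième étape : vérifier que $\iota$ est un morphisme d'algèbres. Je rappelle que la multiplication sur $\gr_\F Q$ provient de l'isomorphisme $\gr_\F Q \simeq \gr_\F U(\g) / \gr_\F I$, et que pour $[u+I]_k \in \gr_{\F,k} Q$ et $[v+I]_l \in \gr_{\F,l} Q$, ce produit est donné par $[uv+I]_{k+l}$. D'autre part, dans $H$ la multiplication est définie par $(u+I)(v+I) = uv+I$, donc le produit dans $\gr_\F H$ de $[u+I]_k$ et $[v+I]_l$ est également $[uv+I]_{k+l}$. Les deux produits coïncident donc via $\iota$, ce qui achève la preuve. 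L'essentiel du travail était accompli dans les résultats préparatoires (en particulier la Proposition \ref{p:grQ}); la présente proposition ne fait qu'en extraire une conséquence formelle.
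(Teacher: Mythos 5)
Votre démonstration est correcte et suit essentiellement la même voie que celle du texte : la filtration $\F_k H = H \cap \F_k Q$ étant induite par celle de $Q$, l'injectivité de $\iota$ en chaque degré est immédiate, et la compatibilité des produits (tous deux issus du produit de $U(\g)$, la structure d'algèbre sur $\gr_\F Q$ provenant de $\gr_\F U(\g)/\gr_\F I$) donne le caractère multiplicatif, ce que le texte se contente d'affirmer « comme on le vérifie aisément ». Vous ne faites donc qu'expliciter les vérifications laissées au lecteur, sans écart de méthode.
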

Soit $\mu: \C[\chi+\kappa(\m^\perp)] \rightarrow \C[\SS]$ 
le comorphisme correspondant à l'inclusion
$\SS \hookrightarrow \chi+\kappa(\m^\perp)$. 
D'après la Proposition \ref{p:grQ}, 
on en déduit un morphisme d'algèbres graduées
$
	\nu: \gr_{\F} H \rightarrow \C[\SS] .
$
\begin{theorem}   \label{t:gr}
Le morphisme
$$ 
	\nu : \gr_{\F} H \rightarrow \C[\SS] 
$$ 
est un isomorphisme d'algèbres de Poisson graduées.
\end{theorem}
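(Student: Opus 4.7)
The strategy is to factor $\nu$ as $\mu \circ \vartheta \circ \iota$ and analyze each factor separately, establishing in turn the morphism property, bijectivity, and Poisson compatibility. The graded algebra morphism property is automatic from Propositions~\ref{p:iota} and~\ref{p:grQ}, together with the fact that $\mu$ is the comorphism of the closed immersion $\SS \hookrightarrow \chi+\kappa(\m^\perp)$ of $\rho^\sharp$-stable affine subvarieties.

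For injectivity, I first show that the image of $\vartheta \circ \iota$ lies in $\C[\chi+\kappa(\m^\perp)]^N$. Indeed, if $h \in \F_k H$ then $\theta(x) h = 0$ in $Q$ for every $x \in \n$, so the class of $\theta(x) h$ in $\gr_\F Q$ also vanishes; combined with the $\n$-equivariance of $\vartheta$ (Proposition~\ref{p:grQ}) and the connectedness of $N$, this forces $\vartheta(\iota(\gr h)) \in \C[\chi+\kappa(\m^\perp)]^N$. Theorem~\ref{t:alpha}, transported via $\kappa$, yields an isomorphism of affine varieties $N \times \SS \xrightarrow{\sim} \chi + \kappa(\m^\perp)$, and consequently an isomorphism $\mu \colon \C[\chi+\kappa(\m^\perp)]^N \xrightarrow{\sim} \C[\SS]$. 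Together with injectivity of $\iota$, this gives the injectivity of $\nu$.

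For surjectivity, I would fix a homogeneous basis $(z_i)_{1 \le i \le s}$ of $\ss$ with $z_i \in \g_{d_i}$ and $d_i \le a-1$, so that the linear coordinates $\psi_i \in \C[\SS]$ generate $\C[\SS]$ as an algebra (cf.\ the proof of Lemma~\ref{l:TT}). For each $i$, one constructs an element $\Theta_i \in \F_{d_i+a} H$ whose principal symbol in $\gr_\F H$ is mapped by $\nu$ to $\psi_i$: starting from the naive lift $z_i + I \in \F_{d_i+a} Q$, one iteratively corrects by lower-filtration terms to annihilate the $\n$-action in $Q$. At each step the obstruction lies in strictly lower filtration, and its exactness is controlled by the geometric factorization $N \times \SS \simeq \chi + \kappa(\m^\perp)$ taken at the associated graded level. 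The subalgebra generated by $\gr \Theta_1, \ldots, \gr \Theta_s$ then maps onto $\C[\SS]$ under $\nu$; alternatively, one can bypass the explicit construction by comparing the Hilbert series of $\gr_\F H$ and $\C[\SS]$ with respect to the $\rho^\sharp$-grading.

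For the Poisson compatibility, the Kazhdan filtration on $U(\g)$ satisfies $[\F_r U(\g), \F_s U(\g)] \subseteq \F_{r+s-a} U(\g)$ (as observed just after the definition of $\F$), endowing $\gr_\F U(\g) \simeq \C[\g^*]$ with a Poisson bracket matching the Kirillov--Kostant one (Remark~\ref{rk:Poiss}). This Poisson structure descends to $\gr_\F Q \simeq \C[\chi+\kappa(\m^\perp)]$ and further restricts to $\gr_\F H$; via $\vartheta$ and $\mu$ it matches the Poisson bracket inherited by $\SS$ in Proposition~\ref{p:Poiss}, so $\nu$ is a morphism of Poisson algebras. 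I expect the main obstacle to be the surjectivity step, specifically the inductive construction of the lifts $\Theta_i$: verifying solvability of the $\n$-invariance equation at each filtration level requires the transversality/factorization from Theorem~\ref{t:alpha} and a Rees algebra type flat deformation argument in the spirit of Gan--Ginzburg.
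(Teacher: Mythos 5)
Your skeleton (injectivity of $\nu$ via the inclusion $\gr_{\F}H\hookrightarrow(\gr_{\F}Q)^{\n}\simeq\C[\chi+\kappa(\m^\perp)]^{N}\simeq\C[\SS]$, then surjectivity, then the Poisson compatibility via la Remarque~\ref{rk:Poiss}) is sound, and the injectivity and Poisson parts match what the paper does. But the surjectivity step, which you yourself flag as the main obstacle, is left as an assertion and this is a genuine gap. When you correct the naive lift $z_i+I\in\F_{d_i+a}Q$ step by step, the failure of $\n$-invariance at each stage is a $1$-cocycle of $\n$ with values in a graded piece of $\gr_{\F}Q$, and you need it to be a coboundary \emph{in the correct degree}. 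This is exactly the statement $H^{1}(\n,\gr_{\F}Q)=0$ (degree by degree), i.e.\ la Proposition~\ref{p:H0}(ii); the ``geometric factorization'' $N\times\SS\simeq\chi+\kappa(\m^\perp)$ reduces it to $H^{1}(\n,\C[N])=0$, which is \emph{not} automatic: it is the identification of the Lie-algebra cohomology $H^{i}(\n,\C[N])$ with the algebraic de Rham cohomology of the affine space $N$ (Chevalley--Eilenberg/Hochschild/Grothendieck) together with its vanishing in positive degrees. Without naming and proving this cohomological vanishing, the inductive construction of the $\Theta_i$ does not close; one must also check that the induction terminates, which uses $\F_kQ=\{0\}$ for $k<0$. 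Your fallback of comparing Hilbert series is circular: computing the Hilbert series of $\gr_{\F}H=\gr_{\F}(Q^{\n})$ requires knowing that taking $\n$-invariants commutes with passing to the associated graded, which is precisely the point at issue.

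For comparison, the paper packages this same content differently: it runs the Chevalley--Eilenberg complex $C^{\bullet}=\bigwedge^{\bullet}\n^{*}\otimes Q$ with the filtration $\FF$, proves $H^{0}(\n,\gr_{\F}Q)\simeq\C[\SS]$ and $H^{i}(\n,\gr_{\F}Q)=0$ for $i>0$ (Proposition~\ref{p:H0}), and then uses the associated spectral sequence (Lemme~\ref{l:E1}, following Gan--Ginzburg) to conclude $\gr_{\F}H^{0}(\n,Q)\simeq H^{0}(\n,\gr_{\F}Q)$. Your successive-approximation argument is essentially an unrolled, degree-by-degree version of that degeneration; it can be made to work, but only once the vanishing $H^{1}(\n,\C[N])\otimes\C[\SS]=0$ is established as an explicit lemma, and at that point you have reproved the key half of Proposition~\ref{p:H0}. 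A minor additional caution on the Poisson step: $\gr_{\F}I$ is not a Poisson ideal of $\gr_{\F}U(\g)$, so the bracket does not literally ``descend to $\gr_{\F}Q$''; it is only on the subquotient $\gr_{\F}H$ that the commutator-induced bracket of la Remarque~\ref{rk:Poiss} is well defined, and that is the form in which the compatibility with Proposition~\ref{p:Poiss} should be stated.
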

Afin de montrer ce résultat, on suit la stratégie de \cite[Section 5]{GG} qui 
s'applique dans notre cas comme on le vérifiera. On détaille ici la démonstration
qui repose sur la théorie des suites spectrales.
Notre principale référence pour les suites spectrales est \cite{CaE}.

\medskip

Rappelons que l'opération adjointe de $\n$ dans $U(\g)$ 
induit une opération dans $Q$ (car $\n$ stabilise $I$). 
Considérons le complexe des cochaînes de Chevalley-Eilenberg du $\n$-module $Q$:
$$
	0 \to C^0 \to C^1 \to \ldots \to  C^i \to \ldots 
$$ 
où $C^{i} : = \Hom (\bigwedge\nolimits^{i} \n , Q) \simeq (\bigwedge\nolimits^{i} \n)^* \otimes Q 
	\simeq \bigwedge\nolimits^{i} \n^* \otimes Q$, 
et soit $\partial : C^{\bullet} \to C^{\bullet+1}$ la différentielle correspondante.
Pour tout $j \in \Z$, 
on pose $\n^*(j) := \{ \xi \in \n^*; \, (\ad^* h_\Gamma) \xi = j \xi \}$
où $\ad^* h_\Gamma$ désigne l'opération coadjointe de $h_\Gamma$ dans $\g^*$.
On constate que
$\n^*(j) \simeq (\g_{-j} \cap \n)^*$.   
%
Comme $\n \subset \bigoplus \limits_{j \le -1} \g_j$, 
on a $\n^* = \bigoplus \limits_{j \ge 1} \n^*(j) $.
Soit $i \in \N$. On a 
$$ 
	\bigwedge\nolimits^{i} \n^* = \bigoplus_{q \ge 1} (\bigwedge\nolimits^{i} \n^*)_q \  
		\text{ où } \ (\bigwedge\nolimits^{i} \n^*)_q
			   : = \bigoplus_{j_1+ \cdots + j_i=q} \n^*(j_1) 
			    	\bigwedge\nolimits \cdots \bigwedge\nolimits \n^*(j_i). 
$$
On définit une filtration croissante sur $C^i$ en posant: 
$$ 
	\FF_k C^i := \sum_{q+j \le k} (\bigwedge\nolimits^{i} \n^*)_q \otimes \F_j Q, 
	\quad (k \in \Z).
$$
Remarquons que $C^0= Q$ et
que $\FF_k C^0 = \F_k Q$ pour tout $k$. 
De plus, $\FF_k C^i=0$ pour $k$ (négatif) suffisament petit.
\begin{lemma}   \label{l:C}
Pour tout $i$, on a $\partial(\FF_k C^i) \subset \FF_k C^{i+1}$ .
\end{lemma}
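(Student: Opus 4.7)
Le plan est de d�composer la diff�rentielle de Chevalley-Eilenberg en ses deux parties naturelles et de v�rifier que chacune pr�serve la filtration $\FF$. Plus pr�cis�ment, fixons une base homog�ne $(x_1,\ldots,x_r)$ de $\n$ avec $x_k \in \g_{-d_k}$, $d_k \ge 1$ (car $\n \subset \g_{<0}$), et soit $(x_1^*,\ldots,x_r^*)$ la base duale ; alors $x_k^* \in \n^*(d_k)$. Pour $\eta \otimes u \in \bigwedge^i \n^* \otimes Q$, on �crit
$$
\partial (\eta \otimes u) \,=\, \partial_M(\eta \otimes u) + \partial_L(\eta \otimes u),
$$
o� $\partial_M(\eta \otimes u) = \sum_k x_k^* \wedge \eta \otimes \theta(x_k) u$ correspond � l'op�ration de $\n$ dans $Q$, et $\partial_L(\eta \otimes u) = (\partial' \eta) \otimes u$, o� $\partial' : \bigwedge^i \n^* \to \bigwedge^{i+1} \n^*$ est la diff�rentielle de Koszul duale du crochet de Lie.

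Je traiterais d'abord $\partial_M$. Pour $\eta \in (\bigwedge^i \n^*)_q$ et $u \in \F_j Q$ avec $q + j \le k$, on observe que $x_k^* \wedge \eta \in (\bigwedge^{i+1} \n^*)_{q+d_k}$. D'autre part, comme $x_k \in \g_{-d_k} \cap U^1(\g) \subset U^1(\g) \cap U_{-d_k}(\g) \subset \F_{a-d_k}U(\g)$, la remarque rappel�e juste avant l'�nonc� (le crochet d'�l�ments dans $\F_r$ et $\F_s$ est dans $\F_{r+s-a}$) donne $\theta(x_k) = [x_k, \cdot] : \F_j Q \to \F_{j-d_k} Q$. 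Par cons�quent le terme $x_k^* \wedge \eta \otimes \theta(x_k) u$ vit dans $(\bigwedge^{i+1} \n^*)_{q+d_k} \otimes \F_{j-d_k} Q$, et la somme des deux indices reste �gale � $q+j \le k$.

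Ensuite, je traiterais $\partial_L$. La $\Z$-graduation induite par $\ad h_\Ga$ respecte le crochet de Lie : $[\g_{-d_1} \cap \n, \g_{-d_2} \cap \n] \subset \g_{-d_1-d_2} \cap \n$. Par dualit�, $\partial'$ pr�serve la graduation en $q$, c'est-�-dire $\partial'((\bigwedge^i \n^*)_q) \subset (\bigwedge^{i+1} \n^*)_q$. Ainsi $\partial_L(\eta \otimes u) \in (\bigwedge^{i+1} \n^*)_q \otimes \F_j Q$, et la somme des indices est encore $q+j \le k$. En combinant les deux estimations, on obtient $\partial(\FF_k C^i) \subset \FF_k C^{i+1}$, d'o� le lemme.

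Aucune �tape ne pr�sente de difficult� s�rieuse : le r�sultat est essentiellement une cons�quence directe de la compatibilit� de la filtration de Kazhdan $\F$ avec le crochet (explicit�e dans le paragraphe qui pr�c�de le lemme) et du fait que le crochet sur $\n$ et l'op�ration de $\n$ sur $Q$ sont homog�nes relativement � $\ad h_\Ga$. Le seul point sur lequel il convient d'�tre pr�cis est le bilan des poids : l'op�ration $x_k \mapsto x_k^* \wedge \cdot$ augmente le poids $q$ de $d_k$, tandis que $\theta(x_k)$ diminue le poids $j$ du m�me entier $d_k$, ce qui fait que la somme $q+j$ est conserv�e. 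C'est cette compensation exacte qui justifie le choix de la filtration $\FF_k C^i := \sum_{q+j\le k} (\bigwedge^i \n^*)_q \otimes \F_j Q$.
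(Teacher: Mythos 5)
Votre preuve est correcte et suit essentiellement la m�me d�marche que celle du texte : le papier �value $\partial f$ sur des produits ext�rieurs d'�l�ments homog�nes $Y_r \in \g_{\lambda_r}$ et constate que le terme d'action fait passer de $(\bigwedge^{i}\n^*)_q \otimes \F_j Q$ � $(\bigwedge^{i+1}\n^*)_{q-\lambda_l}\otimes\F_{j+\lambda_l}Q$ tandis que le terme de crochet pr�serve $q$ et $j$, ce qui est exactement votre compensation $q\mapsto q+d_k$, $j\mapsto j-d_k$ �crite dans la formule duale $\sum_k x_k^*\wedge\eta\otimes\theta(x_k)u$. La seule diff�rence est notationnelle (formule en base duale contre �valuation sur les �l�ments), et votre justification de $\theta(x_k):\F_jQ\to\F_{j-d_k}Q$ via $x_k\in\F_{a-d_k}U(\g)$ et l'estimation $[\F_r,\F_s]\subset\F_{r+s-a}$ est pr�cis�ment celle utilis�e implicitement dans le texte.
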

\begin{proof}
Soit $f  \in  \FF_{k} C^{i}$. 
Montrons que $\partial f \in \FF_k C^{i+1}$. 
On peut supposer que $f = \varphi \otimes v$ 
où $\varphi \in (\bigwedge\nolimits^{i} \n^{*})_{q}$  
et $v \in \F_{j} Q$ tels que $q+j \le k$. 
Soient $Y_{1}, \cdots, Y_{i+1}$ des éléments de $\n$ 
tels que pour tout $r \in \{1, \cdots, i+1\}$, 
$Y_{r} \in \g_{\lambda_{r}}$ avec $\lambda_r \le -1$. 
Alors
\begin{eqnarray*}
 &&  \hspace{-1cm} \partial f(Y_{1}\wedge \cdots \wedge Y_{i+1}) \hspace{7cm}\\
&& = \ \sum \limits_{l=1}^{i+1} (-1)^{l} Y_{l} \cdot 
							f(Y_{1}\wedge \cdots \wedge \widehat{Y}_{l} \wedge\cdots 
								  \wedge Y_{i+1}) \\
                                       &&\  + \  \sum \limits_{1 \le l < m \le i+1} (-1)^{l+m} f([Y_{l},Y_{m}] \wedge
						 Y_{1}\wedge \cdots \wedge\widehat{Y}_{l} \wedge \cdots \wedge 
						 	\widehat{Y}_{m} \wedge \cdots \wedge Y_{i+1}) \\
   &&= \ \sum \limits_{l=1}^{i+1} (-1)^{l} Y_{l} \cdot 
							      (\varphi (Y_{1}\wedge \cdots \wedge \widehat{Y}_{l}\wedge \cdots 
							       \wedge Y_{i+1})v) \\
                                     && \ + \ \sum \limits_{1 \le l < m \le i+1} (-1)^{l+m} \varphi([Y_{l},Y_{m}] \wedge 
						    Y_{1} \wedge \cdots \wedge \widehat{Y}_{l} \wedge \cdots \wedge \widehat{Y}_{m}
						      \wedge \cdots \wedge Y_{i+1})v. \\
\end{eqnarray*}
Si $\sum \limits_{r \neq l} \l_r \not= -q$, 
alors $\varphi(Y_{1}\wedge \cdots \wedge \widehat{Y}_{l} \wedge \cdots \wedge Y_{i+1})$ 
est nul. 
D'autre part, $Y_l v \in \F_{j + \l_l} Q$. 
De même, si $\sum \limits_{r} \lambda_r \not= -q$, alors
$\varphi([Y_{l},Y_{m}]\wedge Y_{1}\wedge \cdots \wedge \widehat{Y}_{l} \wedge \cdots 
	\wedge \widehat{Y}_{m} \wedge \cdots \wedge Y_{i+1})$ est nul. 
Ainsi, 
$ 
	\partial f \in \sum \limits_{r=1}^{i+1} 
		(\bigwedge\nolimits^{i} \n^{*})_{q-\l_r} \otimes \F_{j+\l_r} Q  
						+ (\bigwedge\nolimits^{i} \n^*)_{q} \otimes \F_{j} Q
$, 
et $\partial f \in \FF_k C^{i+1}.$
\end{proof}
Rappelons que la structure de $\n$-module sur $Q$ induit
une structure de $\n$-module sur $\gr_\F Q$. 
Considérons le complexe,
$$
	G^{i} : = \Hom (\bigwedge\nolimits^{i} \n , \gr_{\F} Q) \simeq (\bigwedge\nolimits^{i} \n)^* \otimes \gr_{\F} Q 
	 \simeq \bigwedge\nolimits^{i} \n^* \otimes \gr_{\F} Q ,
$$
associé à $\gr_{\F} Q$, et notons $\delta$ la différentielle correspondante.  
On a une graduation sur $G^{i}$ donnée par 
$G^{i} = \bigoplus \limits_{k} G^{i}_{k}$ où
$G^{i}_{k} := \bigoplus \limits_{q+j=k} (\bigwedge\nolimits^{i} \n^*)_{q} 
		\otimes \F^{j}Q/\F^{j+1}Q,$ 
avec $\F^{j} Q := \F_{-j}Q.$
Pour $k \in \Z$ et $i \in \N$, on pose 
$$ 
	\FF^{k} C^i:=\FF_{-k} C^i. 
$$
La filtration $(\FF^{k} C^i)_k$ est décroissante
et $\FF^{k} C^i=0$ pour $k$ assez grand. 
\begin{remark}      \label{rk:Gi}
Soit $(k,i) \in \Z \times \N$. 

{\rm 1)} L'application $G^{i}_{k} \to \FF^{k}C^{i} /\FF^{k+1}C^{i}$ qui à  
$f_q \otimes \bar{v_j} \in (\wedge^{i} \n^*)_{q} \otimes \F^{j}Q/\F^{j+1}Q$,
avec $q+j=k$, associe la classe de $f_q \otimes v_j$, où $v_j$ est
un représentant de $\bar{v_j}$, est un isomorphisme.
On identifie désormais $G^{i}_{k}$ à $\FF^{k}C^{i} /\FF^{k+1}C^{i}$.

{\rm 2)} On a $\de(G^{i}_{k}) \subset G^{i+1}_{k}$ par le Lemme \ref{l:C}. 
\end{remark}

L'inclusion $\FF^{k}(C^i) \hookrightarrow C^i$ 
induit un morphisme $H^{i}(\FF^{k}(C^\bullet)) \to H^{i}(C^\bullet)$. 
Notons $\FF^{k}H^{i}(\n,Q)$ l'image de $H^{i}(\FF^{k}(C^\bullet))$ dans $H^{i}(C^\bullet)$. 
Ceci définit une filtration sur $H^{i} (C^\bullet) = H^{i}(\n,Q)$ et,
$$
	\FF^{k}H^{i}(\n,Q) \simeq \displaystyle{\frac{ \ker \partial  \cap \FF^k C^{i} }
					{ {\rm im} \, \partial \cap \FF^k C^{i} }} .
$$
On peut alors considérer l'algèbre graduée associée:
$$ 
	\gr_{\FF} H^{i} (\n, Q) = \bigoplus_k \gr_{\FF,k} H^{i}(\n,Q),
$$
où $\gr_{\FF,k} H^{i}(\n,Q) = \FF^{k} H^{i} (\n, Q) / \FF^{k+1} H^{i} (\n, Q)$.
\begin{remark}  
Pour $i=0$, on a $\gr_{\FF} H^{0} (\n, Q) = \gr_{\F} Q^{\n} = \gr_{\F} H$.
\end{remark}
\begin{proposition}      \label{p:H0}
{\rm (i)} On a un isomorphisme d'algèbres entre $H^{0}(\n, \gr_{\F} Q)$ et $\C[\SS].$

{\rm (ii)} Pour tout $i>0$, on a $H^{i}(\n, \gr_{\F} Q) =0.$
\end{proposition}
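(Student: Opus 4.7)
The plan is to combine Proposition~\ref{p:grQ} with Theorem~\ref{t:alpha} to reduce the computation of $H^\bullet(\n, \gr_\F Q)$ to the Lie algebra cohomology of $\n$ with coefficients in the coordinate ring of a product $N \times \SS$, where $\n$ acts only on the first factor via the left regular representation.

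First, by Proposition~\ref{p:grQ}, there is an $\n$-equivariant isomorphism of graded algebras $\gr_\F Q \simeq \C[\chi + \kappa(\m^\perp)]$, so it suffices to compute $H^\bullet(\n, \C[\chi + \kappa(\m^\perp)])$. Transporting via the Killing isomorphism $\kappa$, and using that $N$ stabilises $e + \m^\perp$ (Lemma~\ref{l:alpha1}), this reduces to analysing $\C[e + \m^\perp]$ as an $N$-module for the adjoint action. By Theorem~\ref{t:alpha}, the morphism $\alpha : N \times (e + \ss) \to e + \m^\perp$ is an isomorphism of affine varieties, and it is $N$-equivariant when $N$ acts on $N \times (e + \ss)$ by left multiplication on the first factor and trivially on $e + \ss$. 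Pulling back along $\alpha$ then yields an $\n$-equivariant isomorphism of algebras
$$
  \C[\chi + \kappa(\m^\perp)] \simeq \C[N] \otimes \C[\SS],
$$
where $\C[\SS]$ carries the trivial $\n$-action and $\C[N]$ carries the derivative of the left translation action of $N$ on itself.

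Next, since $\C[\SS]$ is a trivial $\n$-module, the Chevalley-Eilenberg complex computing $H^\bullet(\n, \C[N] \otimes \C[\SS])$ factors as the tensor product, over $\C$, of the Chevalley-Eilenberg complex of $\C[N]$ with the complex $\C[\SS]$ concentrated in degree zero. A Künneth-type argument thus gives
$$
  H^i(\n, \C[N] \otimes \C[\SS]) \simeq H^i(\n, \C[N]) \otimes \C[\SS].
$$
The problem then reduces to establishing the two identities $H^0(\n, \C[N]) = \C$ and $H^i(\n, \C[N]) = 0$ for $i > 0$, which together yield statements (i) and (ii) simultaneously.

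The main obstacle is the vanishing in positive degree. Since $N$ is a connected unipotent algebraic group, the exponential map identifies $N$ with the affine space underlying $\n$ and $\C[N]$ with the symmetric algebra $S(\n^*)$, and under this identification the differentiated left translation action of $\n$ becomes the action by translation-invariant derivations. The Chevalley-Eilenberg complex $\wedge^\bullet \n^* \otimes \C[N]$ then coincides with the algebraic de Rham complex of the affine space $N$, whose cohomology vanishes in positive degree and reduces to $\C$ in degree zero. A self-contained argument proceeds by induction on $\dim \n$ via the Hochschild-Serre spectral sequence associated with a codimension-one ideal of $\n$, the base case being the elementary computation of the cohomology of the one-dimensional abelian Lie algebra acting on $\C[t]$ by $\partial_t$. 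Combined with the obvious identification $\C[N]^N = \C$, this completes the proof.
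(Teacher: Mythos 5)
Votre démonstration suit essentiellement la même démarche que celle du texte : réduction à $\C[\chi+\kappa(\m^\perp)]$ via la Proposition~\ref{p:grQ}, puis à $\C[N]\otimes\C[\SS]$ via l'isomorphisme $N$-équivariant du Théorème~\ref{t:alpha}, argument de type K\"unneth, et identification de $H^{\bullet}(\n,\C[N])$ avec la cohomologie de De Rham de l'espace affine $N$. La seule différence est mineure : là où le texte cite Chevalley--Eilenberg puis Hartshorne pour l'annulation de $H^{i}(\n,\C[N])$ en degré $i>0$, vous proposez en supplément une récurrence auto-contenue via la suite spectrale de Hochschild--Serre, ce qui est correct mais ne change pas la structure de la preuve.
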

\begin{proof}
(i) L'isomorphisme  $\alpha : N \times \SS \to \chi + \kappa(\m^\perp)$ 
du Théorème \ref{t:alpha} induit un isomorphisme
$$
	\alpha^{*} : \C[\chi + \kappa(\m^\perp)]  \to \C[N] \otimes \C[\SS] .
$$
Le groupe $N$ opère dans $N \times \SS$ 
par $x.(y,s)=(xy,s)$, avec $x,y \in N$ et $s \in \SS$, 
et dans $\chi + \kappa(\m^\perp)$ par l'opération coadjointe.   
Ceci induit des opérations de $N$ dans $\C[N] \otimes \C[\SS]$ et 
dans $\C[\chi + \kappa(\m^\perp)]$. 
On vérifie sans peine que $\alpha$ et $\alpha^{*}$ sont $N$-équivariants 
pour ces opérations.
Ainsi,
\begin{equation*}    \label{eq:H0}
 \C[\chi + \kappa(\m^\perp)]^N \simeq (\C[N] \otimes \C[\SS] )^N 
	\simeq \C[N]^N \otimes \C[\SS] \simeq \C[\SS] 
\end{equation*}
car $\C[N]^N = \C$.  
D'autre part, d'après la Proposition \ref{p:grQ}, on a
\begin{equation*}    \label{eq:H0-2}
 \C[\chi + \kappa(\m^\perp)]^N = 
	 \C[\chi + \kappa(\m^\perp)]^{\n}
	= H^{0}(\n, \C[\chi + \kappa(\m^\perp)]) 
	\simeq H^{0} (\n, \gr_{\F} Q ). 
\end{equation*}
On obtient ainsi l'isomorphisme souhaité: 
$$
	H^0 (\n, \gr_{\F} Q ) \simeq \C[\SS] .
$$

\smallskip

(ii) Les isomorphismes $\vartheta : \gr_{\F} Q \to \C[\chi + \kappa(\m^\perp) ]$ et
$\alpha^{*} : \C[\chi + \kappa(\m^\perp)]  \to \C[N] \otimes \C[\SS]$ nous donnent, 
pour tout $i \ge 0$,
$$
	H^{i} (\n, \gr_{\F} Q ) \simeq H^{i}(\n, \C[\chi + \kappa(\m^\perp)]) 
		\simeq H^{i} (\n, \C[N] \otimes \C[\SS] ) .
$$
\noindent
L'opération de $N$ dans $N \times \SS$
implique $H^{i} (\n, \C[N] \otimes \C[\SS] ) = H^{i} (\n, \C[N] ) \otimes \C[\SS]$. 
D'autre part, d'après \cite[Theorem 10.1]{ChE} (ou \cite[Lemma 5.1]{Ho} 
ou \cite{Gr}), pour $i > 0$, $H^{i} (\n, \C[N] )$ est égal au $i$-ème groupe de cohomologie 
du complexe de De Rham pour $N$. 
Ce dernier est nul d'après \cite[Ch.\,III, Theorem 3.7]{Ha}. 
En conclusion, 
$H^{i} (\n, \C[N] \otimes \C[\SS] ) = H^{i} (\n, \C[N] ) \otimes \C[\SS] = 0$, 
ce qui achève la démonstration de (ii).
\end{proof}

Compte tenu de la Proposition \ref{p:H0}(i),  
on souhaite montrer que $\gr_\F H^0(\n,Q) \simeq H^0(\n,\gr_\F Q)$. 
Comme dans \cite[Section 5]{GG}, nous y parvenons à l'aide 
de suites spectrales. 
%
D'après le Lemme \ref{l:C}, on associe à la filtration $(\FF^k C^\bullet)_k$ 
la suite spectrale donnée par: 
$$
\begin{array}{lcl}
	 E_{m}^{k,l}  & :=  &
		\displaystyle{\frac{\FF^kC^{k+l} \cap \partial^{-1} (\FF^{k+m} C^{k+l+1}) 
			+ \FF^{k+1} C^{k+l}}
				{\partial(\FF^{k-m+1}C^{k+l-1} \cap \partial^{-1} (\FF^{k} C^{k+l}))
			+ \FF^{k+1} C^{k+l}}} ; \\
\\
	E_{\infty}^{k,l} & := & \displaystyle{ \frac{\ker \partial \cap \FF^{k} C^{k+l} 
		+ \FF^{k+1} C^{k+l}}{{\rm im} \, \partial \cap \FF^{k} C^{k+l} 
						      + \FF^{k+1} C^{k+l}}} ;  \\
\\
	E^{i} & := & H^{i}(C^\bullet) = H^{i} (\n, Q). 
\end{array}
$$
Il résulte de \cite[Ch.\,XV, Section 4]{CaE} que
\begin{eqnarray}    \label{eq:infty}
	E_{\infty}^{k,l} \simeq \gr_{\FF,k} E^{k+l} . 
\end{eqnarray}

\noindent
Compte tenu de la Remarque~\ref{rk:Gi} (2),
on peut poser pour $k \in \Z$ et $i > 0$,
$$ 
	H^{i}(\n, \gr_{\F} Q)_{k} := \displaystyle \frac{\ker \de 
		\cap G^{i}_{k}}{\de(G^{i-1}_{k})} .
$$

\begin{lemma}   \label{l:E1}

{\rm (i)} Pour tous $k,l \in \Z$, on a 
$E_{1}^{k,l} \simeq H^{k+l}(\n, \gr_{\F} Q)_{k}.$ 

{\rm (ii)} On a 
$ 
	\gr_{\FF} H^{0} (\n, Q) 
		\simeq H^{0} (\n, \gr_{\F} Q) . 
$ 

\end{lemma}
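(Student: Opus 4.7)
Mon plan est d'appliquer les techniques standard des suites spectrales au complexe filtr\'e $(C^\bullet, \partial, \FF^\bullet)$: le point (i) est essentiellement un d\'evissage \`a la page $E_0$, et (ii) se d\'eduira de (i) par un argument de d\'eg\'en\'erescence reposant sur la Proposition~\ref{p:H0}(ii).

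Pour (i), je commencerai par expliciter la page $E_0$: la formule d\'efinissant $E_m^{k,l}$ avec $m=0$ donne $E_0^{k,l} = \FF^k C^{k+l}/\FF^{k+1}C^{k+l}$, que la Remarque~\ref{rk:Gi}(1) identifie canoniquement \`a $G^{k+l}_k$. Il s'agira ensuite de v\'erifier que, via cette identification, la diff\'erentielle $d_0 : E_0^{k,l} \to E_0^{k,l+1}$ induite par $\partial$ co\"{\i}ncide avec la restriction de $\delta$ aux composantes de $k$-graduation, ce qui d\'ecoule essentiellement de la Remarque~\ref{rk:Gi}(2): les termes de $\partial$ qui augmentent strictement le degr\'e filtrant disparaissent modulo $\FF^{k+1}$, et seule subsiste la partie principale, qui est pr\'ecis\'ement $\delta$ sur le $k$-gradu\'e. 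En prenant la cohomologie en degr\'e $k+l$, on obtient bien $E_1^{k,l} = (\ker \delta \cap G^{k+l}_k)/\delta(G^{k+l-1}_k) = H^{k+l}(\n, \gr_\F Q)_k$.

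Pour (ii), j'exploiterai la Proposition~\ref{p:H0}(ii) affirmant que $H^i(\n,\gr_\F Q) = 0$ pour $i > 0$. Combin\'ee \`a (i), cette annulation entra\^ine $E_1^{k,l} = 0$ d\`es que $k+l > 0$; par ailleurs, $G^{k+l}_k = 0$ pour $k+l < 0$ puisque $\bigwedge^{k+l}\n^* = 0$ dans ce cas. Les termes non nuls de $E_1$ sont donc concentr\'es sur l'antidiagonale $k+l=0$. Pour chaque $r \ge 1$, la diff\'erentielle $d_r : E_r^{k,l} \to E_r^{k+r,l-r+1}$ ou bien part d'un terme de l'antidiagonale $k+l = 0$ pour aboutir sur $k+l = 1$ (terme nul), ou bien part d'un terme nul situ\'e hors de l'antidiagonale: elle est donc toujours nulle. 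La suite spectrale d\'eg\'en\`ere \`a la page $E_1$, et la convergence rappel\'ee en (\ref{eq:infty}) donne $\gr_{\FF,k} H^0(\n, Q) \simeq E_\infty^{k,-k} = E_1^{k,-k} = H^0(\n,\gr_\F Q)_k$; la sommation sur $k$ fournit l'isomorphisme souhait\'e. Le point le plus d\'elicat sera la v\'erification explicite, dans (i), de l'identification entre $d_0$ et $\delta$ via les isomorphismes de la Remarque~\ref{rk:Gi}; une fois celle-ci acquise, la conclusion de (ii) repose sur un argument formel de suite spectrale parfaitement standard.
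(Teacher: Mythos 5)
Votre preuve est correcte et suit essentiellement la m\^eme d\'emarche que celle du texte: identification de la page $E_1$ (via $E_0$ et $d_0$, ce qui revient au calcul explicite de $E_1^{k,l}$ et au diagramme commutatif de la Remarque~\ref{rk:Gi} utilis\'es ici), puis annulation hors de l'antidiagonale par la Proposition~\ref{p:H0}(ii) et conclusion par d\'eg\'en\'erescence et convergence (\ref{eq:infty}). La seule diff\'erence est cosm\'etique: vous justifiez la stationnarit\'e en observant directement que toutes les diff\'erentielles $d_r$, $r\ge 1$, sont nulles, l\`a o\`u le texte invoque \cite[Ch.\,XV, Propositions 5.2, 5.2a]{CaE}.
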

\begin{proof}

(i) On a
$$
    E_{1}^{k,l} = \frac{\FF^{k} C^{k+l} \cap \partial^{-1} (\FF^{k+1} C^{k+l+1}) 
					+ \FF^{k+1} C^{k+l}}
					{\partial(\FF^{k}C^{k+l-1} \cap \partial^{-1} (\FF^{k} C^{k+l}))
					+ \FF^{k+1} C^{k+l}}.
$$
De la Remarque \ref{rk:Gi} et du diagramme commutatif suivant, 

\begin{equation*} 
\xymatrix{
	G^{i-1}_{k} \ar[r]^{\de} \ar[d]^{\sim} & G^{i}_{k} \ar[r]^{\de} \ar[d]^{\sim} & G^{i+1}_{k} \ar[d]^{\sim} \\
	\FF^{k} C^{i-1}/\FF^{k+1} C^{i-1} \ar[r]^{\hspace{.3cm}\partial} & \FF^{k} C^{i}/\FF^{k+1} C^{i}  
						      \ar[r]^{\hspace{-.5cm}\partial} & \FF^{k} C^{i+1}/\FF^{k+1} C^{i+1}}
\end{equation*}

\noindent
on tire, 
$$
	H^{i}(\n, \gr_{\FF} Q)_{k} \simeq 
			\frac{\ker (\de) \cap \FF^{k}C^{i} /\FF^{k+1}C^{i}}{\de(\FF^{k}C^{i-1} /\FF^{k+1}C^{i-1})} 
		\simeq 
			\frac{\FF^{k}C^{i}\cap \partial^{-1} (\FF^{k+1} C^{i+1})
			+ \FF^{k+1} C^{i}}{\partial(\FF^{k} C^{i-1})+\FF^{k+1} C^{i}} .
$$
En particulier, lorsque $i=k+l$, on obtient (i).

\smallskip

(ii) D'après la Proposition \ref{p:H0}(ii), 
pour tout $(k,l) \in \Z^2$ tel que
$k+l>0$, on a  $E_{1}^{k,l} \simeq H^{k+l}(\n, \gr_{\FF} Q)_{k}=0$. 
Comme $C^{i} = 0$ pour tout $i<0$, pour tout $(k,l) \in \Z^2$ 
tel que $k+l<0$, on a  $E_{1}^{k,l}=0$.

Soit $1 < s < \infty$. 
D'après ce qui précède,  
il résulte de \cite[Ch.\,XV, Propositions 5.2, 5.2a]{CaE} que
$$ 
	E_{1}^{k,l} \simeq E_{s}^{k,l} . 
$$
La suite spectrale $(E_s^{k,l})_s$ est donc stationnaire. 
Par conséquent, $E_{1}^{k,l} \simeq E_{\infty}^{k,l}$. 
D'après (i), pour $k,l \in \Z$ vérifiant $k+l=0$ on a
$E_{1}^{k,l} \simeq H^{k+l}(\n, \gr_{\F} Q)_{k} 
= H^{0}(\n, \gr_{\F} Q)_{k}$. 
D'autre part, d'après (\ref{eq:infty}), 
on a $E_{\infty}^{k,l} \simeq \gr_{\FF,k} H^{k+l}(\n,Q) = \gr_{\FF,k} H^{0}(\n,Q)$, 
et (ii) s'ensuit. 
\end{proof}

Nous sommes désormais en mesure de démontrer 
le Théorème \ref{t:gr}: 

\begin{proof}[Démonstration du Théorème \ref{t:gr}]
D'après la Proposition~\ref{p:H0} et le Lemme~\ref{l:E1}, on a:
$$
	\C[\SS] \simeq H^{0} (\n, \gr_{\F} Q) \simeq \gr_{\FF} H^{0} (\n, Q)
		= \gr_{\F}  Q^\n =\gr_{\F} H . 
$$
Par construction, le morphisme $\nu$ est un isomorphisme d'algèbres graduées.
Il reste à montrer que c'est un morphisme 
d'algèbres de Poisson.
Cela résulte de la Remarque \ref{rk:Poiss}. 
\end{proof} 
\begin{remark}  \label{rk:kQ}
Pour tout $i>0$, on a $ H^{i} (\n, Q)=0.$
En effet, 
soient $k,l \in \Z$ tels que $k+l>0$. 
D'après le Lemme \ref{l:E1} (démonstration de (ii)), 
on a $$E_{1}^{k,l} = H^{k+l}(\n, \gr_{\F} Q)_{k}=0.$$
D'après \cite[Ch.\,XV, Proposition 5.1]{CaE}, 
on déduit que $$0 = E_{\infty}^{k,l} = \gr_{\FF,k} H^{k+l}(\n,Q).$$  
Ainsi, $\gr_{\FF} H^{i} (\n, Q)=0$ pour tout $i>0$.
Par conséquent, $\FF^{k} H^{i}(\n, Q)=\FF^{k+1}H^{i} (\n, Q)$ pour tout $k$ 
et tout $i > 0$.  
Pour $k$ assez grand, $\FF^{k} H^{i}(\n, Q)=0$.
On déduit que $\FF^{k} H^{i}(\n, Q)=0$ pour tout $k$.
La remarque s'ensuit.
\end{remark}

\smallskip

Soit $\m$ une sous-algèbre de $\g$ admissible pour $e$.
Soient $\CC$ la catégorie abélienne des $U(\g)$-modules 
à gauche finiment engendrés sur lesquels,  
pour tout $m \in \m$, l'élément $m - \chi(m)$ 
de $U(\g)$ agit localement comme 
un endomorphisme nilpotent, 
et $\CC'$ la catégorie des $H$-modules à gauche 
finiment engendrés. 

\smallskip

%
On note d'une part $Q \otimes_H - : \CC' \rightarrow \CC$ le foncteur
défini par 
$$
  (Q \otimes_H -)(V) = Q \otimes_H V \ \text{ et } \ (Q \otimes_H -)(\varphi)(q\otimes v) = q \otimes \varphi(v),
$$
pour tous $V,W$ deux objets de $\CC'$, $\varphi \in \Hom_{\CC'}(V,W)$, $q \in Q$ et $v \in V$.
D'autre part, on désigne par $\Wh: \CC \rightarrow \CC'$ le foncteur
$$
  \Wh(E) = \{ x \in E \; ; \; mx = \chi(m) x \,  \text{ pour tout } \,  m \in \m \} \ \text{ et } \ 
      \Wh(\Phi)(x) = \Phi(x)
$$
pour tous $E,F$ deux objets de $\CC$, $\Phi \in \Hom_{\CC}(E,F)$ et $x \in \Wh(E)$.
On remarque que $\Wh(\Phi)$ est bien défini car $\Phi(\Wh(E)) \subset \Wh(F)$.
\begin{theorem}   \label{t:Skr}
Le foncteur $Q \otimes_H -$ établit une équivalence 
de catégories entre les catégories $\CC'$ et $\CC$. 
L'inverse est donné par le foncteur $\Wh$.
\end{theorem}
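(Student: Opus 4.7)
The plan is to follow the blueprint of Skryabin's original argument, as adapted by Gan--Ginzburg in their proof of~\cite[Theorem 6.1]{GG}. All the ingredients needed to run the argument in our setting are already established in the excerpt: the Poisson isomorphism $\nu: \gr_{\F} H \xrightarrow{\sim} \C[\SS]$ of Theorem~\ref{t:gr}, the isomorphism $\vartheta: \gr_{\F} Q \xrightarrow{\sim} \C[\chi + \kappa(\m^\perp)]$ of Proposition~\ref{p:grQ}, the isomorphism of affine varieties $\alpha: N \times (e+\ss) \xrightarrow{\sim} e+\m^\perp$ of Theorem~\ref{t:alpha}, and the cohomological vanishing $H^i(\m, Q) = 0$ for $i>0$ recorded in Remark~\ref{rk:kQ}. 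Since we are in the case $\n=\m$, the algebra $H = H(\m,\m)$ acts on $Q$ on the right, giving $Q$ the structure of a $(U(\g), H)$-bimodule, so the pair of functors $Q \otimes_H -$ and $\Wh$ is well defined.

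The first step is to prove that $Q$ is a flat $H$-module, and indeed free. For this I would transport the filtration $\F$ on $Q$ and the induced filtration on $H$ to the associated graded picture. Via $\alpha^*$ combined with $\vartheta$, one obtains a $\C[\SS]$-module isomorphism $\gr_\F Q \simeq \C[N] \otimes \C[\SS]$, so $\gr_\F Q$ is free over $\gr_\F H \simeq \C[\SS]$. Choosing a homogeneous basis of $\gr_\F Q$ over $\gr_\F H$ and lifting it to elements $(q_\alpha)$ of $Q$, one shows by a standard filtration argument (using that the filtrations are exhaustive and bounded below) that the $q_\alpha$ form a free $H$-basis of $Q$. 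In particular, $Q \otimes_H -$ is exact.

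The second step is to establish the unit isomorphism $V \xrightarrow{\sim} \Wh(Q \otimes_H V)$ for $V \in \CC'$. Using the freeness of $Q$ over $H$, it suffices to check this on the free module $V = H$, i.e. to show $\Wh(Q) = H$. This is immediate since $\Wh(Q) = Q^{\ad \m}$ (using that $x - \chi(x)$ acts on $Q$ via $\ad x$ modulo $I(\m)$ for $x \in \m$), and $Q^{\ad \m} = H$ by definition. One then extends to arbitrary $V \in \CC'$ by taking a free presentation and using that $\Wh$ commutes with arbitrary direct sums when restricted to $Q \otimes_H -$ of free modules; the key ingredient here is again the flatness of $Q$ over $H$ and the vanishing of the first $\m$-cohomology of $Q$ from Remark~\ref{rk:kQ}.

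The third and hardest step is to establish the counit isomorphism $Q \otimes_H \Wh(E) \xrightarrow{\sim} E$ for $E \in \CC$. Injectivity and surjectivity are handled via a filtration of $E$ by the subspaces $E_k = \{x \in E\,;\, (m - \chi(m))^k x = 0 \text{ for all } m \in \m\}$ (which is exhausting by the local nilpotency hypothesis defining $\CC$), together with an induction using the long exact sequence in $\m$-cohomology and the vanishing $H^i(\m, Q \otimes_H W) = 0$ for $i>0$ and arbitrary $H$-module $W$. This latter vanishing reduces, via freeness of $Q$ over $H$, to $H^i(\m, Q) = 0$ for $i>0$, which is Remark~\ref{rk:kQ}. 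The main obstacle lies in controlling the interaction between the $\m$-action on $E$ and the tensor product $Q \otimes_H -$; one handles it exactly as in~\cite[Section 6]{GG}, by first treating the case $E = Q \otimes_H V$ for free $V$ (where the assertion follows from step two and the unit--counit identities), and then bootstrapping to general $E$ by noetherian induction on the support of $E_k / E_{k-1}$ viewed through the Poisson geometry of $\SS$. Since every structural tool we used (Kazhdan filtration, Poisson transversality, cohomological vanishing) is already in place in our admissible setting, the argument goes through verbatim.
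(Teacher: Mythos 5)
Your proposal is correct and takes essentially the same route as the paper: the paper's proof of Theorem \ref{t:Skr} consists precisely of the observation that the arguments of \cite[Section 6]{GG} carry over once Theorem \ref{t:alpha}, Proposition \ref{p:grQ}, Theorem \ref{t:gr} and Remark \ref{rk:kQ} are in place, and your sketch (freeness of $Q$ over $H$ read off from the graded picture, $\Wh(Q)=H$, and the cohomological vanishing of Remark \ref{rk:kQ} driving the unit and counit isomorphisms) is exactly an outline of that argument. The only cosmetic slip is the closing appeal to a noetherian induction on supports via the Poisson geometry of $\SS$, which is not how Gan and Ginzburg actually conclude (they use the exactness of $\Wh$ on $\CC$ and the generation of $E$ by its Whittaker vectors), but since you defer to their Section 6 at that point anyway, this does not affect the soundness of the plan.
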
 
Les arguments de \cite[Section 6]{GG} 
s'appliquent à notre situation
compte tenu de la démonstration précédente. On omet ici la démonstration.
\section{\'{E}quivalence et problème d'isomorphisme} \label{ch:iso}
Compte tenu de la généralisation faite à la section précédente,
il est naturel de se demander si l'algèbre $H(\m,\n)$
dépend du choix de la paire admissible $(\m,\n)$.
Cette question sera traitée dans cette section. On conserve
les notations des sections précédentes.
En particulier, $\g$ est une algèbre de Lie simple complexe de dimension finie 
et $e$ est un élément nilpotent de $\g$.
%
\begin{definition} \label{d:comp}
Soient $\Ga \in \GA(e)$ et $(\m,\n), (\m',\n') \in \PA(e, \Ga)$.
On note 
$$
  (\m',\n') \peq_\Gamma (\m,\n)
$$
si $\m \subseteq \m' \subseteq \n' \subseteq \n$.

Soient $(\m,\n), (\m',\n') \in \PA(e)$.
On dit que $(\m,\n)$ et $ (\m',\n')$
sont {\rm \bf comparables} s'il existe $\Gamma \in \GA(e)$
telle que $(\m,\n), (\m',\n') \in \PA(e, \Ga)$ et telle que
$$
  (\m',\n') \peq_\Gamma (\m,\n) \ \text{ ou } \  (\m,\n) \peq_\Gamma (\m',\n').
$$
\end{definition}
\begin{example} \label{ex:comparable}
Soit $\Gamma: \g = \bigoplus_{j \in \Z} \g_j$ une bonne graduation
pour $e$. La paire
$(\g_{\le -2},\g_{<0})$ est admissible pour $e$. De plus,
pour tout $(\m,\n) \in \PA(e, \Ga)$, on a
$$
	(\m,\n ) \peq_\Gamma (\g_{\le -2},\g_{<0}).
$$

Plus généralement, si $\Gamma: \g = \bigoplus_{j \in \Z} \g_j$ est une graduation de $\g$
telle que $e\in \g_a$ pour $a > 1$ et $\g^e \cap \g_{<0} =\{0\}$
alors tout 
$(\m,\n) \in \PA(e, \Ga)$  vérifie
$$
	(\m,\n) \peq_\Gamma (\g_{\le -a},\g_{<0}),
$$
où $(\g_{\le -a},\g_{<0})$ est l'unique paire optimale d'après la Proposition \ref{p:supp}.
\end{example}
\begin{remark} \label{rk:optmax}
 Si $\Ga \in \GA(e)$ et si $(\m,\n)\in \PA(e, \Ga)$ est optimale,
alors $(\m,\n)$ est maximale pour l'ordre partiel $\peq_\Ga$ sur
les éléments de $\PA(e,\Ga)$.
\end{remark}
\begin{example} \label{ex:compsansmax}
 On suppose que $\g=\lsl_8(\C)$ et que
$e := \sum \limits_{\underset{i \not= 4}{1 \le i \le 7}} E_{i,i+1}$. 
On considère la graduation $\Gamma: \g = \bigoplus \limits_{j \in \Z} \g_j$ 
définie par l'élément semisimple
$$\frac{1}{2}\diag(7,1,-5,-11,11,5,-1,-7).$$ 
On a $\dim \g^e = 15$ et $e\in \g_3$.
Posons
$$
    \m := \g_{\le -3} \oplus \g_{-1}, \quad 
    \m':= \g_{\le -3} \oplus \C E_{6,1}, \quad \n':= \m' \oplus \C E_{7,2}
		    \oplus \C E_{8,3}\oplus \C E_{3,7}\oplus \C E_{4,8}.
$$
On vérifie par un calcul que les paires $(\m,\m)$ et $(\m',\n')$ sont admissibles pour $e$.
Elles sont comparables. Précisément, on a
$
    (\m,\m) \peq_\Gamma (\m',\n').
$
\end{example}
%
%
\begin{example} \label{ex:noncomp}
Deux paires admissibles pour $e$ ne sont pas toujours comparables.
En effet, reprenons l'exemple \ref{ex:admsl}. 
On a vu que
$$
  (\m_1,\n_1) := (\g_{\le -2} , \g_{\le -2} \oplus \C E_{2,1} \oplus \C E_{3,2}) \in \PA(e,\Ga).
$$
De la même manière on peut montrer que la paire
$$
  (\m_2,\n_2) := (\g_{\le -2} , \g_{\le -2} \oplus \C E_{3,2} \oplus \C E_{4,3})
$$
appartient à $\PA(e,\Gamma)$. On remarque que les paires $(\m_1,\n_1)$
et $(\m_2,\n_2)$ ne sont pas comparables.
\end{example}
\begin{proposition} \label{p:comp}
Si $(\m_1,\n_1), (\m_2,\n_2) \in \PA(e)$
sont comparables, alors les algèbres $H(\m_1,\n_1)$ et $H(\m_2,\n_2)$
sont isomorphes.
\end{proposition}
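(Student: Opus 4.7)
The plan is to exhibit a natural filtered algebra morphism $\pi\colon H(\m_1,\n_1) \to H(\m_2,\n_2)$ and to prove it is an isomorphism by passing to the associated gradeds via Theorem \ref{t:gr}. Up to swapping the two pairs, we may assume $(\m_2,\n_2) \peq_\Ga (\m_1,\n_1)$, i.e.\ $\m_1 \subseteq \m_2 \subseteq \n_2 \subseteq \n_1$, with all four subalgebras $\Ga$-graded for some $\Ga \in \GA(e)$. Since $\m_1 \subseteq \m_2$ we have $I(\m_1) \subseteq I(\m_2)$, so $u + I(\m_1) \mapsto u + I(\m_2)$ defines a surjection $Q(\m_1) \twoheadrightarrow Q(\m_2)$; and if $\theta(x)(u) \in I(\m_1)$ for every $x \in \n_1$, then in particular for every $x \in \n_2 \subseteq \n_1$, so this map sends $H(\m_1,\n_1)$ into $H(\m_2,\n_2)$ and is a morphism of algebras, clearly filtered for the Kazhdan filtrations inherited from $U(\g)$.

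The crucial step is to build a transversal slice adapted to both pairs at once. I would first establish
$$
  [\n_1,e] + \m_2^\perp = \m_1^\perp.
$$
The inclusion $\subseteq$ is immediate from (A3) for $(\m_1,\n_1)$ together with $\m_2^\perp \subseteq \m_1^\perp$. The reverse inclusion reduces to a dimension count based on the identity $\m_2^\perp \cap [\n_1,e] = \m_2^\perp \cap [\g,e] = [\n_2,e]$, in which the first equality uses $\m_2^\perp \subseteq \m_1^\perp$ and (A3) for $(\m_1,\n_1)$, and the second is (A3) for $(\m_2,\n_2)$; combining with (A6) for both pairs yields the desired equality of dimensions. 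Consequently one can pick a graded subspace $\ss \subseteq \m_2^\perp$ complementary to $[\n_1,e]$ in $\m_1^\perp$. Since $\ss \cap [\n_2,e] \subseteq \ss \cap [\n_1,e] = \{0\}$ and $\dim \ss = \dim \g^e = \dim \m_2^\perp - \dim [\n_2,e]$ again by (A6), the same $\ss$ is also a complement of $[\n_2,e]$ in $\m_2^\perp$. Setting $\SS := \chi + \kappa(\ss)$, we obtain a transversal slice valid for both admissible pairs simultaneously.

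With this common choice, tracing the identifications of Theorem \ref{t:gr} through Proposition \ref{p:grQ} and the isomorphism $\alpha^*$ from the proof of Proposition \ref{p:H0} produces a commutative diagram in which $\gr \pi$ corresponds to the restriction map
$$
  \C[\chi + \kappa(\m_1^\perp)]^{N_1} \longrightarrow \C[\chi + \kappa(\m_2^\perp)]^{N_2},
$$
while both source and target are in turn identified with $\C[\SS]$ by restriction to $\SS$. Since restricting an $N_1$-invariant function on $\chi + \kappa(\m_1^\perp)$ to $\SS$ gives the same result whether one restricts directly or first passes to $\chi + \kappa(\m_2^\perp)$, the graded morphism $\gr \pi$ is the identity of $\C[\SS]$, hence an isomorphism. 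The main technical obstacle is precisely this compatibility check across all of the identifications; once it is settled, the Kazhdan filtrations on $H(\m_i,\n_i)$ being exhaustive and bounded below, the standard lemma for filtered algebras lifts $\gr \pi$ to the conclusion that $\pi$ itself is an isomorphism.
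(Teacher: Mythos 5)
Your proposal is correct and follows essentially the same route as the paper: the morphism induced by $I(\m_1)\subseteq I(\m_2)$, a common graded complement $\ss$ (hence a common slice $\SS$) obtained from (A3), (A4), (A6) for both pairs, the identification of the associated graded map with the identity of $\C[\SS]$ via Theorem \ref{t:gr} and Proposition \ref{p:grQ}, and the lift back through the bounded-below Kazhdan filtration. The only cosmetic difference is that you prove $[\n_1,e]+\m_2^{\perp}=\m_1^{\perp}$ by a direct dimension count where the paper invokes Lemma \ref{l:supps}; the content is the same.
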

\begin{proof}
Sans perte de généralité, on peut supposer qu'il existe 
une $\Z$-graduation $e$-admissible $\Gamma: \g = \bigoplus_{j \in \Z} \g_j$ de $\g$
telle que $e \in \g_a$ pour $a>1$ et $(\m_2,\n_2) \peq_\Gamma (\m_1,\n_1)$. 
En particulier,
$$
  \g_{\le -a} \subseteq \m_1 \subseteq \m_2 \subseteq \n_2 \subseteq \n_1 \subseteq \g_{<0}.
$$
Comme $I(\m_1) \subset I(\m_2)$, on a la suite exacte courte
$$
  0 \rightarrow \ker \Phi \rightarrow Q(\m_1) \xrightarrow[]{\Phi} Q(\m_2) \rightarrow 0, 
$$
où $\Phi$ est le morphisme de $Q(\m_1)$ sur $Q(\m_2)$ induit par 
le morphisme quotient (surjectif) $U(\g) \rightarrow Q(\m_2)$.
Soit $\bar{\Phi}$ la restriction de $\Phi$ à
$H(\m_1,\n_1)$.
Comme $I(\m_1) \subset I(\m_2)$ et $\n_2 \subset \n_1$,
$$
	\bar{\Phi}(H(\m_1,\n_1)) \subseteq H(\m_2,\n_2).
$$
On rappelle que la filtration de Kazhdan généralisée
$\F$ induit une filtration croissante sur $H(\m_1,\n_1)$
et $H(\m_2,\n_2)$.
On note $\gr \Phi$ et $\gr \bar{\Phi}$
les morphismes gradués associés à $\Phi$ et $\bar{\Phi}$ respectivement:
$$
    \gr \Phi: \gr_{\F} Q(\m_1) \rightarrow \gr_{\F} Q(\m_2),
\  \gr \bar{\Phi}: \gr_{\F} H(\m_1,\n_1) \rightarrow \gr_{\F} H(\m_2,\n_2).
$$
Soit $\ss$ un sous-espace gradué de $\g$
supplémentaire de $[\n_2,e]$ dans $\m_2^\perp$.
D'après le Lemme \ref{l:supps}, on a $\ss \oplus [\g,e] = \g$.
Il s'ensuit que $\ss \cap [\n_1,e] =\{0\}$. 
D'après la condition (A6),
on en déduit que $\ss \oplus [\n_1,e] = \m_1^\perp$.
On pose alors 
$$
	\SS:= \kappa(e + \ss).
$$
D'après le Théorème \ref{t:gr},
on a un isomorphisme
$$
	\nu_i: \gr_\F H(\m_i,\n_i) \rightarrow \C[\SS],
$$
pour $i \in \{1,2\}$.
Montrons que $\nu_2 \circ \gr \bar{\Phi} = \nu_1$.
Soient $\mu_i$ et $\mu$ les comorphismes correspondants aux
inclusions $\SS \hookrightarrow \kappa(e + \m_i^\perp)$ ($i=1,2$), 
et $\kappa(e + \m_2^\perp) \hookrightarrow \kappa(e + \m_1^\perp) $.
Le diagramme suivant commute alors
$$
\xymatrix{
\C[\kappa(e + \m_2^\perp)] \ar[rr]^{\mu_2}  &&\C[\SS] \\
\C[\kappa(e + \m_1^\perp)]. \ar[u]^{\mu} \ar[urr]_{\mu_1}&&}
$$
Soit $\iota_i: \gr_\F H(\m_i,\n_i) \hookrightarrow \gr_\F Q(\m_i)$
le morphisme injectif donné par la Proposition \ref{p:iota} pour $i=1,2$.
On a $\nu_i = \mu_i \circ \iota_i$.
De plus, on a le diagramme commutatif suivant
$$
\xymatrix{
\gr_\F H(\m_2,\n_2) \ar@{^{(}->}[rr]^{\, \iota_2} && \gr_\F Q(\m_2)\\
\gr_\F H(\m_1,\n_1) \ar[u]^{\gr \bar{\Phi}} \ar@{^{(}->}[rr]_{\, \iota_1} && \gr_\F Q(\m_1). \ar[u]_{\gr \Phi}}
$$
Comme $\gr_\F Q(\m_i) \simeq \C[\kappa(e + \m_i^\perp)]$ pour $i \in \{1,2\}$
d'après la Proposition \ref{p:grQ}, on en déduit que le
diagramme suivant commute:
$$
\xymatrix{
\gr_\F H(\m_2,\n_2) \ar@{^{(}->}[r] & \gr_\F Q(\m_2) \simeq \C[\kappa(e + \m_2^\perp)] \ar[r]  &\C[\SS]\\
\gr_\F H(\m_1,\n_1) \ar[u]^{\gr \bar{\Phi}} \ar@{^{(}->}[r] & \gr_\F Q(\m_1) 
\simeq \C[\kappa(e + \m_1^\perp)] \ar@<5ex>[u]^{\gr \Phi} \ar@<-1ex>[u]_{\mu} \ar[ur] & }
$$
d'où $\nu_2 \circ \gr \bar{\Phi} = \nu_1$.
On en déduit que $\gr \bar{\Phi}$ est un
isomorphisme d'algèbres graduées.

Comme $\F_k Q =\{0\}$ pour tout $k < 0$ et d'après \cite[Propositions 7.5.7 and 7.5.8]{TY},
$\bar{\Phi}$ est donc un isomorphisme d'algèbres
ce qui démontre la proposition.
\end{proof}
\begin{remark}
 L'isomorphisme établi dans la démonstration de la Proposition \ref{p:comp}
n'est pas canonique car il dépend du choix du sous-espace gradué $\ss$ de $\g$.
\end{remark}

\begin{definition} \label{d:equi}
 Soient $(\m,\n), (\m',\n') \in \PA(e)$.
On dit que $(\m,\n)$ et $(\m',\n')$ sont {\rm \bf équivalentes},
et on note $(\m,\n) \sim (\m',\n')$, s'il existe une famille
finie de paires $e$-admissibles $\{(\m_i,\n_i)\}_{i \in \{1, \cdots, s\}}$
telle que 
\begin{enumerate}
 \item[{\rm (1)}] $(\m_1,\n_1) = (\m,\n)$;
 \item[{\rm (2)}] les paires $(\m_i,\n_i)$ et $(\m_{i+1},\n_{i+1})$ sont comparables
pour tout $i \in \{1,\cdots, s-1\}$;
 \item[{\rm (3)}] $(\m_s,\n_s) = (\m',\n')$.
\end{enumerate}
\end{definition}
La relation $\sim$ définit une relation d'équivalence
sur $\PA(e)$.
\begin{examples} \label{ex:equi}
{\rm (1) }On a vu dans l'Exemple \ref{ex:noncomp}
que les paires
$$
  (\m_1,\n_1) := (\g_{\le -2} , \g_{\le -2} \oplus \C E_{2,1} \oplus \C E_{3,2}) \quad \text{et} \quad
  (\m_2,\n_2) := (\g_{\le -2} , \g_{\le -2} \oplus \C E_{3,2} \oplus \C E_{4,3})
$$
ne sont pas comparables.
Cependant, on peut montrer que la sous-algèbre 
$$
  \m_3:= \g_{\le -2} \oplus \C E_{3,2}
$$
est admissible pour $e$ relativement à la même graduation $\Gamma$, et que
$$
  ( \m_3, \m_3) \peq_\Gamma (\m_1,\n_1) \text{ et } ( \m_3, \m_3) \peq_\Gamma (\m_2,\n_2).
$$
Il s'ensuit que les paires $(\m_1,\n_1)$ et $(\m_2,\n_2)$ sont équivalentes.

\medskip

{\rm (2)} Donnons un exemple de paires $e$-admissibles équivalentes
qui ne sont pas issues de la même graduation $e$-admissible.
On reprend l'Exemple
\ref{ex:bonnondyn}. On rappelle que la paire $(\g_{-2},\g_{-2})$
est la seule paire $e$-admissible relativement à $\Gamma$.
On considère ensuite la graduation $\Gamma': \g = \bigoplus_{j \in \Z} \g'_j$
définie par l'élément semisimple
 $\frac{1}{3}\diag(4,-2,-2)$. Les degrés
des matrices élémentaires $E_{i,j}$ sont donnés par la matrice suivante:
$$\left(
\begin{array}{ccc}
0 & 2 & 2 \\
-2 & 0 & 0 \\
-2 & 0 & 0 
\end{array} \right).
$$
On vérifie alors que $(\g'_{-2},\g'_{-2})$
est la seule paire $e$-admissible relativement à $\Gamma'$.
On considère enfin la graduation de Dynkin $\Ga'': \g = \bigoplus_{j \in \Z} \g''_j$
dont les degrés des matrices élémentaires $E_{i,j}$ sont donnés par la matrice suivante:
$$\left(
\begin{array}{ccc}
0 & 1& 2 \\
-1 & 0 & 1 \\
-2 & -1 & 0 
\end{array} \right).
$$
En particulier, la paire $(\g''_{-2},\g''_{-2} \oplus \g''_{-1})$ est $e$-admissible
et optimale d'après l'Exemple \ref{ex:dyn}. De plus, 
$(\g_{-2},\g_{-2})$ et $(\g'_{-2},\g'_{-2})$ appartiennent à $\PA(e,\Ga'')$
et vérifient
$$
  (\g_{-2},\g_{-2}) \peq_{\Ga''}  (\g''_{-2},\g''_{-2} \oplus \g''_{-1}) 
    \quad \text{et} \quad (\g'_{-2},\g'_{-2}) \peq_{\Ga''}  (\g''_{-2},\g''_{-2} \oplus \g''_{-1})).
$$
Il s'ensuit que les paires $(\g_{-2},\g_{-2})$ et $(\g'_{-2},\g'_{-2})$
sont équivalentes.
\end{examples}
\begin{remark} \label{rk:uniqopt}
Soit $\Gamma: \g = \bigoplus_{j \in \Z} \g_j$ une graduation de $\g$
telle que $e\in \g_a$ pour $a > 1$. Si $(\g_{\le -a},\g_{<0})$
est une paire admissible pour $e$, toutes les paires admissibles pour $e$ relativement
à $\Gamma$ sont équivalentes d'après l'Exemple \ref{ex:comparable} {\rm (2)}.
En particulier, les paires admissibles pour $e$ relativement à une graduation de Dynkin
(et plus généralement à une bonne graduation) sont équivalentes.
\end{remark}
\begin{theorem} \label{t:equi}
 Soient $(\m,\n)$ et $(\m',\n')$ deux paires $e$-admissibles équivalentes.
Alors les algèbres $H(\m,\n)$ et $H(\m',\n')$ sont isomorphes.
\end{theorem}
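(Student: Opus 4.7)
Le plan est de proc�der par une r�currence imm�diate sur la longueur de la cha�ne de paires comparables donn�e par la D�finition \ref{d:equi}, en invoquant � chaque cran la Proposition \ref{p:comp}. Par hypoth�se, il existe un entier $s \ge 1$ et une famille $\{(\m_i,\n_i)\}_{i \in \{1,\dots,s\}}$ de paires $e$-admissibles telle que $(\m_1,\n_1) = (\m,\n)$, $(\m_s,\n_s) = (\m',\n')$ et telle que, pour tout $i \in \{1,\dots,s-1\}$, les paires $(\m_i,\n_i)$ et $(\m_{i+1},\n_{i+1})$ soient comparables.

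Pour chaque $i \in \{1,\dots,s-1\}$, la Proposition \ref{p:comp} fournit un isomorphisme d'alg�bres
$$
\Psi_i : H(\m_i,\n_i) \xrightarrow{\sim} H(\m_{i+1},\n_{i+1}).
$$
Je poserais alors $\Psi := \Psi_{s-1} \circ \cdots \circ \Psi_1$. Par composition d'isomorphismes, $\Psi$ est un isomorphisme d'alg�bres entre $H(\m_1,\n_1) = H(\m,\n)$ et $H(\m_s,\n_s) = H(\m',\n')$, ce qui conclut.

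Il n'y a ici aucune difficult� technique v�ritable: le travail a d�j� �t� effectu� dans la d�monstration de la Proposition \ref{p:comp}, o� l'on a construit l'isomorphisme � partir de la compatibilit� de la filtration de Kazhdan g�n�ralis�e $\F$ avec l'inclusion $I(\m_i) \subset I(\m_{i+1})$ (ou l'inclusion oppos�e) et de l'identification, via le Th�or�me \ref{t:gr}, des alg�bres gradu�es $\gr_{\F} H(\m_i,\n_i)$ avec $\C[\SS]$ pour un m�me choix de sous-espace gradu� suppl�mentaire $\ss$ de $[\n,e]$ dans $\m^{\perp}$ (les notations �tant adapt�es � chaque paire comparable). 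Le seul point � signaler est que l'isomorphisme $\Psi$ ainsi obtenu n'est pas canonique, puisqu'il d�pend, via chaque $\Psi_i$, du choix des suppl�mentaires gradu�s effectu�s dans les applications successives de la Proposition \ref{p:comp}.
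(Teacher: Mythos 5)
Votre démonstration est correcte et suit exactement la même démarche que celle du texte: on enchaîne les isomorphismes fournis par la Proposition \ref{p:comp} le long de la suite de paires comparables donnée par la Définition \ref{d:equi}, puis on compose. La remarque finale sur la non-canonicité de l'isomorphisme obtenu correspond d'ailleurs à la Remarque qui suit la Proposition \ref{p:comp} dans le texte.
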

\begin{proof}
 Comme $(\m,\n) \sim (\m',\n')$, il existe d'après la Définition \ref{d:equi} une famille
finie $\{(\m_i,\n_i)\}_{i \in \{1, \cdots, s\}}$ de paires admissibles pour $e$ 
telle que $(\m_1,\n_1) = (\m,\n)$, $(\m_s,\n_s) = (\m',\n')$ et telle que
les paires $(\m_i,\n_i)$ et $(\m_{i+1},\n_{i+1})$ soient comparables
pour tout $i \in \{1,\cdots, s-1\}$. Il découle de la Proposition 
\ref{p:comp} que 
$$
  H(\m_i,\n_i) \simeq H(\m_{i+1},\n_{i+1}) \text{ pour tout } i \in \{1,\cdots, s-1\}.
$$ 
Alors 
$$
  H(\m_i,\n_i) \simeq H(\m_{j},\n_{j}) \text{ pour tout } i,j \in \{1,\cdots, s\}.
$$ 
En particulier, 
$$
   H(\m,\n) = H(\m_1,\n_1) \simeq H(\m_{s},\n_{s}) = H(\m',\n').
$$
\end{proof}
\begin{remark} \label{rk:distiso}
Si $e$ est distingué alors, d'après la Proposition \ref{p:dist} 
et la Remarque \ref{rk:uniqopt}, les paires $e$-admissibles sont équivalentes
et donc, d'après le Théorème \ref{t:equi}, les $W$-algèbres associées sont isomorphes.
\end{remark}
\section{Connexité des graduations admissibles} \label{ch:conn}
On conserve
les notations des sections précédentes.
La notion de paires admissibles pour $e$
relativement à des $\Z$-graduations de $\g$ peut être étendue aux $\Q$-graduations de $\g$
comme suit: une paire $(\m,\n)$ de sous-algèbres ${\rm ad}$-nilpotentes de $\g$
est dite {\bf {\em admissible pour $e$}} s'il existe une $\Q$-graduation
$\Gamma: \g = \displaystyle \bigoplus_{j\in \Q} \g_j$ de $\g$
et un entier $a > 1$ tels que les conditions (A1) à (A6)
soient vérifiées. De façon analogue, on dira qu'une $\Q$-graduation
de $\g$ est {\bf {\em admissible pour $e$}} s'il existe un entier $a > 1$ tel que $e \in \g_a$ 
et s'il existe une paire admissible pour $e$ relativement à cette graduation.
On note dans la suite $\PA_\Q(e)$ l'ensemble des paires admissibles pour $e$
relativement à des $\Q$-graduations de $\g$ et $\GA_\Q(e)$ l'ensemble des
$\Q$-graduations admissibles pour $e$. Pour $\Gamma \in \GA_\Q(e)$,
on désigne par $\PA_\Q(e,\Gamma)$ l'ensemble des paires admissibles pour $e$
relativement à la $\Q$-graduation $\Gamma$.
Pour une $\Q$-graduation $\Gamma$ de $\g$ et $\lambda \in \Q^*_+$,
on rappelle que $\l \Ga$ est la $\Q$-graduation de $\g$ définie par
l'élément semisimple $\l h_\Ga$ où $h_\Ga$ est l'élément semisimple
qui définit la graduation $\Ga$.
\begin{proposition} \label{p:lambdagamQ}
 \begin{enumerate}
  \item[{\rm (i)}] Pour tout $\Gamma \in \GA_\Q(e)$, il existe $\lambda \in \Q^*_+$
tel que $\lambda \Gamma \in \GA(e)$.
  \item[{\rm (ii)}] Pour tous $\Gamma \in \GA_\Q(e)$ et $\lambda \in \Q^*_+$, on a 
$\PA_\Q(e,\Gamma) = \PA_\Q(e, \lambda \Gamma)$.
 \end{enumerate}
\end{proposition}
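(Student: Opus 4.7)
The plan is to exploit the elementary fact that rescaling a $\Q$-graduation by a positive rational does not alter its underlying eigenspace decomposition, only the numerical labels on the pieces. Precisely, for $\l \in \Q^*_+$ and $\Ga: \g = \bigoplus_{j \in \Q} \g_j$, the graduation $\l \Ga$ is defined by the semisimple element $\l h_\Ga$, whose eigenspaces are $(\l\Ga)_{\l j} = \g_j$. Thus the subspaces of $\g$ appearing in conditions (A1)--(A6) of D\'efinition~\ref{d:adm} are unchanged, only their indices rescaled by $\l$; in particular, ``being $\Ga$-gradu\'e'' and ``being $\l\Ga$-gradu\'e'' are the same property for any subspace of $\g$.

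For (i), suppose $\Ga \in \GA_\Q(e)$ with associated integer $a > 1$ witnessing an admissible pair $(\m,\n) \in \PA_\Q(e,\Ga)$. Since $\g$ is finite-dimensional, the set of rational weights $j$ for which $\g_j \neq 0$ is finite. Let $N \in \N^*$ be a common denominator of all these weights; then $N h_\Ga$ has only integer eigenvalues, so $N\Ga$ is a genuine $\Z$-graduation of $\g$. By the above compatibility, $(N\Ga)_{Na} = \g_a$, $(N\Ga)_{\le -Na} = \g_{\le -a}$, and $(N\Ga)_{<0} = \g_{<0}$, so the same pair $(\m,\n)$ satisfies (A1)--(A6) relative to $(N\Ga, Na)$ with $Na \in \Z$ and $Na > 1$. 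Consequently $(\m,\n) \in \PA(e, N\Ga)$ and $N\Ga \in \GA(e)$, which proves (i).

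For (ii), the same dictionary between graded subspaces under $\Ga$ and under $\l\Ga$ yields both inclusions $\PA_\Q(e,\Ga) \subseteq \PA_\Q(e,\l\Ga)$ and $\PA_\Q(e,\l\Ga) \subseteq \PA_\Q(e,\Ga)$. Indeed, if $(\m,\n)$ lies in $\PA_\Q(e,\Ga)$ witnessed by the integer $a$, then $\m$ and $\n$ are $\l\Ga$-gradu\'ees, $e \in (\l\Ga)_{\l a}$, and the translated conditions (A2)--(A6) for $(\l\Ga, \l a)$ are precisely conditions (A2)--(A6) for $(\Ga, a)$, so they hold; clearing denominators as in~(i) we may further rescale to arrange $\l a \in \Z_{>1}$, giving $(\m,\n) \in \PA_\Q(e,\l\Ga)$. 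The reverse inclusion follows by applying the same argument to $\l^{-1}(\l\Ga) = \Ga$. Since the underlying pair of subalgebras $(\m,\n)$ is literally unchanged throughout, the two sets of admissible pairs coincide.

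No genuine obstacle arises: the proposition is essentially a book-keeping statement expressing that the notion of admissibility is insensitive to the scalar normalization of the defining semisimple element $h_\Ga$. The only delicate point is preserving the integrality constraint on the witnessing parameter $a$ in D\'efinition~\ref{d:adm}, which is handled by the common-denominator argument used in~(i).
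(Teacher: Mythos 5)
The paper states this proposition without any proof, so there is no official argument to measure yours against; what you have written is essentially the justification the author leaves implicit. Your key observation --- that $\lambda\Gamma$ has exactly the same eigenspaces as $\Gamma$, only with the labels multiplied by $\lambda$, so that being graded, the subspaces $\g_{<0}$ and $\g_{\le -a}$, and conditions (A3)--(A6) are all insensitive to the rescaling --- is correct and is clearly the intended content. Part (i), with $N$ a common denominator of the finitely many weights so that $N\Gamma$ is a $\Z$-graduation in which $e$ sits in integer degree $Na>1$, is complete and correct.

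The step that does not work as written is your treatment of integrality in part (ii). The definition of $\Q$-admissibility in Section 4 still demands \emph{un entier} $a>1$ with $e\in\g_a$, and for a general $\lambda\in\Q^*_+$ the degree $\lambda a$ of $e$ in $\lambda\Gamma$ is neither an integer nor necessarily greater than $1$ (take $\lambda=\tfrac14$ and $a=2$: then $e$ lies only in degree $\tfrac12$, so under the letter of the definition $\PA_\Q(e,\lambda\Gamma)=\varnothing$ while $\PA_\Q(e,\Gamma)\neq\varnothing$). Your proposed fix, ``clearing denominators as in (i) we may further rescale to arrange $\lambda a\in\Z_{>1}$'', replaces $\lambda\Gamma$ by yet another graduation $\mu\lambda\Gamma$; it therefore establishes $(\m,\n)\in\PA_\Q(e,\mu\lambda\Gamma)$, not $(\m,\n)\in\PA_\Q(e,\lambda\Gamma)$, and cannot yield the stated equality for the given $\lambda$ without circularity. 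The honest resolution is either to read the witnessing parameter of a $\Q$-graduation as a positive rational, so that only the eigenspace decomposition matters and your dictionary applies verbatim with witness $\lambda a$, or to note that (ii) as literally stated holds exactly for those $\lambda$ with $\lambda a\in\Z_{>1}$ --- which covers every use the paper makes of it, since in the connexity argument $\lambda$ is precisely the common denominator $N$ of part (i). Under either reading the ``further rescale'' sentence should be deleted: in the first it is unnecessary, in the second it proves the wrong statement.
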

\begin{definition} \label{d:adj}
 Deux $\Q$-graduations $\Ga, \Ga'$ admissibles pour $e$ sont dites
{\rm \bf adjacentes} si elles ont une paire $e$-admissible en commun, i.e.,
$\PA_\Q(e,\Ga) \cap \PA_\Q(e,\Ga') \neq \varnothing$.
\end{definition}
\begin{definition} \label{d:conn}
Deux graduations $\Gamma, \Gamma' \in \GA_\Q(e)$ sont dites {\rm \bf connexes} 
s'il existe une suite $(\Gamma_i)_{i \in \{1,\cdots,s\}}$
de $\Q$-graduations admissibles pour $e$ telle que
\begin{enumerate}
 \item[{\rm (1)}] $\Gamma = \Gamma_1$;
 \item[{\rm (2)}] les graduations $\Gamma_i$ et $\Gamma_{i+1}$ sont adjacentes pour tout $1 \le i \le s-1$;
 \item[{\rm (3)}] $\Gamma' = \Gamma_s$.
\end{enumerate}
\end{definition}
\begin{examples} \label{ex:adj}
{\rm (1)} Soit $\Gamma \in \GA_\Q(e)$. Pour tout $\lambda \in \Q^*_+$, les
graduations $\lambda \Gamma$ et $\Gamma$ sont adjacentes d'après la Proposition
\ref{p:lambdagamQ} {\rm (ii)}.

\noindent
{\rm (2)} On reprend l'Exemple \ref{ex:equi}(1) et on rappelle que $(\m_3,\m_3) \in \PA(e,\Gamma)$.
On considère la graduation de Dynkin $\g = \bigoplus_{j \in \Z} \g'_j$ de $\g$
associée à $\diag(1,1,-1,-1)$.
Les degrés des matrices élémentaires $E_{i,j}$ sont donnés sur la matrice suivante
$$
\left(
\begin{array}{cccc}
0 & 0 & 2 & 2 \\
0 & 0 & 2 & 2 \\
-2 & -2 & 0 & 0 \\
-2 & -2 & 0 & 0
\end{array}
\right).
$$ 
On remarque que $ \m_3= \g'_{-2}$.
Par conséquent, $(\m_3,\m_3)$ est une paire admissible pour $e$ commune
à la graduation $\Gamma$ et la graduation de Dynkin
qui sont donc adjacentes.
\end{examples}
\begin{example} \label{ex:nonadj}
 Deux $\Q$-graduations admissibles pour $e$ ne sont pas toujours
adjacentes. En effet, dans l'Exemple \ref{ex:equi}(2) 
les graduations $\Gamma$ et $\Gamma'$
sont connexes via $\Ga''$ mais ne sont pas adjacentes.
\end{example}

\begin{proposition}
Deux éléments $\Gamma, \Gamma' \in \GA_\Q(e)$ sont connexes
si et seulement si $\Ga$ et $\Ga'$ sont connexes via une suite d'éléments de $\GA(e)$.
\end{proposition}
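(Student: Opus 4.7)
Le plan est le suivant. L'implication ($\Leftarrow$) est imm�diate puisque $\GA(e) \subseteq \GA_\Q(e)$, de sorte que toute suite d'�l�ments de $\GA(e)$ est, a fortiori, une suite dans $\GA_\Q(e)$. Seule l'implication directe demande un argument.

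Pour cette derni�re, l'id�e est d'exploiter la Proposition \ref{p:lambdagamQ} afin de remplacer chaque $\Q$-graduation d'une suite de connexit� donn�e par une graduation enti�re admissible, sans affecter les adjacences. Partant d'une suite $(\Gamma_i)_{1 \le i \le s}$ dans $\GA_\Q(e)$ reliant $\Gamma = \Gamma_1$ � $\Gamma' = \Gamma_s$ et telle que $\Gamma_i$ soit adjacente � $\Gamma_{i+1}$ pour tout $i < s$, je choisirai pour chaque $i$, gr�ce � la Proposition \ref{p:lambdagamQ}(i), un rationnel $\lambda_i \in \Q^*_+$ tel que $\lambda_i \Gamma_i \in \GA(e)$, en imposant $\lambda_i = 1$ chaque fois que $\Gamma_i$ est d�j� enti�re.

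Le point central de l'argument consistera ensuite � invoquer la Proposition \ref{p:lambdagamQ}(ii), laquelle assure l'�galit� $\PA_\Q(e, \lambda_i \Gamma_i) = \PA_\Q(e, \Gamma_i)$. J'en d�duirai d'une part que chaque $\Gamma_i$ est adjacente � $\lambda_i \Gamma_i$ (elles partagent toutes leurs paires admissibles), et d'autre part que la paire commune � $\Gamma_i$ et $\Gamma_{i+1}$ fournit aussi une adjacence entre $\lambda_i \Gamma_i$ et $\lambda_{i+1} \Gamma_{i+1}$. La suite $(\lambda_i \Gamma_i)_{1 \le i \le s}$, �ventuellement compl�t�e par $\Gamma$ et $\Gamma'$ aux extr�mit�s si ces derniers ne sont pas d�j� entiers, r�alisera alors la connexit� souhait�e via des �l�ments de $\GA(e)$.

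Je ne pr�vois pas d'obstacle substantiel: toute la d�monstration se ram�ne � une application soigneuse de la Proposition \ref{p:lambdagamQ}, dont le point (ii) garantit pr�cis�ment l'invariance des paires admissibles par changement d'�chelle positif. La seule v�rification � surveiller est le raccordement aux extr�mit�s de la suite, qui se r�gle trivialement par le choix $\lambda_1 = \lambda_s = 1$ lorsque $\Gamma$ et $\Gamma'$ sont eux-m�mes entiers.
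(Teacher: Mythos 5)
Votre démonstration est correcte et suit exactement la même stratégie que celle de l'article: remplacer chaque $\Gamma_i$ par $\lambda_i\Gamma_i \in \GA(e)$ via la Proposition \ref{p:lambdagamQ}, l'égalité $\PA_\Q(e,\Gamma_i)=\PA_\Q(e,\lambda_i\Gamma_i)$ garantissant que toutes les adjacences (y compris le raccordement aux extrémités $\Gamma$ et $\Gamma'$) sont préservées. Rien à redire.
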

\begin{proof}
L'implication réciproque est évidente.
Montrons l'implication directe. D'après l'hypothèse, il existe une suite $(\Gamma_i)_{i \in \{1,\cdots,s\}}$
de $\Q$-graduations admissibles pour $e$ telle que $\Gamma = \Gamma_1$ et $\Gamma' = \Gamma_s$
et telle que les graduations $\Gamma_i$ et $\Gamma_{i+1}$ soient adjacentes pour tout $1 \le i \le s-1$.
D'après la Proposition \ref{p:lambdagamQ}, pour tout $1 \le i \le s$, il existe
$\lambda_i \in \Q^*_+$ tel que $\lambda_i \Gamma_i \in \GA(e)$ et
$\PA_\Q(e,\Gamma_i) = \PA_\Q(e, \lambda_i \Gamma_i)$.
On en déduit que les graduations $\lambda_1 \Gamma$ et $\lambda_s \Gamma'$
sont connexes via une suite de graduations $(\l_i \Gamma_i)_{i \in \{1,\cdots,s\}}$.
Or, $\PA_\Q(e,\Gamma) = \PA_\Q(e, \lambda_1 \Gamma)$ et $\PA_\Q(e,\Gamma') = \PA_\Q(e, \lambda_s \Gamma')$.
La proposition s'ensuit.
\end{proof}
%
La suite du paragraphe est consacrée à la démonstration du théorème suivant:
\begin{theorem} \label{t:CONN}
Les $\Q$-graduations admissibles pour $e$
sont connexes entre elles.
\end{theorem}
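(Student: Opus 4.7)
The strategy is to prove the stronger statement (Theorem~\ref{t:DYN}) that every admissible $\Q$-grading is connected to a Dynkin grading attached to $e$; given this, since any two Dynkin gradings for $e$ are themselves admissible and mutually connected (for instance by applying the same interpolation technique described below to the central torus of $\gs$), all admissible gradings lie in a single connected component, yielding Theorem~\ref{t:CONN}.

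Given $\Gamma\in \GA_\Q(e)$ with $e\in \g_a$, the proof of Proposition~\ref{p:dist} (via \cite[Proposition~32.1.7]{TY}) supplies an $\lsl_2$-triple $(e,h,f)$ with $h\in \g_0^{\Gamma}$ and $f\in \g_{-a}^{\Gamma}$. Put $\ss:=\Vect(e,h,f)$ and $t:=h_\Gamma-\tfrac{a}{2}h\in \gs$; the element $t$ is semisimple and commutes with $h$. The one-parameter family of commuting semisimple elements
$$
  h_\lambda := \tfrac{a}{2}h + \lambda t, \qquad \lambda \in [0,1]\cap \Q,
$$
defines $\Q$-gradings $\Gamma_\lambda$ interpolating between $\Gamma_0=\tfrac{a}{2}\GaD$ and $\Gamma_1=\Gamma$, with $e\in (\g_\lambda)_a$ throughout since $[h_\lambda,e]=ae$.

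The first technical step is to check via Theorem~\ref{t:carac} that every $\Gamma_\lambda$ is admissible. Writing an arbitrary $x\in \g^e$ as a sum of $(\ad h,\ad t)$-bihomogeneous vectors of bi-weight $(i,j)$, one has $i\ge 0$ because $\g^e$ carries only non-negative $h$-weights. Its $h_\lambda$-weight $\tfrac{ai}{2}+\lambda j$ is $\ge 0$ at $\lambda=0$ and $>-a$ at $\lambda=1$ by admissibility of $\Gamma$; being affine in $\lambda$, it stays $>-a$ on all of $[0,1]$. Hence $\g_{\le-a}^{\Gamma_\lambda}\cap \g^e=\{0\}$ for every $\lambda\in [0,1]\cap \Q$, and Theorem~\ref{t:carac} applies.

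The family $(\Gamma_\lambda)$ is locally constant on $[0,1]\cap\Q$ outside the finite set of rational \emph{walls} where either two joint $(\ad h,\ad t)$-eigenvalues $\tfrac{ai}{2}+\lambda j$ coincide, or one of them crosses the critical threshold $0$ or $-a$; label these $0=\lambda_0<\lambda_1<\cdots<\lambda_r=1$. The main obstacle is to show that the consecutive gradings $\Gamma_{\lambda_k}$ and $\Gamma_{\lambda_{k+1}}$ are linked by a chain of adjacencies. The decisive observation is that every $\Gamma_\lambda$ respects the joint $(\ad h,\ad t)$-eigenspace decomposition of $\g$, and the admissible pair produced by the proof of Theorem~\ref{t:carac} is built entirely from the joint eigenspaces $\E_{i,\mu}^l$ used there, hence is automatically $\Gamma_\lambda$-graded for every $\lambda$. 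The plan is then to revisit the case analysis of Theorem~\ref{t:carac} at each wall $\lambda_k$ and select constituent eigenspaces whose $h_\lambda$-weights remain on the desired side of $0$ and $-a$ throughout a closed neighbourhood of $\lambda_k$; the algebraic conditions (A3)--(A6) depend only on the joint structure and are automatic, while the remaining inclusions of (A2) follow from the choice of constituents. This produces common admissible pairs bridging each wall, and concatenating yields a chain $\Gamma\sim\Gamma_{\lambda_{r-1}}\sim\cdots\sim \Gamma_0 =\tfrac{a}{2}\GaD$ connecting $\Gamma$ to a Dynkin grading, as desired.
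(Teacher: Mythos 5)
Your route is the paper's route: interpolate between $\Gamma$ and (a multiple of) the Dynkin grading via $h^{(\veps)}=\tfrac{a}{2}h+\veps t$ with $t=h_\Gamma-\tfrac{a}{2}h\in\gs$, check through Th\'eor\`eme \ref{t:carac} that every intermediate grading is admissible (your affine-in-$\lambda$ argument is a correct variant of the Lemme \ref{l:epsadm}; note it silently uses that the bihomogeneous components of an element of $\g^e$ again lie in $\g^e$), cut $[0,1]_\Q$ at finitely many walls, and exhibit a common admissible pair across each wall built from the joint $(\ad h,\ad t)$-eigenspaces. The gap is precisely at this last step, which is where the paper does its real work (Propositions \ref{p:ADJACENTE} et \ref{p:CHAINE}): you assert that one can ``select constituent eigenspaces whose $h_\lambda$-weights remain on the desired side of $0$ and $-a$'' and that the conditions (A3)--(A6) ``are automatic''. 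They are not. Condition (A3) couples the choice inside $\E_{i,\lambda}$ to the choice inside $\E_{i,-\lambda}$ through the Killing pairing $\E_{i,\lambda}^l\leftrightarrow\E_{i,-\lambda}^{d_i-1-l}$; condition (A6) pins the total dimension $\dim\m_{i,\lambda}+\dim\n_{i,\lambda}$ to $\dim\V_{i,\lambda}-\dim(\V_{i,\lambda}\cap\g^e)$; condition (A4) forbids taking all negative-weight blocks when the whole string $\Xi_{i,\lambda}$ is negative (the top block $\E_{i,\lambda}^{d_i-1}$ lies in $\g^e$); and (A5) is obtained in the paper only because the chosen pieces either sit in $\g_{\le-\frac{a}{2}}$ (Cas I) or coincide with $\V_{i,\lambda}\cap\g_{\le-a}$ for one of the two gradings (Cas II, formule (\ref{eq:mncas2})). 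Moreover, at a wall some eigenvalue is exactly $0$ or $-a$, so ``the desired side'' is ambiguous there; the paper resolves this with the parity bookkeeping $p_{i,\lambda}^{(\veps)}$ of the D\'efinition \ref{d:plac0}, the adjacency criterion $|p_{i,\lambda}^{(\veps)}-p_{i,\lambda}^{(\veps')}|\le 1$, and the explicit calibrated choices of the Table \ref{f:conncasII}. Without carrying out this selection (or at least specifying it), your wall-crossing step is a statement of intent rather than a proof.

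A secondary point: deducing Th\'eor\`eme \ref{t:CONN} from Th\'eor\`eme \ref{t:DYN} requires the Dynkin gradings attached to the various triples $(e,h,f)$ to be connected to one another, and your parenthetical justification (interpolation over the central torus of $\gs$) does not achieve this: two Dynkin elements $h,h'$ for the same $e$ are conjugate under the unipotent radical of $G^e$, so $h'-h$ is a nilpotent element of $\g^e$, not a central semisimple element of $\gs$, and the linear interpolation scheme does not apply as stated. The paper is itself terse on this point (it speaks of ``la graduation de Dynkin''), so this is a shared subtlety rather than an error specific to you, but your proposed fix would need to be replaced by an actual argument.
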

On fixe désormais
une $\Z$-graduation $e$-admissible $\Gamma: \g=\bigoplus_{j \in \Z} \g_j$ telle que 
$e \in \g_a$ où $a >1$ et un $\lsl_2$-triplet $(e,h,f)$ de $\g$
tel que $h \in \g_0$ et $f \in \g_{-a}$ (cf. \cite[Proposition 32.1.7]{TY}).
Soient $h_\Gamma$ l'élément semisimple de $\g$
définissant $\Gamma$ et $\ss := \Vect(e,h,f)$.
On pose
$$
	t:= h_\Gamma -\frac{a}{2} h.
$$
L'élément $t$ de $\g$ est semisimple
et appartient à $\g^e \cap \g^h = \g^\ss$.

On pose $[0,1]_\Q = [0,1] \cap \Q$.
Soit $\veps \in [0,1]_\Q$.
On considère l'élément semisimple
$$
  h_\Gamma^{(\veps)} := \frac{a}{2} h + \veps t.
$$
Pour $j \in \Q$, on note 
$$
    \g_j^{(\veps)}:= \{x \in \g \, ; \; (\ad h_\Gamma^{(\veps)}) (x) = jx\}.
$$
Soit alors
$\Gamma^{(\veps)}: \g = \bigoplus_{j \in \Q} \g_j^{(\veps)}$
la $\Q$-graduation de $\g$ définie par l'élément semisimple $h_\Gamma^{(\veps)}$.
En particulier, $h_\Gamma^{(1)} = h_\Gamma$, $\Gamma^{(1)} = \Gamma$ et
$\Gamma^{(0)} =\frac{a}{2} \GaD$.
Observons aussi que $e \in \g_a^{(\veps)}$.
\begin{lemma} \label{l:epsadm}
La graduation $\Gamma^{(\veps)}$ appartient à $\GA_\Q(e)$.
\end{lemma}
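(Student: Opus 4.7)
La stratégie consiste à appliquer le Théorème~\ref{t:carac}, qu'il convient d'abord d'étendre au cadre des $\Q$-graduations, afin de ramener le lemme à l'unique vérification $\g_{\le -a}^{(\veps)} \cap \g^{e} = \{0\}$. Cette extension est formelle: soit on reprend la démonstration du Théorème~\ref{t:carac} (qui n'utilise nulle part l'intégralité des poids, seulement leur rationalité), soit on multiplie $h_\Gamma^{(\veps)}$ par un entier $N > 0$ suffisamment grand pour obtenir un élément semisimple définissant une $\Z$-graduation dans laquelle $e \in \g_{Na}$, puis on applique le théorème directement.

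Pour vérifier la nullité voulue, je réutiliserais la décomposition isotypique introduite dans la démonstration du Théorème~\ref{t:carac}: $\g = \bigoplus_{i,\lambda,l} \E_{i,\lambda}^{l}$, où $\E_{i,\lambda}^{l}$ est la composante de poids $-(d_i-1)+2l$ de $\ad h$ dans l'espace propre $\E_{i,\lambda}$ de $\ad t$ sur la composante isotypique $\E_i$ de $\ss$-modules simples de dimension $d_i$. Puisque $\g^{e}$ coïncide avec la somme des vecteurs $\ss$-plus-hauts-poids, c'est-à-dire avec la composante de poids $d_i-1$ de $\ad h$ dans chaque $\E_i$, on a
$$
    \g^{e} = \bigoplus_{i,\lambda} \E_{i,\lambda}^{d_i-1}.
$$
Comme l'opérateur $\ad h_\Gamma^{(\veps)} = \frac{a}{2}\ad h + \veps \ad t$ agit sur $\E_{i,\lambda}^{d_i-1}$ par la multiplication par $\frac{a}{2}(d_i-1) + \veps \lambda$, la condition $\g_{\le -a}^{(\veps)} \cap \E_{i,\lambda}^{d_i-1} = \{0\}$ se traduit par l'inégalité $\frac{a}{2}(d_i-1) + \veps \lambda > -a$, c'est-à-dire $\veps \lambda > -\frac{a}{2}(d_i+1)$, pour chaque couple $(i,\lambda)$ tel que $\E_{i,\lambda}^{d_i-1} \ne \{0\}$.

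Il reste à déduire cette inégalité de l'admissibilité déjà supposée de $\Gamma = \Gamma^{(1)}$. Par l'inégalité (\ref{eq:condlambda}) fournie par le Théorème~\ref{t:carac} appliqué à $\Gamma$, on a $\lambda > -\frac{a}{2}(d_i+1)$ pour tout tel couple $(i,\lambda)$. Pour $\veps \in [0,1]_\Q$, une simple disjonction selon le signe de $\lambda$ suffit: si $\lambda \ge 0$, alors $\veps \lambda \ge 0 > -\frac{a}{2}(d_i+1)$; si $\lambda < 0$, alors $\veps \lambda \ge \lambda > -\frac{a}{2}(d_i+1)$ (car $\veps \le 1$ et $\lambda < 0$). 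Dans les deux cas, l'inégalité recherchée est satisfaite, et en sommant on obtient $\g^{e} \cap \g_{\le -a}^{(\veps)} = \{0\}$, ce qui conclut.

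La seule étape potentiellement délicate est l'extension initiale du Théorème~\ref{t:carac} au cadre $\Q$-gradué, mais elle est purement formelle. Le c\oe ur de l'argument se réduit à l'observation élémentaire que le segment $\{\veps \lambda : \veps \in [0,1]\}$ joignant $0$ à $\lambda$ ne traverse jamais le seuil $-\frac{a}{2}(d_i+1)$ dès que ses deux extrémités ne le franchissent pas; en particulier, le lemme exprime une forme de convexité de la propriété d'admissibilité le long du segment $[0,1]$ reliant la graduation de Dynkin $\frac{a}{2}\GaD$ à $\Gamma$.
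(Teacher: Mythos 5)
Votre preuve est correcte et suit essentiellement la même démarche que celle du texte : on applique le Théorème~\ref{t:carac} pour ramener le lemme à la condition $\g_{\le -a}^{(\veps)} \cap \g^{e} = \{0\}$, que l'on traduit via la décomposition isotypique en l'inégalité (\ref{eq:condlambda}), laquelle est préservée lorsque $\lambda$ est remplacé par $\veps\lambda$ avec $\veps \in [0,1]_\Q$ (votre disjonction selon le signe de $\lambda$ équivaut au $|\veps\lambda| \le |\lambda|$ du texte). Votre remarque préliminaire sur l'extension du Théorème~\ref{t:carac} aux $\Q$-graduations est un point de rigueur bienvenu que le texte passe sous silence, mais elle ne change pas la nature de l'argument.
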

\begin{proof}
D'après le Théorème \ref{t:carac}, 
comme la graduation $\Gamma$ est admissible, $\g_{\le -a} \cap \g^e =\{0\}$ .
Par suite, avec les notations de la formule (\ref{eq:condlambda}) de la Section \ref{ch:padm},
on a $|\lambda| < \frac{a}{2}(d_i+1)$.
Ainsi, pour $\veps \in [0,1]_\Q$,
$$
  |\veps \lambda| \le |\lambda| < \frac{a}{2}(d_i+1),
$$
ceci implique que $\g_{\le -a}^{(\veps)} \cap \g^e =\{0\} $,
de nouveau d'après la formule (\ref{eq:condlambda}).
Le lemme s'ensuit d'après le Théorème \ref{t:carac}.
\end{proof}
La démonstration du Théorème \ref{t:CONN} étant assez technique,
on commence par en expliquer les idées principales.
Remarquons tout d'abord qu'il suffit de démontrer que $\Ga$ est connexe
à $\Ga^{(0)}$ qui est adjacente à la graduation de Dynkin.
On va construire une suite de rationnels $0=\veps_0 < \veps_1 < \cdots < \veps_s =1$
telle que pour tout $i \in \{0, \ldots, s\}$, $\Ga^{(\veps_i)}$ et $\Ga^{(\veps_{i+1})}$
soient adjacentes. Pour cela, on reprend dans les grandes lignes 
la démonstration du Théorème \ref{t:carac}.

On rappelle quelques notations de la démonstration du Théorème \ref{t:carac}.
On a les décompositions 
$$
    \g = \bigoplus_{i=1}^r \E_i\, , \quad \E_i = \bigoplus_{\l \in \Q} \E_{i,\l}\, , \quad 
	\E_{i,\l} = \bigoplus_{l= 0}^{d_i-1} \E_{i,\l}^l \, ,
$$
où la première décomposition est la décomposition orthogonale
en composantes isotypiques de $\ss$-modules simples,
la deuxième est la décomposition en sous-espaces propres de $\ad t$
et la troisième est la décomposition en sous-espaces propres de $\ad h_\Ga$
avec $d_i$ la dimension d'un $\ss$-module simple de $\E_i$.
Rappelons aussi que pour $\l \ge 0$, 
$$
  \V_{i,\l} = \E_{i,\l} +\E_{i,-\l} \quad \text{ et } \quad  \g= \bigoplus_{i=1}^r \bigoplus_{\l \in \Q^+} \V_{i,\l}
$$
est une décomposition orthogonale relative à la forme de Killing.
Enfin pour tout $l$, 
\begin{equation} \label{eq:dimvileps}
 m_{i,\l} = \frac{\dim \E_{i,\l}}{d_i} = \dim \E_{i,\l}^l = \dim (\E_{i,\l} \cap \g^e)
\end{equation}
car $\E_{i,\l} \cap \g^e = \E_{i,\l}^{d_i -1}$.
Pour tous $\veps \in [0,1]_\Q$ et $\l \in \Q$, l'espace $\E_{i,\l}$ est stable
par $\ad h_\Ga^{(\veps)}$. De plus, pour tout $l \in \{0, \cdots, d_i-1\}$,
$\E_{i,\l}^l$ est un sous-espace propre de $\ad h_\Ga^{(\veps)}$ de valeur propre $\rho_{i,\l}^{(\veps)}+ la$
où 
$$
\rho_{i,\l}^{(\veps)} := - \frac{a}{2}(d_i-1) +\veps \l.
$$
L'ensemble des valeurs propres de $\ad h_\Gamma^{(\veps)}$ sur $\E_{i,\lambda}$
est donné par:
$$
    \Xi_{i,\l}^{(\veps)} := \{\rho_{i,\lambda}^{(\veps)} + l a \, ; \  l =0, 1, \ldots, d_i-1\}.
$$
\begin{remark} \label{rk:vpentre-a0}
On a $\Xi_{i,-\l}^{(\veps)}= - \Xi_{i,\l}^{(\veps)}$.
Il en résulte que
$$
  {\rm Card \,} \bigl( \, ]-a,0[ \  \cap \  (\Xi_{i,\l}^{(\veps)} \cup \Xi_{i,-\l}^{(\veps)}) \bigr) \le 2.
$$
De plus, si ce cardinal vaut exactement $2$, alors il existe $b \in ]0, \frac{a}{2}[$ tel que 
$$
   ]-a,0[ \  \cap \  (\Xi_{i,\l}^{(\veps)} \cup \Xi_{i,-\l}^{(\veps)})= \{-b, b-a\}.
$$
\end{remark}

Comme dans la démonstration du Théorème \ref{t:carac},
on va s'intéresser à la \og position du zéro\fg\ dans $\Xi_{i,\l}^{(\veps)}$,
d'où la définition suivante:
\begin{definition} \label{d:plac0}
On définit l'entier positif $p_{i, \lambda}^{(\veps)}$ par 

{\rm a)} $p_{i,\lambda}^{(\veps)} = 1$ si $0 < \rho_{i, \lambda}^{(\veps)}$;

{\rm b)} $p_{i,\lambda}^{(\veps)} = 2(s+1)+1$ si $\rho_{i,\lambda}^{(\veps)}+sa < 0 < \rho_{i,\lambda}^{(\veps)}+(s+1)a$
pour $s \in \{0, \cdots, d_i -2\}$;

{\rm c)} $p_{i,\lambda}^{(\veps)} = 2d_i+1$ si $\rho_{i,\lambda}^{(\veps)} + (d_i-1)a < 0$;

{\rm d)} $p_{i,\lambda}^{(\veps)} = 2(s+1)$ si $\rho_{i,\lambda}^{(\veps)} + sa =0$
pour $s \in \{0, \cdots, d_i -1\}$.

\end{definition}

On représente sur la Figure \ref{f:plac0} la position
du zéro dans $\Xi_{i,\lambda}^{(\veps)}$ selon les différents cas
de la Définition \ref{d:plac0}.
Sur cette figure, les $\bullet$ représentent les éléments
de $\Xi_{i,\lambda}^{(\veps)}$ et les $|$ représentent les positions du zéro.

\smallskip

\begin{figure}[h!]
$$
\begin{array}{lccl} 
\hbox{Cas a) :} & 
\begin{picture}(275,-10)
\put(0,5){\line(1,0){120}}
\put(120,5){\line(1,0){3}}
\put(130,5){\line(1,0){3}}
\put(140,5){\line(1,0){3}}
\put(150,5){\line(1,0){3}}
\put(160,5){\line(1,0){3}}
\put(170,5){\line(1,0){3}}
\put(180,5){\line(1,0){3}}
\put(190,5){\line(1,0){3}}
\put(200,5){\line(1,0){75}}
\put(12,2){\line(0,1){6}}
\put(10,-5){\scriptsize$0$}
\put(55,5){\circle*{4}}
\put(46,12){\Rho{}}
\put(100,5){\circle*{4}}
\put(87,12){\Rho{+a}}
\put(230,5){\circle*{4}}
\put(210,12){\Rho{+(d_{i}-1)a}}
\end{picture} 
& \hskip0em &
p_{i,\lambda}^{(\varepsilon)} = 1
\\ [2em]
\hbox{Cas b) :} & 
\begin{picture}(275,-10)
\put(0,5){\line(1,0){40}}
\put(40,5){\line(1,0){3}}
\put(50,5){\line(1,0){3}}
\put(60,5){\line(1,0){3}}
\put(70,5){\line(1,0){3}}
\put(80,5){\line(1,0){3}}
\put(90,5){\line(1,0){75}}
\put(160,5){\line(1,0){3}}
\put(175,5){\line(1,0){3}}
\put(185,5){\line(1,0){3}}
\put(195,5){\line(1,0){3}}
\put(205,5){\line(1,0){3}}
\put(215,5){\line(1,0){60}}
\put(25,5){\circle*{4}}
\put(16,12){\Rho{}}
\put(105,5){\circle*{4}}
\put(85,12){\Rho{+sa}}
\put(118,2){\line(0,1){6}}
\put(116,-5){\scriptsize$0$}
\put(150,5){\circle*{4}}
\put(130,12){\Rho{+(s+1)a}}
\put(240,5){\circle*{4}}
\put(220,12){\Rho{+(d_{i}-1)a}}
\end{picture} 
& & p_{i,\lambda}^{(\varepsilon)} = 2(s+1)+1
\\ [2em]
\hbox{Cas c) :} & 
\begin{picture}(275,-10)
\put(0,5){\line(1,0){85}}
\put(85,5){\line(1,0){3}}
\put(95,5){\line(1,0){3}}
\put(105,5){\line(1,0){3}}
\put(115,5){\line(1,0){3}}
\put(125,5){\line(1,0){3}}
\put(135,5){\line(1,0){3}}
\put(145,5){\line(1,0){3}}
\put(155,5){\line(1,0){3}}
\put(165,5){\line(1,0){110}}
\put(20,5){\circle*{4}}
\put(16,12){\Rho{}}
\put(65,5){\circle*{4}}
\put(45,12){\Rho{+a}}
\put(205,5){\circle*{4}}
\put(175,12){\Rho{+(d_{i}-1)a}}
\put(250,2){\line(0,1){6}}
\put(248,-5){\scriptsize$0$}
\end{picture} 
& & p_{i,\lambda}^{(\varepsilon)} = 2d_{i}+1
\\ [2em]
\hbox{Cas d) :} & 
\begin{picture}(275,-10)
\put(0,5){\line(1,0){40}}
\put(40,5){\line(1,0){3}}
\put(50,5){\line(1,0){3}}
\put(60,5){\line(1,0){3}}
\put(70,5){\line(1,0){3}}
\put(80,5){\line(1,0){3}}
\put(90,5){\line(1,0){75}}
\put(160,5){\line(1,0){3}}
\put(175,5){\line(1,0){3}}
\put(185,5){\line(1,0){3}}
\put(195,5){\line(1,0){3}}
\put(205,5){\line(1,0){3}}
\put(215,5){\line(1,0){60}}
\put(25,5){\circle*{4}}
\put(16,12){\Rho{}}
\put(105,5){\circle*{4}}
\put(85,12){\Rho{+sa}}
\put(105,1){\line(0,1){8}}
\put(103,-7){\scriptsize$0$}
\put(150,5){\circle*{4}}
\put(130,12){\Rho{+(s+1)a}}
\put(240,5){\circle*{4}}
\put(220,12){\Rho{+(d_{i}-1)a}}
\end{picture} 
& & p_{i,\lambda}^{(\varepsilon)} = 2(s+1)
\end{array} 
$$\caption{Position du zéro dans $\Xi_{i,\l}^{(\veps)}$ } \label{f:plac0}
\end{figure} 

Comme $\Xi_{i,-\lambda}^{(\veps)} = - \Xi_{i,-\lambda}^{(\veps)}$, on a
\begin{equation} \label{eq:sumplac0}
    p_{i,\lambda}^{(\veps)} + p_{i, -\lambda}^{(\veps)} = 2d_i +2.
\end{equation}
\begin{example} \label{ex:plac0}
On reprend l'Exemple \ref{ex:nonexisopt}
où l'on considère $$h := \diag(5,3,1,-1,-3,-5,2,0,-2,1,-1).$$
Il s'ensuit que les valeurs propres de $\ad t$ sont
$$
  -4, \ - \frac{7}{2}, \  -\frac{1}{2}, \  0, \  \frac{1}{2}, \  \frac{7}{2}, \  4.
$$
On note $\E_{i, \l}$ les sous-espaces propres avec $d_i = i$.
Les couples $(i,\l)$ pour $\l \neq 0$ sont alors
$$
  \big( 8, \pm \frac{1}{2}\big), \ \big( 6, \pm \frac{1}{2}\big) , \ \big( 4, \pm \frac{1}{2}\big),
  \ \big( 4, \pm \frac{7}{2}\big), \ \big( 2, \pm \frac{7}{2}\big), \ \big( 7, \pm 4\big), \ \big( 5, \pm 4\big).
$$
On obtient donc
$$
  p_{8, -\frac{1}{2}}^{(1)} = 9, \ p_{6, -\frac{1}{2}}^{(1)} =7, \ p_{4, -\frac{1}{2}}^{(1)} = 5, \ 
  p_{4, -\frac{7}{2}}^{(1)} = 7, \ p_{2, -\frac{7}{2}}^{(1)} = 5, \ p_{7, -4}^{(1)} = 11 , \ p_{5, -4}^{(1)} = 9.
$$
Les autres valeurs s'obtiennent grâce à l'égalité (\ref{eq:sumplac0}).
\end{example}

%
%
%
\begin{proposition} \label{p:ADJACENTE}
Soient $\veps, \veps' \in [0,1]_\Q$ tels que
$|p_{i,\lambda}^{(\veps)}-p_{i,\lambda}^{(\veps')}| \le 1$ pour tous $i$ et $\lambda$. 
Alors les graduations $\Gamma^{(\veps)}$ et $\Gamma^{(\veps')}$
sont adjacentes.
\end{proposition}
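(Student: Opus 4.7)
L'objectif est de construire une paire $(\m, \n)$ appartenant simultan\'ement \`a $\PA_\Q(e, \Gamma^{(\veps)})$ et \`a $\PA_\Q(e, \Gamma^{(\veps')})$, ce qui \'etablira l'adjacence. On adapte la strat\'egie de la d\'emonstration du Th\'eor\`eme \ref{t:carac} en exploitant la d\'ecomposition orthogonale
\[
\g = \bigoplus_{i=1}^r \bigoplus_{\lambda \in \Q^+} \V_{i,\lambda},
\]
stable \`a la fois sous $\ad h_\Gamma^{(\veps)}$ et sous $\ad h_\Gamma^{(\veps')}$, puisque ces deux op\'erateurs appartiennent \`a $\Vect(h, t)$. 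Dans chaque $\V_{i,\lambda}$, on cherche \`a construire des sous-espaces $\m_{i,\lambda} \subset \n_{i,\lambda}$ v\'erifiant les conditions (C1)--(C4) du Lemme \ref{l:stratgen} pour les deux graduations simultan\'ement.

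L'observation fondamentale est que chaque sous-espace $\E_{i,\mu}^l$ est un sous-espace propre commun de $\ad h_\Gamma^{(\veps)}$ et de $\ad h_\Gamma^{(\veps')}$, avec les valeurs propres respectives $\rho_{i,\mu}^{(\veps)} + la$ et $\rho_{i,\mu}^{(\veps')} + la$. Par cons\'equent, toute somme de tels sous-espaces est automatiquement $\Gamma^{(\veps)}$- et $\Gamma^{(\veps')}$-gradu\'ee. L'hypoth\`ese $|p_{i,\lambda}^{(\veps)} - p_{i,\lambda}^{(\veps')}| \le 1$ signifie que, pour chaque $(i, \lambda)$, la position du z\'ero dans le spectre $\Xi_{i,\lambda}$ ne varie au plus que d'un cran entre les deux graduations: g\'eom\'etriquement, ceci correspond \`a une transition entre le Cas I ($p$ pair) et le Cas II ($p$ impair) de la preuve du Th\'eor\`eme \ref{t:carac}, et seul un sous-espace $\E_{i,\mu}^l$ dont la valeur propre franchit z\'ero peut changer d'appartenance aux strates $\g_{<0}$ et $\g_{\le -a}$.

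Le plan consiste en une analyse cas par cas selon les valeurs de $(p_{i,\lambda}^{(\veps)}, p_{i,\lambda}^{(\veps')})$. Lorsque ces deux positions co\"incident, on reprend directement la construction du Th\'eor\`eme \ref{t:carac} (Cas I ou Cas II), qui fonctionne pour les deux graduations. Lorsqu'elles diff\`erent d'exactement une unit\'e, on observe que la formule du Cas I co\"incide avec celle du sous-cas (e) du Cas II -- toutes deux s'\'ecrivent $\m_{i,\lambda} = \n_{i,\lambda} = \bigoplus_{l=0}^{k-1} \E_{i,\lambda}^l + \bigoplus_{l=0}^{d_i-2-k} \E_{i,-\lambda}^l$ pour la valeur appropri\'ee de $k$ --, fournissant ainsi un choix commun adapt\'e aux deux graduations. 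Le principal obstacle est de v\'erifier que ce choix commun satisfait simultan\'ement les quatre conditions (C1)--(C4) dans toutes les configurations possibles, en particulier lorsque l'in\'egalit\'e s\'eparant les sous-cas (d) et (e) du Cas II s'oriente diff\'eremment pour $\veps$ et $\veps'$; cette v\'erification se fait par un contr\^ole pr\'ecis des in\'egalit\'es v\'erifi\'ees par $\rho_{i,\lambda}^{(\veps)}$ et $\rho_{i,\lambda}^{(\veps')}$, en exploitant le fait que l'hypoth\`ese sur les positions $p$ impose que les \'ecarts entre ces deux quantit\'es restent contr\^ol\'es.

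Une fois les pi\`eces locales $\m_{i,\lambda}$ et $\n_{i,\lambda}$ construites, on pose $\m = \bigoplus_{i,\lambda} \m_{i,\lambda}$ et $\n = \bigoplus_{i,\lambda} \n_{i,\lambda}$. Le Lemme \ref{l:stratgen} fournit les conditions (A2), (A3), (A4) et (A6) pour les deux graduations, et (A1) est imm\'ediate puisque $e \in \g_a^{(\veps)} \cap \g_a^{(\veps')}$. Enfin, comme dans la conclusion de la d\'emonstration du Th\'eor\`eme \ref{t:carac}, les inclusions $\m, \n \subset \g_{\le -a/2}^{(\veps)} \cap \g_{\le -a/2}^{(\veps')}$ (v\'erifi\'ees dans chaque cas par le contr\^ole des valeurs propres) entra\^inent $[\n, \m] \subset \g_{\le -a}^{(\veps)} \cap \g_{\le -a}^{(\veps')} \subset \m$, ce qui fournit la condition (A5) et conclut la preuve.
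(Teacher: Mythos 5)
Votre plan suit exactement la route du texte~: d\'ecomposition orthogonale en les $\V_{i,\lambda}$, morceaux $\m_{i,\lambda}\subseteq\n_{i,\lambda}$ form\'es de sommes de sous-espaces propres communs $\E_{i,\mu}^{l}$ (donc gradu\'es pour les deux graduations), Lemme \ref{l:stratgen}, et l'observation-cl\'e que lorsque les positions diff\`erent d'une unit\'e, l'une des deux est paire et le choix $\m_{i,\lambda}=\n_{i,\lambda}=\V_{i,\lambda}\cap\g_{\le -a}$ relatif \`a cette graduation-l\`a convient aussi pour l'autre. Vous diff\'erez toutefois l'essentiel de la v\'erification (le contenu de la Table \ref{f:conncasII}); notez qu'elle se r\'eduit \`a peu de chose puisque, parmi (C1)--(C4), seule (C1) d\'epend de la graduation, et que dans le cas des positions \'egales les strates $\g_{<0}$ et $\g_{\le -a}$ co\"{\i}ncident sur $\V_{i,\lambda}$ pour $\veps$ et $\veps'$, ce qui r\`egle d'un coup la difficult\'e que vous signalez sur l'orientation de l'in\'egalit\'e (d)/(e).

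En revanche, le seul argument concret que vous donnez pour (A5) est faux~: l'inclusion $\m,\n\subseteq\g_{\le -a/2}^{(\veps)}\cap\g_{\le -a/2}^{(\veps')}$ ne tient pas en g\'en\'eral. Dans le cas mixte, disons $(p^{(\veps)}_{i,\lambda},p^{(\veps')}_{i,\lambda})=(2k+1,2k)$, le choix commun est $\V_{i,\lambda}\cap\g^{(\veps')}_{\le -a}$ et contient le sous-espace propre $\E_{i,-\lambda}^{d_i-k-1}$, de valeur propre $\rho^{(\veps)}_{i,-\lambda}+(d_i-k-1)a=-a-(\veps-\veps')\lambda$ pour la graduation $\Ga^{(\veps)}$~; d\`es que $(\veps'-\veps)\lambda>\frac{a}{2}$, cette valeur propre est dans $\left]-\frac{a}{2},0\right[$, et rien dans l'hypoth\`ese ne l'exclut. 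Le m\^eme ph\'enom\`ene se produit pour $\veps'$ avec les morceaux de positions \'egales impaires. L'argument correct est local et non global~: chaque morceau est contenu soit dans $\g^{(\veps)}_{\le -a/2}$ (positions \'egales, gr\^ace au choix de branche fait pour $\veps$), soit dans $\g^{(\sigma)}_{\le -a}$ pour au moins un $\sigma\in\{\veps,\veps'\}$ (positions diff\'erentes, cf. (\ref{eq:prssalgIIab}) et (\ref{eq:prssalgIIcd}))~; pour chaque crochet $[\n_{i,\lambda},\n_{i',\lambda'}]$ on s\'electionne alors la graduation $\sigma$ qui le fait tomber dans $\g^{(\sigma)}_{\le -a}\subseteq\m$. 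Sans ce correctif, l'\'etape (A5) de votre plan \'echoue.
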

\begin{proof} 
On cherche une paire $e$-admissible $(\m,\n)$ commune à 
$\Gamma^{(\veps)}$ et $\Gamma^{(\veps')}$ de la forme suivante:
$$
  \m = \bigoplus_{i,\lambda} \m_{i,\lambda} \quad \text{ et } \quad 
		\n = \bigoplus_{i,\lambda} \n_{i,\lambda}
$$
telle que pour tous $i \in \{1, \cdots, r\}$ et $\lambda \in \Q^+$, les sous-espaces
$\m_{i,\lambda}$ et $\n_{i,\lambda} $ sont $\Ga^{(\veps)}$-gradués 
et $\Ga^{(\veps')}$-gradués dans $\V_{i,\lambda}$ et vérifient les conditions (C1), (C2), (C3) et (C4)
du Lemme \ref{l:stratgen} appliqué à la décomposition $\g= \bigoplus_{i=1}^r \bigoplus_{\l \in \Q^+} \V_{i,\l}$
et aux deux graduations $\Ga^{(\veps)}$ et $\Ga^{(\veps')}$.

Soient $i \in \{1, \cdots, r\}$ et $\lambda \in \Q^+$.
Pour la construction de $\m_{i,\lambda}$ et $\n_{i,\lambda} $, on distingue deux cas :
le cas où $p_{i,\lambda}^{(\veps)} = p_{i,\lambda}^{(\veps')}$ et le cas où
$|p_{i,\lambda}^{(\veps)} - p_{i,\lambda}^{(\veps')}|=1$.

\paragraph{Cas I: $p_{i,\lambda}^{(\veps)} = p_{i,\lambda}^{(\veps')}$.} Dans ce cas,
$\V_{i,\lambda} \cap \g_{\le -a}^{(\veps)} = \V_{i,\lambda} \cap \g_{\le -a}^{(\veps')}$ et
$\V_{i,\lambda} \cap \g_{<0 }^{(\veps)} = \V_{i,\lambda} \cap \g_{<0 }^{(\veps')}.$
On distingue trois sous-cas.
\begin{enumerate}
\item[{\bf (a)}] Si $\lambda = 0$, on pose
$$
     \m_{i,0} := \V_{i,0} \cap \g_{\le -a}^{(\veps)}, \quad
	  \n_{i,0} := \V_{i,0} \cap \g_{<0}^{(\veps)} = \V_{i,0} \cap \g_{\le -\frac{a}{2}}^{(\veps)}.
$$
\item[{\bf (b)}] Si $\lambda \neq 0$ et si $p_{i,\lambda}^{(\veps)}$ est pair ou appartient à $\{1, 2d_i+1\}$,
on pose
$$
    \m_{i,\lambda} = \n_{i,\lambda} := \V_{i,\lambda} \cap \g_{\le -a}^{(\veps)}.
$$
\item[{\bf (c)}] Supposons $\lambda \neq 0$ et $p_{i,\lambda}^{(\veps)}$ 
impair de la forme $2k+1$ où $k \in \{1,\ldots, d_i-1\}$.

Si $\rho_{i,\lambda}^{(\veps)} + (k-1)a \le \rho_{i,-\lambda}^{(\veps)} + (d_i-k-1)a$, on pose
$$
    \m_{i,\lambda} = \n_{i,\lambda} := \E_{i,\lambda} \cap \g_{<0}^{(\veps)} +
					   \E_{i,-\lambda} \cap \g_{\le -a}^{(\veps)}.
$$

Si $\rho_{i,\lambda}^{(\veps)} + (k-1)a > \rho_{i,-\lambda}^{(\veps)} + (d_i-k-1)a$, on pose
$$
    \m_{i,\lambda} = \n_{i,\lambda} := \E_{i,\lambda} \cap \g_{\le -a}^{(\veps)} +
					   \E_{i,-\lambda} \cap \g_{<0}^{(\veps)}.
$$
\end{enumerate}

Dans chacun des sous-cas, les conditions (C1), (C2) et (C3) sont vérifiées
par construction. De plus, la condition (C4) est satisfaite.
En effet, si $\lambda = 0$,
$$
  \dim  \m_{i,0} + \dim \n_{i,0} = (d_i-1) m_{i,0} = \dim \V_{i,0}- \dim (\V_{i,0} \cap \g^e)
$$ 
d'après (\ref{eq:dimvileps}). Si $\lambda \neq 0$,
$$
  \dim  \m_{i,\lambda} + \dim \n_{i,\lambda} = 2  [(d_i-1) m_{i,\lambda}] = 
\dim \V_{i,\lambda}- \dim (\V_{i,\lambda} \cap \g^e)
$$
d'après (\ref{eq:dimvileps}).
On constate aussi que dans tous les sous-cas, on a par construction
\begin{equation} \label{eq:prssalgI}
  \m_{i,\lambda}, \n_{i,\lambda} \subseteq \V_{i,\lambda}\cap \g_{\le -\frac{a}{2}}^{(\veps)} 
	      = \V_{i,\lambda}\cap \g_{\le -\frac{a}{2}}^{(\veps')}.
\end{equation} 
%
\paragraph{Cas II: $|p_{i,\lambda}^{(\veps)} - p_{i,\lambda}^{(\veps')}|=1$.} 
Il existe $k \in\{1, \ldots, d_i\}$ tel que l'une des conditions suivantes soit vérifiée:

\smallskip

{\bf (i)} $(p_{i,\lambda}^{(\veps)} , p_{i,\lambda}^{(\veps')})= (2k, 2k-1);$ 

{\bf (ii)} $(p_{i,\lambda}^{(\veps)} , p_{i,\lambda}^{(\veps')})= (2k, 2k+1);$ 

{\bf (iii)} $(p_{i,\lambda}^{(\veps)} , p_{i,\lambda}^{(\veps')})= (2k+1, 2k);$ 

{\bf (iv)} $(p_{i,\lambda}^{(\veps)} , p_{i,\lambda}^{(\veps')})= (2k-1, 2k).$

\medskip

\noindent
On représente sur la Figure \ref{f:plac0demo} la position du zéro
dans chacun des sous-cas (i), (ii), (iii) et (iv).
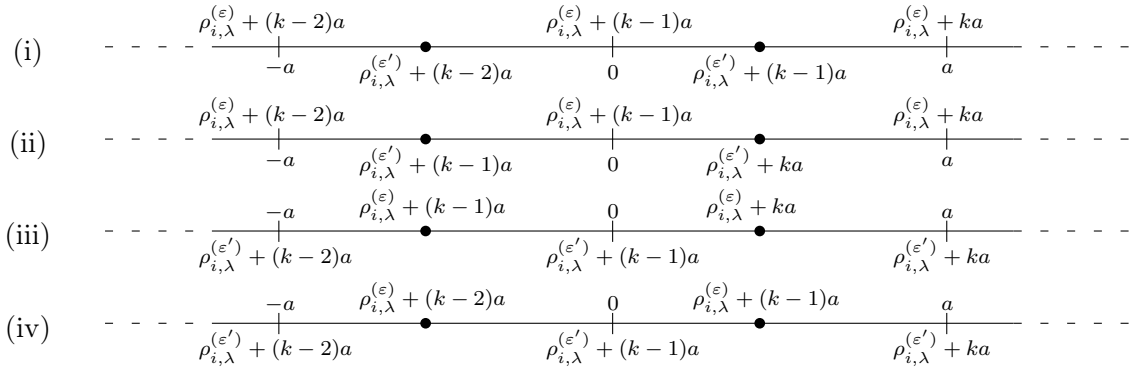
\begin{figure}[h!]
$$
\begin{array}{ccc}
\hbox{(i)} & 
\begin{picture}(400,-10)
\put(10,5){\line(1,0){3}}
\put(20,5){\line(1,0){3}}
\put(30,5){\line(1,0){3}}
\put(40,5){\line(1,0){3}}
\put(50,5){\line(1,0){300}}
\put(350,5){\line(1,0){3}}
\put(360,5){\line(1,0){3}}
\put(370,5){\line(1,0){3}}
\put(380,5){\line(1,0){3}}
\put(390,5){\line(1,0){3}}
\put(75,1){\line(0,1){8}}
\put(45,12){\Rho{+(k-2)a}}
\put(70,-5){\scriptsize$-a$}
\put(200,1){\line(0,1){8}}
\put(175,12){\Rho{+(k-1)a}}
\put(198,-7){\scriptsize$0$}
\put(325,1){\line(0,1){8}}
\put(305,12){\Rho{+k a}}
\put(323,-5){\scriptsize$a$}
\put(130,5){\circle*{4}}
\put(105,-7){\Rhop{+(k-2)a}}
\put(255,5){\circle*{4}}
\put(230,-7){\Rhop{+(k-1)a}}
\end{picture} \\ [2em]
\hbox{(ii)} &  
\begin{picture}(400,-10)
\put(10,5){\line(1,0){3}}
\put(20,5){\line(1,0){3}}
\put(30,5){\line(1,0){3}}
\put(40,5){\line(1,0){3}}
\put(50,5){\line(1,0){300}}
\put(350,5){\line(1,0){3}}
\put(360,5){\line(1,0){3}}
\put(370,5){\line(1,0){3}}
\put(380,5){\line(1,0){3}}
\put(390,5){\line(1,0){3}}
\put(75,1){\line(0,1){8}}
\put(45,12){\Rho{+(k-2)a}}
\put(70,-5){\scriptsize$-a$}
\put(200,1){\line(0,1){8}}
\put(175,12){\Rho{+(k-1)a}}
\put(198,-7){\scriptsize$0$}
\put(325,1){\line(0,1){8}}
\put(305,12){\Rho{+k a}}
\put(323,-5){\scriptsize$a$}
\put(130,5){\circle*{4}}
\put(105,-7){\Rhop{+(k-1)a}}
\put(255,5){\circle*{4}}
\put(235,-7){\Rhop{+ka}}
\end{picture} \\ [2em]
\hbox{(iii)} &  
\begin{picture}(400,-10)
\put(10,5){\line(1,0){3}}
\put(20,5){\line(1,0){3}}
\put(30,5){\line(1,0){3}}
\put(40,5){\line(1,0){3}}
\put(50,5){\line(1,0){300}}
\put(350,5){\line(1,0){3}}
\put(360,5){\line(1,0){3}}
\put(370,5){\line(1,0){3}}
\put(380,5){\line(1,0){3}}
\put(390,5){\line(1,0){3}}
\put(75,1){\line(0,1){8}}
\put(45,-7){\Rhop{+(k-2)a}}
\put(70,10){\scriptsize$-a$}
\put(200,1){\line(0,1){8}}
\put(175,-7){\Rhop{+(k-1)a}}
\put(198,10){\scriptsize$0$}
\put(325,1){\line(0,1){8}}
\put(305,-7){\Rhop{+k a}}
\put(323,10){\scriptsize$a$}
\put(130,5){\circle*{4}}
\put(105,12){\Rho{+(k-1)a}}
\put(255,5){\circle*{4}}
\put(235,12){\Rho{+ka}}
\end{picture} \\ [2em]
\hbox{(iv)} & 
\begin{picture}(400,-10)
\put(10,5){\line(1,0){3}}
\put(20,5){\line(1,0){3}}
\put(30,5){\line(1,0){3}}
\put(40,5){\line(1,0){3}}
\put(50,5){\line(1,0){300}}
\put(350,5){\line(1,0){3}}
\put(360,5){\line(1,0){3}}
\put(370,5){\line(1,0){3}}
\put(380,5){\line(1,0){3}}
\put(390,5){\line(1,0){3}}
\put(75,1){\line(0,1){8}}
\put(45,-7){\Rhop{+(k-2)a}}
\put(70,10){\scriptsize$-a$}
\put(200,1){\line(0,1){8}}
\put(175,-7){\Rhop{+(k-1)a}}
\put(198,10){\scriptsize$0$}
\put(325,1){\line(0,1){8}}
\put(305,-7){\Rhop{+k a}}
\put(323,10){\scriptsize$a$}
\put(130,5){\circle*{4}}
\put(105,12){\Rho{+(k-2)a}}
\put(255,5){\circle*{4}}
\put(230,12){\Rho{+(k-1)a}}
\end{picture} 
\end{array}
$$
\caption{Position du zéro dans les sous-cas (i), (ii), (iii) et (iv)}\label{f:plac0demo}
\end{figure}

\smallskip

\noindent On pose
\begin{equation} \label{eq:mncas2}
 \m_{i,\lambda}=\n_{i,\lambda} := \bigoplus \limits_{l =0}^{k-2}\E_{i,\lambda}^{l}
			+\bigoplus \limits_{l =0}^{d_i-k-1}\E_{i, -\lambda}^{l}.
\end{equation}
La condition (C2) est vérifiée car
$$
    \m_{i,\lambda}^\perp \cap [ \V_{i,\lambda}, e] = \bigoplus \limits_{l =1}^{k-1}\E_{i,\lambda}^{l}
			+\bigoplus \limits_{l =1}^{d_i-k}\E_{i, -\lambda}^{l} 
			  = [ \n_{i,\lambda}, e].
$$
Par construction, la condition (C3) est satisfaite.
De plus, la condition (C4) l'est aussi car
$$
  \dim  \m_{i,\lambda} + \dim \n_{i,\lambda} = 2  [(d_i-1) m_{i,\lambda}] = 
\dim V_{i,\lambda}- \dim (V_{i,\lambda} \cap \g^e)
$$
d'après (\ref{eq:dimvileps}).
Pour chacun des sous-cas (i), (ii), (iii) et (iv), on représente dans la Table \ref{f:conncasII}
les sous-espaces $\E_{i,\lambda} \cap \g_{<0}^{(\veps)}$, 
$\E_{i,- \lambda} \cap \g_{<0}^{(\veps)}$,
 $\E_{i,\lambda} \cap \g_{<0}^{(\veps')}$ et $\E_{i,- \lambda} \cap \g_{<0}^{(\veps')}$.
Ceci nous permet de conclure que la condition (C1) est vérifiée.
\begin{table}[h]
\centering
{\renewcommand{\arraystretch}{2}
\setlength{\tabcolsep}{0.5cm}
\begin{tabular}{|c|c|c|c|c|}
  \hline
  {\bf Cas II.} & $\E_{i,\lambda} \cap \g_{<0}^{(\veps)}$ 
	    & $\E_{i,- \lambda} \cap \g_{<0}^{(\veps)}$ & $\E_{i,\lambda} \cap \g_{<0}^{(\veps')}$ &
	       $\E_{i,- \lambda} \cap \g_{<0}^{(\veps')}$  \\
  \hline
  {\bf (i)} & $\bigoplus \limits_{l =0}^{k-2}\E_{i,\lambda}^{l}$  & 
	    $\bigoplus \limits_{l =0}^{d_i-k-1}\E_{i, -\lambda}^{l}$  &
		  $\bigoplus \limits_{l =0}^{k-2}\E_{i, \lambda}^{l}$ &
						$\bigoplus \limits_{l =0}^{d_i-k}\E_{i, -\lambda}^{l}$ \\[0.2cm]
  \hline
  {\bf (ii)} & $\bigoplus \limits_{l =0}^{k-2}\E_{i,\lambda}^{l}$  & 
	    $\bigoplus \limits_{l =0}^{d_i-k-1}\E_{i, -\lambda}^{l}$  &
		  $\bigoplus \limits_{l =0}^{k-1}\E_{i, \lambda}^{l}$&
						$\bigoplus \limits_{l =0}^{d_i-k-1}\E_{i, -\lambda}^{l}$ \\[0.2cm]
  \hline
  {\bf (iii)} & $\bigoplus \limits_{l =0}^{k-1}\E_{i,\lambda}^{l}$  & 
	    $\bigoplus \limits_{l =0}^{d_i-k-1}\E_{i, -\lambda}^{l}$  &
		  $\bigoplus \limits_{l =0}^{k-2}\E_{i, \lambda}^{l}$&
						$\bigoplus \limits_{l =0}^{d_i-k-1}\E_{i, -\lambda}^{l}$ \\[0.2cm]
  \hline
  {\bf (iv)} & $\bigoplus \limits_{l =0}^{k-2}\E_{i,\lambda}^{l}$  & 
	    $\bigoplus \limits_{l =0}^{d_i-k}\E_{i, -\lambda}^{l}$  &
		  $\bigoplus \limits_{l =0}^{k-2}\E_{i, \lambda}^{l}$&
						$\bigoplus \limits_{l =0}^{d_i-k-1}\E_{i, -\lambda}^{l}$ \\[0.2cm]
    \hline
\end{tabular}}

\smallskip

\caption{{\bf Cas II.}} \label{f:conncasII}
\end{table} 
On constate enfin que dans les sous-cas (i) et (ii), on a
\begin{equation} \label{eq:prssalgIIab}
 \m_{i,\lambda} = \n_{i,\lambda} = \V_{i,\lambda}\cap \g_{<0}^{(\veps)}
      = \V_{i,\lambda}\cap \g_{\le -a}^{(\veps)}.
\end{equation}
Dans les sous-cas (iii) et (iv), on a
\begin{equation} \label{eq:prssalgIIcd}
 \m_{i,\lambda} = \n_{i,\lambda} = \V_{i,\lambda}\cap \g_{<0}^{(\veps')}
	  = \V_{i,\lambda}\cap \g_{\le -a}^{(\veps')}.
\end{equation}
%
%
%
%
%
%

\paragraph{Conclusion.} On pose 
$$
  \m = \bigoplus_{i,\lambda} \m_{i,\lambda} \ \text{ et } \ 
		\n = \bigoplus_{i,\lambda} \n_{i,\lambda}.
$$
D'après le Lemme \ref{l:stratgen}, la paire $(\m,\n)$
vérifie les conditions (A2), (A3), (A4) et (A6). 
D'après (\ref{eq:prssalgI}), (\ref{eq:prssalgIIab})
et (\ref{eq:prssalgIIcd}), $\m$ et $\n$ sont deux sous-algèbres de $\g$
et vérifient (A5).
Ceci nous permet de conclure que $(\m,\n)$ est une paire admissible pour $e$ commune aux
graduations $\Gamma^{(\veps)}$ et $\Gamma^{(\veps')}$.
\end{proof}
%
\begin{proposition}\label{p:CHAINE}
 Il existe une chaîne de rationnels $\veps_0,\veps_1,\cdots,\veps_{s}=1$
telle que 
$$
    0 = \veps_0 < \veps_1 < \cdots < \veps_{s-1} < \veps_{s}=1
$$
avec pour tout $k \in \{0, \cdots, s-1\}$ et pour tout $(i,\lambda)$ on a 
\begin{enumerate}
 \item[{\rm (1)}] $p_{i,\lambda}^{(\veps)} = p_{i,\lambda}^{(\veps')}$ pour tous
$\veps,\veps' \in ]\veps_k,\veps_{k+1}[$;
 \item[{\rm (2)}] $|p_{i,\lambda}^{(\veps)} - p_{i,\lambda}^{(\veps_{k})}| \le 1$ pour tout
$\veps \in ]\veps_k,\veps_{k+1}[$;
 \item[{\rm (3)}] $|p_{i,\lambda}^{(\veps)} - p_{i,\lambda}^{(\veps_{k+1})}| \le 1$ pour tout
$\veps \in ]\veps_k,\veps_{k+1}[$.
\end{enumerate}
\end{proposition}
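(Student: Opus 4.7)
Le plan est d'utiliser comme points de coupure les valeurs de $\varepsilon \in [0,1]_\Q$ o� l'un des �l�ments de $\Xi_{i,\lambda}^{(\varepsilon)}$ s'annule. Comme $\rho_{i,\lambda}^{(\varepsilon)} = -\frac{a}{2}(d_i-1) + \varepsilon\lambda$, la fonction $\varepsilon \mapsto \rho_{i,\lambda}^{(\varepsilon)} + l a$ est affine en $\varepsilon$ pour tout $l \in \{0,\ldots,d_i-1\}$. Si $\lambda = 0$, elle est constante, donc $p_{i,0}^{(\varepsilon)}$ ne d�pend pas de $\varepsilon$ et ne contribue pas � des points de coupure. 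Si $\lambda \neq 0$, elle s'annule exactement une fois, en
$$
  \varepsilon_{i,\lambda,l} := \frac{\frac{a}{2}(d_i-1) - l a}{\lambda} \in \Q.
$$

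La premi�re �tape serait d'introduire l'ensemble fini
$$
  \mathcal{E} := \bigl\{ \varepsilon_{i,\lambda,l} \ ; \ 1 \le i \le r, \; \lambda \in \Q^*, \; 0 \le l \le d_i - 1 \bigr\} \cap [0,1]_\Q,
$$
qui est fini car pour un $\varepsilon$ fix�, seuls les couples $(i,\lambda)$ tels que $\E_{i,\lambda} \neq \{0\}$ interviennent, et ils sont en nombre fini dans la d�composition de $\g$. On ordonne ensuite les �l�ments de $\mathcal{E} \cup \{0,1\}$ pour obtenir la suite strictement croissante $0 = \varepsilon_0 < \varepsilon_1 < \cdots < \varepsilon_s = 1$.

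La condition (1) est alors imm�diate : sur tout intervalle ouvert $]\varepsilon_k, \varepsilon_{k+1}[$, aucune quantit� $\rho_{i,\lambda}^{(\varepsilon)} + l a$ ne s'annule (sinon un nouveau point de coupure appara�trait), donc aucune ne change de signe. La position du z�ro relativement � $\Xi_{i,\lambda}^{(\varepsilon)}$ est donc constante sur cet intervalle, et $p_{i,\lambda}^{(\varepsilon)}$ aussi.

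Pour les conditions (2) et (3), on fixe $(i,\lambda)$ et on se place � un extr�mit� $\varepsilon_k$ (l'analyse pour $\varepsilon_{k+1}$ �tant sym�trique). Deux situations se pr�sentent. Si $\varepsilon_k \notin \{\varepsilon_{i,\lambda,l} \ ; \ l\}$, alors par l'argument de la condition (1) appliqu� au voisinage de $\varepsilon_k$, $p_{i,\lambda}^{(\varepsilon)} = p_{i,\lambda}^{(\varepsilon_k)}$ pour tout $\varepsilon \in \,]\varepsilon_k, \varepsilon_{k+1}[$, et la diff�rence est nulle. Sinon, il existe un unique $l_0$ tel que $\varepsilon_k = \varepsilon_{i,\lambda,l_0}$, et l'on a $\rho_{i,\lambda}^{(\varepsilon_k)} + l_0 a = 0$, donc $p_{i,\lambda}^{(\varepsilon_k)} = 2(l_0+1)$ d'apr�s le cas (d) de la D�finition~\ref{d:plac0}. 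Pour $\varepsilon$ l�g�rement sup�rieur � $\varepsilon_k$ dans $]\varepsilon_k, \varepsilon_{k+1}[$, $\rho_{i,\lambda}^{(\varepsilon)} + l_0 a$ devient strictement du signe de $\lambda$, tandis que $\rho_{i,\lambda}^{(\varepsilon)} + (l_0 \pm 1)a$ conserve son signe (puisque $\varepsilon_k$ �tait la seule valeur critique pour $(i,\lambda)$ dans $[\varepsilon_k, \varepsilon_{k+1}]$). Ainsi le z�ro est encadr� soit par $\rho_{i,\lambda}^{(\varepsilon)} + (l_0-1)a$ et $\rho_{i,\lambda}^{(\varepsilon)} + l_0 a$, soit par $\rho_{i,\lambda}^{(\varepsilon)} + l_0 a$ et $\rho_{i,\lambda}^{(\varepsilon)} + (l_0+1)a$, donnant respectivement $p_{i,\lambda}^{(\varepsilon)} = 2l_0 + 1$ ou $p_{i,\lambda}^{(\varepsilon)} = 2l_0 + 3$. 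Dans les deux cas, $|p_{i,\lambda}^{(\varepsilon)} - p_{i,\lambda}^{(\varepsilon_k)}| = 1$.

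L'unique point d�licat concerne la rationalit� des $\varepsilon_k$ et la finitude de $\mathcal{E}$ ; tout le reste est combinatoire. Gr�ce � la Proposition~\ref{p:ADJACENTE}, cette cha�ne garantira que $\Gamma^{(\varepsilon_k)}$ et $\Gamma^{(\varepsilon_{k+1})}$ sont adjacentes pour tout $k$, en passant �ventuellement par une valeur interm�diaire dans $]\varepsilon_k, \varepsilon_{k+1}[$ qui est adjacente � chacune des deux extr�mit�s.
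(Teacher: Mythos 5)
Votre d�monstration est correcte et suit essentiellement la m�me strat�gie que celle de l'article : on prend pour points de coupure les valeurs (rationnelles, en nombre fini) de $\veps$ o� l'une des quantit�s $\rho_{i,\lambda}^{(\veps)}+la$ s'annule, c'est-�-dire o� un $p_{i,\lambda}^{(\veps)}$ est pair, puis on constate que les $p_{i,\lambda}^{(\veps)}$ sont constants sur les intervalles ouverts et ne sautent que de $\pm 1$ en un point de coupure. Vous explicitez simplement la v�rification des conditions (2) et (3) que l'article laisse au lecteur (\og l'assertion est alors claire \fg).
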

\begin{proof}
Observons que $p_{i,\lambda}^{(0)} = d_i+1 $. De plus, on remarque
que $p_{i,0}^{(\veps)} = d_i+1$ pour tout $\veps$. D'autre part, lorsque
$\l \neq 0$, on a
$$
    p_{i,\lambda}^{(\veps)} = 2k \Leftrightarrow 0 = \rho_{i,\lambda}^{(\veps)} + (k-1)a \Leftrightarrow
	  \veps = \frac{1}{\l} \big( \frac{a}{2}(d_i-1) - (k-1)a \big).
$$
Il en résulte que lorsque $\l \neq 0$, $p_{i,\lambda}^{(\veps)}$ n'est pair que pour un nombre 
fini de valeurs de $\veps$.
L'application $\veps \mapsto \bigl( p_{i,\l}^{(\veps)} \bigr)_{i,\l}$
converge vers $\bigl (d_i+1 \bigr )_{i,\l}$ lorsque $\veps$ tend vers $0$.
De plus, l'ensemble des couples $(i,\l)$ est fini
et, pour $(i,\l)$ fixé, l'ensemble des valeurs des $p_{i,\l}^{(\veps)}$
est contenu dans $\{1, \ldots, 2d_i+1\}$. On en déduit qu'il existe 
$0 < \veps_1 < \cdots < \veps_{s-1} < \veps_{s}=1$ tels que

\smallskip

$\ast$ pour tout $k \in \{1,\cdots, s-1\}$, il existe $(i,\l)$ tel que $\l \neq 0$
et $p_{i,\l}^{(\veps_k)}$ soit pair;

\smallskip

$\ast$ pour $\veps \in ]0,1[$ différent de $\veps_1,\veps_2, \cdots, \veps_s$, 
$p_{i,\l}^{(\veps)}$ est impair pour tout $\l \neq 0$.

\smallskip

L'assertion est alors claire.
\end{proof}
\begin{example}
On reprend l'Exemple \ref{ex:plac0} et on conserve les mêmes notations.
Par des calculs directs, on vérifie sans peine que $p_{i,\lambda}^{(\veps)} = i+1$
pour $\l \in \{-\frac{1}{2}, 0\}$ et pour tout $\veps \in [0,1]$.
De plus,
$$
  p_{i,-4}^{(\veps)} = \left\{
	    \begin{array}{ll}
	      i+1 & \text{ si } \ \veps = 0; \\
	      i+2 & \text{ si } \ 0 < \veps < \frac{3}{4}; \\
	      i+3 & \text{ si } \ \veps = \frac{3}{4}; \\
	      i+4 & \text{ si } \ \frac{3}{4} < \veps \le 1,
	    \end{array}
	\right.
\ \text{ et } \ 
 p_{i,-\frac{7}{2}}^{(\veps)} = \left\{
	    \begin{array}{ll}
	      i+1 & \text{ si } \ 0 \le \veps < \frac{3}{7}; \\
	      i+2 & \text{ si } \ \veps = \frac{3}{7}; \\
	      i+3 & \text{ si } \ \frac{3}{7} < \veps \le 1.
	    \end{array}
	\right.
$$
On en déduit que la suite $0 < \frac{3}{7} < \frac{3}{4} < 1 $
vérifie les propriétés de la Proposition \ref{p:CHAINE}.
\end{example}

\begin{theorem} \label{t:DYN}
Toute $\Q$-graduation admissible pour $e$ est connexe 
à la graduation de Dynkin.
\end{theorem}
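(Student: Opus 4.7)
The plan is to interpolate between a given $\Q$-graduation $\Ga$ and the Dynkin grading via the one-parameter family $\Ga^{(\veps)}$ introduced before Lemma~\ref{l:epsadm}, and then use Proposition~\ref{p:ADJACENTE} and Proposition~\ref{p:CHAINE} to produce an explicit chain of adjacent admissible gradings. Thanks to Proposition~\ref{p:lambdagamQ}, I may first replace $\Ga$ by a positive rational multiple and so assume that $\Ga \in \GA(e)$ with $e \in \g_a$ for some integer $a>1$; since $\Ga$ and its rescalings share the same admissible pairs, this reduction preserves adjacency and hence connectedness.

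Next, I choose an $\lsl_2$-triplet $(e,h,f)$ with $h \in \g_0$ and $f \in \g_{-a}$, set $t := h_\Ga - \frac{a}{2}h$, and consider the family $h_\Ga^{(\veps)} := \frac{a}{2}h + \veps t$ for $\veps \in [0,1]_\Q$. Lemma~\ref{l:epsadm} guarantees that each $\Ga^{(\veps)}$ is admissible for $e$. At the two endpoints, $\Ga^{(1)} = \Ga$ and $\Ga^{(0)} = \frac{a}{2}\GaD$; by Example~\ref{ex:adj}(1), $\frac{a}{2}\GaD$ is adjacent to $\GaD$, so it suffices to prove that $\Ga$ is connected to $\Ga^{(0)}$ through the family $\{\Ga^{(\veps)}\}_{\veps \in [0,1]_\Q}$.

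Apply Proposition~\ref{p:CHAINE} to obtain a subdivision $0 = \veps_0 < \veps_1 < \cdots < \veps_s = 1$ such that on each open interval $]\veps_k,\veps_{k+1}[$ the integers $p_{i,\lambda}^{(\veps)}$ are constant in $\veps$ and differ from $p_{i,\lambda}^{(\veps_k)}$ and $p_{i,\lambda}^{(\veps_{k+1})}$ by at most $1$ uniformly in $(i,\lambda)$. Pick any rational $\eta_k \in ]\veps_k,\veps_{k+1}[$. Proposition~\ref{p:ADJACENTE} then yields that $\Ga^{(\veps_k)}$ is adjacent to $\Ga^{(\eta_k)}$ and that $\Ga^{(\eta_k)}$ is adjacent to $\Ga^{(\veps_{k+1})}$, so the sequence
$$
\Ga = \Ga^{(\veps_s)},\ \Ga^{(\eta_{s-1})},\ \Ga^{(\veps_{s-1})},\ \ldots,\ \Ga^{(\eta_0)},\ \Ga^{(\veps_0)} = \tfrac{a}{2}\GaD
$$
is a chain of pairwise adjacent admissible $\Q$-gradings. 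Concatenating with the adjacency $\frac{a}{2}\GaD \leftrightarrow \GaD$ from Example~\ref{ex:adj}(1) produces the required chain from $\Ga$ to the Dynkin grading.

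The only delicate point is the step just outlined, which has already been packaged into Propositions~\ref{p:ADJACENTE} and~\ref{p:CHAINE}; once these are available the proof is essentially a bookkeeping argument. As a corollary one recovers Theorem~\ref{t:CONN}: any two $\Q$-gradings in $\GA_\Q(e)$ are connected to $\GaD$, hence connected to each other.
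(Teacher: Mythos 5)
Your proof is correct and follows essentially the same route as the paper: reduce to a $\Z$-grading via Proposition~\ref{p:lambdagamQ}, interpolate through the family $\Ga^{(\veps)}$, and chain adjacencies using Propositions~\ref{p:CHAINE} and~\ref{p:ADJACENTE} with an intermediate point $\eta_k$ in each interval $]\veps_k,\veps_{k+1}[$ (the paper's $\veps'_k$). The only cosmetic difference is that you make explicit the final adjacency $\frac{a}{2}\GaD \leftrightarrow \GaD$ via Example~\ref{ex:adj}(1), which the paper leaves implicit.
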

\begin{proof}
Soit $\Gamma$ une $\Q$-graduation admissible pour $e$.
D'après la Proposition \ref{p:lambdagamQ}, on peut supposer que $\Ga$
est une $\Z$-graduation admissible pour $e$ et il suffit de montrer
que $\Ga$ est connexe à $\Ga^{(0)}$.
Considérons les rationnels $\veps_k$ donnés par la Proposition \ref{p:CHAINE}
et la suite
$$
    0=\veps_0 < \veps'_0 < \veps_1 < \veps'_1 < \cdots < \veps_{s-1} < \veps'_{s-1} < \veps_s =1.
$$
Pour tous $(i,\lambda)$ et $k \in \{0, \cdots, s-1\}$, on a
$$
    |p_{i,\lambda}^{(\veps_k)}-p_{i,\lambda}^{(\veps'_{k})}| \le 1 \ \text{ et } \ 
|p_{i,\lambda}^{(\veps'_k)}-p_{i,\lambda}^{(\veps_{k+1})}| \le 1.
$$
D'après la Proposition \ref{p:ADJACENTE}, d'une part les graduations
$\Gamma^{(\veps_k)}$ et $\Gamma^{(\veps'_{k})}$
sont adjacentes pour tout $k \in \{0, \cdots, s-1\}$
et d'autre part, les graduations $\Gamma^{(\veps'_{k})}$ et $\Gamma^{(\veps_{k+1})}$
le sont aussi.
Le théorème s'ensuit.
\end{proof}
\begin{proof}[Démonstration du Théorème \ref{t:CONN}]
Si $\Gamma, \Gamma' \in \GA_\Q(e)$ alors d'après le théorème
précédent, elles sont toutes les deux connexes à la graduation
de Dynkin. En particulier, elles sont connexes entre elles.
\end{proof}
\begin{remark}\label{rk:pbreduit}
D'après le Théorème \ref{t:CONN}, le problème d'isomorphisme des $W$-algèbres
se réduit à l'étude de la relation d'équivalence sur l'ensemble des paires $e$-admissibles
pour une graduation $e$-admissible donnée.
\end{remark}
\section{Problème d'isomorphisme pour les graduations optimales} \label{ch:opt}
On montre dans cette section que les $W$-algèbres associées aux paires $e$-admissibles
de certaines graduations sont isomorphes.
On conserve les notations des sections précédentes.
Soit $\Ga: \g = \bigoplus_{j \in \Q} \g_j$ une graduation de $\g$
telle que $e \in \g_a$ avec $a \in \N$, $a \ge 2$.
On considère la forme bilinéaire antisymétrique
$$ \Phi_e: \g_{-b} \times \g_{b-a} \rightarrow \C, \quad (x,y) \mapsto \la e, [x,y] \ra,$$
où $b \in \Q$. On rappelle que $\g_k^e$ désigne l'intersection
de $\g_k$ avec $\g^e$ pour $k\in \Q$.
Comme $\g_{b-a}$ et $\g_{a-b}$ sont en couplage
par rapport à la forme de Killing, on montre aisément
le lemme suivant.
\begin{lemma} \label{l:phie}
Soit $V$ (resp. $W$) un supplémentaire de $\g_{-b}^e$ dans $\g_{-b}$ 
(resp. de $\g_{b-a}^e$ dans $\g_{b-a}$).
Alors la restriction de $\Phi_e$ à $V \times W$ est non dégénérée. En particulier,
$\dim V = \dim W$.
\end{lemma}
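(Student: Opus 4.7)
The plan is to identify the left and right radicals of $\Phi_e$ using the invariance of the Killing form, then conclude that complements to these radicals yield a non-degenerate restriction.

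First I would use the identity $\la e,[x,y]\ra = \la [e,x], y\ra$, which holds for all $x,y \in \g$ because the Killing form is $\ad$-invariant. For $x \in \g_{-b}$ one has $[e,x] \in \g_{a-b}$, and since the Killing form puts $\g_{a-b}$ and $\g_{b-a}$ in non-degenerate duality (homogeneous components of opposite degree are in Killing duality, as recalled in \S\ref{S:not}), the linear form $y \mapsto \la [e,x],y\ra$ on $\g_{b-a}$ vanishes identically if and only if $[e,x] = 0$, i.e.\ $x \in \g_{-b}^e$. Thus the left radical of $\Phi_e$ is exactly $\g_{-b}^e$. By antisymmetry (or by the symmetric argument using $\la e,[x,y]\ra = -\la [e,y], x\ra$), the right radical is exactly $\g_{b-a}^e$.

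Given this, if $V$ is a complement of $\g_{-b}^e$ in $\g_{-b}$ and $W$ is a complement of $\g_{b-a}^e$ in $\g_{b-a}$, then the restriction $\Phi_e|_{V\times W}$ is non-degenerate: if $v \in V$ satisfies $\Phi_e(v,w)=0$ for all $w \in W$, then since $\Phi_e(v,w')=0$ automatically for $w' \in \g_{b-a}^e$ (because $[e,w']=0$ implies $\la [e,v],w'\ra = -\la v,[e,w']\ra = 0$, and simultaneously $v$ paired against $\g_{b-a}^e$ via the left-radical computation gives the same vanishing), it follows that $v$ lies in the left radical $\g_{-b}^e$; but $V \cap \g_{-b}^e = \{0\}$, so $v = 0$. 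The symmetric argument applies on the $W$-side. Finally, a non-degenerate bilinear pairing on $V\times W$ forces $\dim V = \dim W$, which gives the last assertion.

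The main potential pitfall is the verification that $\Phi_e(v,w')=0$ automatically when $w' \in \g_{b-a}^e$, i.e.\ that $\g_{b-a}^e$ is orthogonal to all of $\g_{-b}$ with respect to $\Phi_e$; this is precisely the identification of the right radical above and is the only nontrivial ingredient. Everything else is a formal consequence of $\ad$-invariance of $\la\,,\,\ra$ and the duality between $\g_k$ and $\g_{-k}$.
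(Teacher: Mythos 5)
Your proof is correct, and it is precisely the argument the paper has in mind: the lemma is stated there without proof ("on montre ais\'ement"), with the only indicated ingredient being the Killing duality between $\g_{b-a}$ and $\g_{a-b}$, which is exactly what you use to identify the left and right radicals of $\Phi_e$ with $\g_{-b}^e$ and $\g_{b-a}^e$. The remaining step --- that restricting a pairing to complements of its radicals yields a non-degenerate pairing, hence equal dimensions --- is the standard linear algebra you carry out, so nothing is missing.
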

\begin{lemma} \label{l:dimuorth}
Soit $U$ un sous-espace de $\g_{-b}$ tel que $U \cap \g^e = \{0\}$.
Alors $\dim (U^\perp \cap [e,\g_{b-a}]) = \dim \g_{b-a} - \dim \g_{b-a}^e - \dim U$.
\end{lemma}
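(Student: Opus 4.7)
The plan is to relate the dimension of $U^\perp \cap [e,\g_{b-a}]$ to $\dim U$ via a rank-nullity argument applied to an auxiliary linear map built from $\Phi_e$. Specifically, I will introduce
\[
  \phi \, : \, \g_{b-a} \longrightarrow U^*, \qquad \phi(y)(x) := \la x, [e,y]\ra,
\]
which is (up to sign) the restriction of $y \mapsto \Phi_e(\cdot,y)$ to $U$, since $\la e, [x,y]\ra = -\la x, [e,y]\ra$ by $\ad$-invariance of the Killing form.

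First, I would identify $\ker \phi$. An element $y \in \g_{b-a}$ lies in $\ker \phi$ if and only if $\la x, [e,y]\ra = 0$ for all $x \in U$, i.e., $[e,y] \in U^\perp$. Since $[e,y] \in [e,\g_{b-a}]$ automatically, this is equivalent to $[e,y] \in U^\perp \cap [e,\g_{b-a}]$. Thus $\ker \phi = (\ad e)^{-1}(U^\perp \cap [e,\g_{b-a}])$; and because $\ad e \colon \g_{b-a} \to \g_b$ has kernel $\g_{b-a}^e$, one gets
\[
  \dim \ker \phi = \dim \g_{b-a}^e + \dim \bigl(U^\perp \cap [e,\g_{b-a}]\bigr).
\]

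The crucial step is to prove that $\phi$ is surjective. Here the hypothesis $U \cap \g^e = \{0\}$ is used: since $U \subset \g_{-b}$ meets $\g_{-b}^e$ trivially, one may pick a graded complement $V$ of $\g_{-b}^e$ in $\g_{-b}$ containing $U$, and a complement $W$ of $\g_{b-a}^e$ in $\g_{b-a}$. By Lemma \ref{l:phie}, the restriction of $\Phi_e$ to $V \times W$ is non-degenerate, so it induces an isomorphism $W \xrightarrow{\sim} V^*$, $y \mapsto \Phi_e(\cdot,y)|_V$. Composing with the (surjective) restriction $V^* \twoheadrightarrow U^*$ gives a surjection $W \twoheadrightarrow U^*$ which coincides with $-\phi|_W$. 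Hence $\phi$ itself is surjective and $\dim \mathrm{Im}(\phi) = \dim U$.

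The conclusion then follows from rank-nullity applied to $\phi$:
\[
  \dim \g_{b-a} = \dim \g_{b-a}^e + \dim\bigl(U^\perp \cap [e,\g_{b-a}]\bigr) + \dim U,
\]
which rearranges to the stated formula. The only real obstacle is the surjectivity of $\phi$; once the complement $V$ containing $U$ is chosen (which requires precisely the assumption $U \cap \g^e = \{0\}$), Lemma \ref{l:phie} takes care of it essentially for free.
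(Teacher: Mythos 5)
Your proof is correct and rests on the same ingredients as the paper's: one chooses a complement $V$ of $\g_{-b}^e$ in $\g_{-b}$ containing $U$ (this is exactly where $U\cap\g^e=\{0\}$ enters) and a complement $W$ of $\g_{b-a}^e$, and invokes Lemme \ref{l:phie}. The only difference is presentational: you package the dimension count as rank--nullity for the map $\g_{b-a}\to U^*$, whereas the paper computes $\dim\bigl(U^\perp\cap[e,\g_{b-a}]\bigr)=\dim\g-\dim\bigl(U+[e,\g_{b-a}]^\perp\bigr)$ directly from the non-degenerate pairing between $[e,W]$ and $V$.
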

\begin{proof}
Soient $V$ un supplémentaire de $\g_{-b}^e$ dans $\g_{-b}$ contenant $U$
et $W$ un supplémentaire de $\g_{b-a}^e$ dans $\g_{b-a}$.
D'après le Lemme \ref{l:phie}, la restriction de la forme de Killing à $[e,W] \times V$
est non dégénérée.
Il en résulte que
$$\dim (U^\perp \cap [e,\g_{b-a}]) = \dim \g - \dim (U + [e,\g_{b-a}]^\perp) = \dim [e,\g_{b-a}] - \dim U. $$
Le lemme s'ensuit.
\end{proof}
Supposons désormais que la graduation $\Ga$ soit admissible pour $e$.
\begin{proposition}\label{p:croisees}
Soient $(\m,\n)$ une paire $e$-admissible relativement à $\Ga$ et
$b \in ]0,\frac{a}{2}]$.  
On pose $U' = \m \cap \g_{b-a}$, $V' = \m \cap \g_{-b}$, $U = \n \cap \g_{b-a}$ et $V = \n \cap \g_{-b}$.
Alors $\dim U + \dim V'= \dim \g_{-b} - \dim \g_{-b}^e = \dim U' + \dim V $. 
\end{proposition}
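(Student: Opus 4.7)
\medskip

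\noindent\textbf{Plan of proof.} The strategy is to graduate the identity (A3), $\m^{\perp} \cap [\g,e] = [\n,e]$, and extract from it the components of degree $a-b$ and $b$, which will directly relate the four subspaces $U,U',V,V'$ via $\ad e$ and the Killing pairing. Since $0 < b \le \tfrac{a}{2}$, both $-b$ and $b-a$ are strictly negative, so $U,V \subseteq \n$ and $U',V' \subseteq \m$ are well-defined graded subspaces. Note also that $V \cap \g^e = V' \cap \g^e = U \cap \g^e = U' \cap \g^e = \{0\}$ by (A4), so $\ad e$ acts injectively on each of them.

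First I would focus on the component in degree $a-b$. Because the grading is orthogonal for the Killing form, $\m^{\perp} \cap \g_{a-b}$ coincides with the orthogonal of $U' = \m \cap \g_{b-a}$ inside $\g_{a-b}$ under the perfect pairing $\g_{a-b} \times \g_{b-a} \to \C$. Since $\n$ is graded and contained in $\g_{<0}$, one has $[\n,e] \cap \g_{a-b} = [V,e]$, and obviously $[\g,e] \cap \g_{a-b} = [e,\g_{-b}]$. Therefore (A3) restricted to $\g_{a-b}$ yields
$$
    [V,e] \;=\; (U')^{\perp} \cap [e,\g_{-b}].
$$
Taking dimensions, $\dim[V,e] = \dim V$ by injectivity, and applying Lemma~\ref{l:dimuorth} with the roles of $b$ and $a-b$ interchanged (so that $U'$ plays the role of the subspace inside $\g_{-(a-b)} = \g_{b-a}$) gives
$$
    \dim V \;=\; \dim \g_{-b} - \dim \g_{-b}^{e} - \dim U',
$$
which is the first equality of the proposition.

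The second equality is obtained by the mirror argument, now intersecting (A3) with $\g_{b}$. Using that $\m^{\perp}\cap\g_b = (V')^{\perp}\cap\g_b$, that $[\g,e]\cap\g_b = [e,\g_{b-a}]$, and that $[\n,e]\cap\g_b = [U,e]$, one gets $[U,e] = (V')^{\perp} \cap [e,\g_{b-a}]$, and Lemma~\ref{l:dimuorth} applied directly with $V' \subseteq \g_{-b}$ yields
$$
    \dim U + \dim V' \;=\; \dim \g_{b-a} - \dim \g_{b-a}^{e}.
$$
To finish, it remains to identify the two right-hand sides. This follows at once from Lemma~\ref{l:phie}: the form $\Phi_e$ induces a non-degenerate pairing between any complements of $\g_{-b}^{e}$ in $\g_{-b}$ and of $\g_{b-a}^{e}$ in $\g_{b-a}$, so $\dim \g_{-b} - \dim \g_{-b}^{e} = \dim \g_{b-a} - \dim \g_{b-a}^{e}$.

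There is no real obstacle here; the only point requiring care is the bookkeeping of the orthogonals, namely checking that the graded-piece orthogonal of $\m$ in $\g_{a-b}$ is indeed the orthogonal of $U'$ with respect to the perfect pairing $\g_{a-b}\times\g_{b-a}\to\C$, and similarly in degree $b$. Once this is set up, the result reduces to a clean dimension count combining (A3), (A4) and Lemmas~\ref{l:phie} and \ref{l:dimuorth}. The special case $b=a/2$ (where $U=V$ and $U'=V'$) is automatically covered, since the two equalities then coincide.
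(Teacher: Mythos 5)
Votre preuve est correcte et suit essentiellement la m\^eme d\'emarche que celle du texte : on restreint la condition (A3) aux composantes gradu\'ees de degr\'es $b$ et $a-b$, on utilise (A4) pour obtenir $\dim[e,U]=\dim U$ et $\dim[e,V]=\dim V$, puis les Lemmes \ref{l:dimuorth} et \ref{l:phie} donnent les deux \'egalit\'es (le texte ne traite explicitement qu'une des deux et invoque la sym\'etrie pour l'autre). Seule remarque mineure : l'\'egalit\'e $\dim U'+\dim V=\dim\g_{-b}-\dim\g_{-b}^{e}$ que vous obtenez en premier est en fait la seconde de l'\'enonc\'e, mais cela n'affecte en rien l'argument.
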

\begin{proof}
Montrons tout d'abord que $\dim U + \dim V' = \dim \g_{-b} - \dim \g_{-b}^e$.
Comme la paire $(\m,\n)$ est $e$-admissible, on a, d'après la condition (A3)
de la Définition \ref{d:adm}, 
$$V'^\perp \cap [e, \g_{b-a}] = [e,U] .$$
Par suite, $\dim (V'^\perp \cap [e, \g_{b-a}]) = \dim U$ car $U \cap \g^e = \{0\}$.
D'après le Lemme \ref{l:dimuorth}, on a $\dim (V'^\perp \cap [e, \g_{b-a}]) = \dim \g_{b-a} - \dim \g_{b-a}^e - \dim V'$, 
d'où l'égalité car $\dim \g_{-b} - \dim \g_{-b}^e = \dim \g_{b-a} - \dim \g_{b-a}^e$
d'après le Lemme \ref{l:phie}.
Des arguments analogues s'appliquent pour montrer la deuxième égalité.
\end{proof}
\begin{definition} \label{d:bopt}
Soit $b >0$.
Une $\Q$-graduation $\Ga: \g = \bigoplus_{j \in \Q} \g_j$ est dite {\bf $b$-optimale pour $e$}
si $\g_{< - b} \cap \g^e = \{0\}$ et s'il existe $a \in \N$, avec $a \ge 2$,
tel que $e\in \g_a$ et $a \ge 2b$.
\end{definition}
\begin{examples}

{\rm (1)} Une graduation de Dynkin est $1$-optimale pour $e$.

{\rm (2)} Une graduation $2d$-bonne est $d$-optimale pour $e$.

{\rm (3)} La graduation $\Ga$ de l'Exemple \ref{ex:admsl} est $1$-optimale pour $e$.

{\rm (4)} Il n'existe pas toujours des graduations $b$-optimales pour $e$. C'est
le cas de la graduation $\Ga$ de l'Exemple \ref{ex:nonexisopt}.
\end{examples}

\begin{remark}
{\rm (1)} Soient $b > b' >0$ et $\Ga:\g = \bigoplus_{j \in \Q} \g_j$ une graduation $b'$-optimale pour $e$
telle que $e \in \g_a$ et $a \ge 2b$. Alors $\Ga$ est $b$-optimale pour $e$.

{\rm (2)} Si $\Ga$ est $b$-optimale et $\l \in \N^*$, alors $\l \Ga$ est $\l b$-optimale.

{\rm (3)} Une graduation $b$-optimale pour $e$
est $e$-admissible. Cela provient du Théorème \ref{t:carac}.
\end{remark}
\begin{theorem}\label{t:equiopt}
Si $\Ga$ est une graduation $b$-optimale pour $e$, alors les paires
$e$-admissibles relativement à $\Ga$ sont équivalentes entre elles.
\end{theorem}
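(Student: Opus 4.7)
The plan is to reduce any admissible pair to a canonical form in which $\m$ is concentrated in degrees $\le -b$, and then to connect any two such canonical forms by a chain of elementary comparable moves.

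\medskip\noindent\textbf{Step 1 (Reduction).} Given $(\m, \n) \in \PA(e, \Gamma)$, I would set
$$\tilde\m := \g_{\le -a} + (\m \cap \g_{\le -b}), \qquad \tilde\n := \n + \g_{(-a,\,b-a)}.$$
The inclusions $\tilde\m \subseteq \m \subseteq \n \subseteq \tilde\n$ give $(\m,\n) \peq_\Gamma (\tilde\m, \tilde\n)$, so it suffices to verify that $(\tilde\m, \tilde\n)$ is admissible. For (A5), the inclusion $\tilde\m \subseteq \g_{\le -b}$ together with $a \ge 2b$ forces $[\tilde\m, \tilde\m] \subseteq \g_{\le -2b} \subseteq \g_{\le -a} \subseteq \tilde\m$, and a direct degree computation shows $[\g_{(-a,b-a)}, \tilde\m] \subseteq \g_{<-a}$. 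Condition (A4) uses $b$-optimality: the enlargement $\g_{(-a,b-a)} \subseteq \g_{<-b}$ avoids $\g^e$, and $\tilde\m \cap \g^e \subseteq \m \cap \g^e = \{0\}$. Conditions (A3) and (A6) follow from Proposition~\ref{p:croisees} applied with parameter $c \in (0, b)$: since $\tilde\m \cap \g_{-c} = 0$ by construction, the dimension equality forces $\tilde\n \cap \g_{c-a} = \g_{c-a}$, which matches precisely the enlargement by $\g_{(-a,b-a)}$.

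\medskip\noindent\textbf{Step 2 (Connecting reduced pairs).} Given two reduced pairs $(\tilde\m_1, \tilde\n_1)$ and $(\tilde\m_2, \tilde\n_2)$, both share the same prescribed shape outside the degrees $[b-a, 0)$, and the remaining freedom is linked through Proposition~\ref{p:croisees}. I would construct a chain of comparable admissible pairs joining them by modifying one graded component of $\m$ (or of $\n$) at a time, each such modification compensated in the conjugate degree via condition (A3) and the dimension formula. When $a = 2b$ the only active degree is $-b$, and the argument reduces to the classical connectedness of the Lagrangian Grassmannian in the symplectic space $\g_{-b}/\g_{-b}^e$ endowed with $\Phi_e$.

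\medskip\noindent\textbf{Main obstacle.} The principal difficulty lies in Step 2. Naive common bounds such as $(\tilde\m_1 + \tilde\m_2,\,\tilde\n_1 \cap \tilde\n_2)$ generally fail to be admissible: even if each $\tilde\m_i \cap \g_{-b}$ avoids $\g_{-b}^e$, their sum may intersect $\g_{-b}^e$ nontrivially, violating (A4), and the intersection $\tilde\n_1 \cap \tilde\n_2$ may violate (A3) in certain middle degrees. Hence one cannot compare the two reduced pairs in a single step but must proceed by elementary adjustments, each of which preserves admissibility thanks to the rigid dimension duality of Proposition~\ref{p:croisees} and to the inequality $a \ge 2b$ that keeps all cross-brackets inside $\g_{\le -a} \subseteq \tilde\m$.
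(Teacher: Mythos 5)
Your Step~1 can essentially be made to work, but not as written: for $a>2b$ the inclusion $\g_{\le -2b}\subseteq\g_{\le -a}$ you invoke for (A5) is reversed; what saves the condition is rather $[\tilde\m,\tilde\m]\subseteq[\m,\m]\cap\g_{\le -2b}\subseteq\m\cap\g_{\le -b}=\tilde\m$ together with $[\n,\tilde\m]\subseteq\m\cap\g_{<-b}$, and (A3) should be deduced from the inclusion $[\tilde\n,e]\subseteq\tilde\m^{\perp}\cap[\g,e]$ (which uses $\chi([\tilde\m,\tilde\n])=\{0\}$) combined with the dimension identity (A6), itself a consequence of la Proposition \ref{p:croisees} appliqu�e aux degr�s $-c$, $c\in\,]0,b[$, o� $\g_{c-a}^{e}=\{0\}$. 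The genuine gap is Step~2, which you yourself flag as the main obstacle and leave unproved: after your reduction the pairs still carry arbitrary graded components in every degree of $[b-a,-b]$ for $\tilde\m$ and of $[b-a,0[$ for $\tilde\n$, and you construct no chain of comparable admissible pairs joining two such reduced forms. Appealing to connectedness of the Lagrangian Grassmannian when $a=2b$ does not close the gap either: equivalence of admissible pairs means the existence of a finite chain of comparable admissible pairs, and connectedness of the parameter space does not by itself produce such a chain -- one must exhibit explicit intermediate admissible pairs.

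The paper removes this obstacle by reducing much further in a single two-step comparison. Since $b\le\frac{a}{2}$, la $b$-optimalit� donne $\g_{<-\frac{a}{2}}\cap\g^{e}=\{0\}$, so one passes from $(\m,\n)$ to $(\m',\n')$ with $\m'=\m\cap\g_{<-\frac{a}{2}}$ and $\n'=\g_{<-\frac{a}{2}}\oplus\bigoplus_i(\n\cap\g_{-b_i})\oplus U$, where $U$ is a complement of $\g_{-\frac{a}{2}}^{e}$ in $\g_{-\frac{a}{2}}$ containing $\n\cap\g_{-\frac{a}{2}}$, and then to $(\m'',\n'')=(\g_{<-\frac{a}{2}},\,\g_{<-\frac{a}{2}}\oplus U)$, with $(\m,\n)\peq_\Ga(\m',\n')\seq_\Ga(\m'',\n'')$. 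After this, the only residual freedom is the complement $U$ at the single degree $-\frac{a}{2}$, and le Lemme \ref{l:recur} handles it by induction on $\codim_U(U\cap V)$: in codimension one, a parity argument on $\Phi_e$ yields $x\neq0$ in the radical of $\Phi_e|_{U\cap V}$, and $(\g_{<-\frac{a}{2}}\oplus\C x,\ \g_{<-\frac{a}{2}}\oplus(U\cap V))$ is an admissible pair comparable to both $\P_U$ and $\P_V$; the induction step uses the auxiliary subspaces $U_{\a,\b}$, $V_{\a,\b}$. That explicit construction of intermediate pairs, entirely absent from your Step~2, is the heart of the proof, so as it stands your proposal does not establish the theorem.
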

\begin{proof}
Soit $\Ga: \g = \bigoplus_{j \in \Q} \g_j$ une graduation $b$-optimale pour $e$
telle que $a \in \N$, avec $a \ge 2$, $e\in \g_a$ et $a \ge 2b$.

On désigne par $h_\Ga$ l'élément semisimple de $\g$ définissant $\Ga$.
On note $-b_1 , -b_2 , \cdots , -b_r$ les valeurs propres de $\ad h_\Ga$
appartenant à $]-\frac{a}{2},0[$ telles que $b_i < b_{i+1}$ pour $i \in \{1,\cdots, r-1\}$.
En particulier, 
$$\g_{<0} = \g_{\le -a} \oplus \bigoplus_{i=1}^{r} (\g_{b_i-a} \oplus \g_{-b_i}) \oplus \g_{-\frac{a}{2}} .$$
Soit $(\m,\n)$ une paire $e$-admissible relativement à $\Ga$ telle que
$$\m = \g_{\le -a} \oplus \bigoplus_{i=1}^{r} (\m_{b_i-a} \oplus \m_{-b_i}) \oplus \m_{-\frac{a}{2}}, \quad 
\n = \g_{\le -a} \oplus \bigoplus_{i=1}^{r} (\n_{b_i-a} \oplus \n_{-b_i}) \oplus \n_{-\frac{a}{2}},$$
où $\m_{j} \subset \n_{j} \subset \g_j$ pour tout $j$.
Comme $\g_{< -\frac{a}{2}} \cap \g^e = \{0\}$, on a $\g_{b_i-a}^e =\{0\}$ pour tout $i$.
D'après la Proposition \ref{p:croisees}, on a alors pour tout $1 \le i \le r$
\begin{equation} \label{f:dimdemo}
\dim \m_{b_i-a} + \dim \n_{-b_i} = \dim \n_{b_i-a} + \dim \m_{-b_i} 
= \dim \g_{-b_i} - \dim \g_{-b_i}^e = \dim \g_{b_i-a}.
\end{equation}
De plus,
\begin{equation} \label{f:dimdemo1}
 \dim \m_{-\frac{a}{2}} + \dim \n_{-\frac{a}{2}} = \dim \g_{-\frac{a}{2}} - \dim \g_{-\frac{a}{2}}^e.
\end{equation}

Soit $U$ un sous-espace de $\g_{-\frac{a}{2}}$ supplémentaire de $\g_{-\frac{a}{2}}^e$
contenant $\n_{-\frac{a}{2}}$.
On pose
$$\m' := \g_{\le -a} \oplus \bigoplus_{i=1}^{r} \m_{b_i-a}, 
\quad \n' := \g_{< -\frac{a}{2}} \oplus \bigoplus_{i=1}^{r}  \n_{-b_i} \oplus U,$$
$$\m'' := \g_{< -\frac{a}{2}} , \ \text{ et } \ 
\n'' := \g_{< -\frac{a}{2}} \oplus U .$$
Alors $(\m', \n')$ et $(\m'',\n'')$ appartiennent à $\PA(e,\Ga)$.
En effet, les propriétés (A1), (A2) et (A4) sont vérifiées par construction.
Comme par construction $[e, \g_{-\frac{a}{2}}] = [e, U]$, on a (A3).
Les sous-espaces $\m'$, $\n'$, $\m''$ et $\n''$ sont des sous-algèbres de $\g$
qui vérifient (A5) car $b_i < \frac{a}{2}$ et $(\m,\n) \in \PA(e, \Ga)$.
La condition (A6) est satisfaite d'après (\ref{f:dimdemo}) et (\ref{f:dimdemo1}).
De plus, on a
$$(\m,\n) \peq_\Ga (\m',\n') \seq_\Ga (\m'',\n'').$$
Le théorème s'ensuit grâce au lemme suivant:
\begin{lemma} \label{l:recur}
Soient $U$ et $V$ sont deux sous-espaces de $\g_{-\frac{a}{2}}$ supplémentaires de $\g_{-\frac{a}{2}}^e$.
Les paires $\P_U$ et $\P_V$ sont équivalentes entre elles, où
pour $W \in \{U,V\}$, $\P_W$ désigne la paire
$(\g_{< -\frac{a}{2}} , \g_{< -\frac{a}{2}}  \oplus W)$.
\end{lemma}
\begin{proof}
Si $-\frac{a}{2}$ n'est pas une valeur propre de $\ad h_\Ga$ (respectivement si $\g_{-\frac{a}{2}}^e = \{0\}$),
le lemme est évident car $U = V = \{0\}$ (resp. car $U = V = \g_{-\frac{a}{2}}$).
Supposons donc que $-\frac{a}{2}$ est une valeur propre de $\ad h_\Ga$ et que $\g_{-\frac{a}{2}}^e \not= \{0\}$.
Montrons le lemme par récurrence sur $n := \codim_U U\cap V$.

Supposons tout d'abord que $n =1$.
D'après le Lemme \ref{l:phie}, la restriction de $\Phi_e$ à $U \times U$ est non dégénérée.
Par suite, la dimension de $U$ est un entier pair, donc la dimension de $U \cap V$ est un entier impair
car $\dim U \cap V = \dim U -1$. Il s'ensuit que la restriction de $\Phi_e$ à $U\cap V \times U \cap V$
est dégénérée. Il existe donc un élément $x$ non nul de $U \cap V$ tel que $\Phi_e(x,y) = 0$
pour tout $y \in U \cap V$.
On pose
$$D = \C x \subset U \cap V.$$
On montre alors que
$(\m,\n) := (\g_{< -\frac{a}{2}} \oplus D, 
\g_{< -\frac{a}{2}} \oplus  U\cap V) \in \PA(e,\Ga)$.
En effet, les propriétés (A1), (A2) et (A4) de la Définition \ref{d:adm} sont vérifiées par construction.
Comme $\m \subset \n \subset \g_{\le -\frac{a}{2}}$, les sous-espaces $\m$ et $\n$ sont
des sous-algèbres de $\g$ vérifiant (A5) de la même définition. 
De plus, (A6) est satisfaite car $\dim D + \dim U\cap V = \dim \g_{-\frac{a}{2}} - \dim \g_{-\frac{a}{2}}^e$.
Il nous reste à vérifier (A3). Il suffit de montrer que 
$$D^\perp \cap [e, \g_{-\frac{a}{2}}] = [e, U \cap V] .$$
Par construction, on a $[e, U \cap V] \subset D^\perp \cap [e, \g_{-\frac{a}{2}}]$.
De plus, $\dim [e, U \cap V] = \dim U -1$ et 
$\dim D^\perp \cap [e, \g_{-\frac{a}{2}}] = \dim D^\perp \cap [e,U] = \dim [e,U] -1$,
d'où l'égalité voulue.
Pour conclure, il suffit de remarquer que 
$$ \P_U \seq_\Ga (\m,\n) \peq_\Ga \P_V.$$

Supposons à présent que le lemme soit vrai pour $n-1$
et montrons-le pour $n$.
Soient $U$ et $V$ deux sous-espaces de $\g_{-\frac{a}{2}}$ supplémentaires de 
$\g_{-\frac{a}{2}}^e$
tels que $\codim_U U\cap V = n$ et
$$U = \Vect(\underline{w}, u_1, \cdots , u_n) \ \text{ et } \  V = \Vect(\underline{w}, v_1, \cdots , v_n),$$
où $\underline{w} = (w_1,\cdots, w_r)$, avec $w_i, u_i, v_i \in \g_{-\frac{a}{2}}$ pour tout $i$. 
En particulier, $U \cap V = \Vect (\underline{w})$.
Soient $\a$ et $\b$ deux réels non nuls tels que les sous-espaces
$$ U_{\a,\b} = \Vect(\underline{w}, u_1, \cdots , u_{n-1}, \a u_n + \b v_n) \ 
\text{ et } \ V_{\a,\b}=\Vect(\underline{w}, v_1, \cdots , v_{n-1},\a u_n + \b v_n)$$
de $\g_{-\frac{a}{2}}$ soient des supplémentaires de $\g_{-\frac{a}{2}}^e$.
Pour $W \in \{U_{\a,\b}, V_{\a,\b}\}$ on a $(\g_{<-\frac{a}{2}}, \g_{<-\frac{a}{2}} \oplus W) \in \PA(e,\Ga)$.
De plus, $\codim_{U_{\a,\b}} U_{\a,\b} \cap V_{\a,\b} = n-1$ 
car $U_{\a,\b} \cap V_{\a,\b} = \Vect(\underline{w},\a u_n + \b v_n)$.
D'après l'hypothèse de récurrence, les paires $(\g_{<-\frac{a}{2}}, \g_{<-\frac{a}{2}} \oplus U_{\a,\b}) $
et $(\g_{<-\frac{a}{2}}, \g_{<-\frac{a}{2}}\oplus V_{\a,\b}) $ sont équivalentes entre elles.
De plus, $\codim_U U\cap U_{\a,\b} =1$ et $\codim_V V\cap V_{\a,\b} =1$.
Il s'ensuit que les paires $(\g_{<-\frac{a}{2}}, \g_{<-\frac{a}{2}} \oplus U) $ 
et $(\g_{<-\frac{a}{2}}, \g_{<-\frac{a}{2}} \oplus U_{\a,\b})$ sont équivalentes entre elles. 
Il en est de même pour les paires $(\g_{<-\frac{a}{2}}, \g_{<-\frac{a}{2}} \oplus V) $ 
et $(\g_{<-\frac{a}{2}}, \g_{<-\frac{a}{2}}\oplus V_{\a,\b})$, d'où le lemme.
\end{proof}
\end{proof}
\begin{theorem} \label{c:equidyn}
Les paires $e$-admissibles relativement à une graduation
$b$-optimale pour $e$ sont équivalentes à la paire $e$-admissible
optimale d'une graduation de Dynkin. En particulier, les $W$-algèbres associées sont isomorphes.
\end{theorem}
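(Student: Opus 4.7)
Le plan consiste à combiner le Théorème~\ref{t:equiopt} avec la connexité des graduations $e$-admissibles établie dans la section précédente. Soit $\Ga$ une graduation $b$-optimale pour $e$, avec $e \in \g_a$ et $a \ge 2b$. D'après le Théorème~\ref{t:equiopt} appliqué à $\Ga$, toutes les paires de $\PA(e,\Ga)$ sont mutuellement équivalentes. De même, la graduation de Dynkin $\GaD$ étant $1$-optimale pour $e$ (car alors $a=2$ et $\g^e \subset \g_{\ge 0}$), toutes les paires de $\PA(e,\GaD)$ sont équivalentes, et en particulier équivalentes à la paire optimale $(\g_{\le -2}, \g_{<0})$ décrite dans l'Exemple~\ref{ex:dyn}. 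Il suffit donc d'exhiber \emph{une} paire de $\PA(e,\Ga)$ équivalente à \emph{une} paire de $\PA(e,\GaD)$, ce que je me propose de faire en reprenant la déformation $\Ga^{(\veps)}$ introduite dans la démonstration des Théorèmes~\ref{t:CONN} et~\ref{t:DYN}.

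On pose $h_\Ga^{(\veps)} = \frac{a}{2} h + \veps t$ pour $\veps \in [0,1]_\Q$, de sorte que $\Ga^{(1)} = \Ga$ et $\Ga^{(0)} = \frac{a}{2} \GaD$ (cette dernière ayant les mêmes paires $e$-admissibles que $\GaD$ d'après la Proposition~\ref{p:lambdagamQ}(ii)). La Proposition~\ref{p:CHAINE} fournit une suite finie $0 = \veps_0 < \veps'_0 < \veps_1 < \cdots < \veps_s = 1$ telle que, par la Proposition~\ref{p:ADJACENTE}, deux valeurs consécutives produisent des graduations adjacentes, donc partageant une paire $e$-admissible commune. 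Le point crucial est de vérifier que chaque graduation intermédiaire $\Ga^{(\veps)}$ reste $b$-optimale, afin que le Théorème~\ref{t:equiopt} s'applique à chaque cran de la chaîne. Dans la décomposition $\g = \bigoplus_{i,\l} \E_{i,\l}$ de la Section~\ref{ch:padm}, le sous-espace $\E_{i,\l}^{d_i-1} = \E_{i,\l} \cap \g^e$ est propre pour $\ad h_\Ga^{(\veps)}$ avec valeur propre $\frac{a}{2}(d_i-1) + \veps \l$. Puisque $\Ga$ est $b$-optimale, on a $\frac{a}{2}(d_i-1) + \l \ge -b$ pour tout $(i,\l)$ apparaissant; pour $\l \ge 0$ l'inégalité $\frac{a}{2}(d_i-1) + \veps \l \ge -b$ est immédiate, et pour $\l < 0$ et $\veps \in [0,1]$ on a $\veps \l \ge \l$, d'où $\frac{a}{2}(d_i-1) + \veps \l \ge \frac{a}{2}(d_i-1) + \l \ge -b$. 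Comme $e \in \g_a^{(\veps)}$ et $a \ge 2b$, la graduation $\Ga^{(\veps)}$ est donc $b$-optimale pour tout $\veps \in [0,1]_\Q$.

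Il restera à conclure par transitivité de la relation $\sim$. À chaque transition de la chaîne, le Théorème~\ref{t:equiopt} appliqué aux graduations $b$-optimales $\Ga^{(\veps_k)}$ et $\Ga^{(\veps_{k+1})}$ assure que toute paire de $\PA(e,\Ga^{(\veps_k)})$ est équivalente à la paire partagée avec $\Ga^{(\veps'_k)}$, elle-même équivalente à toute paire de $\PA(e,\Ga^{(\veps_{k+1})})$. En concaténant le long de la chaîne, on transporte l'équivalence depuis $\PA(e,\Ga)$ jusqu'à $\PA(e,\Ga^{(0)}) = \PA(e,\GaD)$, et donc jusqu'à la paire optimale de Dynkin. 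L'isomorphisme des $W$-algèbres découle alors du Théorème~\ref{t:equi}. L'obstacle principal est la préservation uniforme de la $b$-optimalité le long de la déformation, réglée ci-dessus par l'argument de combinaison convexe; le reste se ramène à un assemblage formel des équivalences obtenues par le Théorème~\ref{t:equiopt} à chaque étape.
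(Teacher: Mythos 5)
Votre démonstration est correcte et suit essentiellement la même stratégie que celle du texte : déformation $\Ga^{(\veps)}$ le long de la cha\^ine des Propositions~\ref{p:CHAINE} et~\ref{p:ADJACENTE}, vérification que chaque $\Ga^{(\veps)}$ reste $b$-optimale (votre argument $\veps\l \ge \l$ pour $\l<0$ est la même combinaison convexe que l'inégalité $|\veps\l|\le|\l|$ du texte), puis application du Théorème~\ref{t:equiopt} à chaque cran et concaténation.
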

\begin{proof}
On utilise dans cette démonstration les notions de la section précédente.
Considérons les rationnels $\veps_k$ donnés par la Proposition \ref{p:CHAINE}
et la suite
$$
    0=\veps_0 < \veps'_0 < \veps_1 < \veps'_1 < \cdots < \veps_{s-1} < \veps'_{s-1} < \veps_s =1.
$$
Comme dans la démonstration du Théorème \ref{t:DYN}, on applique la Proposition \ref{p:ADJACENTE}. Pour tout $k \in \{0, \cdots, s-1\}$, les graduations
$\Gamma^{(\veps_k)}$ et $\Gamma^{(\veps'_{k})}$
sont adjacentes. Il en est de même
des graduations $\Gamma^{(\veps'_{k})}$ et $\Gamma^{(\veps_{k+1})}$.
Comme $\Ga = \Ga^{(1)}$ est $b$-optimale pour $e$, on a $\g_{< -b}^{(1)} \cap \g^e =\{0\}$.
Pour tout $\lambda$, on a alors
$|\lambda| < \frac{a}{2}(d_i-1) + b$. Or, pour tout $\veps \in [0,1]_\Q$ on a
$|\veps \lambda| \le |\lambda|$. On en déduit que 
$|\veps \lambda| < \frac{a}{2}(d_i-1) + b$. En particulier, $\g_{< -b}^{(\veps)} \cap \g^e =\{0\}$.
La graduation $\Gamma^{(\veps)}$ est donc $b$-optimale pour $e$ pour tout $\veps \in [0,1]_\Q$.
Le Théorème \ref{t:equiopt} suffit alors pour conclure.
\end{proof}

\begin{remark}
À partir du corollaire précédent, on retrouve comme cas particulier que les paires
$e$-admissibles issues de bonnes graduations construites par Brundan et Goodwin dans \cite{BG}
sont équivalentes à la paire optimale d'une graduation de Dynkin.
En particulier, les $W$-algèbres associées sont isomorphes, \cite[Theorem 1]{BG}.
\end{remark}
\section{Résultats dans quelques cas particuliers} \label{ch:rang1}
Les notations des sections précédentes sont conservées.
On montre tout d'abord que les paires $e$-admissibles
relatives à une graduation vérifiant une certaine propriété sont
équivalentes entre elles. Ce résultat sera appliqué à plusieurs reprises dans la suite.
\begin{theorem} \label{t:2vp}
Si $\g_{<0} = \g_{\le -a} \oplus (\g_{b-a} +  \g_{-b})$,
où $b \in ]0,\frac{a}{2}]$, alors les paires $e$-admissibles
relativement à la graduation $\Ga$ sont équivalentes entre elles.
\end{theorem}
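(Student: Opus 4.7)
Soit $(\m,\n)$ une paire $e$-admissible relativement � $\Ga$. D'apr�s l'hypoth�se sur $\g_{<0}$, on a la d�composition
\[
  \m = \g_{\le -a} \oplus \m_{b-a} \oplus \m_{-b}, \quad \n = \g_{\le -a} \oplus \n_{b-a} \oplus \n_{-b},
\]
avec $\m_j \subseteq \n_j \subseteq \g_j$ pour $j \in \{b-a,-b\}$, et la Proposition \ref{p:croisees} fournit les �galit�s $\dim \m_{b-a} + \dim \n_{-b} = \dim \n_{b-a} + \dim \m_{-b} = \dim \g_{-b} - \dim \g_{-b}^e$. Si $b = a/2$, alors $\g_{<-b} = \g_{\le -a}$ et par admissibilit� de $\Ga$ combin�e au Th�or�me \ref{t:carac}, on a $\g_{<-b} \cap \g^e = \{0\}$ : la graduation est $b$-optimale et le Th�or�me \ref{t:equiopt} permet de conclure. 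On suppose d�sormais $b < a/2$.

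L'�tape suivante consiste � ramener $(\m,\n)$ � une paire \emph{canonique} $(\g_{\le -a},\g_{\le -a} \oplus V_+ \oplus V_-)$, o� $V_+$ (resp.\ $V_-$) est un suppl�mentaire gradu� de $\g_{b-a}^e$ dans $\g_{b-a}$ (resp.\ de $\g_{-b}^e$ dans $\g_{-b}$). � cette fin, on introduit la paire interm�diaire
\[
  \m' := \g_{\le -a} \oplus \m_{b-a}, \quad \n' := \g_{\le -a} \oplus V_+ \oplus \n_{-b},
\]
o� $V_+$ est un suppl�mentaire de $\g_{b-a}^e$ contenant $\n_{b-a}$. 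L'admissibilit� de $(\m',\n')$ et la comparaison $(\m,\n) \peq_\Ga (\m',\n')$ r�sultent de la Proposition \ref{p:croisees} (pour la condition (A6)), de la dualit� $\Phi_e$ du Lemme \ref{l:phie} traduisant (A3) en une �galit� du type $\m_{-b}^\perp \cap [\g_{b-a},e] = [V_+,e]$, et de l'hypoth�se $b < a/2$ qui assure (A5). Un argument sym�trique, jouant cette fois sur les composantes de degr� $-b$, fournit alors une paire $(\m'',\n'') = (\g_{\le -a},\g_{\le -a} \oplus V_+ \oplus V_-)$ v�rifiant $(\m',\n') \peq_\Ga (\m'',\n'')$, o� $V_-$ est un suppl�mentaire de $\g_{-b}^e$ contenant $\n_{-b}$.

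L'�tape centrale et principale difficult� consistera alors � d�montrer que deux paires canoniques $(\g_{\le -a},\g_{\le -a} \oplus V_+ \oplus V_-)$ et $(\g_{\le -a},\g_{\le -a} \oplus V'_+ \oplus V'_-)$ sont �quivalentes, pour tout choix de suppl�mentaires, ce qui constitue l'analogue du Lemme \ref{l:recur} dans ce cadre. On proc�dera par r�currence sur $\codim_{V_+}(V_+ \cap V'_+)$, puis sur $\codim_{V_-}(V_- \cap V'_-)$, en construisant � chaque pas une paire admissible interm�diaire comparable aux deux paires adjacentes. Contrairement � la situation du Lemme \ref{l:recur}, o� une forme antisym�trique sur un m�me espace $\g_{-a/2}$ fournit naturellement un vecteur isotrope, le cas pr�sent exploite la dualit� crois�e entre $\g_{b-a}$ et $\g_{-b}$ via $\Phi_e$. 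L'obstacle technique principal sera la construction explicite de ces interm�diaires, devant pr�server simultan�ment les conditions (A3), (A4) et (A5), notamment dans les cas exceptionnels (par exemple $b = a/3$) o� $\g_{-2b}=\g_{b-a}$ est non trivial et induit des contraintes suppl�mentaires sur la structure de sous-alg�bre.
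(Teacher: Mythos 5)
Your preliminary reductions agree with the paper's: the case $b=\frac{a}{2}$ (and likewise the case $\g_{b-a}^e=\{0\}$, which you should isolate explicitly) follows from $b$-optimality and Theorem \ref{t:equiopt}, and your intermediate pair $(\m',\n')=(\g_{\le -a}\oplus\m_{b-a},\,\g_{\le -a}\oplus V_+\oplus\n_{-b})$ is exactly the one the paper uses, justified the same way by Proposition \ref{p:croisees} and Lemma \ref{l:phie}. The problem is everything after that. First, your canonical pair $(\g_{\le -a},\,\g_{\le -a}\oplus V_+\oplus V_-)$ need not be admissible: when $-2b=b-a$ (i.e.\ $b=a/3$) one has $[V_-,V_-]\subseteq\g_{-2b}=\g_{b-a}$, and for independently chosen complements $V_+$ and $V_-$ there is no reason that $[V_-,V_-]\subseteq V_+$, so $\n''$ may fail to be a subalgebra; you flag this but do not resolve it, and it already compromises the comparison $(\m',\n')\peq_\Ga(\m'',\n'')$. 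Second, and decisively, the equivalence of two such canonical pairs --- which you yourself identify as the central step and main difficulty --- is only announced as a plan (a double induction whose intermediate pairs are never constructed). Since that step carries essentially all of the content of the theorem beyond the formal reductions, the proof is incomplete as written.

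The paper closes the argument by choosing a different, strictly smaller terminal form: from $(\m',\n')$ it descends to the diagonal pair $(\m,\m)$ with $\m=\g_{\le -a}\oplus U$, where $U=V_+$ is a complement of $\g_{b-a}^e$ in $\g_{b-a}$. This pair is admissible with no case distinction: (A6) holds because $\dim U=\dim\g_{b-a}-\dim\g_{b-a}^e=\dim\g_{-b}-\dim\g_{-b}^e$ (Lemme \ref{l:phie}), (A3) reduces to $U^\perp\cap[e,\g_{-b}]=\{0\}$ by the same dimension count, and $[U,U]\subseteq\g_{2(b-a)}\subseteq\g_{\le -a}$ automatically since $2b\le a$. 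One is then left with a single induction on $\codim_U(U\cap V)$ over complements $U,V$ of $\g_{b-a}^e$; in codimension one the bridge is a line $D\subseteq\g_{-b}\cap[e,U\cap V]^\perp$ with $D\cap\g^e=\{0\}$, which exists because $\dim\bigl(\g_{-b}\cap[e,U\cap V]^\perp\bigr)=\dim\g_{-b}^e+1$, and the chain $\P_U\peq_\Ga\mathbf{A}_U\seq_\Ga(\m_0,\m_0)\peq_\Ga\mathbf{A}_V\seq_\Ga\P_V$, with $\mathbf{A}_W=(\g_{\le -a}\oplus(U\cap V),\,\g_{\le -a}\oplus W\oplus D)$ and $\m_0=\g_{\le -a}\oplus(U\cap V)\oplus D$, concludes; every subalgebra condition is free because $\dim D=1$ and $(b-a)+(-b)=-a$. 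If you wish to keep your route you must carry out your double induction in full, including the coupling between $V_+$ and $V_-$ at $b=a/3$; it is more economical to adopt this smaller canonical form.
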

\begin{proof}
Compte tenu de la Remarque \ref{rk:uniqopt}, on suppose que
$\g_{<0} \cap \g^e \not= \{0\}$.
Si $b = \frac{a}{2}$, alors $\g_{< -\frac{a}{2} } \cap \g^e = \{0\}$, la graduation $\Ga$ est donc $\frac{a}{2}$-optimale. Le théorème s'ensuit d'après le Théorème \ref{t:equiopt}.

Considérons le cas où $b \not= \frac{a}{2}$.
Si $\g_{b-a}^e =\{0\}$, la graduation $\Ga$ est $b$-optimale. On conclut à partir du Théorème \ref{t:equiopt}. Supposons alors que $\g_{b-a}^e \not=\{0\}$.

Une paire $e$-admissible est de la forme $(\m'',\n'') :=(\g_{\le -a} \oplus U'' \oplus V'' , \g_{\le -a} \oplus U' \oplus V')$
avec $U'' \subset U' \subset \g_{b-a}$ et $V'' \subset V' \subset \g_{-b}$.
Soit $U$ un sous-espace de $\g_{b-a}$
supplémentaire de $\g_{b-a}^e$ contenant $U'$.
On peut alors montrer que $ (\m',\n') := (\g_{\le -a} \oplus U'', \g_{\le -a} \oplus U \oplus V') \in \PA(e,\Ga)$.
En effet, les propriétés (A1), (A2), (A4) et (A5) sont vérifiées par construction.
La propriété (A6) est satisfaite car d'après la Proposition \ref{p:croisees} et le Lemme \ref{l:phie},
on a $\dim U'' + \dim V' = \dim \g_{-b} - \dim \g_{-b}^e = \dim U$.
Il reste à vérifier que $\m'$ et $\n'$ sont des sous-algèbres de $\g$ qui vérifient
la condition (A3). Comme $(\m'',\n'') \in \PA(e, \Ga)$, on a 
$U''^\perp \cap [e,\g_{-b}] = [e,V']$. La condition (A3) est donc vraie
car $\dim [e,\g_{b-a}] = \dim U = \dim [e,U]$.
Par construction, $\m'$ est une sous-algèbre de $\g$.
Comme $\n'' \subset \n'$ et $[V',V']\subset \n''$ car $(\m'',\n'') \in \PA(e, \Ga)$,
on déduit que $\n'$ est une sous-algèbre de $\g$.
De plus,
$$(\m'',\n'') \peq_\Ga (\m',\n').$$
On pose $\m := \g_{\le -a} \oplus U$.
On peut alors vérifier aisément que la paire $(\m,\m)$ est $e$-admissible
relativement à la graduation $\Ga$ telle que
$$(\m,\m) \peq_\Ga (\m',\n').$$
Il s'ensuit que les paires $(\m'',\n'')$ et $(\m,\m)$ sont équivalentes entre elles.
Pour conclure, on considère $U$ et $V$ deux sous-espaces de $\g_{b-a}$ supplémentaires de $\g_{b-a}^e$.
Il suffit alors de montrer que les paires $e$-admissibles $\P_U$ et $\P_V$ 
sont équivalentes entre elles, où pour $W \in \{U,V\}$,  $\P_W$ désigne la paire
$(\g_{\le -a} \oplus W, \g_{\le -a} \oplus W)$.
En effet, on utilise un raisonnement par récurrence sur $n := \codim_U U\cap V$
et on montre le résultat pour $n=1$. Pour achever la récurrence, on utilise des arguments
analogues à ceux de la démonstration du Lemme \ref{l:recur}.
On a
$$ 
\dim \g_{-b} \cap [e,U \cap V]^\perp = \dim \g_{-b} - \dim [e,U \cap V] = \dim \g_{-b}^e + 1.
$$
De plus, on a $\g_{-b}^e \subset \g_{-b} \cap [e,U \cap V]^\perp$.
Il existe alors $D \subseteq \g_{-b} \cap [e,U \cap V]^\perp$ tel que $\dim D =1$
et $D \cap \g^e = \{0\}$.
On montre par suite que pour $W \in \{U,V\}$, on a
$\mathbf{A}_W :=(\g_{\le -a} \oplus U \cap V, \g_{\le -a} \oplus W \oplus D) \in \PA(e,\Ga)$.
En effet, les conditions (A1), (A2) et (A4) sont vérifiées par construction.
Les sous-espaces $\m$ et $\n$ sont des sous-algèbres de $\g$ vérifiant (A5) car $[D,D] = \{0\}$.
On a (A6) car $\dim U \cap V + \dim D = \dim W$.
Il reste à vérifier (A3). Il suffit donc de montrer que
$$(U \cap V)^\perp \cap [e,\g_{-b}] = [e,D] .$$
Par construction, on a $[e,D] \subset (U \cap V)^\perp \cap [e,\g_{-b}]$, avec,
$$\dim (U \cap V)^\perp \cap [e,\g_{-b}] = \dim [e,\g_{-b}] - \dim U \cap V = 1 = \dim [e,D],$$
d'où l'égalité voulue.
Par ailleurs, on pose $\m = \g_{\le -a} \oplus U\cap V \oplus D$.
On vérifie alors aisément que $(\m,\m) \in \PA(e,\Ga)$.
Comme
$$\P_U \peq_\Ga \mathbf{A}_U \seq_\Ga (\m,\m) \peq_\Ga \mathbf{A}_V \seq_\Ga \P_V,$$
le résultat s'ensuit.
\end{proof}

On rappelle que $\ss$ est la sous-algèbre de $\g$ engendrée par un $\lsl_2$-triplet
de $\g$ contenant $e$. Lorsque $e$ est distingué, i.e., $\rk \gs = 0$,
les paires $e$-admissibles sont équivalentes entre elles (cf. Remarque \ref{rk:distiso}).
On considère dans cette section le cas où $\rk \gs = 1$.
On tente de répondre à la question suivante:
\begin{question}\label{q:equirang1}
Sous l'hypothèse que $\rk \gs =1$, les paires admissibles pour $e$ sont-elles équivalentes
entre elles?
\end{question}
Compte tenu de la Remarque \ref{rk:pbreduit}, on s'intéresse désormais à la question suivante.
\begin{question} \label{q:equirang1gradfix}
Sous l'hypothèse que $\rk \gs =1$,
les paires admissibles pour $e$ relativement à une graduation $e$-admissible donnée
sont-elles équivalentes entre elles?
\end{question}
\subsection{Description des éléments nilpotents pour lesquels $\rk \gs =1$} \label{S:eltnil}
\subsubsection{Cas classiques}
Nous supposons dans ce paragraphe que $\g$ soit l'une des algèbres de Lie
simples de type classique suivantes: $\lsl(V)$, $\so(V)$, $\sp(V)$, où 
$V$ est un $\C$-espace vetoriel de dimension finie $n \in \N^*$.

Notre référence pour la théorie des orbites nilpotentes de $\g$ est \cite{Ja}.
Rappelons que les orbites nilpotentes de $\g$ sont paramétrées par certaines partitions de $n$.
Soit $(d_1,\ldots, d_m)$ la partition de $n$ associée à l'orbite nilpotente de $e$, où
$m \in \{1,\ldots, n\}$ et $d_1, \ldots, d_m \in \N^*$. On décrit dans ce 
paragraphe les conditions sur la partition $(d_1,\ldots, d_m)$ pour que $\rk \gs =1$.

\underline{Cas où $\g = \lsl(V)$:}
Supposons que $\g$ soit l'algèbre de Lie $\lsl(V)$ formée
des endomorphismes de $V$ de trace nulle.
D'après \cite[\S3.7, Proposition 1]{Ja}, on a:
\begin{lemma} \label{l:esl}
 Le rang de $\gs$ est égal à $1$ si et seulement si la partition
associée à $e$ est de la forme $(d_1,d_2)$. 
De plus, $\gs \simeq \lsl_2(\C)$, si $d_1 = d_2$ et $\gs \simeq \C$ sinon.
\end{lemma}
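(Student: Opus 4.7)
La strat�gie consiste � d�crire explicitement $\gl(V)^\ss$ gr�ce � la d�composition de $V$ en $\ss$-modules simples, puis � obtenir $\gs$ par la condition de trace nulle. Je commencerais par la d�composition
$$V \;\simeq\; V_{d_1} \oplus \cdots \oplus V_{d_m},$$
o� $V_d$ d�signe l'unique $\ss$-module simple de dimension $d$. C'est pr�cis�ment l� qu'intervient l'hypoth�se que la partition de Jordan associ�e � $e$ est $(d_1,\ldots,d_m)$: cette d�composition r�sulte du th�or�me classique reliant d�composition de Jordan et repr�sentations de $\lsl_2$ (cf.\ \cite{Ja}), que l'on consid�re comme acquis.

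La seconde �tape rel�ve purement du lemme de Schur. En regroupant les blocs par taille et en notant $n_d$ la multiplicit� de la valeur $d$ dans la partition, on obtient
$$\gl(V)^\ss \;=\; \End_\ss(V) \;\simeq\; \bigoplus_{d \ge 1} \gl_{n_d}(\C),$$
puisque $\Hom_\ss(V_d,V_{d'}) = \{0\}$ pour $d \ne d'$ et $\End_\ss(V_d) \simeq \C\,\id$. En particulier, $\gl(V)^\ss$ est une alg�bre de Lie r�ductive de rang $\sum_d n_d = m$.

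La derni�re �tape consiste � passer � $\gs = \gl(V)^\ss \cap \lsl(V)$. La restriction � $\gl(V)^\ss$ de la trace de $\gl(V)$ co�ncide, sur chaque facteur $\gl_{n_d}(\C)$, avec $d$ fois la trace standard; cette forme lin�aire �tant non nulle, $\gs$ en est un hyperplan contenant la partie semi-simple, d'o� $\rk \gs = m-1$. La premi�re assertion en d�coule aussit�t. Quant � la seconde, si $d_1 = d_2 = d$, alors $n_d = 2$ et $\gl(V)^\ss \simeq \gl_2(\C)$, ce qui donne $\gs \simeq \lsl_2(\C)$; si $d_1 \ne d_2$, alors $\gl(V)^\ss \simeq \C \oplus \C$ (ab�lien de dimension $2$) et $\gs$ en est l'hyperplan $\{(x,y) : d_1 x + d_2 y = 0\}$, isomorphe � $\C$. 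Aucune �tape ne pr�sente de v�ritable obstacle: tout repose sur la d�composition $\ss$-isotypique initiale, et le passage de $\gl(V)^\ss$ � $\gs$ est une simple v�rification de codimension.
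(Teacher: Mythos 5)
Your proposal is correct. The paper itself gives no argument here: it simply invokes \cite[\S 3.7, Proposition 1]{Ja}, which states precisely the description $\gl(V)^{\ss}\simeq\bigoplus_{d}\gl_{n_d}(\C)$ that you derive via Schur's lemma, followed by the trace-hyperplane computation giving $\rk \gs = m-1$. So you have in effect supplied the standard proof of the cited result, specialized to $\lsl(V)$; the content and the conclusion agree with the paper in every respect.
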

\underline{Cas où $\g=\so(V)$ ou $\sp(V)$:}
On désigne par $\Phi$ une forme bilinéaire de $V$ non dégénérée,
symétrique ou alternée.
Supposons que $\g$ soit l'algèbre de Lie simple formée des
endomorphismes $x$ de $V$ qui vérifient pour tous $v,w \in V$:
$$
  \Phi(xv,w) + \Phi(v,xw) = 0.
$$
On a alors $\g = \so(V)$ si $\Phi$ est symétrique et $\g = \sp(V)$
si $\Phi$ est alternée.
On note $G$ le groupe algébrique d'algèbre de Lie $\g$ formé des automorphismes $g$ de $V$
vérifiant pour tous $v,w \in V$:
$$\Phi(gv,gw) = \Phi(v,w). $$
Rappelons que d'après \cite[Theorem 1.6]{Ja}, si $\Phi$ est symétrique,
l'ensemble des $G$-orbites nilpotentes
dans $\so(V)$ est en bijection avec l'ensemble des
partitions de $n$ dont les parties paires ont un
nombre d'occurrences pair.
Si $\Phi$ est alternée,
l'ensemble des $G$-orbites nilpotentes
dans $\sp(V)$ est en bijection avec l'ensemble des
partitions de $n$ dont les parties impaires ont un
nombre d'occurrences pair.
On pose, pour tout $s\in \N^*$, $r_s := {\rm Card} (j \, ; \; d_j = s)$.
Si $\Phi$ est symétrique, d'après la remarque qui suit \cite[\S3.7, Proposition 2]{Ja}, 
on a l'isomorphisme
$$
	\gs \simeq \displaystyle \prod_{s\ge 1; \, s \text{ impair}} \so_{r_s}(\C) 
			\times \prod_{s\ge 1; \, s \text{ pair}} \sp_{r_s}(\C).
$$
Le lemme suivant s'ensuit:
\begin{lemma} \label{l:eso}
On suppose que $\g = \so(V)$.
Le rang de $\gs$ est égal à $1$ si et seulement si l'une des conditions suivantes
est vérifiée:
\begin{enumerate}
\item[{\rm (a)}] $m \ge 2$ et il existe $i \in \{1, \ldots, m-1\}$ tel que $d_i$ soit pair et
$$
  d_1 > d_2 > \cdots > d_i = d_{i+1} > \cdots > d_m.
$$
Dans ce cas, $\gs \simeq \sp_2(\C) \simeq \lsl_2(\C)$ et $d_j$ est impair pour tout $j \notin \{i, i+1\}$.
\item[{\rm (b)}] $m \ge 2$ et il existe $i \in \{1, \ldots, m-1\}$ tel que $d_i$ soit impair et
$$
  d_1 > d_2 > \cdots > d_i = d_{i+1} > \cdots > d_m.
$$
Dans ce cas, $\gs \simeq \C$ et $d_j$ est impair pour tout $j \in \{1,\cdots, m\}$.
\item[{\rm (c)}] $m \ge 3$ et il existe $i \in \{2, \ldots, m-1\}$ tel que $d_i$ soit impair et
$$
  d_1 > d_2 > \cdots > d_{i-1} = d_i = d_{i+1} > \cdots > d_m.
$$
Dans ce cas, $\gs \simeq \so_3(\C) \simeq \lsl_2(\C)$ et $d_j$ est impair pour tout $j \in \{1,\cdots, m\}$.
\end{enumerate}
\end{lemma}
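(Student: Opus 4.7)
The plan is to reduce the statement to an elementary rank computation using the isomorphism
$$
  \gs \;\simeq\; \prod_{s\ge 1;\, s\ \text{impair}} \so_{r_s}(\C) \;\times\; \prod_{s\ge 1;\, s\ \text{pair}} \sp_{r_s}(\C)
$$
rappel� juste avant l'�nonc�, combin�e avec les formules bien connues $\rk \so_k(\C) = \lfloor k/2 \rfloor$ et $\rk \sp_{2k}(\C) = k$, et la contrainte que $r_s$ est pair lorsque $s$ est pair (puisqu'on est dans le cas $\g = \so(V)$).

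Premi�rement, j'�crirais
$$
  \rk \gs \;=\; \sum_{s\ \text{impair}} \Big\lfloor \tfrac{r_s}{2}\Big\rfloor \;+\; \sum_{s\ \text{pair}} \tfrac{r_s}{2},
$$
et j'observerais que chaque terme de ces sommes est un entier positif ou nul. La condition $\rk \gs = 1$ �quivaut donc � l'existence d'un unique indice $s_0$ tel que le terme correspondant vaille $1$, tous les autres termes �tant nuls. Les trois seules possibilit�s sont alors: (A) $s_0$ pair et $r_{s_0} = 2$; (B) $s_0$ impair et $r_{s_0} = 2$ (donnant $\so_2 \simeq \C$); (C) $s_0$ impair et $r_{s_0} = 3$ (donnant $\so_3 \simeq \lsl_2$). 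L'annulation des autres termes impose que $r_s \in \{0,1\}$ pour tout $s$ impair $\ne s_0$, et $r_s = 0$ pour tout $s$ pair $\ne s_0$ dans les cas (B) et (C), car tout $r_s$ pair non nul contribue au moins $1$ au rang.

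Deuxi�mement, je traduirais chacun de ces trois sous-cas en la structure correspondante de la partition $(d_1,\ldots,d_m)$ ordonn�e d�croissante. Dans le cas (A), exactement deux indices cons�cutifs $i,i+1$ v�rifient $d_i = d_{i+1}$ (valeur paire), et toutes les autres parts apparaissent avec multiplicit� $1$ donc sont obligatoirement impaires (car les parts paires doivent appara�tre avec multiplicit� paire dans $\so(V)$); ceci est pr�cis�ment la condition (a). Le cas (B) correspond � deux indices cons�cutifs avec $d_i = d_{i+1}$ impair et toutes les autres parts simples; l'absence de parts paires de multiplicit� $1$ force toutes les parts � �tre impaires, d'o� la condition (b). Le cas (C) correspond � trois indices cons�cutifs avec $d_{i-1} = d_i = d_{i+1}$ impair et toutes les autres parts simples et impaires, d'o� la condition (c).

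Il n'y a pas r�ellement d'obstacle dans cette d�monstration: l'essentiel du travail (la description de $\gs$) est d�j� fait et rappel� dans le texte qui pr�c�de l'�nonc�. Le seul point � soigner est la v�rification r�ciproque, consistant � remonter des conditions (a), (b), (c) � la valeur $\rk \gs = 1$ et � l'isomorphisme annonc� pour $\gs$, ce qui est imm�diat en r�injectant la structure de la partition dans la formule produit ci-dessus.
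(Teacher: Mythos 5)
Votre d�monstration est correcte et suit essentiellement la m�me voie que le texte: le papier ne donne d'ailleurs aucune preuve d�taill�e, se contentant de dire que le lemme \og s'ensuit\fg\ de l'isomorphisme $\gs \simeq \prod_{s \text{ impair}} \so_{r_s}(\C) \times \prod_{s \text{ pair}} \sp_{r_s}(\C)$ rappel� juste avant. Vous ne faites qu'expliciter le d�compte des rangs ($\rk \so_{r_s} = \lfloor r_s/2 \rfloor$, $\rk \sp_{r_s} = r_s/2$, parit� des multiplicit�s des parts paires), ce qui est exactement l'argument sous-entendu.
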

Si $\Phi$ est alternée, toujours d'après la remarque qui suit \cite[\S3.7, Proposition 2]{Ja}, 
on a l'isomorphisme
$$
	\gs \simeq \displaystyle \prod_{s\ge 1; \, s \text{ impair}} \sp_{r_s}(\C) 
			\times \prod_{s\ge 1; \, s \text{ pair}} \so_{r_s}(\C).
$$
Le lemme suivant s'ensuit:
\begin{lemma} \label{l:esp}
On suppose que $\g = \sp(V)$.
Le rang de $\gs$ est égal à $1$ si et seulement si l'une des conditions suivantes
est vérifiée:
\begin{enumerate}
\item[{\rm (a)}] $m \ge 2$ et il existe $i \in \{1, \ldots, m-1\}$ tel que $d_i$ soit impair et
$$
  d_1 > d_2 > \cdots > d_i = d_{i+1} > \cdots > d_m.
$$
Dans ce cas, $\gs \simeq \sp_2(\C) \simeq \lsl_2(\C)$ et $d_j$ est pair pour tout $j \notin \{i, i+1\}$.
\item[{\rm (b)}] $m \ge 2$ et il existe $i \in \{1, \ldots, m-1\}$ tel que $d_i$ soit pair et
$$
  d_1 > d_2 > \cdots > d_i = d_{i+1} > \cdots > d_m.
$$
Dans ce cas, $\gs \simeq \C$ et $d_j$ est pair pour tout $j \in \{1,\cdots, m\}$.
\item[{\rm (c)}] $m \ge 2$ et il existe $i \in \{2, \ldots, m-1\}$ tel que $d_i$ soit pair et
$$
  d_1 > d_2 > \cdots > d_{i-1} = d_i = d_{i+1} > \cdots > d_m.
$$
Dans ce cas, $\gs \simeq \so_3(\C) \simeq \lsl_2(\C)$
et $d_j$ est pair pour tout $j \in \{1,\cdots, m\}$.
\end{enumerate}
\end{lemma}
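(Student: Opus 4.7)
Ma strat�gie est d'exploiter directement l'isomorphisme
$$
  \gs \simeq \prod_{s \text{ impair}} \sp_{r_s}(\C) \,\times\, \prod_{s \text{ pair}} \so_{r_s}(\C)
$$
rappel� juste avant l'�nonc�, puis d'analyser les contraintes num�riques que l'�galit� $\rk\gs=1$ impose sur les multiplicit�s $r_s$. Le rang �tant additif pour le produit direct d'alg�bres de Lie r�ductives, l'unique facteur qui n'est pas de rang nul doit �tre de rang $1$. Je me ram�ne donc � �num�rer les facteurs possibles de rang $1$ � l'aide des formules classiques $\rk\sp_{2r}(\C)=r$ et $\rk\so_r(\C)=\lfloor r/2\rfloor$.

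D'abord, je rappelle que pour $\g=\sp(V)$ les parts impaires de la partition apparaissent avec multiplicit� paire, donc $r_s$ est pair d�s que $s$ est impair; en particulier $\sp_{r_s}(\C)$ est bien d�fini et de rang $r_s/2$. Les seules fa�ons d'obtenir un unique facteur de rang $1$ sont alors: (i) un $s$ impair avec $r_s=2$, donnant $\sp_2(\C)\simeq \lsl_2(\C)$, tous les autres $r_{s'}$ impairs �tant nuls et tous les $r_{s'}$ pairs v�rifiant $r_{s'}\in\{0,1\}$; (ii) un $s$ pair avec $r_s=2$, donnant $\so_2(\C)\simeq\C$, tous les $r_{s'}$ impairs �tant nuls et les autres $r_{s'}$ pairs dans $\{0,1\}$; (iii) un $s$ pair avec $r_s=3$, donnant $\so_3(\C)\simeq\lsl_2(\C)$, les $r_{s'}$ impairs �tant nuls et les autres $r_{s'}$ pairs dans $\{0,1\}$.

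Il reste � traduire chacune des trois configurations sur les $r_s$ en une description de la partition $(d_1,\ldots,d_m)$ ordonn�e de mani�re d�croissante. La condition \og $r_s\in\{0,1\}$ pour tout $s$ sauf un, o� $r_s=2$\fg\ dit exactement qu'il existe un unique indice $i$ tel que $d_i=d_{i+1}$ et que toutes les autres parts sont distinctes; la condition \og $r_s=3$\fg\ donne un indice $i$ tel que $d_{i-1}=d_i=d_{i+1}$, les autres parts �tant distinctes. En combinant avec la parit� impos�e sur le $s$ r�p�t� et sur les autres parts (celles-ci devant �tre paires dans les trois cas pour que les $r_{s'}$ impairs soient nuls), on obtient respectivement (a), (b) et (c).

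La d�monstration est essentiellement combinatoire et parall�le � celle du Lemme \ref{l:eso}: il suffit d'�changer syst�matiquement les r�les de $\sp_{r_s}$ et $\so_{r_s}$, et donc des parts paires et impaires. Je n'anticipe pas de v�ritable obstacle; le seul point � surveiller est la parit� forc�e des $r_s$ pour $s$ impair, qui exclut notamment le sous-cas analogue � (c) avec $s$ impair (on aurait $r_s=3$ impair, ce qui est interdit), ce qui explique pourquoi seuls trois cas figurent dans l'�nonc�, en parall�le avec les trois cas du Lemme \ref{l:eso}.
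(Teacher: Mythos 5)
Votre d\'emonstration est correcte et suit essentiellement la m\^eme voie que l'article, qui d\'eduit le lemme directement de l'isomorphisme $\gs \simeq \prod_{s \text{ impair}} \sp_{r_s}(\C) \times \prod_{s \text{ pair}} \so_{r_s}(\C)$ tir\'e de Jantzen et conclut par \og le lemme suivant s'ensuit\fg. Vous explicitez simplement ce que l'article laisse implicite : l'additivit\'e du rang, l'\'enum\'eration des facteurs de rang $1$ et la contrainte de parit\'e sur $r_s$ pour $s$ impair qui exclut l'analogue du cas (c) avec une part impaire.
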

\subsubsection{Cas exceptionnels}
Supposons dans ce paragraphe que $\g$ soit l'une des algèbres de Lie exceptionnelles
$\mathbf{G}_2$, $\mathbf{F}_4$ ou $\mathbf{E}_6$.
À l'aide de \cite[Chapter 13]{Ca}, on détermine les éléments nilpotents $e$ de $\g$
tels que $\rk \gs = 1$. On liste ces éléments dans les Tables 
\ref{fig:g2}, \ref{fig:f4} et \ref{fig:e6}, correspondantes à 
$\mathbf{G}_2$, $\mathbf{F}_4$ et $\mathbf{E}_6$ respectivement.
Dans chacune d'elles, la première colonne donne le label de l'orbite de $e$
dans la classification de Bala-Carter, la deuxième son diagramme de Dynkin pondéré,
et la troisième sa dimension.
\begin{table}[h!]
\centering
\small
\begin{tabular}{|c|c|c|}
\hline
Label & $\underset{\begin{Dynkin}
\Dbloc{\Dcirc \Deast \Ddoubleeast \Drightarrow}
\Dbloc{\Dwest \Ddoublewest \Dcirc}
\end{Dynkin}}{\text{Diagramme}}$& $\dim G.e$ \\
\hline
$A_1$ & $\begin{Dynkin}
        \Dbloc{\Dtext{}{1}}
        \Dbloc{\Dtext{}{0}}
        \end{Dynkin}$& $6$ \\
\hline
$\tilde{A}_1$ & $\begin{Dynkin}
        \Dbloc{\Dtext{}{0}}
        \Dbloc{\Dtext{}{1}}
        \end{Dynkin}$& $8$ \\
\hline
\end{tabular}

\smallskip

\caption{Type $\mathbf{G}_2$.}\label{fig:g2}
\end{table}

\begin{table}[h!]
\centering
\small
\begin{tabular}{|c|c|c|}
\hline
Label & $\underset{\begin{Dynkin}
\Dbloc{\Dcirc \Deast}
\Dbloc{\Dwest \Dcirc \Ddoubleeast \Drightarrow}
\Dbloc{\Ddoublewest \Dcirc \Deast}
\Dbloc{\Dwest \Dcirc}
\end{Dynkin}}{\text{Diagramme}}$& $\dim G.e$ \\
\hline
$A_2 + \tilde{A}_1$ & $\begin{Dynkin}
        \Dbloc{\Dtext{}{0}}
        \Dbloc{\Dtext{}{0}}
        \Dbloc{\Dtext{}{1}}
        \Dbloc{\Dtext{}{0}}
        \end{Dynkin}$& $34$ \\
\hline
$\tilde{A}_2+A_1$ & $\begin{Dynkin}
        \Dbloc{\Dtext{}{0}}
        \Dbloc{\Dtext{}{1}}
        \Dbloc{\Dtext{}{0}}
        \Dbloc{\Dtext{}{1}}
        \end{Dynkin}$& $36$ \\
\hline
$C_3(a_1)$ & $\begin{Dynkin}
        \Dbloc{\Dtext{}{1}}
        \Dbloc{\Dtext{}{0}}
        \Dbloc{\Dtext{}{1}}
        \Dbloc{\Dtext{}{0}}
        \end{Dynkin}$& $38$ \\
\hline
$B_3$ & $\begin{Dynkin}
        \Dbloc{\Dtext{}{2}}
        \Dbloc{\Dtext{}{2}}
        \Dbloc{\Dtext{}{0}}
        \Dbloc{\Dtext{}{0}}
        \end{Dynkin}$& $42$ \\
\hline
$C_3$ & $\begin{Dynkin}
        \Dbloc{\Dtext{}{1}}
        \Dbloc{\Dtext{}{0}}
        \Dbloc{\Dtext{}{1}}
        \Dbloc{\Dtext{}{2}}
        \end{Dynkin}$& $36$ \\
\hline
\end{tabular}

\smallskip

\caption{Type $\mathbf{F}_4$.}\label{fig:f4}
\end{table}

\begin{table}[h!]
\centering
\small
\begin{tabular}{|c|c|c|}
\hline
Label & $\underset{\begin{Dynkin}
\Dbloc{\Dcirc \Deast}
\Dbloc{\Dwest \Dcirc \Deast }
\Dbloc{\Dwest \Dcirc \Dsouth \Deast }
\Dbloc{\Dwest \Dcirc \Deast}
\Dbloc{\Dwest \Dcirc }\Dskip
\Dbloc{}\Dbloc{}
\Dbloc{\Dcirc \Dnorth}
\end{Dynkin}}{\text{Diagramme}}$& $\dim G.e$ \\
\hline
$2A_2 + A_1$ & $\begin{Dynkin}
        \Dbloc{\Dtext{}{1}}
        \Dbloc{\Dtext{}{0}}
        \Dbloc{\Dtext{}{1}}
        \Dbloc{\Dtext{}{0}}
        \Dbloc{\Dtext{}{1}} \Dskip
        \Dbloc{}\Dbloc{}
        \Dbloc{\Dtext{}{0}}
        \end{Dynkin}$& $54$ \\
\hline
$A_4 + A_1$ & $\begin{Dynkin}
        \Dbloc{\Dtext{}{1}}
        \Dbloc{\Dtext{}{1}}
        \Dbloc{\Dtext{}{0}}
        \Dbloc{\Dtext{}{1}}
        \Dbloc{\Dtext{}{1}} \Dskip
        \Dbloc{}\Dbloc{}
        \Dbloc{\Dtext{}{1}}
        \end{Dynkin}$& $62$ \\
\hline
$A_5$ & $\begin{Dynkin}
        \Dbloc{\Dtext{}{2}}
        \Dbloc{\Dtext{}{1}}
        \Dbloc{\Dtext{}{0}}
        \Dbloc{\Dtext{}{1}}
        \Dbloc{\Dtext{}{2}} \Dskip
        \Dbloc{}\Dbloc{}
        \Dbloc{\Dtext{}{1}}
        \end{Dynkin}$& $64$ \\
\hline
$D_5(a_1)$ & $\begin{Dynkin}
        \Dbloc{\Dtext{}{1}}
        \Dbloc{\Dtext{}{1}}
        \Dbloc{\Dtext{}{0}}
        \Dbloc{\Dtext{}{1}}
        \Dbloc{\Dtext{}{1}} \Dskip
        \Dbloc{}\Dbloc{}
        \Dbloc{\Dtext{}{2}}
        \end{Dynkin}$& $64$ \\
\hline
$D_5$ & $\begin{Dynkin}
        \Dbloc{\Dtext{}{2}}
        \Dbloc{\Dtext{}{0}}
        \Dbloc{\Dtext{}{2}}
        \Dbloc{\Dtext{}{0}}
        \Dbloc{\Dtext{}{2}} \Dskip
        \Dbloc{}\Dbloc{}
        \Dbloc{\Dtext{}{2}}
        \end{Dynkin}$& $68$ \\
\hline
\end{tabular}

\smallskip

\caption{Type $\mathbf{E}_6$.}\label{fig:e6}
\end{table}
Désormais, $\Ga: \g = \bigoplus_{j \in \Z} \g_j$ est une $\Z$-graduation admissible pour $e$
définie par un élément semisimple $h_\Ga$ de $\g$ et $(e,h,f)$ est un $\lsl_2$-triplet de $\g$ tel que
$h \in \g_0$ et $f \in \g_{-a}$ (cf. \cite[Proposition 32.1.7]{TY}). On pose $\ss := \Vect(e,h,f)$
et $t:= h_\Ga -\frac{a}{2} h$.
On désigne par $\gl(V)$ l'algèbre de Lie des endomorphismes de $V$.
On suppose enfin que $\rk \gs =1$.
\subsection{Cas où $\g = \lsl(V)$} \label{S:sl}
Supposons que $\g$ soit
l'algèbre de Lie $\lsl(V)$ où $V$ est un $\C$-espace vectoriel de dimension
finie $n \in \N^*$.
D'après le Lemme \ref{l:esl}, comme $\rk \gs =1$, la partition associée à $e$ est $(d_1,d_2)$.
L'élément $t$ appartient à $\gs$ et il est semisimple.
En tant que $\ss$-module, $V$ se décompose en composantes isotypiques.
Comme $t$ commute avec $\ss$, d'après le lemme de Schur, $t$ laisse
stable chaque composante isotypique. 
Puisque $t$ est semisimple, sa restriction à chaque composante isotypique
est semisimple.
De plus, comme $t \in \gs$ on a la décomposition
\begin{equation} \label{eq:decmodsim}
 V = V_1 \oplus V_2
\end{equation}
où $V_i$ est un $\ss$-module simple de dimension $d_i$
et $V_i$ est stable par $\ad t$ pour tout $i \in \{1,2\}$.
Pour tout $i \in \{1,2\}$, il existe $\a_i \in \C$ tel que $t|_{V_i} = \a_i \id_{V_i}$
et on a $\a_1 d_1 + \a_2 d_2  = 0$.
Rappelons l'identification en tant que $\gl(V)$-modules
$$
   \gl(V) = V^* \otimes V
$$
où pour $(\phi,v) \in V^* \times V$ l'endomorphisme $\phi \otimes v$ de $V$ est défini par
$\phi \otimes v(x) = \phi(x)v$ pour tout $x \in V.$ 
Par suite,
$$
    \gl(V) = \bigoplus_{1 \le i,j \le 2} \V_{i,j} \quad \text{ où } \quad \V_{i,j} := V_i^* \otimes V_j.
$$
Alors $\V_{i,j}$ est un $\ss$-module via l'opération adjointe dans $\gl(V)$
et il est $\ad t$-stable, avec
$$
    \ad t|_{\V_{i,j}} = (\a_j-\a_i) \id_{\V_{i,j}}.
$$
De plus on a
\begin{equation} \label{eq:coupijji}
	\la.,.\ra |_{\V_{i,j}\times \V_{k,l}} = 0 \  \text{ si } \  (i,j) \neq (l,k)  \  \text{ et } \ 
	\la.,.\ra |_{\V_{i,j}\times \V_{j,i}} \ 
	 \text{ est non dégénérée.}
\end{equation}
Autrement dit, $\V_{i,j}$ et $\V_{j,i}$ sont en couplage par rapport à la forme de Killing.

On peut donc faire une étude directe des valeurs propres de $\ad h_\Ga$ sur les $\ss$-modules $\V_{i,j}$.
On en déduit que le nombre de valeurs propres de $\ad h_\Ga$
appartenant à $]-a,0[$ est au plus égal à $2$. 
D'après le Théorème \ref{t:CONN} et le Théorème \ref{t:2vp}, on peut conclure.
\begin{theorem} \label{t:equisl}
Lorsque $\g = \lsl(V)$
et $\rk \gs = 1$, les paires $e$-admissibles
sont équivalentes entre elles. En particulier, les $W$-algèbres associées sont isomorphes.
\end{theorem}
\subsection{Cas où $\g = \so(V)$ ou $\sp(V)$} \label{S:so}
On traite dans ce paragraphe le cas où $\g = \so(V)$, et on montre le résultat
de façon analogue si $\g = \sp(V)$.
Supposons donc que $\g$ soit l'algèbre de Lie $\so(V)$.
On conserve les notations utilisées dans la Section \ref{S:eltnil}.
La forme bilinéaire non dégénérée $\Phi$ induit un isomorphisme $\b: V \rightarrow V^*$
qui associe à un élément $v$ de $V$ la forme linéaire $\Phi(v, \cdot)$. Les isomorphismes $\b$ et $\b^{-1}$ sont
des isomorphismes de $\so(V)$-modules. On a l'identification de $\so(V)$-modules
$$
  \gl(V) = V^* \otimes V \simeq V \otimes V.
$$
En effet, la structure de $\so(V)$-module sur $\gl(V)$ est donnée
par l'opération adjointe dans $\gl(V)$. De plus, pour tous $x,v,w \in V$,
l'identification est donnée par
$$
  (v \otimes w)(x) = \Phi(v,x)w.
$$
Pour $v,w \in V$, on note $v \wedge w = v \otimes w - w \otimes v$ et
$
  \bigwedge \nolimits^2 V = \Vect(v \wedge w \, ; \; v,w \in V)
$
la composante de degré $2$ de la graduation naturelle
de l'algèbre extérieure de $V$.
On a $v \wedge w \in \so(V)$ pour tout $v,w \in V$.
On en déduit pour des raisons de dimension 
$$
    \so(V) = \bigwedge \nolimits^2 V.
$$
{\bf Cas (a) ou (b).}
Supposons tout d'abord que la partition associée à $e$ vérifie l'une des conditions (a) ou (b) du Lemme \ref{l:eso}.
On peut supposer qu'elle est de la forme $(d_1,d_2, d_3, \ldots, d_m)$, avec $d_1 = d_2$, où
on ne suppose plus que les $d_i$ soient croissants.

Comme $t$ est un élément semisimple de $\gs$, on a la décomposition
$$
    V = V_1 \oplus V_2 \oplus \cdots \oplus V_m
$$
où pour tout $i \in \{1, \ldots , m\}$, $V_i$ est un $\ss$-module
simple de dimension $d_i$, avec $t|_{V_1} = \a_1 \id_{V_1}$,
$t|_{V_2} = \a_2 \id_{V_2}$ et $t|_{V_i} = 0$ pour tout $i \ge 3$.
Comme la trace de $t$ est nulle, $\a_1 + \a_2 =0$. On pose donc $\a := \a_1 = - \a_2$.
Ici, $V_1 \oplus V_2, V_3, \cdots, V_m$ sont les composantes isotypiques
du $\ss$-module $V$, et définissent une décomposition orthogonale par rapport à $\Phi$.
Observons que $V_1$ et $V_2$ sont totalement isotropes par rapport à $\Phi$.
On a 
$$
    \bigwedge \nolimits^2 (V_1 + V_2) =  \W_0 \oplus \W_+ \oplus \W_-
$$
où $$\W_0 = \Vect(v \wedge w \, ; \; v \in V_1, \, w \in V_2) \simeq \gl(V_1),$$
$$\W_+ \simeq \bigwedge \nolimits^2 V_1 = \Vect(v \wedge w \, ; \; v,w \in V_1) \ 
\text{ et } \  \W_- \simeq \bigwedge \nolimits^2 V_2 = \Vect(v \wedge w \, ; \; v,w \in V_2).$$
On note 
$$
	\W_1 = \W_+ \oplus \W_-.
$$
On a donc la décomposition orthogonale
\begin{equation} \label{eq:decompso}
    \so(V) = \bigwedge \nolimits^2 V = \W_0 \oplus \W_1 \oplus 
	  \bigoplus_{3 \le i \le m} \V_{1,i} \oplus \bigoplus_{3 \le i \le m} \V_{2,i}
	      \oplus \bigoplus_{3 \le i \le j \le m} \V_{i,j},
\end{equation}
avec $\V_{i,i} \simeq \bigwedge \nolimits^2 V_i$ et $\V_{i,j} \simeq V_i \otimes V_j$ pour $i < j$,
des isomorphismes en tant que $\ss$-modules.
De plus, $\W_+$ et $\W_-$ sont en couplage par rapport à la forme de Killing. De plus, les crochets
suivants s'annulent: $[\W_{\pm},\W_{\pm} ]$, $[\W_{0},\W_{\pm} ]$, $[\W_{\pm},\V_{1,i} ]$
et $[\W_{\pm},\V_{2,i} ]$ pour $i \ge 3$.

Notre objectif est d'obtenir l'équivalence des paires 
$e$-admissibles. Compte tenu de la Remarque \ref{rk:uniqopt}, on va s'intéresser
au cas où $\g_{< 0} \cap \g^e \neq \{0\}$.  D'après ce qui précède, on peut faire une étude
directe des valeurs propres de $\ad h_\Ga$ sur les sous-espaces $\W_0$, $\W_1$ et $\V_{i,j}$ pour
tous $i,j$.
On résume dans la Table \ref{fig:vp-a0} l'ensemble des valeurs propres de $\ad h_\Ga$
appartenant à $]-a, 0[$ réparties dans chacun des sous-espaces $\W_+,\W_-,\V_{1,i}$ et $\V_{2,i}$.
On encadre dans cette table la valeur propre $k$ telle que $\g_k \cap \g^e \not= \{0\}$.
\begin{table}[h]
\centering
{\renewcommand{\arraystretch}{1.5}
\begin{tabular}{|c|c|c|c|c|c|}
 \hline
 $d_1$ & $\a$ & $\W_+$ & $\W_-$ & $\V_{1,i}$ & $\V_{2,i}$ \\
 \hline
 pair & $]-\frac{a}{2}, 0[$ & $\boxed{2\a}$ & $-2\a-a$ & $-\frac{a}{2} + \a$ & $-\frac{a}{2} - \a$ \\
 \hline
 pair & $]0, \frac{a}{2} [$ & $2 \a -a$ & $\boxed{-2\a}$ & $-\frac{a}{2} + \a$ & $-\frac{a}{2} - \a$ \\
 \hline
 impair & $]-a, -\frac{a}{2} [$ & $\boxed{2 \a +a}$ & $-2\a -2a$ & $\a$ & $-a-\a$ \\
 \hline
 impair & $] \frac{a}{2},a [$ & $2\a - 2a$ & $\boxed{-2\a +a}$ & $-a+\a$ & $-\a$ \\
\hline 
\end{tabular}}

\smallskip

 \caption{Cas (a) ou (b): valeurs propres de $\ad h_\Ga$ dans $]-a, 0[$.} \label{fig:vp-a0}
\end{table}

Afin de montrer l'équivalence des paires $e$-admissibles, on distingue
deux cas: le cas où le nombre de valeurs propres de $\ad h_\Ga$ appartenant à $]-a,0[$
est égal à $4$, et le cas où il est strictement inférieur à $4$.

{\bf \underline{Cas I:}} le nombre de valeurs propres de $\ad h_\Ga$ appartenant à $]-a,0[$
est égal à $4$. On traite ici uniquement le cas où $d_1$ est pair et $ \a < 0$, les autres cas
se traitent de façon analogue.
Soit $(\m,\n)$ une paire $e$-admissible de $\g$ relativement à $\Ga$. Alors $(\m,\n)$
peut s'écrire de la forme suivante:
$$\m =  \g_{\le -a} \oplus U' \oplus V'  \oplus W' \oplus X' \ \text{ et } \ 
\n =  \g_{\le -a} \oplus U \oplus V  \oplus W \oplus X,$$
où $U' \subset U \subset \g_{2\a}$, $V' \subset V \subset \g_{-2\a-a}$,
$W' \subset W \subset  \g_{-\frac{a}{2} + \a}$
et $X' \subset X \subset  \g_{-\frac{a}{2} -\a}$.

\noindent On pose
$$ \m' :=   \g_{\le -a} \oplus V'  \oplus W' , \  
\n' := \g_{\le -a} \oplus U \oplus \g_{-2\a-a}  \oplus  \g_{-\frac{a}{2} + \a} \oplus X$$
$$\text{ et } \  \m'' :=  \g_{\le -a} \oplus \g_{-2\a-a}  \oplus  \g_{-\frac{a}{2} + \a}.$$
On montre alors de façon directe que les paires $(\m',\n')$ et $(\m'',\m'')$ sont
$e$-admissibles relativement à la graduation $\Ga$. 
On peut donc conclure car
$$ (\m,\n) \peq_\Ga (\m',\n') \seq_\Ga (\m'',\m'').$$

{\bf \underline{Cas II:}} le nombre de valeurs propres de $\ad h_\Ga$ appartenant à $]-a,0[$
est strictement inférieur à $4$.
On représente dans la Table \ref{fig:poss}
les différentes possibilités.
\begin{table}[h]
\centering
\renewcommand{\arraystretch}{1.5}
\begin{tabular}{|c|c|c|c|c|c|}
\hline
 $d_1$ & $\a$ & $\W_+$ & $\W_-$ & $\V_{1,i}$ & $\V_{2,i}$ \\
\hline
pair & $-\frac{a}{4}$ & $-\frac{a}{2}$ & $-\frac{a}{2}$ & $-\frac{3a}{4}$ & $-\frac{a}{4}$ \\
\hline
pair & $-\frac{a}{6}$ & $-\frac{a}{3}$ & $-\frac{2a}{3}$ & $-\frac{2a}{3}$ & $-\frac{a}{3}$ \\
\hline
pair & $\frac{a}{4}$ & $-\frac{a}{2}$ & $-\frac{a}{2}$ & $-\frac{a}{4}$ & $-\frac{3a}{4}$ \\
\hline
pair & $\frac{a}{6}$ & $-\frac{2a}{3}$ & $-\frac{a}{3}$ & $-\frac{a}{3}$ & $-\frac{2a}{3}$ \\
\hline
impair & $-\frac{3a}{4}$ & $-\frac{a}{2}$ & $-\frac{a}{2}$ & $-\frac{3a}{4}$ & $-\frac{a}{4}$ \\
\hline
impair & $-\frac{2a}{3}$ & $-\frac{a}{3}$ & $-\frac{2a}{3}$ & $-\frac{2a}{3}$ & $-\frac{a}{3}$ \\
\hline
impair & $\frac{3a}{4}$ & $-\frac{a}{2}$ & $-\frac{a}{2}$ & $-\frac{a}{4}$ & $-\frac{3a}{4}$ \\
\hline
impair & $\frac{2a}{3}$ & $-\frac{2a}{3}$ & $-\frac{a}{3}$ & $-\frac{a}{3}$ & $-\frac{2a}{3}$ \\
\hline
\end{tabular}

\smallskip

\caption{Cas (a) ou (b): possibilités du cas II.}\label{fig:poss}
\end{table}

Lorsque $\a \in \{\pm \frac{a}{6}, \pm \frac{2a}{3}\}$, on a
$
	\g_{<0} = \g_{\le -a} \oplus \g_{- \frac{2a}{3}} \oplus \g_{ -\frac{a}{3} } .
$
On conclut grâce au Théorème \ref{t:2vp}.

Lorsque $\a \in \{\pm \frac{a}{4}, \pm \frac{3a}{4}\}$, on remarque que la
graduation $\Ga$ est $\frac{a}{2}$-optimale. On conclut alors grâce au Théorème \ref{t:equiopt}.

{\bf Cas (c).}
Supposons que la partition associée à $e$ vérifie la condition (c) du Lemme \ref{l:eso}.
On peut supposer qu'elle est de la forme $(d_0,d_0,d_0,d_1, \ldots, d_m)$, où
on ne suppose plus que les $d_i$ soient croissants.

Comme $t$ est un élément semisimple de $\gs$, on a la décomposition en composantes isotypiques
du $\ss$-module $V$
$$
    V = V_0 \oplus V_1 \oplus \cdots \oplus V_m.
$$
Pour tout $i \in \{1, \ldots , m\}$, $V_i$ est un $\ss$-module
simple de dimension $d_i$, avec $t|_{V_i} = 0$.
De plus, $V_0$ admet une décomposition 
$$ V_0 = \W_0 \oplus \W_+ \oplus \W_-$$
où $\W_0$ est un sous-espace régulier relativement à $\Phi$
et $\W_+$ et $\W_-$ sont deux sous-espaces totalement isotropes
par rapport à $\Phi$. En particulier, $t|_{\W_0} = 0$, $t|_{\W_+} = \a_1 \id_{\W_+}$ et
$t|_{\W_-} = \a_2 \id_{\W_-}$.
Comme la trace de $t$ est nulle, $\a_1 + \a_2 =0$. On pose donc $\a := \a_1 = - \a_2$.
On a donc la décomposition
$$ \so(V) =  \W_{0,0} \oplus \W_{0,+} \oplus \W_{0,-} \oplus  
\W_{+,+} \oplus \W_{-,-} \oplus \W_{+,-} \oplus \bigoplus_{i \ge 1} (\W_{0,i} \oplus 
\W_{+,i} \oplus \W_{-,i}) \oplus \bigoplus_{1 \le i \le j \le m} \V_{i,j},$$
avec $\W_{0,0}  \simeq \bigwedge \nolimits^2  \W_0$, $\W_{\pm,\pm}  \simeq \bigwedge \nolimits^2  \W_\pm$, 
$\W_{0,\pm} \simeq \W_0 \otimes \W_\pm$, $\W_{+,-} \simeq \W_+ \otimes \W_-$, 
$\V_{i,i}  \simeq \bigwedge \nolimits^2  V_i$ pour tout $i \ge 1$,
$\W_{0,i} \simeq \W_0 \otimes V_i$, $\W_{\pm,i} \simeq \W_\pm \otimes V_i$
et $\V_{i,j} \simeq V_i \otimes V_j$ pour tous $1 \le i <j$.
D'après ce qui précède, les crochets suivants s'annulent: $[\W_{\pm,\pm},\W_{\pm,\pm}]$,
$[\W_{\pm,i},\W_{\pm,\pm}]$, $[\W_{\pm,i},\W_{0,\pm}]$ et $[\W_{0,\pm},\W_{\pm,\pm}]$.

On peut donc faire une étude directe des valeurs propres de $\ad h_\Ga$ sur ces différents sous-espaces.
Comme $\g_{\le -a} \cap \g^e = \{0\}$, on a $-a < \a < a$.
Pour montrer l'équivalence des paires $e$-admissibles relativement à $\Ga$, on considère le cas où $\a > 0$. L'autre cas se traite de façon analogue.
On résume dans la Table \ref{fig:2emecasvp-a0} l'ensemble des valeurs propres de $\ad h_\Ga$
appartenant à $]-a, 0[$ dans les différents sous-espaces de la décomposition précédente.
On encadre dans cette table la valeur propre $k$ telle que $\g_k \cap \g^e \not= \{0\}$.
\begin{table}[h]
\centering
{\renewcommand{\arraystretch}{1.5}
\begin{tabular}{|c|c|c|c|c|c|c|}
 \hline
 $\a$ & $\W_{+,i}$ & $\W_{-,i}$ & $\W_{0,+}$ & $\W_{0,-}$ & $\W_{+,+}$ & $\W_{-,-}$ \\
 \hline
$]\frac{a}{2}, a[$ & $\a-a$ & $-\a$ & $\a-a$ & $\boxed{-\a}$ & $2\a - 2a$ & $\boxed{-2\a +a}$ \\
 \hline
 $]0, \frac{a}{2} [$ & $\a-a$ & $-\a$ & $\a-a$ & $\boxed{-\a}$ & $2\a - a$ & $-2\a $ \\
 \hline
 $\frac{a}{2}$ & $-\frac{a}{2}$ & $-\frac{a}{2}$& $-\frac{a}{2}$& $\boxed{-\frac{a}{2}}$ && \\
 \hline
\end{tabular}}

\smallskip

 \caption{Cas (c): valeurs propres de $\ad h_\Ga$ dans $]-a, 0[$.} \label{fig:2emecasvp-a0}
\end{table}

Lorsque $\a \in ]0, \frac{a}{2} [$, on peut montrer l'équivalence des paires $e$-admissibles de
façon analogue à la démarche utilisée dans le paragraphe précédent.
Lorsque $\a =\frac{a}{2}$, la graduation est $\frac{a}{2}$-optimale, on peut donc conclure grâce au
Théorème \ref{t:equiopt}.
Supposons donc que $\a \in ]\frac{a}{2},a [$.
Afin de montrer l'équivalence des paires $e$-admissibles, on distingue
deux cas: le cas où le nombre de valeurs propres de $\ad h_\Ga$ appartenant à $]-a,0[$
est égal à $4$, et le cas où il est strictement inférieur à $4$.

{\bf \underline{Cas I:}} le nombre de valeurs propres de $\ad h_\Ga$ appartenant à $]-a,0[$
est égal à $4$. 
Soit $(\m,\n)$ une paire $e$-admissible de $\g$ relativement à $\Ga$. Alors $(\m,\n)$
peut s'écrire de la forme suivante:
$$\m =  \g_{\le -a} \oplus U' \oplus V'  \oplus W' \oplus X' \ \text{ et } \ 
\n =  \g_{\le -a} \oplus U \oplus V  \oplus W \oplus X,$$
où $U' \subset U \subset \g_{\a-a}$, $V' \subset V \subset \g_{-\a}$,
$W' \subset W \subset  \g_{2 \a-2a}$
et $X' \subset X \subset  \g_{ -2\a+a}$.
Soit $ \tilde{V}$ un sous-espace de $\g_{-\a}$ supplémentaire de $\g_{-\a} \cap \g^e$
contenant $V$.
On pose
$$ \m' :=   \g_{\le -a} \oplus V'  \oplus W' , \  
\n' := \g_{\le -a} \oplus U \oplus \tilde{V}  \oplus  \g_{2 \a-2a} \oplus X
\text{ et } \  \m'' :=  \g_{\le -a} \oplus \tilde{V}  \oplus  \g_{2 \a-2a}.$$
On montre par calculs directs que les paires $(\m',\n')$ et $(\m'',\m'')$ sont
$e$-admissibles relativement à la graduation $\Ga$. De plus, elles vérifient
$$ (\m,\n) \peq_\Ga (\m',\n') \seq_\Ga (\m'',\m'').$$
On peut donc conclure grâce au lemme suivant:
\begin{lemma}
Soient $U$ et $ V$ deux sous-espaces de $\g_{-\a}$ supplémentaires de $\g_{-\a} \cap \g^e$.
Les paires $\P_U$ et $\P_V$ sont équivalentes entre elles, où  
pour $W \in \{U,V\}$, $\P_W$ désigne la paire $e$-admissible  
$(\g_{\le -a} \oplus W \oplus  \g_{2 \a-2a},\g_{\le -a} \oplus W  \oplus  \g_{2 \a-2a})$.
\end{lemma}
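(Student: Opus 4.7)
Le plan est de raisonner par r�currence sur $n := \codim_U(U \cap V)$, en calquant la structure de la d�monstration du Lemme~\ref{l:recur}. Pour $n=0$ il n'y a rien � montrer. Pour $n\ge 2$, on fixe des bases $(\underline{w}, u_1, \ldots, u_n)$ de $U$ et $(\underline{w}, v_1, \ldots, v_n)$ de $V$ telles que $\underline{w}=(w_1,\ldots,w_r)$ soit une base de $U\cap V$, puis on introduit, pour $(\l,\mu)\in(\C^*)^2$ g�n�riques, les suppl�mentaires
\[
U_{\l,\mu} := \Vect(\underline{w}, u_1, \ldots, u_{n-1}, \l u_n + \mu v_n), \quad V_{\l,\mu} := \Vect(\underline{w}, v_1, \ldots, v_{n-1}, \l u_n + \mu v_n)
\]
de $\g_{-\a} \cap \g^e$ dans $\g_{-\a}$. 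Ils v�rifient $\codim_{U_{\l,\mu}}(U_{\l,\mu} \cap V_{\l,\mu}) = n - 1$ ainsi que $\codim_U(U \cap U_{\l,\mu}) = \codim_V(V \cap V_{\l,\mu}) = 1$; l'hypoth�se de r�currence combin�e au cas de base $n=1$ fournit alors la cha�ne $\P_U \sim \P_{U_{\l,\mu}} \sim \P_{V_{\l,\mu}} \sim \P_V$.

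Toute la difficult� se concentre dans le cas de base $n=1$. La m�thode du Lemme~\ref{l:recur} ne s'y adapte pas directement: elle reposait sur la non-d�g�n�rescence de la forme altern�e $\Phi_e$ sur $\g_{-a/2}\times\g_{-a/2}$, qui fournissait par parit� un vecteur isotrope servant d'interm�diaire. Or ici, puisque $\a\in\,]\frac{a}{2},a[$, on a $-2\a<-a$, donc $[\g_{-\a},\g_{-\a}]\subseteq\g_{\le -a}$ et $\Phi_e|_{\g_{-\a}\times\g_{-\a}}\equiv 0$. La dualit� utile devient celle, non d�g�n�r�e modulo les centralisateurs, entre $\g_{-\a}$ et $\g_{\a-a}$ fournie par le Lemme~\ref{l:phie}.

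Pour traiter le cas $n=1$, on exploiterait les composantes auxiliaires $\g_{\a-a}$ et $\g_{-2\a+a}$, pr�sentes dans le Cas~I, ainsi que l'observation (lisible dans le Tableau~\ref{fig:2emecasvp-a0}) que $\g_{\a-a}\cap\g^e=\g_{2\a-2a}\cap\g^e=\{0\}$. On construirait alors des paires $e$-admissibles interm�diaires de la forme
\[
\m := \g_{\le -a}\oplus M\oplus\g_{2\a-2a}, \qquad \n := \g_{\le -a}\oplus N'\oplus N\oplus\g_{2\a-2a},
\]
avec $M\subseteq U\cap V$, $N'\subseteq\g_{\a-a}$ et $N$ un suppl�mentaire de $\g_{-\a}\cap\g^e$ dans $\g_{-\a}$ contenant $U$ (respectivement $V$), les contraintes dimensionnelles fournies par la Proposition~\ref{p:croisees} imposant $\dim M + \dim N' + \dim N = 2\dim U$ pour garantir (A6), et le Lemme~\ref{l:dimuorth} assurant (A3) via la dualit� $(\g_{-\a},\g_{\a-a})$. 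On relierait la paire associ�e � $U$ � celle associ�e � $V$ en faisant varier la composante $N'\subseteq\g_{\a-a}$ au moyen d'une ou plusieurs paires interm�diaires additionnelles.

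L'obstacle principal sera la v�rification simultan�e des conditions d'admissibilit� pour ces paires: en particulier (A4), car $(U+V)\cap\g^e$ est non nul d�s que $U\neq V$, de sorte qu'on ne peut inclure � la fois $U$ et $V$ dans la composante $\g_{-\a}$ de $\n$, d'o� le recours � plusieurs paires interm�diaires; et (A5), dont la validit� repose sur les annulations de crochets $[\W_{+,+},\W_{+,+}]=\{0\}$ et $[\W_{0,\pm},\W_{\pm,\pm}]=\{0\}$, cons�quences de l'isotropie totale de $\W_\pm$ pour $\Phi$, d�j� �tablies avant l'�nonc� du lemme.
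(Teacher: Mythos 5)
Votre strat\'egie d'ensemble est celle de l'article : r\'ecurrence sur $n=\codim_U(U\cap V)$ ramen\'ee au cas $n=1$ par le proc\'ed\'e des espaces $U_{\l,\mu}$, $V_{\l,\mu}$ du Lemme \ref{l:recur}, puis, pour le cas de base, recours \`a la dualit\'e entre $\g_{-\a}$ et $\g_{\a-a}$ (Lemme \ref{l:phie}) et \`a des paires interm\'ediaires dont la partie $\n$ comporte une composante dans $\g_{\a-a}$. Votre diagnostic est juste : $\Phi_e$ s'annule sur $\g_{-\a}\times\g_{-\a}$ puisque $2\a>a$, et c'est bien (A4) qui interdit de loger $U+V$ dans $\n$. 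Mais le cas $n=1$, que vous identifiez vous-m\^eme comme toute la difficult\'e, reste \`a l'\'etat de programme au conditionnel : le sous-espace $N'\subseteq\g_{\a-a}$ n'est pas sp\'ecifi\'e, les paires interm\'ediaires ne sont pas \'ecrites, la cha\^{\i}ne de comparabilit\'es n'est pas donn\'ee, et vous concluez sur une liste d'obstacles ((A4), (A5)) sans les lever. Il y a donc un trou au point pr\'ecis o\`u se concentre la difficult\'e.

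Ce qui manque est le choix explicite de la composante dans $\g_{\a-a}$ : on pose $D:=\g_{\a-a}\cap[e,U\cap V]^\perp$. Comme $\g_{\a-a}^e=\{0\}$, le Lemme \ref{l:phie} donne $\dim\g_{\a-a}=\dim U$, d'o\`u $\dim D=\dim\g_{\a-a}-\dim[e,U\cap V]=\dim U-(\dim U-1)=1$ : c'est une droite, la m\^eme pour $U$ et pour $V$ ; contrairement \`a ce que sugg\`ere votre derni\`ere phrase, il n'y a pas \`a faire varier cette composante au moyen de paires additionnelles. On pose alors $\m:=\g_{\le -a}\oplus D\oplus(U\cap V)\oplus\g_{2\a-2a}$ et, pour $W\in\{U,V\}$,
$$
\mathbf{A}_W:=\bigl(\g_{\le -a}\oplus(U\cap V)\oplus\g_{2\a-2a},\ \g_{\le -a}\oplus D\oplus W\oplus\g_{2\a-2a}\bigr).
$$
La condition (A3) r\'esulte de la d\'efinition de $D$ et du comptage du Lemme \ref{l:dimuorth}, la condition (A4) de $D\cap\g^e\subseteq\g_{\a-a}^e=\{0\}$ et de $W\cap\g^e=\{0\}$, les conditions (A5) et (A6) des annulations de crochets et des \'egalit\'es dimensionnelles (Proposition \ref{p:croisees}) que vous invoquez d\'ej\`a. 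Le cas $n=1$ s'ach\`eve par la cha\^{\i}ne
$$
\P_U \peq_\Ga \mathbf{A}_U \seq_\Ga (\m,\m) \peq_\Ga \mathbf{A}_V \seq_\Ga \P_V,
$$
c'est-\`a-dire pr\'ecis\'ement l'\'etape que votre r\'edaction laisse en suspens.
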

\begin{proof}
Montrons le lemme par récurrence sur $n := \codim_U U\cap V$.
On se contente de le montrer dans le cas où $n =1$, le reste de la récurrence étant similaire
à celui de la démonstration du Lemme \ref{l:recur}.
On pose $$D := \g_{\a-a} \cap [e,U \cap V]^\perp \ \text{ , } \ \m:= \g_{\le -a}  \oplus D \oplus U \cap V  \oplus \g_{2 \a-2a},$$
$$\text{ et } \mathbf{A}_W := (\g_{\le -a}  \oplus U \cap V  \oplus \g_{2 \a-2a}, \g_{\le -a}  \oplus D \oplus W  \oplus \g_{2 \a-2a} ),$$
pour $W \in \{U,V\}$.
On vérifie sans peine que les paires $\mathbf{A}_U$, $\mathbf{A}_V$ et $(\m,\m)$ sont $e$-admissibles
relativement à la graduation $\Ga$. Le lemme s'ensuit car
$$\P_U \peq_\Ga \mathbf{A}_U  \seq_\Ga (\m,\m) \peq_\Ga \mathbf{A}_V  \seq_\Ga \P_V.$$ 
\end{proof}

{\bf \underline{Cas II:}} le nombre de valeurs propres de $\ad h_\Ga$ appartenant à $]-a,0[$
est strictement inférieur à $4$.
On représente dans la Table \ref{fig:poss2} les deux possiblités.
\begin{table}[h]
\centering
\renewcommand{\arraystretch}{1.5}
\begin{tabular}{|c|c|c|c|c|}
\hline
 $\a$ & $\W_{+,i} + \W_{0,+}$ & $\W_{-,i} + \W_{0,-}$ & $\W_{+,+}$ & $\W_{-,-}$ \\
\hline
 $\frac{2a}{3}$ & $-\frac{a}{3}$ & $-\frac{2a}{3}$ & $-\frac{2a}{3}$ & $-\frac{a}{3}$ \\
\hline
 $\frac{3a}{4}$ & $-\frac{a}{4}$ & $-\frac{3a}{4}$ & $-\frac{a}{2}$ & $-\frac{a}{2}$ \\
\hline
\end{tabular}

\smallskip

\caption{Cas (c): possibilités du cas II.}\label{fig:poss2}
\end{table}

Lorsque $\a = \frac{2a}{3}$, on conclut d'après le Théorème \ref{t:2vp}.
Supposons désormais que $\a = \frac{3a}{4}$.
Soit $(\m_1,\n_1)$ une paire $e$-admissible de $\g$ relativement à $\Ga$. Alors $(\m_1,\n_1)$
peut s'écrire de la forme suivante:
$$\m_1 =  \g_{\le -a} \oplus U' \oplus V'  \oplus W' \ \text{ et } \ 
\n_1 =  \g_{\le -a} \oplus U \oplus V  \oplus W,$$
où $U' \subset U \subset \g_{-\frac{3a}{4}}$, $V' \subset V \subset \g_{-\frac{a}{2}}$ et
$W' \subset W \subset  \g_{-\frac{a}{4}}$.
Soient $\tilde{U}$ un sous-espace de $ \g_{-\frac{3a}{4}}$ supplémentaire de $ \g_{-\frac{3a}{4}}^e$
contenant $U$ et $\tilde{V}$ un sous-espace de $ \g_{-\frac{a}{2}}$ supplémentaire de $ \g_{-\frac{a}{2}}^e$ contenant $\g_{-\frac{a}{2}} \cap \W_{+,+}$.
On pose
$$\m_2 :=  \g_{\le -a} \oplus U' \oplus V' \ \text{ , } \  \n_2:= \g_{\le -a} \oplus \tilde{U} \oplus V  \oplus W,$$
$$\m_3 :=   \g_{\le -a} \oplus \tilde{U} \oplus V'  \  \text{ , } \ \n_3:= \g_{\le -a} \oplus \tilde{U} \oplus V,$$
$$\m_4 :=  \g_{\le -a} \oplus \tilde{U} \ \text{ , } \ \n_4:= \g_{\le -a} \oplus \tilde{U} \oplus \tilde{V},$$
$$ \text{ et } \  \m_5 :=  \g_{\le -a} \oplus \tilde{U} \oplus \g_{-\frac{a}{2}} \cap \W_{+,+}.$$
On vérifie de façon directe que les paires $(\m_2,\n_2)$, $(\m_3,\n_3)$, $(\m_4,\n_4)$ et $(\m_5,\m_5)$
sont $e$-admissibles relativement à la graduation $\Ga$. De plus, on a
$$(\m_1,\n_1) \peq_\Ga (\m_2,\n_2) \seq_\Ga (\m_3,\n_3) \peq_\Ga (\m_4,\n_4) \seq_\Ga (\m_5,\m_5).$$
On déduit alors l'équivalence des paires $e$-admissibles dans ce cas à partir du lemme suivant.
\begin{lemma}
Soient $U$ et $ V$ deux sous-espaces de  $ \g_{-\frac{3a}{4}}$ supplémentaires de $ \g_{-\frac{3a}{4}}^e$.
Les paires $\P_U$ et $\P_V$ sont équivalentes entre elles, où  
pour $X \in \{U,V\}$, $\P_X$ désigne la paire $e$-admissible  
$(\g_{\le -a} \oplus X \oplus \g_{-\frac{a}{2}} \cap \W_{+,+},\g_{\le -a} \oplus X \oplus \g_{-\frac{a}{2}} \cap \W_{+,+})$.
\end{lemma}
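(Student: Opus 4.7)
Le plan est de prouver ce lemme par r�currence sur $n := \codim_U U \cap V$, en imitant l'architecture de la d�monstration du Lemme \ref{l:recur} ainsi que celle du second lemme interne � la preuve du Th�or�me \ref{t:2vp}. Pour l'h�r�dit�, �crivant $U = W \oplus \Vect(u_1,\ldots,u_n)$ et $V = W \oplus \Vect(v_1,\ldots,v_n)$ avec $W := U\cap V$, je choisirai un couple $(\alpha,\beta) \in (\C^*)^2$ g�n�rique pour lequel les sous-espaces $U_{\alpha,\beta} := W \oplus \Vect(u_1,\ldots,u_{n-1},\alpha u_n+\beta v_n)$ et $V_{\alpha,\beta} := W \oplus \Vect(v_1,\ldots,v_{n-1},\alpha u_n+\beta v_n)$ restent suppl�mentaires de $\g_{-\frac{3a}{4}}^e$ dans $\g_{-\frac{3a}{4}}$, avec $\codim_{U_{\alpha,\beta}}(U_{\alpha,\beta} \cap V_{\alpha,\beta}) = n-1$. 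L'hypoth�se de r�currence donnera alors $\P_{U_{\alpha,\beta}} \sim \P_{V_{\alpha,\beta}}$, tandis que le cas de base appliqu� aux couples $(U, U_{\alpha,\beta})$ et $(V, V_{\alpha,\beta})$ (dont les codimensions internes valent $1$) fournira $\P_U \sim \P_{U_{\alpha,\beta}}$ et $\P_V \sim \P_{V_{\alpha,\beta}}$, d'o� $\P_U \sim \P_V$ par transitivit�.

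Le travail principal se situe au cas de base $n=1$. �crivant $U = W \oplus \C u$ et $V = W \oplus \C v$, je poserai $Z := \g_{\le -a} \oplus (\g_{-\frac{a}{2}} \cap \W_{+,+})$, puis j'introduirai les paires interm�diaires
$$\mathbf{A}_X := (Z \oplus W,\; Z \oplus X \oplus D),\qquad X \in \{U,V\},\qquad \m_0 := Z \oplus W \oplus D,$$
o� $D \subset \g_{-\frac{a}{4}}$ est de dimension $1$ et contenu dans l'orthogonal $[e,W]^\perp$ pour la forme de Killing. L'existence d'un tel $D$ d�coulera du d�compte suivant: $\dim \g_{-\frac{a}{4}} = r$ (o� $r := \dim U$) par le Lemme \ref{l:phie} appliqu� avec $b = \frac{3a}{4}$, tandis que $\dim [e,W] = r - 1$, d'o� $\dim ([e,W]^\perp \cap \g_{-\frac{a}{4}}) = 1$; de plus $D \cap \g^e = \{0\}$ automatiquement puisque $\g_{-\frac{a}{4}} \cap \g^e = \{0\}$ d'apr�s la Table \ref{fig:2emecasvp-a0}. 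J'obtiendrai alors la cha�ne de comparabilit�s
$$\P_U \peq_\Ga \mathbf{A}_U \seq_\Ga (\m_0,\m_0) \peq_\Ga \mathbf{A}_V \seq_\Ga \P_V,$$
qui conclura � l'�quivalence cherch�e.

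La v�rification de l'admissibilit� des trois paires $\mathbf{A}_U, \mathbf{A}_V, (\m_0,\m_0)$ constitue la partie technique. Les axiomes (A1), (A2), (A4) sont imm�diats, et (A6) d�coule du calcul $2\dim Z + 2r = \dim\g - \dim\g^e$. La condition (A5) s'�tablit degr� par degr�: les brackets $[X,W] \subset \g_{-\frac{3a}{2}}$, $[X,X] \subset \g_{-\frac{3a}{2}}$ et $[D,W] \subset \g_{-a}$ retombent dans $\g_{\le -a}$, et $[D, \g_{-\frac{a}{2}}\cap\W_{+,+}] = 0$ est cons�quence des annulations $[\W_{\pm,i}, \W_{\pm,\pm}] = [\W_{0,\pm}, \W_{\pm,\pm}] = 0$ rappel�es dans le texte. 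Quant � la condition (A3), gr�ce � (A4) et (A6), elle se r�duit � $\chi([\n, \m]) = 0$, qui repose sur l'unique bracket potentiellement non trivial en degr� $-a$: on calcule $\chi([D,W]) = \la e, [D,W]\ra = -\la [e,W], D\ra = 0$, imm�diat par construction de $D$.

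L'obstacle principal est le choix du degr� dans lequel placer $D$. Contrairement au cas trait� dans la preuve du Th�or�me \ref{t:2vp} (o� la configuration $b \neq \frac{a}{2}$ offrait naturellement un second degr� $-b$ disjoint du degr� $b-a$ pour y placer $D$), ici les sous-espaces $U$, $V$ et $W$ vivent tous dans $\g_{-\frac{3a}{4}}$ et un �ventuel $X \oplus D \subset \g_{-\frac{3a}{4}}$ violerait (A4), puisque $\dim \g_{-\frac{3a}{4}} - \dim \g_{-\frac{3a}{4}}^e = r$ est exactement la dimension maximale d'un suppl�mentaire. Le recours au degr� $\g_{-\frac{a}{4}}$ est essentiellement forc� par la contrainte d'orthogonalit�, et l'�galit� serr�e $\dim ([e,W]^\perp \cap \g_{-\frac{a}{4}}) = 1$ rend la construction quasi-unique mais n�anmoins possible.
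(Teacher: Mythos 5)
Votre d\'emonstration est correcte et suit essentiellement la m\^eme d\'emarche que celle du texte : r\'ecurrence sur $\codim_U U\cap V$ avec, au cas $n=1$, le m\^eme sous-espace $D = \g_{-\frac{a}{4}} \cap [e,U\cap V]^\perp$, les m\^emes paires interm\'ediaires $\mathbf{A}_U$, $\mathbf{A}_V$, $(\m,\m)$ et la m\^eme cha\^{\i}ne de comparabilit\'es. Vous ne faites qu'expliciter les v\'erifications (dimension de $D$, conditions (A1) \`a (A6)) que le texte laisse au lecteur.
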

\begin{proof}
Montrons le lemme par récurrence sur $n := \codim_U U\cap V$.
On se contente de le montrer dans le cas où $n =1$, le reste de la récurrence étant similaire
à celui de la démonstration du Lemme \ref{l:recur}.
Pour simplifier, on note $\g_{-\frac{a}{2}}^+ $ le sous-espace $\g_{-\frac{a}{2}} \cap \W_{+,+}$.
On pose $$D := \g_{-\frac{a}{4}} \cap [e,U \cap V]^\perp \ \text{ , } \ \m:= \g_{\le -a}  \oplus U \cap V  \oplus \g_{-\frac{a}{2}}^+\oplus D,$$
$$\text{ et } \mathbf{A}_X := (\g_{\le -a}  \oplus U \cap V  \oplus \g_{-\frac{a}{2}}^+, \g_{\le -a}  \oplus X \oplus\g_{-\frac{a}{2}}^+ \oplus D ),$$
pour $X \in \{U,V\}$.
On vérifie sans peine que les paires $\mathbf{A}_U$, $\mathbf{A}_V$ et $(\m,\m)$ sont $e$-admissibles
relativement à la graduation $\Ga$. Le lemme s'ensuit car
$$\P_U \peq_\Ga \mathbf{A}_U  \seq_\Ga (\m,\m) \peq_\Ga \mathbf{A}_V  \seq_\Ga \P_V.$$ 
\end{proof}
On a donc montré le théorème suivant.
\begin{theorem} \label{t:equiso}
Lorsque $\g = \so(V)$
et $\rk \gs = 1$, les paires $e$-admissibles
sont équivalentes entre elles. En particulier, les $W$-algèbres associées sont isomorphes.
\end{theorem}
De façon analogue, on montre un résultat similaire lorsque $\g = \sp(V)$.
\begin{theorem} \label{t:equisp}
Lorsque $\g = \sp(V)$
et $\rk \gs = 1$, les paires $e$-admissibles
sont équivalentes entre elles. En particulier, les $W$-algèbres associées sont isomorphes.
\end{theorem}
\subsection{Cas où $\g = \G_2$, $\mathbf{F}_4$ ou $\E_6$}\label{S:excep}
Supposons que $\g = \mathbf{G}_2$, $\mathbf{F}_4$ ou $\mathbf{E}_6$.
Comme $\g$ est un $\ss$-module, $\g$ se décompose en composantes isotypiques
de $\ss$-modules simples. L'élément $t$ est semisimple et appartient à $\gs$.
Il laisse donc stable chaque composante isotypique. 
Puisque $t$ est semisimple, sa restriction à chaque composante
isotypique est semisimple. De nouveau comme $t \in \gs$, on a une décomposition
de $\g$ en $\ss$-modules simples stables par $\ad t$.
En conclusion, $\g$ se décompose en $\ad t$-espaces propres
$$
    \g =\bigoplus_{\l} \W_\l.
$$
Pour $\l \neq 0$, $\W_\l$ et $\W_{-\l}$ sont en couplage
par rapport à la forme de Killing.
De plus, chaque $\ad t$-espace propre se décompose en $\ss$-modules simples.
Afin d'étudier l'équivalence des paires $e$-admissibles, on traite
au cas par cas les orbites nilpotentes des Tables \ref{fig:g2},
\ref{fig:f4} et \ref{fig:e6}. Pour chacune d'entre elles, on peut expliciter
les décompositions ci-dessus à l'aide du logiciel GAP4. 
On étudie ensuite les valeurs propres de $\ad h_\Ga$ sur les espaces propres $\W_\l$.
En s'inspirant des constructions faites dans les sections précédentes, on montre que lorsque $\g = \mathbf{G}_2$, $\mathbf{F}_4$ ou $\mathbf{E}_6$
et $\rk \gs = 1$, les paires $e$-admissibles
sont équivalentes entre elles. En particulier, les $W$-algèbres associées sont isomorphes.

\end{document}